\theoremstyle{plain}
\newtheorem{theorem}{Theorem}[section]
\newtheorem*{theorem*}{Theorem}
\newtheorem{proposition}[theorem]{Proposition}
\newtheorem{lemma}[theorem]{Lemma}
\newtheorem{corollary}[theorem]{Corollary}
\theoremstyle{remark}
\newtheorem{definition}[theorem]{Definition}
\newtheorem{example}[theorem]{Example}
\newtheorem{assumption}[theorem]{Assumption}
\newtheorem{remark}[theorem]{Remark}
\newcommand{\R}{\mathbb{R}}
\newcommand{\C}{\mathbb{C}}
\newcommand{\E}{\mathbb{E}}
\renewcommand{\P}{\mathbb{P}}
\newcommand{\N}{\mathcal{N}}
\newcommand{\eps}{\varepsilon}
\newcommand{\1}{\mathbbm{1}}
\newcommand{\Id}{\operatorname{Id}}
\renewcommand{\Im}{\operatorname{Im}}
\renewcommand{\Re}{\operatorname{Re}}
\renewcommand{\O}{O_{\prec}}
\newcommand{\Tr}{\operatorname{Tr}}
\newcommand{\col}{\operatorname{col}}
\renewcommand{\vec}{\operatorname{vec}}
\newcommand{\diag}{\operatorname{diag}}
\newcommand{\sign}{\operatorname{sign}}
\newcommand{\rank}{\operatorname{rank}}
\newcommand{\supp}{\operatorname{supp}}
\newcommand{\dist}{\operatorname{dist}}
\newcommand{\HS}{{\text{HS}}}
\renewcommand{\v}{\mathbf{v}}
\newcommand{\y}{\mathbf{y}}
\newcommand{\bD}{\mathbf{D}}
\newcommand{\bmu}{\boldsymbol{\mu}}
\newcommand{\balpha}{\boldsymbol{\alpha}}
\newcommand{\beps}{\boldsymbol{\varepsilon}}
\newcommand{\one}{\mathbf{1}}
\renewcommand{\a}{\alpha}
\renewcommand{\b}{\beta}
\renewcommand{\t}{\mathfrak{t}}
\newcommand{\X}{\mathfrak{X}}
\newcommand{\Y}{\mathfrak{Y}}
\newcommand{\vT}{\check{T}}
\newcommand{\vG}{\check{G}}
\newcommand{\vU}{\check{U}}
\newcommand{\vV}{\check{V}}
\newcommand{\vR}{\check{R}}
\newcommand{\vg}{\check{\gamma}}
\newcommand{\vt}{\check{t}}
\newcommand{\vs}{\check{s}}
\newcommand{\vf}{\check{f}}
\newcommand{\vm}{\check{m}}
\newcommand{\vz}{\check{z}}
\newcommand{\vE}{\check{E}}
\newcommand{\vX}{\check{\mathfrak{X}}}
\renewcommand{\t}[1]{\widetilde{#1}}
\newcommand{\tG}{\tilde{G}}
\newcommand{\tX}{\tilde{\mathfrak{X}}}
\newcommand{\ty}{\tilde{y}}
\newcommand{\tm}{\tilde{m}}
\newcommand{\pa}{^{(\a)}}
\newcommand{\tvG}{\tilde{\check{G}}}
\newcommand{\G}{\mathcal{G}}
\newcommand{\I}{\mathcal{I}}
\newcommand{\cP}{\mathcal{P}}
\newcommand{\Q}{\mathcal{Q}}
\newcommand{\cR}{\mathcal{R}}
\newcommand{\cZ}{\mathcal{Z}}
\newcommand{\hsigma}{\hat{\sigma}}
\newcommand{\hSigma}{\widehat{\Sigma}}
\title[Tracy-Widom for covariance matrices]
{Tracy-Widom at each edge of real covariance and MANOVA estimators}
\author{Zhou Fan}
\address{Department of Statistics and Data Science, Yale University}
\email{zhou.fan@yale.edu}
\author{Iain M. Johnstone}
\address{Department of Statistics, Stanford University}
\email{imj@stanford.edu}
\begin{document}

\maketitle

\begin{abstract}
We study the sample covariance matrix for real-valued data with general
population covariance, as well as MANOVA-type covariance estimators in variance
components models under null hypotheses of global sphericity. 
In the limit as matrix dimensions increase proportionally,
the asymptotic spectra of such estimators may
have multiple disjoint intervals of support, possibly intersecting the
negative half line. We show that the distribution of the extremal eigenvalue
at each regular edge of the support has a GOE Tracy-Widom limit. Our proof
extends a comparison argument of Ji Oon Lee and Kevin Schnelli, replacing a
continuous Green function flow by a discrete Lindeberg swapping scheme.
\end{abstract}

\section{Introduction}

Consider a matrix $\hSigma=X'TX$, where $X \in \R^{M \times N}$ has
random independent entries, and $T \in \R^{M \times M}$
is deterministic. We study eigenvalue fluctuations at 
the edges of the spectrum of $\hSigma$, when $M \asymp N$ are both large.

At the largest edge and for $T \succ 0$, a substantial literature,
reviewed below, shows that the fluctuations of the largest eigenvalue of
$\hSigma$ follow the Tracy-Widom distribution.
In this paper, we extend the validity of this Tracy-Widom limit
to matrices $T$ with both positive and negative eigenvalues, and to
all ``regular'' edges of the spectrum of $\hSigma$. 
Our main result is stated informally as follows:
\begin{theorem*}[Informal]
Let $\hSigma=X'TX$, where $\sqrt{N}X \in \R^{M \times N}$
has independent entries with mean 0, variance 1, and bounded higher moments,
and $T \in \R^{M \times M}$ is diagonal with bounded entries. Let $\mu_0$
be the deterministic approximation for the spectrum of $\hSigma$ and
let $E_*$ be any regular edge of the support of $\mu_0$. Then for
$\lambda(\hSigma)$ the extremal eigenvalue of $\hSigma$ near $E_*$, 
and for a scale constant $\gamma>0$,
\[\pm (\gamma N)^{2/3}(\lambda(\hSigma)-E_*) \overset{L}{\to} \mu_{TW}.\]
\end{theorem*}
Here, $\mu_{TW}$ is the GOE Tracy-Widom law \cite{tracywidomorthogonal}.
A formal statement is provided in Theorem \ref{thm:TW}, and we comment on the
assumption of diagonal $T$ in Remark \ref{remark:diagonal} below.

Our study of this model is motivated by two applications in statistics and
genetics. In the first well-studied setting, $\y_1,\ldots,\y_n \in \R^p$ are
observations of $p$ variables, or ``traits'',
in $n$ independent samples. When the traits
are distributed with mean 0 and covariance $\Sigma \in \R^{p \times p}$, the
sample covariance matrix $\tilde{\Sigma}=n^{-1}Y'Y$
provides an unbiased estimate of $\Sigma$,
where $Y \in \R^{n \times p}$ is a row-wise stacking of $\y_1,\ldots,\y_n$.
Writing $Y=n^{1/2}X'\Sigma^{1/2}$, this takes the form
\begin{equation}\label{eq:samplecovariance}
\tilde{\Sigma}=\Sigma^{1/2}XX'\Sigma^{1/2}.
\end{equation}
The non-zero eigenvalues of $\tilde{\Sigma}$ are the same as those of its
``companion'' matrix $\hSigma = X'\Sigma X$.
Here $T = \Sigma$ is positive definite, and since $\y_1,\ldots,\y_n$ are
independent and identically distributed, there is a single level of variation.

In the second setting, we consider models with multiple levels of
variation which induce \textit{dependence} among the observations. 
For example, suppose the samples are divided into $I$ groups of size
$J=n/I$, and modeled by a random effects linear model
where the traits for sample $j$ of group $i$ are given by
\[\y_{i,j}=\balpha_i+\beps_{i,j} \in \R^p.\]
Here, $\balpha_i,\beps_{i,j}$
are independent vectors capturing variation at the group and
individual levels, with mean 0 and respective
covariances $\Sigma_1,\Sigma_2 \in \R^{p \times p}$.
The traditional (MANOVA) estimate of the variance component $\Sigma_1$ is
\begin{equation}\label{eq:MANOVA1}
    \hSigma=Y'BY,
\end{equation}
where again $Y \in \R^{n \times p}$ is a row-wise stacking of
the observations $\y_{i,j}$. The matrix $B$ is \textit{not} positive definite,
having $n - I$ negative eigenvalues: Loosely speaking, one subtracts a
scaled estimate of the second-level noise $\Sigma_2$ to estimate $\Sigma_1$.
Under a null hypothesis of ``global sphericity'' where $\Sigma_1,\Sigma_2
\propto \Id$, and
introducing a representation $Y=UX$ detailed in Section \ref{subsec:MANOVA},
we obtain $\hSigma=X'TX$ with $T=U'BU$ having positive and negative
eigenvalues in non-vanishing proportions.
\cite[Boxes 1 and 2]{blowsmcguigan} has an example from quantitative
genetics, and our main result resolves an open question stated there
about Tracy-Widom limits and scaling constants in this model.

Returning to the general discussion,
when $M,N \to \infty$ proportionally, the
empirical spectrum of $\hSigma$ is well approximated by a deterministic
law $\mu_0$ \cite{marcenkopastur,yinyq,silverstein,silversteinbai}. Under a
``sphericity'' null hypothesis that $T=\Id$, the law $\mu_0$ is
the Marcenko-Pastur distribution, and the largest and smallest
eigenvalues of $\hSigma$ converge to the edges of the support of
$\mu_0$ \cite{geman,yinetal,baiyin} and have asymptotic
GUE/GOE Tracy-Widom fluctuations
\cite{johansson,johnstone,soshnikov,peche,feldheimsodin,pillaiyin}. 
In statistics and genetics,
these results have enabled the application of Roy's largest root
test in high-dimensional principal components analysis
\cite{johnstone,pattersonetal}.

In this paper, we study $\hSigma$ in the setting $T \neq \Id$.
For $T \succeq 0$, 
\cite{baisilverstein} showed that all eigenvalues of $\hSigma$ converge
to the support of $\mu_0$, and \cite{baisilversteinexact,knowlesyin}
proved exact separation of eigenvalues and eigenvalue rigidity.
For complex Gaussian $X$ and $T \succ 0$,
\cite{elkaroui,onatski} established GUE Tracy-Widom fluctuations
of the largest eigenvalue,
under an edge regularity condition introduced in
\cite{elkaroui}. For complex Gaussian $X$, this was extended to each regular
edge of the support in \cite{hachemetal}. For real $X$
and diagonal $T \succ 0$, \cite{leeschnelli} established GOE Tracy-Widom
fluctuations of the largest eigenvalue, using different techniques based on
earlier work for the deformed Wigner model
in \cite{leeschnelliwigner}. Universality results
of \cite{baopanzhouTW,knowlesyin} lift these assumptions that
$X$ is Gaussian and/or $T$ is diagonal.

We build on the proof in \cite{leeschnelli} to extend the above picture
in two directions: First, we establish a GOE Tracy-Widom limit at each regular
edge of the support for real $X$, including the interior edges.
This extension is new even in the Gaussian setting.
Second, we extend the notion of edge regularity and associated analysis
to $T$ having both positive and negative eigenvalues.
This is important for our study of random effects models with multiple
levels of variation, whose edge behavior is obtained here for the
first time.

\begin{remark}\label{remark:diagonal}
We restrict attention as in
\cite{leeschnelli} to diagonal $T$. By rotational invariance, this
encompasses the case of non-diagonal $T$ and real Gaussian $X$.
Existing universality results of \cite{baopanzhouTW,knowlesyin}
imply that our conclusions hold also for non-diagonal $T \succeq 0$.
We believe that, with minor modifications to the proof, the
results of \cite{knowlesyin} may be further extended to $T$ having
negative eigenvalues, but we will not pursue this extension here.
\end{remark}

\subsection{Strategy of proof}
Our proof generalizes the resolvent comparison argument of
\cite{leeschnelli} for the largest eigenvalue.
Let $E_*$ denote an edge of the deterministic
spectral support of $\hSigma$. (We define this formally in
Section \ref{sec:model}.) We will consider
\[\hSigma^{(L)}=X'T^{(L)}X\]
for a different matrix $T^{(L)}$, and
compare the eigenvalue behavior of $\hSigma$ near $E_*$ with that
of $\hSigma^{(L)}$ near an edge $E_*^{(L)}$.

In \cite{leeschnelli}, $E_*$ is the rightmost edge of support.
The comparison between $T$
and $T^{(L)}$ is achieved by a continuous interpolation over $l \in
[0,L]$, where $T^{(0)}=T$ and each $T^{(l)}$ has diagonal entries
$\{t_\a^{(l)}:\a=1,\ldots,M\}$ given by
\begin{equation}\label{eq:leeschnelliflow}
(t_\a^{(l)})^{-1}=e^{-l}(t_\a^{(0)})^{-1}+(1-e^{-l}).
\end{equation}
(See \cite[Eq.\ (6.1)]{leeschnelli}.) 
Taking $L=\infty$, $T^{(\infty)}$ is a multiple of the identity,
and Tracy-Widom fluctuations are known for $\hSigma^{(\infty)}$. Along this
interpolation, the edge $E_*^{(l)}$ evolves continuously. Defining a smooth
resolvent approximation
\begin{equation}\label{eq:roughresolventapprox}
    \P\left[\hSigma^{(l)} \text{ has no eigenvalues in }
    E_*^{(l)}+[s_1,s_2]\right]
\approx \E\left[K(\X^{(l)}(s_1,s_2))\right],
\end{equation}
\cite{leeschnelli} establishes the bound
\begin{equation}\label{eq:leeschnellibound}
\big|\tfrac{d}{dl}\E\big[K(\X^{(l)}(s_1,s_2))\big]\big| \leq N^{-1/3+\eps}
\end{equation}
for a small constant $\eps>0$ and $s_1,s_2$ on the $N^{-2/3}$ scale.
This is applied to compare the
probability in (\ref{eq:roughresolventapprox}) for $l=0$ and
$l=\infty$.

We extend this argument by showing that 
the continuous interpolation in (\ref{eq:leeschnelliflow}) may be replaced by a
discrete Lindeberg sequence
$T^{(0)},T^{(1)},\ldots,T^{(L)}$ for an integer $L \leq O(N)$,
swapping one diagonal entry of $T$ at a time.
Letting $E_*$ be any regular edge of $\hSigma$,
each matrix $\hSigma^{(l)}
\equiv X'T^{(l)}X$ will have a corresponding edge $E_*^{(l)}$ such that
\begin{equation}\label{eq:roughEclose}
|E_*^{(l+1)}-E_*^{(l)}| \leq O(1/N).
\end{equation}
Each of these $L$ discrete steps may be thought of as
corresponding to a time interval $\Delta l=O(N^{-1})$ in the continuous
interpolation (\ref{eq:leeschnelliflow}).
We show that the above conditions are sufficient
to establish a discrete analogue of (\ref{eq:leeschnellibound}),
\begin{equation}\label{eq:roughresolventcompare}
\left|\E\left[K(\X^{(l+1)}(s_1,s_2))\right]
-\E\left[K(\X^{(l)}(s_1,s_2))\right]\right| \leq N^{-4/3+\eps}.
\end{equation}
As $L \leq O(N)$, summing over $l=0,\ldots,L-1$ establishes the desired
comparison between $T^{(0)}$ and $T^{(L)}$.

In contrast to the continuous flow (\ref{eq:leeschnelliflow}),
our swapping sequence is well-defined even for negative $t_\a^{(0)}$.
Furthermore, by swapping the diagonal entries of $T$ from one support
interval to another without continuously evolving them between such intervals,
we may preserve an interior edge $E_*$ even as the other
intervals of support disappear.

Section \ref{sec:tools} reviews prerequisite proof ingredients.
Section \ref{sec:lindeberg} constructs the interpolating sequence.
Finally, Section \ref{sec:resolventcompare} establishes
(\ref{eq:roughresolventcompare}). The main step of Section
\ref{sec:resolventcompare} is to generalize the
``decoupling lemma'' of \cite[Lemma 6.2]{leeschnelli} to a setting involving
two different resolvents $G$ and $\vG$ corresponding to
$T \equiv T^{(l)}$ and $\vT \equiv T^{(l+1)}$.

\subsection*{Acknowledgments}
We are indebted to geneticist Mark Blows for
asking the question about Tracy-Widom for random effects models that led
to this paper, and for many stimulating discussions. We would like to also thank
Kevin Schnelli for helpful conversations about \cite{leeschnelli}.
ZF was supported
in part by a Hertz Foundation Fellowship and an NDSEG Fellowship (DoD AFOSR 32
CFR 168a). IMJ is supported in part by NIH R01 EB001988 and NSF DMS 1407813.

\section{Model and results}\label{sec:model}
\subsection{Deterministic spectral law}\label{subsec:support}
Let $T=\diag(t_1,\ldots,t_M) \in \R^{M \times M}$ be a deterministic diagonal
matrix, whose
diagonal values $t_1,\ldots,t_M$ may be positive, negative, or zero. Let $X \in
\R^{M \times N}$ be a random matrix with independent entries of
mean 0 and variance $1/N$. We study the matrix
\[\hSigma=X'TX\]
in the limit as $N,M \to \infty$ proportionally. In this limit, the
empirical spectrum of $\hSigma$ is well-approximated by a
deterministic law $\mu_0$.\footnote{We define $\mu_0$ as an $N$-dependent law
depending directly on $M/N$ and $T$, rather than assuming that
$M/N$ and the spectrum of $T$ converge to certain limiting quantities.}
We review in this section the definition of $\mu_0$ and its relevant properties.

When $T=\Id$, $\mu_0$ is the Marcenko-Pastur law
\cite{marcenkopastur}. More generally, the law $\mu_0$ may be defined by
a fixed-point equation in its Stieltjes transform:
For each $z \in \C^+$, there is a unique value $m_0(z) \in \C^+$ which satisfies
\begin{equation}\label{eq:MPdiag}
z=-\frac{1}{m_0(z)}+\frac{1}{N}\sum_{\a=1}^M \frac{t_\a}{1+t_\a m_0(z)}.
\end{equation}
This is oftentimes called the Marcenko-Pastur equation, and it defines
implicitly the Stieltjes transform $m_0:\C^+ \to \C^+$
of a law $\mu_0$ on $\R$ \cite{marcenkopastur,silverstein,silversteinbai}.
This law $\mu_0$ admits a continuous density $f_0$ at each $x \in \R_*$, given
by
\begin{equation}\label{eq:f0}
f_0(x)=\lim_{z \in \C^+ \to x} \frac{1}{\pi} \Im m_0(z),
\end{equation}
where
\begin{equation}\label{eq:Rstar}
\R_*=\begin{cases} \R & \text{ if }\rank(T)>N\\
\R \setminus \{0\}& \text{ if }\rank(T) \leq N.
\end{cases}
\end{equation}
For $x \neq 0$, this is shown in \cite{silversteinchoi}; we extend this to 
$x=0$ when $\rank(T)>N$ in Appendix \ref{appendix:deterministic}.

This law $\mu_0$ may have multiple disjoint intervals of
support, and two such cases are depicted in Figures \ref{fig:example1} and
\ref{fig:example2} of Appendix \ref{appendix:deterministic}.
We denote the support of $\mu_0$ by $\supp(\mu_0)$,
%Formally, we define the support of $\mu_0$ as the closed set
%\[\supp(\mu_0)=\{x \in \R:\mu_0([x-\delta,x+\delta])>0 \text{ for all }
%\delta>0\}.\]
%$E_* \in \R$ is a {\bf right edge} of $\mu_0$ if $(E_*-\delta,E_*)
%\subset \supp(\mu_0)$ and $(E_*,E_*+\delta) \subset \R
%\setminus \supp(\mu_0)$ for some $\delta>0$.
and we call $E_* \in \R$ a right (or left) edge if it is a right (or left)
endpoint of one of the disjoint intervals constituting $\supp(\mu_0)$.
%$E_*$ is a {\bf left edge} of $\mu_0$ if this holds with
%$(E_*-\delta,E_*)$ and $(E_*,E_*+\delta)$ exchanged.
When 0 is a point mass of $\mu_0$, we do not consider it an edge.

The support intervals and edge locations of $\mu_0$ are described in a simple
way by (\ref{eq:MPdiag}), as explained in \cite{silversteinchoi,knowlesyin}:
Define $P=\{0\} \cup \{-t_\a^{-1}:t_\a \neq 0\}$,
and consider $\bar{\R}=\R \cup \{\infty\}$. Consider the formal inverse of
$m_0(z)$,
\begin{equation}\label{eq:z0}
z_0(m)=-\frac{1}{m}+\frac{1}{N}\sum_{\a=1}^M \frac{t_\a}{1+t_\a m},
\end{equation}
as a real-valued function on $\bar{\R} \setminus P$ with the convention
$z_0(\infty)=0$. Two examples are also plotted in Figures \ref{fig:example1}
and \ref{fig:example2} of Appendix \ref{appendix:deterministic}.
Then the local extrema of $z_0$ are in 1-to-1
correspondence with edges of $\mu_0$, with the scale of
square-root decay at each edge inversely related to the curvature of $z_0$.

\begin{proposition}\label{prop:edges}
Let $m_1,\ldots,m_n \in \bar{\R} \setminus P$
denote the local minima and local maxima\footnote{$m_* \in \bar{\R}
\setminus P$ is a local minimum of $z_0$ if
$z_0(m) \geq z_0(m_*)$ for all $m$ in a sufficiently small neighborhood of
$m_*$, with the convention that $m_*=\infty$ is a local minimum if
$z_0$ is positive over $(C,\infty) \cup (-\infty,-C)$ for some $C>0$.
Local maxima are defined similarly.} of $z_0$,
ordered such that $0>m_1>\ldots>m_k>-\infty$ and
$\infty \geq m_{k+1}>\ldots>m_n>0$. Let
$E_j=z_0(m_j)$ for each $j=1,\ldots,n$. Then:
\begin{enumerate}[(a)]
\item $\mu_0$ has exactly $n/2$ support intervals and $n$ edges,
which are given by $E_1,\ldots,E_n$.
\item $E_j$ is a right edge if $m_j$ is a local minimum, and a left edge if
$m_j$ is a local maximum.
\item The edges are ordered as $E_1>\ldots>E_k>E_{k+1}>\ldots>E_n$.
\item For each $E_j$ where $m_j \neq \infty$, we have
$E_j \in \R_*$ and $z_0''(m_j) \neq 0$.
Defining $\gamma_j=\sqrt{2/|z_0''(m_j)|}$, the density of $\mu_0$
satisfies $f_0(x) \sim
(\gamma_j/\pi)\sqrt{|E_j-x|}$ as $x \to E_j$ with $x \in \supp(\mu_0)$.
\end{enumerate}
\end{proposition}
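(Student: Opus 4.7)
The plan is to reduce the proposition to a study of the real-valued rational function $z_0$ on $\bar{\R} \setminus P$, exploiting that $z_0$ formally inverts the Stieltjes transform $m_0$ through the Marcenko-Pastur equation (\ref{eq:MPdiag}). The key enabling fact, which I would extend from the Silverstein-Choi setting to mixed-sign $T$ in Appendix \ref{appendix:deterministic}, is the following support characterization: for $x \in \R_*$, one has $x \notin \supp(\mu_0)$ if and only if there exists a real $m \in \R \setminus P$ with $z_0(m) = x$ and $z_0'(m) > 0$, in which case $m_0$ extends real-analytically to $x$ as the local inverse of $z_0$ on its increasing branch through $m$.

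Given this characterization, parts (a) and (b) will follow by a Morse-type argument at the critical points of $z_0$. As $x$ approaches an edge $E_*$ from the non-support side, the real preimage $m_0(x)$ must converge to a point $m_*$ with $z_0'(m_*) = 0$; conversely, at every such critical point, the equation $z_0(m) = x$ loses its real preimage on one side of $z_0(m_*)$, forcing an edge. The sign of $z_0''(m_*)$ dictates on which side, so local minima produce right edges and local maxima produce left edges. For part (d), Taylor expansion gives $z_0(m) = E_j + \tfrac{1}{2} z_0''(m_j)(m - m_j)^2 + O((m - m_j)^3)$, which inverts to $m_0(x + i0) = m_j + i\gamma_j \sqrt{|E_j - x|}(1 + o(1))$ on the support side with $\gamma_j = \sqrt{2/|z_0''(m_j)|}$, yielding the claimed square-root density. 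The nondegeneracy $z_0''(m_j) \neq 0$ at a finite extremum is a regularity assertion that I would verify by combining the critical-point identity $1/m_*^2 = N^{-1} \sum_\a t_\a^2/(1 + t_\a m_*)^2$ with the hypothetical second-order identity $1/m_*^3 = N^{-1} \sum_\a t_\a^3/(1 + t_\a m_*)^3$, and deriving a contradiction via a weighted positivity or Cauchy-Schwarz argument.

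The primary new content lies in part (c), the ordering of edges when $T$ has both positive and negative diagonal entries. Here the pole set $P = \{0\} \cup \{-t_\a^{-1} : t_\a \neq 0\}$ intersects both $(-\infty, 0)$ and $(0, \infty)$, which is the novel feature beyond the positive semi-definite case of \cite{knowlesyin}. I plan to analyze $z_0$ branch by branch on $\bar{\R} \setminus P$, using the boundary behavior $z_0(m) \to +\infty$ as $m \to 0^-$, $z_0(m) \to -\infty$ as $m \to 0^+$, $z_0(m) \to 0$ as $m \to \pm \infty$, and $z_0(m) \to \pm \infty$ at each finite pole with sign determined by its residue. On each branch the local extrema and the monotone sub-intervals alternate, and the images of the increasing sub-intervals tile $\R_* \setminus \supp(\mu_0)$ in the order consistent with the claimed indexing. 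The principal obstacle is to show that every edge arising from $m_j < 0$ strictly exceeds every edge arising from $m_j > 0$; I expect this to be forced by tracking the boundary values $z_0(0^\pm) = \mp \infty$ combined with intermediate-value arguments on the branches flanking $m = 0$. Careful bookkeeping of signs and residues in the mixed-sign case is the most delicate point, but ultimately a matter of case analysis rather than genuinely new ideas.
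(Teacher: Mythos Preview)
Your plan for parts (a), (b), and the square-root asymptotic in (d) matches the paper's approach, which indeed rests on Proposition \ref{prop:mu0support} and a Taylor expansion of $z_0$ at the critical point. However, there are two genuine gaps in the proposal.

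\textbf{Nondegeneracy $z_0''(m_j) \neq 0$ in (d).} Your proposed route combines the first- and second-order critical identities and seeks a contradiction ``via a weighted positivity or Cauchy-Schwarz argument.'' Writing $w_\a = (m_* + t_\a^{-1})^{-1}$, the two identities become $N m_*^{-2} = \sum_\a w_\a^2$ and $N m_*^{-3} = \sum_\a w_\a^3$; with mixed-sign $w_\a$ there is no evident positivity or Cauchy-Schwarz obstruction, so this line does not close. The paper instead passes to the reciprocal variable $q = 1/m$ and works with $g(q) = z_0(1/q)$. A direct computation gives $g'''(q) = (6/N)\sum_\a t_\a^2/(q+t_\a)^4 > 0$ everywhere on $\R \setminus P'$, so $g'$ is strictly convex on each component. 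At any local extremum $q_j$ of $g$ one has $g'(q_j) = 0$; strict convexity of $g'$ then forces $g'$ to change sign at $q_j$, hence $g''(q_j) \neq 0$, and the chain rule gives $z_0''(m_j) = m_j^{-4} g''(q_j) \neq 0$. This substitution is the missing idea.

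\textbf{Ordering in (c).} Your branch-by-branch plan with boundary values and intermediate-value arguments is too coarse. Tracking $z_0(0^\pm) = \mp\infty$ does not by itself force $E_k > E_{k+1}$, let alone the full chain across branches separated by multiple poles. The paper's argument is different: it introduces the homotopy $g_\lambda(q) = -q + \lambda N^{-1}\sum_\a (t_\a - t_\a^2/(q+t_\a))$ for $\lambda \in (0,1]$, observes that the local extrema $q_j(\lambda)$ vary continuously and that as $\lambda \downarrow 0$ each $q_j(\lambda)$ collapses to an adjacent pole $-t_\a$ with $g_\lambda(q_j(\lambda)) \to t_\a$ from the appropriate side. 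In that degenerate limit the ordering $E_1 > \cdots > E_n$ is immediate from the ordering of the $t_\a$'s, and continuity in $\lambda$ preserves it because the separate lemma (Lemma \ref{lemma:disjointincreasing}: for each $c$ there is at most one $q$ with $g(q)=c$ and $g'(q) \leq 0$) rules out crossings. That disjointness lemma, which comes from a degree count on the polynomial $g(q)=c$, is also what underlies the statement that the images of the increasing sub-intervals are pairwise disjoint; you invoke that tiling without proving it.
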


\begin{definition}\label{def:mvaluescale}
For an edge $E_*$ of $\mu_0$, the local minimum/maximum $m_*$ of $z_0$ such
that $z_0(m_*)=E_*$ is its {\bf $\pmb{m}$-value}. The edge is {\bf soft}
if $m_* \neq \infty$ and {\bf hard} if $m_*=\infty$. For a soft edge,
$\gamma=\sqrt{2/|z_0''(m_*)|}$ is its associated {\bf scale}.
\end{definition}

The statements of Proposition \ref{prop:edges} are known for $T \succeq 0$,
and we describe the extension to
general $T$ in Appendix \ref{appendix:deterministic}.
When $T \succeq 0$, an edge at 0 is usually called hard and all other 
edges soft. Definition \ref{def:mvaluescale} extends this to general $T$:
A hard edge is always 0 and can occur when $\rank(T)=N$.
%One example is depicted in Figure \ref{fig:example2}.
%However,
If $T$ has negative eigenvalues, then
a soft edge may also be 0 when $\rank(T)>N$. We thus distinguish hard edges by
the m-value rather than the edge location.

\subsection{Edge regularity and extremal eigenvalues}

We state our assumptions on $T$ and $X$. We also introduce
the notion of a regular edge, which is similar to the definitions
of \cite{elkaroui,hachemetal,knowlesyin} for $T \succeq 0$.

\begin{assumption}\label{assump:dT}
$T=\diag(t_1,\ldots,t_M) \in \R^{M \times M}$, where $|t_\a|<C$
for some constant $C>0$ and each $\a=1,\ldots,M$.
\end{assumption}
\begin{assumption}\label{assump:X}
$X \in \R^{M \times N}$ is random with independent entries.
For all indices $(\a,i)$, all $\ell \geq 1$,
and some constants $C,C_1,C_2,\ldots>0$,
\[C^{-1}<M/N<C, \quad \E[X_{\a i}]=0,\quad \E[X_{\a i}^2]=1/N,\quad
    \E[|\sqrt{N}X_{\a i}|^\ell] \leq C_\ell.\]
\end{assumption}

\begin{definition}\label{def:regular}
Let $E_* \in \R$ be a soft edge of $\mu_0$ with $m$-value $m_*$ and
scale $\gamma$. Then $E_*$ is {\bf regular} if there is a constant
$\tau>0$ such that $|m_*|<\tau^{-1}$, $\gamma<\tau^{-1}$, and
$|m_*+t_\a^{-1}|>\tau$ for all $\a \in \{1,\ldots,M\}$ such that $t_\a \neq 0$.
\end{definition}
A smaller constant $\tau$ indicates a weaker assumption.
We will say $E_*$ is $\tau$-regular if we wish to emphasize the role of
$\tau$. All subsequent constants may implicitly depend on $\tau$.

The existence of any regular edge will imply that the 
average value of $|t_\a|$ is of constant order; see
Proposition \ref{prop:basicregbounds}.
An interpretation of regularity
is the following, whose proof we defer to Appendix \ref{appendix:m0}.
\begin{proposition}\label{prop:regular}
Suppose Assumption \ref{assump:dT} holds and the edge $E_*$ is regular. Then
there exist constants $C,c,\delta>0$ (independent of $N$) such that
\begin{enumerate}[(a)]
\item (Separation) The interval $(E_*-\delta,E_*+\delta)$ belongs to
$\R_*$ and contains no edge other than $E_*$.
\item (Square-root decay) For all
$x \in \supp(\mu_0) \cap (E_*-\delta,E_*+\delta)$, the density $f_0$ of $\mu_0$
satisfies $c\sqrt{|E_*-x|} \leq f_0(x) \leq C\sqrt{|E_*-x|}$.
\end{enumerate}
\end{proposition}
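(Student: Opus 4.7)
The plan is to invert the defining relation $z_0(m_0(z)) = z$ near the critical point $m_*$ via a quadratic Taylor expansion, and then to read off the behavior of $f_0$ from the imaginary part of the resulting branch of $m_0$. Regularity translates directly into quantitative bounds on $z_0$ and its derivatives, which control this inversion on a neighborhood whose size depends only on $\tau$.

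First I would deduce uniform bounds on $z_0$ and its low-order derivatives on a complex ball $\{|m - m_*| \leq \delta_0\}$ with $\delta_0 = \delta_0(\tau) > 0$. The third regularity condition gives $|1 + t_\a m_*| \geq \tau |t_\a|$ for each $t_\a \neq 0$, so by a triangle inequality $|1 + t_\a m| \geq (\tau/2)|t_\a|$ on the ball once $\delta_0 \leq \tau/2$. Together with $|t_\a| \leq C$ and $|m_*| \leq \tau^{-1}$, this yields uniform bounds on $|z_0^{(k)}(m)|$ for $k = 0,1,2,3$ throughout the ball. The condition $\gamma < \tau^{-1}$ translates to the lower bound $|z_0''(m_*)| = 2/\gamma^2 > 2\tau^2$, while $|z_0''(m_*)| \leq C(\tau)$ from the preceding estimates gives the matching upper bound $\gamma \geq c(\tau)$. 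Finally, using $z_0'(m_*) = 0$ in the form
\[
\frac{1}{m_*^2} = \frac{1}{N}\sum_{\a=1}^M \frac{t_\a^2}{(1 + t_\a m_*)^2} \leq \frac{M/N}{\tau^2} \leq \frac{C}{\tau^2},
\]
one also deduces $|m_*| \geq c\tau$, which keeps the ball bounded away from the pole of $z_0$ at $m = 0$.

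Next I would Taylor expand $z_0(m) - E_* = \tfrac{1}{2}z_0''(m_*)(m - m_*)^2 + R(m)$ with $|R(m)| \leq C(\tau)|m - m_*|^3$. For $z \in \C^+$ with $|z - E_*| \leq \delta_1(\tau)$ small, the holomorphic implicit function theorem (applied to the square-root variable $w = m - m_*$) produces a unique solution $m_0(z)$ of $z_0(m) = z$ in the ball, of the form
\[
m_0(z) - m_* = \sigma \sqrt{\tfrac{2(z - E_*)}{z_0''(m_*)}}\bigl(1 + O\bigl(\sqrt{|z - E_*|}\bigr)\bigr),
\]
with $\sigma \in \{\pm 1\}$ fixed by $\Im m_0(z) > 0$ for $z \in \C^+$. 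Letting $z \to x + i0^+$: on the support side of $E_*$, where $z_0''(m_*)(x - E_*) < 0$, the square root becomes purely imaginary and $\Im m_0 = \gamma\sqrt{|E_* - x|}(1 + O(\sqrt{|E_* - x|}))$; on the other side the square root is real and $\Im m_0 = 0$.

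Shrinking $\delta$ below $\delta_1$ so that the multiplicative error is at most $1/2$, formula (\ref{eq:f0}) yields $f_0(x) \asymp \gamma \sqrt{|E_* - x|}$ on the support side with uniform constants (using $c(\tau) \leq \gamma \leq \tau^{-1}$), proving (b). For (a), the same expansion shows $f_0$ is strictly positive on one side of $E_*$ throughout $(E_* - \delta, E_* + \delta)$ and identically zero on the other, so $E_*$ is the unique edge in this interval; containment in $\R_*$ is immediate when $\rank(T) > N$, while in the case $\rank(T) \leq N$ it follows from Proposition \ref{prop:edges}(d), quantified via the same derivative estimates to produce a lower bound $|E_*| \geq c(\tau) > 0$ uniformly. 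The main obstacle is packaging the three parts of Definition \ref{def:regular} into a single neighborhood of uniform radius $\delta_0(\tau)$ on which the quadratic inversion has errors depending only on $\tau$ and the constants of Assumption \ref{assump:dT}, without any dependence on $N$, $M$, or the individual $t_\a$.
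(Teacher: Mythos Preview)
Your overall strategy---Taylor expand $z_0$ at $m_*$ and invert---matches the paper's. The derivative bounds you establish on the ball $\{|m - m_*| < \delta_0(\tau)\}$ are exactly what the paper packages as its third-derivative lemma, and the quadratic inversion is the content of its estimates for $m_0$ near a regular edge.

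There is, however, a genuine gap in the step where you identify the locally constructed inverse with the Stieltjes transform $m_0(z)$. You write that the implicit function theorem ``produces a unique solution $m_0(z)$ of $z_0(m) = z$ in the ball,'' but $m_0(z)$ is defined globally as the unique root in $\C^+$, not as a root in your ball $D$. To conclude that $m_0(z) \in D$ for all $z$ in a $\tau$-uniform neighborhood of $E_*$, you need more than the implicit function theorem: you need to know that the Stieltjes transform itself stays near $m_*$. Pointwise continuity ($m_0(E_*) = m_*$, which the paper records as Proposition~\ref{prop:m0softedge}) only yields an $N$-dependent neighborhood. The paper closes this via a bootstrap (its uniform-continuity lemma for $m_0$): set $\delta_N$ to be the largest radius on which $|m_0(z) - m_*| < c$ for $|z - E_*| \leq \delta_N$; continuity gives $\delta_N > 0$; then subtracting $0 = z_0'(m_*)$ from the difference $z_0(m_0(z)) - z_0(m_*)$ produces the quadratic bound $|z - E_*| \geq c'|m_0(z) - m_*|^2$, which forces $\delta_N \geq c(\tau)$. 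This is precisely the ``main obstacle'' you flag at the end, but it lives on the $z$-side rather than the $m$-side; your derivative bounds handle the latter but not the former.

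A secondary issue: your route to $(E_* - \delta, E_* + \delta) \subset \R_*$ via a bound $|E_*| \geq c(\tau)$ when $\rank(T) \leq N$ is not substantiated---regularity controls $|m_*|$, $\gamma$, and $|m_* + t_\a^{-1}|$, but gives no direct lower bound on $|z_0(m_*)|$ for indefinite $T$. The paper instead argues that $|m_0(z)|$ is bounded on the interval (once the bootstrap is in place) while $|m_0(z)| \to \infty$ as $z \to 0$ when $\rank(T) \leq N$, so $0$ cannot lie in the interval. For the ``no other edge'' part of (a), the paper's argument also differs slightly from yours: rather than reading off $f_0 \equiv 0$ on the non-support side from the branch expansion, it supposes a nearby edge $E^*$ with $m$-value $m^*$, places $m^*$ in $D$ by the continuity lemma, and then uses $0 = z_0'(m_*) = z_0'(m^*)$ together with $|z_0''(m_*)| \asymp 1$ and $|z_0'''| \leq C$ to force $|m^* - m_*| \geq c(\tau)$, a contradiction. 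Your version works as well once the identification gap above is closed.
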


We will study the extremal eigenvalue of $\hSigma$ at each regular edge.
This is well-defined by the following results establishing
closeness of eigenvalues of $\hSigma$ to the support of $\mu_0$.
Such results were shown in \cite{baisilverstein,knowlesyin} for $T \succ 0$,
and we discuss the extension to general $T$ in Appendix \ref{appendix:locallaw}.

\begin{theorem}[No eigenvalues outside support]\label{thm:sticktobulk}
Suppose Assumptions \ref{assump:dT} and \ref{assump:X} hold. Fix any
constants $\delta,D>0$. There exists a constant $N_0 \equiv N_0(\delta,D)$
such that for all $N \geq N_0$, with probability at least $1-N^{-D}$,
all eigenvalues of $\hSigma$ are within distance $\delta$ of $\supp(\mu_0)$.
\end{theorem}

\begin{theorem}[$N^{-2/3}$ concentration]\label{thm:regedgeconcentration}
Suppose Assumptions \ref{assump:dT} and \ref{assump:X} hold,
and $E_*$ is a regular right edge.
Then there exists a constant $\delta>0$ such that for any
$\eps,D>0$, some $N_0 \equiv N_0(\eps,D)$, and all $N \geq N_0$,
\[\P\Big[\text{no eigenvalue of } \hSigma \text{ belongs to }
[E_*+N^{-2/3+\eps},E_*+\delta]\Big]>1-N^{-D}.\]
The analogous statement holds if $E_*$ is a
regular left edge, with no eigenvalue of $\hSigma$ belonging to
$[E_*-\delta,E_*-N^{-2/3+\eps}]$.
\end{theorem}

\subsection{Tracy-Widom fluctuations}\label{subsec:TW}
The following is our main result.
\begin{theorem}\label{thm:TW}
Let $\hSigma=X'TX$.
Suppose that Assumptions \ref{assump:dT} and \ref{assump:X} hold for $T$ and
$X$, and that $E_*$ is a $\tau$-regular edge of the law
$\mu_0$. Let $E_*$ have scale $\gamma$ as defined in
Definition \ref{def:mvaluescale}. Then
there exists a $\tau$-dependent constant $\delta>0$ such that as
$N,M \to \infty$,
\begin{enumerate}[(a)]
\item For $E_*$ a right edge and $\lambda_{\max}$ the largest eigenvalue
of $\hSigma$ in $E_*+[-\delta,\delta]$,
\[(\gamma N)^{2/3}(\lambda_{\max}-E_*) \overset{L}{\to} \mu_{TW}.\]
\item For $E_*$ a left edge and $\lambda_{\min}$ the smallest eigenvalue
of $\hSigma$ in $E_*+[-\delta,\delta]$,
\[(\gamma N)^{2/3}(E_*-\lambda_{\min}) \overset{L}{\to} \mu_{TW}.\]
\end{enumerate}
\end{theorem}

Here, $\mu_{TW}$ is the GOE Tracy-Widom law.
The notation $\overset{L}{\to}$ indicates convergence in law. As $E_*$
is $N$-dependent, let us clarify that this means
\[\Big|\P[(\gamma N)^{2/3}(\lambda_{\max}-E_*) \leq x]-\mu_{TW}((-\infty,x])
\Big| \leq o(1)\]
for any fixed $x \in \R$, where $E_*$ is any (deterministic) choice of
$\tau$-regular edge, and $o(1)$ denotes a term vanishing as $N,M \to \infty$ and
depending only on $x$, $\tau$, and the constants
in Assumptions \ref{assump:dT} and \ref{assump:X}.

When $T \succeq 0$, the above result holds also for
the sample covariance matrix with the same values of
$E_*$ and $\gamma$, since this has the same eigenvalues as
$\hSigma$ except for a set of $|N-M|$ zeros.
\begin{corollary}
Under the conditions of Theorem \ref{thm:TW}, suppose $T \succeq 0$,
and let $\tilde{\Sigma}=T^{1/2}XX'T^{1/2}$. Then Theorem \ref{thm:TW} holds
also for $\tilde{\Sigma}$.
\end{corollary}

When $T=\Id$, the equation $0=z_0'(m_*)$ may be solved explicitly to yield
\[m_*=-\sqrt{N}/(\sqrt{N}\pm \sqrt{M}),
\qquad E_*=(\sqrt{N}\pm \sqrt{M})^2/N\]
\[(\gamma N)^{-2/3}=
\frac{|\sqrt{N}\pm \sqrt{M}|}{N}\left|\frac{1}{\sqrt{M}}\pm \frac{1}{\sqrt{N}}
\right|^{1/3}\]
for the upper and lower edges.
These centering and scaling constants are the same
as those of \cite{soshnikov,peche,feldheimsodin} and differ from those of
\cite{johnstone,ma} in small $O(1)$ adjustments to $N$ and $M$.
These adjustments do not affect the
validity of Theorem \ref{thm:TW}, although the proper adjustments are shown
in \cite{ma} to lead to an improved second-order rate of convergence.
%The determination of the rate of convergence and of such adjustments remain
%open in the setting of general $T$.

\subsection{Application to linear mixed models}\label{subsec:MANOVA}
Consider $Y \in \R^{n \times p}$ representing $p$
traits in $n$ samples, modeled by a Gaussian random effects linear model
\begin{equation}\label{eq:MANOVAmodel}
Y = U_1\alpha_1+\ldots+U_k\alpha_k.
\end{equation}
Each random effect matrix $\alpha_r \in \R^{m_r \times p}$ has independent rows
with distribution $\N(0,\Sigma_r)$.
The deterministic incidence matrix $U_r \in \R^{n \times m_r}$
determines how the
random effect contributes to the observations $Y$. For simplicity, we omit
here possible additional fixed effects, and we present an example with a fixed
mean effect in Example \ref{ex:oneway} of Appendix \ref{appendix:testing}.

In many examples,
a canonical unbiased MANOVA estimator exists for each covariance
$\Sigma_r$ and takes the form (\ref{eq:MANOVA1}),
where $B \equiv B_r \in \R^{n \times n}$ is a symmetric matrix 
that is constructed based on $U_1,\ldots,U_k$. Spectral properties of MANOVA
estimators in the regime $n,p,m_1,\ldots,m_k \to
\infty$ were studied in \cite{fanjohnstone,fanjohnstonespikes}, which contain
additional discussion and examples.

Theorem \ref{thm:TW} provides the basis for an asymptotic test of the global
sphericity null hypothesis
\begin{equation}\label{eq:nullhypothesis}
    H_0:\Sigma_r=\sigma_r^2\Id \text{ for every } r=1,\ldots,k
\end{equation}
in this model, based on the largest observed eigenvalue of $\hSigma$.
While this test may be performed using any matrix $B$ in (\ref{eq:MANOVA1}),
to yield power against non-isotropic alternatives for a
particular covariance $\Sigma_r$, we suggest choosing $B \equiv B_r$ such that $\hSigma$
is the MANOVA estimator for $\Sigma_r$. Under $H_0$, let us set $N=p$ and write
$\alpha_r=\sqrt{N}\sigma_r X_r$
where $X_r \in \R^{m_r \times N}$ has independent $\N(0,1/N)$ entries. Defining
$M=m_1+\ldots+m_k$, $F_{rs}=N\sigma_r\sigma_s U_r'BU_s \in \R^{m_r \times m_s}$,
and
\begin{equation}\label{eq:F}
X=\begin{pmatrix} X_1 \\ \vdots \\ X_k \end{pmatrix} \in \R^{M \times N},
\qquad 
    F=\begin{pmatrix} F_{11} & \cdots & F_{1k} \\ \vdots & \ddots & \vdots \\
F_{k1} & \cdots & F_{kk} \end{pmatrix} \in \R^{M \times M},
\end{equation}
the MANOVA estimator (\ref{eq:MANOVA1}) takes the form
\[\hSigma=Y'BY=\sum_{r,s=1}^k \alpha_r'U_r'BU_s\alpha_s=X'FX.\]
Rotational invariance of $X$ implies
$\hSigma\overset{L}{=}X'TX$ where $T=\diag(t_1,\ldots,t_M)$ is
the diagonal matrix of eigenvalues of $F$. Under mild conditions
for the model, as discussed in \cite{fanjohnstone,fanjohnstonespikes},
Assumptions \ref{assump:dT} and \ref{assump:X} hold for $\hSigma$.

In detail, a test based on the largest
eigenvalue of $\hSigma$ may be performed as follows:
\begin{enumerate}[1.]
\item Construct the above matrix $F$. Let $t_1,\ldots,t_M$ be its eigenvalues.
\item Plot the function $z_0(m)$ from (\ref{eq:z0})
over $m \in \R$, and locate the value $m_*$ closest to 0 such that
$z_0'(m_*)=0$ and $m_*<0$.
\item Compute the center and scale
$E_*=z_0(m_*)$ and $\gamma=\sqrt{2/z_0''(m_*)}$.
\item Compare $(\gamma N)^{2/3}(\lambda_{\max}-E_*)$ to the
GOE Tracy-Widom law $\mu_{TW}$.
\end{enumerate}

Asymptotic validity of this test requires regularity of the rightmost edge
of $\mu_0$. We provide a sufficient condition for this in Proposition
\ref{prop:balancedregular}, which encompasses many
balanced classification designs. More generally, edge regularity
is quantified by the separation between $m_*$ and the poles of $z_0(m)$,
and by the curvature of $z_0(m)$ at $m_*$. One may visually inspect the plot of
$z_0(m)$ for a qualitative diagnostic check of this assumption.

Constructing $F$ and computing $z_0(m)$ requires knowledge of
$\sigma_1^2,\ldots,\sigma_k^2$. If any $\sigma_r^2$ is unknown, it may be
replaced by the $1/n$-consistent estimate
\[\hsigma_r^2=p^{-1}\Tr \hSigma_r,\]
where $\hSigma_r$ is an unbiased MANOVA estimator for $\Sigma_r$.
We verify this in Appendix \ref{appendix:testing}, where we also
discuss the concrete example of the balanced one-way design, and provide
numerical simulation results to assess approximation accuracy in finite samples.

\section{Preliminaries and tools}\label{sec:tools}
The remainder of this paper is devoted to the proof of Theorem \ref{thm:TW}.
We collect here some tools for the proof.

\subsection{Notation}
We denote
$\I_M=\{1,\ldots,M\}$, $\I_N=\{1,\ldots,N\}$, and
$\I \equiv \I_N \sqcup \I_M$ considering $\I_N$ and $\I_M$ as disjoint.
We index rows and columns of
$\C^{(N+M) \times (N+M)}$ by $\I$ and consistently use lower-case
Roman letters $i,j$, etc.\ for indices in $\I_N$, Greek letters
$\a,\b$, etc.\ for indices in $\I_M$, and upper-case Roman letters $A,B$,
etc.\ for general indices in $\I$.

We typically write $z=E+i\eta$ where $E=\Re z$ and $\eta=\Im z$.
$\C^+$ and $\overline{\C^+}$ denote the open and closed upper-half complex
planes. $X'$ denotes the transpose of a matrix $X$. $\|\v\|$ denotes the
Euclidean norm for vectors, and $\|X\|=\sup_{\v:\|\v\|=1} \|X\v\|$ the
operator norm for matrices. $C,c>0$ denote constants changing from instance to 
instance and may depend on $\tau$ in the context of a regular edge.
$a_N \asymp b_N$ means $cb_N \leq a_N \leq Ca_N$.

\subsection{Stochastic domination}
For a non-negative scalar $\Psi$ (either random or deterministic), we write
\[\xi \prec \Psi \qquad \text{and} \qquad \xi=\O(\Psi)\]
if, for any constants $\eps,D>0$ and all $N \geq N_0(\eps,D)$,
\begin{equation}\label{eq:domination}
\P\left[|\xi|>N^\eps \Psi \right]<N^{-D}.
\end{equation}
Here, $N_0(\eps,D)$ may depend on $\eps,D$, and quantities
which are explicitly constant in the context of the statement.

Several known elementary properties of stochastic domination pertaining to
union bounds and expectations are
reviewed in Appendix \ref{appendix:tools}.

\subsection{Edge regularity}\label{subsec:regularity}
The following are consequences of edge regularity. Similar
properties were established for $T \succeq 0$ in
\cite{baopanzhou,knowlesyin}, and we defer proofs for general $T$ to Appendix
\ref{appendix:deterministic}.

\begin{proposition}\label{prop:basicregbounds}
Suppose Assumption \ref{assump:dT} holds, and $E_*$ is a regular edge with
$m$-value $m_*$ and scale $\gamma$. Then there exist constants $C,c>0$ such that
for all $\a=1,\ldots,M$,
\[c<|m_*|<C,\qquad c<\gamma<C,\qquad |E_*|<C, \qquad
|1+t_\a m_*|>c.\]
Furthermore, if any regular edge $E_*$ exists, then $T$ satisfies
\begin{equation}\label{eq:nondegenerate}
|\{\a \in \{1,\ldots,M\}:\,|t_\a|>c\}|>cM
\end{equation}
for a constant $c>0$, and if $T \succeq 0$, then also $E_*>c>0$.
\end{proposition}

\begin{proposition}\label{prop:z0secondderivative}
Suppose Assumption \ref{assump:dT} holds and $E_*$ is a regular edge with
$m$-value $m_*$. Then there exist constants $c,\delta>0$ such that for all $m
\in (m_*-\delta,m_*+\delta)$, if $E_*$ is a right edge then
$z_0''(m)>c$, and if $E_*$ is a left edge then $z_0''(m)<-c$.
\end{proposition}

\begin{proposition}\label{prop:m0estimates}
Suppose Assumption \ref{assump:dT} holds and $E_*$ is a regular edge.
Then there exist constants $C,c,\delta>0$ such that the following hold:
Define
\[\bD_0=\{z \in \C^+:\Re z \in (E_*-\delta,E_*+\delta),\,\Im z
\in (0,1]\}.\]
Then for all $z \in \bD_0$ and $\alpha \in \{1,\ldots,M\}$,
\[c<|m_0(z)|<C, \qquad c<|1+t_\a m_0(z)|<C.\]
Furthermore, for all $z \in \bD_0$, denoting $z=E+i\eta$ and $\kappa=|E-E_*|$,
\[c\sqrt{\kappa+\eta} \leq |m_0(z)-m_*| \leq C\sqrt{\kappa+\eta},\qquad
cf(z) \leq \Im m_0(z) \leq Cf(z)\]
where
\[f(z)=\begin{cases}
\sqrt{\kappa+\eta} & \text{ if } E \in \supp(\mu_0) \\
\frac{\eta}{\sqrt{\kappa+\eta}} & \text{ if } E \notin \supp(\mu_0).
\end{cases}\]
\end{proposition}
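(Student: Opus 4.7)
The plan is to invert the Marcenko-Pastur identity $z_0(m_0(z)) = z$ locally around the critical point $m_*$, using the quadratic Taylor expansion of $z_0$. By Proposition \ref{prop:basicregbounds} and the regularity condition $|m_* + t_\a^{-1}| > \tau$ for every $t_\a \neq 0$, the poles $P$ of $z_0$ lie at distance at least $\tau$ from $m_*$, so $z_0$ extends holomorphically to a complex disk $D(m_*, r)$ of radius $r = r(\tau) > 0$ on which $|z_0'|, |z_0''|, |z_0'''|$ are uniformly bounded by Assumption \ref{assump:dT}. Combined with the lower bound $|z_0''(m_*)| \geq c$ from Proposition \ref{prop:z0secondderivative} and continuity, one can shrink $r$ to ensure $|z_0''(m)| \geq c/2$ throughout $D(m_*, r)$, so that
\[z_0(m) - E_* = \tfrac{1}{2} z_0''(m_*)(m - m_*)^2 \bigl(1 + O(|m - m_*|)\bigr),\]
and in particular $|z_0(m) - E_*| \asymp |m - m_*|^2$ on a slightly smaller disk $D(m_*, r')$.

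The key step is to show that $m_0(z) \in D(m_*, r')$ for every $z \in \bD_0$, provided $\delta$ is small. Continuity of $m_0$ on $\overline{\C^+}$ near $E_*$ (which follows from the square-root behavior of $f_0$ in Proposition \ref{prop:edges}(d) via $f_0 = \pi^{-1}\Im m_0$), together with the identity $z_0(m_0(z)) = z$, forces $\lim_{z \to E_*,\, z \in \C^+} m_0(z) = m_*$: indeed $m_*$ is the unique real root of $z_0(m) = E_*$ inside the connected component of $\bar{\R} \setminus P$ containing $m_*$, by Proposition \ref{prop:edges} and $z_0'(m_*) = 0$. A connectedness argument inside $\bD_0$ then propagates this globally: on $D(m_*, r)$ the local Taylor inversion is a homeomorphism onto a neighborhood of $E_*$, so $m_0(z)$ cannot cross $\partial D(m_*, r')$ without violating $z_0(m_0(z)) = z$.

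Once $m_0(z) \in D(m_*, r')$ is secured, the remaining conclusions follow easily. Substituting into the Taylor estimate yields $|m_0(z) - m_*| \asymp \sqrt{|z - E_*|} \asymp \sqrt{\kappa + \eta}$, using $|z - E_*| = \sqrt{\kappa^2 + \eta^2} \asymp \kappa + \eta$; the bounds $c < |m_0(z)| < C$ and $c < |1 + t_\a m_0(z)| < C$ then follow from Proposition \ref{prop:basicregbounds} and the closeness of $m_0(z)$ to $m_*$. For $\Im m_0(z)$, I would invert to obtain $m_0(z) - m_* = \sqrt{2(z - E_*)/z_0''(m_*)}\,(1 + o(1))$ with the branch of the square root selected by the constraint $\Im m_0(z) > 0$; a direct calculation of $\Im \sqrt{a + ib}$ then produces the two cases in the definition of $f(z)$, since $E \in \supp(\mu_0)$ corresponds exactly to $\Re\bigl(2(E - E_*)/z_0''(m_*)\bigr) \leq 0$ and $E \notin \supp(\mu_0)$ to the opposite sign. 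The main obstacle is the second step: because $z_0$ is not globally injective and the Marcenko-Pastur equation may admit spurious roots, pinning down the correct local branch of the inverse requires the careful continuity/monodromy argument sketched above; once this is in place, everything else reduces to standard complex-analytic estimates on $\sqrt{a + ib}$.
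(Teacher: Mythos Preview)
Your overall plan matches the paper's: localize $m_0(z)$ near $m_*$, then Taylor-expand $z_0$ at $m_*$ to get $z-E_*=(\gamma^{-2}+r(z))(m_0(z)-m_*)^2$ with $|r(z)|\le C\sqrt{\kappa+\eta}$, and read off $|m_0(z)-m_*|\asymp\sqrt{\kappa+\eta}$. Your connectedness argument for the localization is essentially the paper's Lemma~\ref{lemma:m0uniformcontinuity}, which instead defines $\delta_N$ as the largest radius on which $|m_0(z)-m_*|<c$, proves the quadratic bound there, and bootstraps $\delta_N\ge c^2/C$; this avoids having to appeal separately to continuity of $m_0$ at $E_*$ and to branch selection for the two-to-one map $z_0$.

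There is, however, a real gap in your treatment of $\Im m_0(z)$ when $E\notin\supp(\mu_0)$. You write $m_0(z)-m_*=\sqrt{2(z-E_*)/z_0''(m_*)}\,(1+o(1))$ and propose to read off $\Im\sqrt{a+ib}\asymp \eta/\sqrt{\kappa+\eta}$. But the $o(1)$ factor is \emph{complex}, of size $O(\sqrt{\kappa+\eta})$, and the leading square root has real part $\asymp\sqrt{\kappa+\eta}$; multiplying, the imaginary part picks up an error of order $(\kappa+\eta)$, which dominates $\eta/\sqrt{\kappa+\eta}$ whenever $\eta\ll(\kappa+\eta)^{3/2}$ (e.g.\ $\kappa\asymp\delta$ fixed and $\eta\to 0$). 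So the ``direct calculation of $\Im\sqrt{a+ib}$'' does not survive the error term in this regime. The paper sidesteps this entirely: for $E\notin\supp(\mu_0)$ it abandons the square-root inversion and instead uses the Stieltjes representation
\[
\Im m_0(z)=\int\frac{\eta}{(\lambda-E)^2+\eta^2}\,\mu_0(d\lambda),
\]
splitting into $|\lambda-E_*|<\delta$ and $|\lambda-E_*|\ge\delta$, and evaluating the near part using the density bound $f_0(x)\asymp\sqrt{E_*-x}$ just established. If you want to salvage your route, you would need to argue that the local inverse has a Puiseux expansion in $\sqrt{z-E_*}$ with \emph{real} coefficients (since $z_0$ is real-analytic at $m_*$), so that each higher-order term contributes to $\Im$ only through powers of $\Im\sqrt{z-E_*}$ and $\eta$; this is true but is the missing ingredient in your sketch.
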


\subsection{Resolvent bounds and identities}
For $z \in \C^+$, denote the resolvent and Stieltjes transform of
$\hSigma$ by
\begin{equation}\label{eq:mN}
G_N(z)=(\hSigma-z\Id)^{-1} \in \C^{N \times N},\qquad
m_N(z)=N^{-1}\Tr G_N(z).
\end{equation}
These satisfy the basic properties
\begin{align}
    |m_N(z)| \leq 1/\eta,&\quad |G_{ij}(z)| \leq 1/\eta,\label{eq:mNbound}\\
    |m_N(z)-m_N(z')| \leq |z-z'|/\eta^2,&\quad
|G_{ij}(z)-G_{ij}(z')| \leq |z-z'|/\eta^2.\label{eq:mNlipschitz}
\end{align}

As in \cite{leeschnelli,knowlesyin}, define the linearized resolvent $G(z)$ by
\[H(z)=\begin{pmatrix} -z\Id & X' \\ X & -T^{-1} \end{pmatrix}
\in \C^{(N+M) \times (N+M)},\qquad
G(z)=H(z)^{-1}.\]
The Schur-complement formula yields the alternative form
\begin{equation}\label{eq:Galt}
G(z)=\begin{pmatrix} G_N(z) & G_N(z)X'T \\ TXG_N(z) & TXG_N(z)X'T-T
\end{pmatrix},
\end{equation}
which is understood as the definition of $G(z)$ when $T$ is not invertible.
We will omit the argument $z$ in $m_0,m_N,G_N,G$ when the meaning is
clear.

For any $A \in \I$, define $H^{(A)}$ as the submatrix of $H$ with row and
column $A$ removed, and define $G^{(A)}=(H^{(A)})^{-1}$.
When $T$ is not invertible, $G^{(A)}$ is defined by the alternative form
analogous to (\ref{eq:Galt}). We index $G^{(A)}$ by $\I \setminus \{A\}$.

Note that $G$ and $G^{(A)}$ are symmetric, in the sense $G'=G$ and
$(G^{(A)})'=G^{(A)}$ without complex conjugation.
The entries of $G$ and $G^{(A)}$ are
related by the following Schur-complement identities from
\cite[Lemma 4.4]{knowlesyin}.

\begin{lemma}[Resolvent identities]\label{lemma:resolventidentities}
Fix $z \in \C^+$.
\begin{enumerate}[(a)]
\item For any $i \in \I_N$ and $\a \in \I_M$,
\[G_{ii}=-\,\frac{1}{z+\sum_{\a,\b \in \I_M} G^{(i)}_{\a\b}X_{\a i}X_{\b i}},
\quad
G_{\alpha\alpha}=-\,\frac{t_\alpha}{1+t_\alpha \sum_{i,j \in \I_N}
G^{(\alpha)}_{ij}X_{\a i}X_{\a j}}.\]
\item For any $i \neq j \in \I_N$ and
$\alpha \neq \beta \in \I_M$,
\[G_{ij}=-G_{ii}\sum_{\b \in \I_M} G^{(i)}_{\b j}X_{\b i},
\quad G_{\a\b}=-G_{\a\a}\sum_{j \in \I_N} G^{(\a)}_{j\b}X_{\a j}.\]
For any $\a \in \I_M$ and $i \in \I_N$,
\[G_{i\a}=-G_{ii}\sum_{\b \in \I_M} G^{(i)}_{\b \a}X_{\b i}
=-G_{\a\a}\sum_{j \in \I_N} G^{(\a)}_{ij}X_{\a j} .\]
\item For any $A,B,C \in \I$ with $A \neq C$ and $B \neq C$,
\[G_{AB}^{(C)}=G_{AB}-\frac{G_{AC}G_{CB}}{G_{CC}}.\]
\end{enumerate}
\end{lemma}

\subsection{Local law}
We will require a local law for entries of $G(z)$, when $z \in \C^+$ close to
a regular edge $E_*$. This was established
in \cite{knowlesyin} for $T \succeq 0$, and we discuss the extension to general
$T$ in Appendix \ref{appendix:locallaw}.

\begin{theorem}[Entrywise local law at regular edges]\label{thm:locallaw}
Suppose Assumptions \ref{assump:dT} and \ref{assump:X} hold, and $E_*$
is a $\tau$-regular edge. Then for a $\tau$-dependent constant
$\delta>0$, the following holds: Fix any constant $a>0$ and define
\begin{equation}\label{eq:bD}
\bD=\{z \in \C^+:\Re z \in (E_*-\delta,E_*+\delta),
\;\Im z \in [N^{-1+a},1]\}.
\end{equation}
For $A \in \I$, denote $t_A=1$ if $A \in \I_N$ and $t_A=t_\a$ if $A=\a \in
\I_M$. Set
\begin{equation}\label{eq:Pi0}
\Pi(z)=\begin{pmatrix} m_0(z)\Id & 0 \\ 0 & -T(\Id+m_0(z)T)^{-1}
\end{pmatrix} \in \C^{(N+M) \times (N+M)}.
\end{equation}
Then for all $z \equiv E+i\eta \in \bD$ and $A,B \in \I$,
\begin{equation}\label{eq:anisotropiclaw}
(G_{AB}(z)-\Pi_{AB}(z))\Big/(t_At_B) \prec 
\sqrt{(\Im m_0(z))/(N\eta)}+1/(N\eta),
\end{equation}
and also
\[m_N(z)-m_0(z) \prec 1/(N\eta).\]
\end{theorem}

\begin{corollary}\label{cor:locallawunionbound}
Under the assumptions of Theorem \ref{thm:locallaw}, for any $\eps,D>0$ and all
$N \geq N_0(\eps,D)$, with probability at least $1-N^{-D}$,
\[|G_{AB}(z)-\Pi_{AB}(z)|\Big/|t_A t_B| \leq 
N^\eps \left(\sqrt{\Im m_0(z)/(N\eta)}+1/(N\eta)\right)\]
holds simultaneously for every $z \in \bD$ and $A,B \in \I$.
\end{corollary}

Here, $N_0(\eps,D)$ may depend on the constant $a$ defining $\bD$.
It is verified from (\ref{eq:Galt}) that the quantity on the left of
(\ref{eq:anisotropiclaw}) is alternatively written as
\begin{equation}\label{eq:locallawalt}
\frac{G_{AB}-\Pi_{AB}}{t_A t_B}
=\begin{pmatrix} G_N-m_0\Id & G_NX' \\ XG_N
& XG_NX'-m_0(\Id+m_0T)^{-1} \end{pmatrix}_{AB}.
\end{equation}
This is understood as its definition when either $t_A$ and/or $t_B$ is 0.

\subsection{Resolvent approximation}\label{subsec:resolventapprox}

Fix a regular edge $E_*$. For $s_1,s_2 \in \R$ and $\eta>0$, define
\begin{equation}\label{eq:X}
\X(s_1,s_2,\eta)=N \int_{E_*+s_1}^{E_*+s_2} \Im m_N(y+i\eta)dy.
\end{equation}
For $\eta$ much smaller than $N^{-2/3}$ and $s_1,s_2$ on the $N^{-2/3}$ scale,
we expect
\[\#(E_*+s_1,E_*+s_2) \approx \pi^{-1} \X(s_1,s_2,\eta)\]
where the left side denotes the number of eigenvalues of $\hSigma$
in this interval. The following is a version of this approximation,
similar to \cite[Corollary 6.2]{erdosyauyin}.
We provide a self-contained proof in Appendix \ref{appendix:tools}.

\begin{lemma}\label{lemma:resolventapprox}
Suppose Assumptions \ref{assump:dT} and \ref{assump:X} hold, and 
$E_*$ is a regular right edge. Let $K:\R \to [0,1]$
be such that $K(x)=1$ for all $x \leq 1/3$ and $K(x)=0$
for all $x \geq 2/3$.
Then for sufficiently small constants $\delta,\eps>0$:

Let $\lambda_{\max}$ be the maximum eigenvalue of $\hSigma$ in
$(E_*-\delta,E_*+\delta)$. Set $s_+=N^{-2/3+\eps}$, $l=N^{-2/3-\eps}$,
and $\eta=N^{-2/3-9\eps}$. For any $D>0$, all $N \geq N_0(\eps,D)$,
and all $s \in [-s_+,s_+]$,
\begin{align*}
\E\left[K(\pi^{-1}\X(s-l,s_+,\eta))\right]
-N^{-D} &\leq \P\left[\lambda_{\max} \leq E_*+s\right]\\
&\leq \E\left[K(\pi^{-1}\X(s+l,s_+,\eta))\right]+N^{-D}.
\end{align*}
\end{lemma}

\section{The interpolating sequence}\label{sec:lindeberg}
In this section, we construct the interpolating sequence
$T^{(0)},\ldots,T^{(L)}$ described in the introduction.
We consider only the
case of a right edge; this is without loss of generality, as the edge can have
arbitrary sign and we may take the reflection $T \mapsto -T$. 
For each pair $T \equiv T^{(l)}$ and $\vT \equiv T^{(l+1)}$,
the following definition captures the
relevant property that will be needed in the subsequent computation. 

\begin{definition}
Let $T,\vT \in \R^{M \times M}$ be two diagonal matrices
satisfying Assumption \ref{assump:dT}. Let $E_*$ be a right
edge of the law $\mu_0$ defined by $T$, and let $\vE_*$ be a
right edge of $\check{\mu}_0$ defined by $\vT$. $(T,E_*)$ and
$(\vT,\vE_*)$ are {\bf swappable} if, for a constant $\phi>0$, both of the
following hold.
\begin{itemize}
\item Letting $t_\a,\vt_\a$ be the diagonal entries of $T,\vT$, we have
$\sum_\alpha |t_\alpha-\vt_\alpha|<\phi$.
\item The $m$-values $m,\vm_*$ of $E_*,\vE_*$ satisfy $|m_*-\vm_*|<\phi/N$.
\end{itemize}
\end{definition}
We say that $(T,E_*)$ and $(\vT,\vE_*)$ are $\phi$-swappable if we wish to
emphasize the role of $\phi$. All subsequent constants may implicitly depend
on $\phi$.

One method to construct a swappable pair $T,\vT$ is to ensure
$|t_\a-\vt_\a| \leq \phi/M$ for every $\a=1,\ldots,M$, and such a condition
would hold for each pair $T^{(l)},T^{(l+1)}$ of a suitable discretization
of the continuous flow in \cite{leeschnelli}.
However, to study interior edges of the spectrum, we will instead consider
swappable pairs of a ``Lindeberg'' form where there is an $O(1)$ difference
between $t_\a$ and $\vt_\a$ for a single index $\a$.

We first establish some basic deterministic properties of a swappable pair,
including closeness of the edges $E_*,\vE_*$ as claimed in
(\ref{eq:roughEclose}).

\begin{lemma}\label{lemma:Escaleclose}
Suppose $T,\vT$ are diagonal matrices
satisfying Assumption \ref{assump:dT}, $E_*,\vE_*$ are regular
right edges, and $(T,E_*)$ and $(\vT,\vE_*)$ are swappable. Let $m_*,\gamma$ and
$\vm_*,\vg$ be the $m$-values and scales of $E_*,\vE_*$. Denote
$s_\a=(1+t_\a m_*)^{-1}$ and $\vs_\a=(1+\vt_\a \vm_*)^{-1}$.
Then there exists a constant $C>0$ such that all of the following hold:
\begin{enumerate}[(a)]
\item For all integers $i,j \geq 0$ satisfying $i+j \leq 4$,
\[\left|\frac{1}{N}\sum_{\a=1}^M t_\a^i s_\a^i \vt_\a^j\vs_\a^j
-\frac{1}{N}\sum_{\a=1}^M t_\a^{i+j}s_\a^{i+j}\right| \leq C/N.\]
\item (Closeness of edge location) $|E_*-\vE_*| \leq C/N$ and
\begin{equation}\label{eq:Eplusdiff}
\left|(E_*-\vE_*)-\frac{1}{N}
\sum_{\a=1}^M (t_\a-\vt_\a)s_\a \vs_\a\right|
\leq C/N^2.
\end{equation}
\item (Closeness of scale) $|\gamma-\vg| \leq C/N$.
\end{enumerate}
\end{lemma}
\begin{proof}
By Proposition \ref{prop:basicregbounds},
$|t_\a|,|s_\a|,\gamma<C$, $c<|m_*|<C$
and similarly for $\vt_\a,\vs_\a,\vm_*,\vg$.
From the definitions of $s_\a$ and $\vs_\a$, we verify
\begin{equation}\label{eq:tsdiff}
t_\a s_\a-\vt_\a \vs_\a=(t_\a-\vt_\a)s_\a\vs_\a+(\vm_*-m_*)t_\a s_\a \vt_\a
\vs_\a.
\end{equation}
Then, denoting
$A_{i,j}=N^{-1}\sum_\a t_\a^i s_\a^i \vt_\a^j \vs_\a^j$,
swappability implies
\[|A_{i,j}-A_{i+1,j-1}| \leq \frac{1}{N}\sum_{\a=1}^M
|t_\a^i s_\a^i \vt_\a^{j-1}\vs_\a^{j-1}||\vt_\a \vs_\a-t_\a s_\a| \leq C/N.\]
Iteratively applying this yields (a).
For (b), note by (\ref{eq:tsdiff}) that
\begin{align*}
E_*-\vE_*&=-\frac{1}{m_*}+\frac{1}{\vm_*}
+\frac{1}{N}\sum_{\a=1}^M (t_\a s_\a-\vt_\a \vs_\a)\\
&=(m_*-\vm_*)\left(\frac{1}{m_*\vm_*}-A_{1,1}\right)+\frac{1}{N}
\sum_{\a=1}^M (t_\a-\vt_\a)s_\a \vs_\a.
\end{align*}
Recall $0=z_0'(m_*)=m_*^{-2}-A_{2,0}$. Then
part (b) follows from the definition of swappability, together with
$|A_{1,1}-m_*^{-2}|=|A_{1,1}-A_{2,0}| \leq C/N$ and
$|m_*^{-2}-m_*^{-1}\vm_*^{-1}| \leq C/N$.
For (c), we have $\gamma^{-2}=z_0''(m_*)/2=-m_*^{-3}+A_{3,0}$.
Then (c) follows from
$|\gamma^{-2}-\vg^{-2}| \leq |m_*^{-3}-\vm_*^{-3}|+|A_{3,0}-A_{0,3}|
\leq C/N$.
\end{proof}

In the rest of this section, we prove the existence of an interpolating
sequence. Note that to ensure the final edge $E_*^{(L)}$ is not a hard edge at
0, we allow the final matrix $T^{(L)}$ to have two distinct values $\{0,t\}$.

\begin{lemma}\label{lemma:lindebergconstruction}
Suppose $T$ is diagonal and satisfies Assumption \ref{assump:dT},
and $E_*$ is a $\tau$-regular right edge with scale $\gamma=1$.
Then there exist $\tau$-dependent constants $C',\tau',\phi>0$,
a sequence of
diagonal matrices $T^{(0)},T^{(1)},\ldots,T^{(L)}$ in $\R^{M \times M}$ for
$L \leq 2M$, and a sequence of right edges
$E_*^{(0)},E_*^{(1)},\ldots,E_*^{(L)}$ of the corresponding laws $\mu_0^{(l)}$
defined by $T^{(l)}$, such that:
\begin{enumerate}[1.]
\item $T^{(0)}=T$ and $E_*^{(0)}=E_*$.
\item $T^{(L)}$ has at most two distinct diagonal entries 0 and $t$,
for some $t \in \R$.
\item Each $T^{(l)}$ satisfies Assumption \ref{assump:dT} with constant $C'$.
\item Each $E_*^{(l)}$ is $\tau'$-regular.
\item $(T^{(l)},E_*^{(l)})$ and $(T^{(l+1)},E_*^{(l+1)})$ are $\phi$-swappable
for each $l=0,\ldots,L-1$.
\item (Scaling) Each $E_*^{(l)}$ has associated scale $\gamma^{(l)}=1$.
\end{enumerate}
\end{lemma}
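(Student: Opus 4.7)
The plan is to construct the sequence by repeatedly performing ``swap + rescale'' steps. Each step modifies a single diagonal entry of $T$ to a designated target value (either $0$ or a common nonzero value $t$) and then globally rescales the matrix by a scalar $\lambda = 1 + O(1/N)$ so as to restore the scale to $\gamma = 1$. After enumerating the $M$ diagonal entries and applying one such step (possibly split as two sub-steps: pure swap, then pure rescale) to each, we arrive at $T^{(L)}$ with at most two distinct diagonal values $\{0,t\}$, giving $L \leq 2M$. The core task is to verify that each consecutive pair forms a swappable pair and that $\tau$-regularity is preserved (with a slightly weakened constant $\tau'$) uniformly along the sequence.

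First I would establish a single-swap lemma: if $T$ and $\vT$ differ in one entry $t_\alpha \to \vt_\alpha$ by an $O(1)$ amount, then under $\tau$-regularity the new edge equation $0 = \vz_0'(\vm_*)$ has a root $\vm_* = m_* + O(1/N)$ and defines a regular right edge with slightly weakened constant. This is an implicit function theorem argument: the perturbation to $z_0'$ and to $z_0''$ induced by a single-entry change is $O(1/N)$ uniformly on a neighborhood of $m_*$, and by Proposition~\ref{prop:z0secondderivative} we have $z_0''(m_*) \geq c > 0$. The analogous bounds on $|E_* - \vE_*|$ and $|\gamma - \vg|$ then follow from the computation already carried out in Lemma~\ref{lemma:Escaleclose}. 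Next I would record the effect of rescaling $T \mapsto \lambda T$: a direct substitution shows $z_0^{\text{new}}(m) = \lambda\, z_0(\lambda m)$, whence $m_*^{\text{new}} = m_*/\lambda$, $E_*^{\text{new}} = \lambda E_*$, and $\gamma^{\text{new}} = \gamma\lambda^{-3/2}$. Taking $\lambda = \vg^{2/3} = 1 + O(1/N)$ after a swap restores unit scale and moves every diagonal entry by $O(1/N)$, contributing $O(1)$ to the total $\ell^1$ change and $O(1/N)$ to the $m$-value; combined with the single-swap estimate, this establishes swappability of $(T^{(l)},E_*^{(l)})$ and $(T^{(l+1)},E_*^{(l+1)})$.

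For the construction itself, I would first fix the target value $t$ so that the final two-valued matrix $T^{(L)}$, with the proportion of nonzero entries chosen to avoid a hard edge at $0$, has a right edge of scale $1$. This determines $t$ uniquely by inverting the explicit two-parameter Marchenko--Pastur equation, and the resulting edge is regular by Proposition~\ref{prop:balancedregular}. I would then enumerate the indices $\alpha = 1, \ldots, M$ and, at the $\alpha$-th stage, swap $t_\alpha^{(l)}$ to its designated final target ($0$ or $t$) and rescale to restore unit scale. Induction yields a uniform regularity constant $\tau'$ and confirms that each $E_*^{(l)}$ persists as a right edge of $\mu_0^{(l)}$.

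The main obstacle is maintaining edge regularity uniformly along the entire sequence, and in particular the pole-separation condition $|m_*^{(l)} + (t_\alpha^{(l)})^{-1}| > \tau'$. A single swap moves $m_*$ only by $O(1/N)$, but over $M$ steps the cumulative drift is $O(1)$, and the pole set $\{-t_\alpha^{-1}\}$ itself shifts with each swap---one must rule out the possibility that $m_*$ collides with a pole, or that the designated right edge merges with a neighboring support component, or that the edge swaps role with a left edge. I would address this by ordering the swaps so that the pole configuration near $m_*$ evolves in a controlled fashion, for example by handling indices with $t_\alpha < 0$, $t_\alpha = 0$, and $t_\alpha > 0$ in separate phases, and by using the explicit form of $z_0$ at the two-point endpoint together with monotone continuation to verify that the sign and rank conditions defining the edge never degenerate. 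This bookkeeping is the technically delicate part of the lemma; the per-step perturbation estimates themselves are routine.
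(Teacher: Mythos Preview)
Your framework is sound---single-entry swaps combined with rescaling is exactly the right structure, and your per-step estimates (implicit function theorem for the new $m$-value, closeness of $E_*$ and $\gamma$ via Lemma~\ref{lemma:Escaleclose}, the effect of $T\mapsto\lambda T$) are all correct. You also correctly identify the central difficulty: a single swap degrades regularity by $C/N$ with a $\tau$-dependent $C$, so naive iteration over $O(N)$ swaps gives no uniform $\tau'$ and the edge can disappear. The paper states this explicitly.

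The gap is that your proposed fix---``order the swaps by the sign of $t_\alpha$ and use monotone continuation''---is not a proof, and in fact the sign of $t_\alpha$ is not the relevant quantity. What matters is the position of the pole $-t_\alpha^{-1}$ relative to $m_*$, and monotonicity of $m_*^{(l)}$ alone does not prevent collision with a pole that is not being swapped. The paper's construction is quite specific and depends on the sign of $m_*$:
\begin{itemize}
\item When $m_*<0$, the first phase \emph{reflects} each pole lying to the right of $m_*$ to its mirror image on the left. Because $|m_*+t_\alpha^{-1}|$ is preserved by reflection, $z_0'(m_*)$ is exactly unchanged, so $m_*$ does not move at all in this phase, while $z_0''(m_*)$ strictly increases. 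After reflection all poles lie to the left of $m_*$, so in the second phase (swap each entry to the maximal value $t$) one can check directly that $m_*^{(l)}$ drifts monotonically toward $0$, hence away from every pole, and $z_0''$ stays bounded below.
\item When $m_*>0$, the paper first replaces a small constant fraction $c_0M$ of entries by a fixed $t$ chosen as the nearest pole in $(0,m_*)$. This buffer guarantees, via the equation $0=z_0'(m_*^{(l)})$, that $m_*^{(l)}+t^{-1}$ stays bounded below uniformly in $l$ throughout the subsequent removals. The lower bound on $z_0''$ in the later phases is obtained not from perturbation but by rewriting $m_*^4 z_0''(m_*)$ as a sum with controllable sign.
\end{itemize}
Neither of these devices is captured by ``phases by sign of $t_\alpha$''. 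Your idea of fixing the target $t$ in advance is also problematic: the paper lets $t$ emerge from the construction (as the extremal entry after reflection, or as the nearest pole), precisely so that $|m_*+t^{-1}|>\tau$ holds at the moment each swap is performed. Finally, the paper separates the rescaling from the Lindeberg sequence entirely: it first builds $T^{(0)},\ldots,T^{(L)}$ satisfying properties 1--5 (with varying $\gamma^{(l)}$), and only at the end replaces each $T^{(l)}$ by $\gamma_l^{2/3}T^{(l)}$, checking that swappability and regularity survive this uniform rescaling. This is cleaner than interleaving swaps and rescalings.
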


We first ignore the scaling property 6, and construct
$T^{(0)},\ldots,T^{(L)}$ and $E_*^{(0)},\ldots,E_*^{(L)}$
satisfying properties 1--5. We will use a Lindeberg swapping construction,
where each $T^{(l+1)}$ differs from $T^{(l)}$ in only one diagonal entry.
It is useful to write $z_0'$ and $z_0''$ as
\[z_0'(m)=\frac{1}{m^2}-\frac{1}{N}\sum_{\a:t_\a \neq 0}
\frac{1}{(m+t_\a^{-1})^2}, \qquad
z_0''(m)=-\frac{2}{m^3}+\frac{2}{N}\sum_{\a:t_\a \neq 0}
\frac{1}{(m+t_\a^{-1})^3},\]
and to think about swapping entries of $T$ as swapping or removing poles of
$z_0'$ and $z_0''$. In particular, for each fixed $m \in \R$, we can easily
deduce from the above whether a given swap increases or decreases 
$z_0'(m)$ and $z_0''(m)$.

Upon defining a swap $T \to \vT$, the identification of the new right edge
$\vE_*$ for $\vT$ uses the following continuity lemma.
\begin{lemma}\label{lemma:singleswap}
Suppose $T$ is a diagonal matrix satisfying Assumption \ref{assump:dT}, and
$E_*$ is a $\tau$-regular right edge with $m$-value $m_*$. Let $\vT$ be a
matrix that replaces a single diagonal entry $t_\a$ of $T$ by a value $\vt_\a$,
such that $|\vt_\a| \leq \|T\|$ and either $\vt_\a=0$ or
$|m_*+\vt_\a^{-1}|>\tau$.
Let $z_0,\vz_0$ denote the function (\ref{eq:z0}) defined by $T,\vT$.
Then there exist $\tau$-dependent constants $N_0,\phi>0$ such that whenever $N
\geq N_0$:
\begin{itemize}
\item $\vT$ has a right edge $\vE_*$ with $m$-value $\vm_*$ satisfying
$|m_*-\vm_*|<\phi/N$.
\item The interval between $m_*$ and $\vm_*$ does not contain any pole
of $z_0$ or $\vz_0$.
\item $\sign(m_*-\vm_*)=\sign(\vz_0'(m_*))$.
\end{itemize}
(We define $\sign(x)=1$ if $x>0$, $-1$ if $x<0$, and 0 if $x=0$.)
\end{lemma}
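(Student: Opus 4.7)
The plan is to view $\vz_0$ as an $O(1/N)$ perturbation of $z_0$ in a fixed neighborhood of $m_*$, and then locate $\vm_*$ via Newton's method (or, equivalently, the implicit function theorem applied to $\vz_0'$).

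\textbf{Step 1: Uniform closeness of $z_0$ and $\vz_0$ near $m_*$.} From the definitions,
\[
\vz_0(m)-z_0(m)=\frac{1}{N}\left(\frac{\vt_\a}{1+\vt_\a m}-\frac{t_\a}{1+t_\a m}\right),
\]
and analogous expressions hold for $\vz_0'(m)-z_0'(m)$ and $\vz_0''(m)-z_0''(m)$ (with the usual convention that a zero value of $t_\a$ or $\vt_\a$ removes that term). By Proposition \ref{prop:basicregbounds}, $|m_*|,|t_\beta|,|1+t_\beta m_*|$ are bounded above and below by $\tau$-dependent constants for every $\beta$, and by hypothesis $|\vt_\a|\le\|T\|<C$ together with $|1+\vt_\a m_*|>\tau|\vt_\a|$ (or $\vt_\a=0$). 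Hence on some fixed neighborhood $I=[m_*-\delta_0,m_*+\delta_0]$ depending only on $\tau$ and $\|T\|$, the set $I$ avoids every pole of $z_0$ and of $\vz_0$, and
\[
\sup_{m\in I}\bigl(|\vz_0(m)-z_0(m)|+|\vz_0'(m)-z_0'(m)|+|\vz_0''(m)-z_0''(m)|+|\vz_0'''(m)-z_0'''(m)|\bigr)\le C/N.
\]

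\textbf{Step 2: Locating $\vm_*$.} Since $E_*$ is a regular right edge, Proposition \ref{prop:z0secondderivative} gives $z_0''(m)\ge c>0$ on a neighborhood of $m_*$; shrinking $\delta_0$ if needed, we may assume $z_0''(m)\ge c$ on $I$. By Step 1, for $N$ sufficiently large $\vz_0''(m)\ge c/2$ on $I$, so $\vz_0'$ is strictly increasing on $I$. Using $z_0'(m_*)=0$ gives $|\vz_0'(m_*)|\le C/N$, so by the intermediate value theorem there is a unique $\vm_*\in I$ with $\vz_0'(\vm_*)=0$, and a first-order Taylor expansion yields
\[
|\vm_*-m_*|\le \frac{2|\vz_0'(m_*)|}{c}\le \phi/N
\]
for a $\tau$-dependent constant $\phi$. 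Because $\vz_0''(\vm_*)\ge c/2>0$, $\vm_*$ is a strict local minimum of $\vz_0$ on $\bar\R\setminus \vP$, hence by Proposition \ref{prop:edges}(b) it is the $m$-value of a right edge $\vE_*=\vz_0(\vm_*)$ of $\vmu_0$.

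\textbf{Step 3: No poles between $m_*$ and $\vm_*$, and the sign condition.} The poles of $z_0$ and of $\vz_0$ on $I$ are contained in $\{0\}\cup\{-t_\beta^{-1}\}_{\beta:t_\beta\ne 0}\cup\{-\vt_\a^{-1}\}$, and by Proposition \ref{prop:basicregbounds} together with the hypothesis $|m_*+\vt_\a^{-1}|>\tau$ (or $\vt_\a=0$, in which case there is no pole from $\vt_\a$), every such pole is at distance at least $c>0$ from $m_*$. Since $|m_*-\vm_*|<\phi/N<c$ for $N\ge N_0$, the closed interval between $m_*$ and $\vm_*$ lies in $I$ and contains no pole of either function. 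Finally, strict monotonicity of $\vz_0'$ on $I$ (Step 2) together with $\vz_0'(\vm_*)=0$ gives $\sign(m_*-\vm_*)=\sign(\vz_0'(m_*))$, as required.

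\textbf{Main obstacle.} The substantive content is ensuring that all derivatives of $z_0-\vz_0$ are genuinely $O(1/N)$ uniformly on a fixed neighborhood of $m_*$; this requires combining the edge-regularity bound $|m_*+\vt_\a^{-1}|>\tau$ with the Proposition \ref{prop:basicregbounds} bounds on the original poles, and is the only place where the precise hypotheses on $\vt_\a$ and the regularity of $E_*$ enter in a nontrivial way. The remainder of the argument is a routine Newton-type perturbation of a non-degenerate critical point.
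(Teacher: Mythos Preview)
Your proof is correct and follows essentially the same route as the paper: both choose a fixed neighborhood of $m_*$ avoiding all poles of $z_0$ and $\vz_0$, observe that $|z_0'-\vz_0'|\le C/N$ there, use Proposition \ref{prop:z0secondderivative} to get $z_0''>c$ (hence $\vz_0''>c/2$ for large $N$), and then locate the unique zero of $\vz_0'$ within $O(1/N)$ of $m_*$, with the sign and pole-free statements following immediately from monotonicity of $\vz_0'$. Your write-up is slightly more explicit (working directly with $\vz_0''$ to obtain strict monotonicity of $\vz_0'$), but there is no substantive difference in method.
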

\begin{proof}
By Proposition \ref{prop:basicregbounds}, $|m_*|>\nu$ for a constant $\nu$.
Take $\delta<\min(\tau/2$, $\nu/2)$.
Then the given conditions for $\vt_\a$ imply
that $(m_*-\delta,m_*+\delta)$ does not
contain any pole of $z_0$ or $\vz_0$, and
\[|z_0'(m)-\vz_0'(m)|<C/N\]
for some $C>0$ and all $m \in (m_*-\delta,m_*+\delta)$. For sufficiently small
$\delta$, Proposition \ref{prop:z0secondderivative} also ensures
$z_0''(m)>c$ for all $m \in (m_*-\delta,m_*+\delta)$. If
$\vz_0'(m_*)<0=z_0'(m_*)$, this implies
$\vz_0$ must have a local minimum
in $(m_*,m_*+C/N)$, for a constant $C>0$ and all $N \geq N_0$.
Similarly, if $\vz_0'(m_*)>0$, then $\vz_0$ has a local
minimum in $(m_*-C/N,m_*)$, and if $\vz_0'(m_*)=0$, then $\vz_0$ has a local
minimum at $m_*$. The result follows from Proposition \ref{prop:edges} upon
setting $\vE_*=\vz_0(\vm_*)$.
\end{proof}

The basic idea for proving Lemma \ref{lemma:lindebergconstruction} is to
take a Lindeberg sequence $T^{(0)},\ldots,T^{(L)}$ and apply the above lemma for
each swap. We cannot do this naively for any Lindeberg sequence, because in
general if $E_*^{(l)}$ is $\tau_l$-regular, then the above lemma only
guarantees that $E_*^{(l+1)}$ is $\tau_{l+1}$-regular for
$\tau_{l+1}=\tau_l-C/N$ and a $\tau_l$-dependent constant $C>0$. 
Thus edge regularity, as well as the edge itself, may vanish after $O(N)$ swaps.

To circumvent this, we consider a specific construction of the Lindeberg
sequence, apply Lemma \ref{lemma:singleswap} along this sequence
to identify an edge $\vE_*$ for each successive $\vT$,
and use a separate argument to
show that $\vE_*$ must be $\tau'$-regular for a fixed constant $\tau'>0$.
Hence we may continue to apply Lemma \ref{lemma:singleswap} along the whole
sequence.

We consider separately the cases $m_*<0$ and $m_*>0$.

\begin{lemma}\label{lemma:lindebergmneg}
Suppose (the right edge) $E_*$ has $m$-value $m_*<0$. Then for some
$\tau$-dependent constant $N_0$, whenever $N \geq N_0$,
Lemma \ref{lemma:lindebergconstruction} holds without the scaling
condition, property 6.
\end{lemma}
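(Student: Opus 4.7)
The plan is to reduce $T$ to a two-value configuration $\{0,t\}$ through single-entry Lindeberg swaps. First I would select the target value $t>0$ so that $t^{-1}$ is well separated from $|m_*|$. A convenient choice is $t=1/(|m_*|(1+\sqrt{M/N}/2))$, which yields $|m_*+t^{-1}|=|m_*|\sqrt{M/N}/2$, bounded below by a constant depending only on $\tau$ and the dimension ratio (via Proposition \ref{prop:basicregbounds} and Assumption \ref{assump:dT}). With this $t$ and $K\approx 3M/4$ zeros in the target, a standard Wishart-type computation gives that the rightmost edge of $\mu_0^{(L)}$ sits at $m_*^{(L)}=-1/(t(1+\sqrt{(M-K)/N}))$, which equals $m_*^{(0)}$ up to an $O(1/N)$ integer rounding of $K$.

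The sequence $T^{(0)},\ldots,T^{(L)}$ would then be built by processing each $t_\a\notin\{0,t\}$ in turn, swapping it to either $0$ or $t$. Swapping to $0$ is always admissible by Lemma \ref{lemma:singleswap}, and swapping to $t$ is admissible as long as I maintain an invariant $|m_*^{(l)}-m_*^{(0)}|<\varepsilon_0$ for a small $\tau$-dependent $\varepsilon_0$. Lemma \ref{lemma:singleswap} then yields a new right edge $E_*^{(l+1)}$ with $|m_*^{(l+1)}-m_*^{(l)}|\leq\phi/N$, which is exactly the $m$-value condition in the swappability definition. The direction of the shift is dictated by the sign of $\vz_0'(m_*^{(l)})$: swapping $t_\a\to 0$ always gives $\vz_0'(m_*^{(l)})>0$ and pushes $m_*$ downward, while swapping $t_\a\to t$ gives either sign depending on whether $t_\a^{-1}$ or $t^{-1}$ is closer to $-m_*^{(l)}$. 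At each step I would pick an admissible swap whose direction pushes $m_*^{(l+1)}$ back toward $m_*^{(0)}$, preserving the drift invariant.

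Under the invariant $|m_*^{(l)}-m_*^{(0)}|<\varepsilon_0$, verifying uniform $\tau'$-regularity (property 4) is straightforward. For unswapped original entries, $|m_*^{(l)}+t_\a^{-1}|\geq\tau-\varepsilon_0$; for swapped entries equal to $t$, $|m_*^{(l)}+t^{-1}|\geq|m_*^{(0)}+t^{-1}|-\varepsilon_0$ is bounded below by our choice of $t$; and the curvature $z_0''(m_*^{(l)})=2/(\gamma^{(l)})^2$ varies continuously with the swaps, so $\gamma^{(l)}$ stays uniformly bounded. Properties 1--3 are immediate from the construction with $C'=\max(C,t^{-1})$, and the budget $L\leq 2M$ allows each entry to be touched up to twice if needed (for instance, sent first to $0$ and later to $t$ during balancing).

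The hard part will be showing that an opposite-sign corrective swap is always available until the target configuration is reached, so that the drift invariant can actually be maintained. Formally, at any intermediate step where $m_*^{(l)}$ has drifted in one direction, one must exhibit at least one unprocessed entry whose swap produces $\vz_0'(m_*^{(l)})$ of the opposite sign. I expect this to follow from a pigeonhole argument driven by the calibration of $K$ and $t$: since the net balance of ``$0$-swaps'' and ``$t$-swaps'' is fixed by the target proportions, and those proportions were chosen precisely so the final edge matches $m_*^{(0)}$, any temporary imbalance creates a reserve of corrective swaps of the opposite type. Formalizing this counting and accounting for the sign of $\vz_0'$ produced by each candidate swap is the delicate technical step; everything else reduces to bookkeeping built on Lemma \ref{lemma:singleswap} and Proposition \ref{prop:basicregbounds}.
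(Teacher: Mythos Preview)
Your approach is quite different from the paper's and carries two genuine gaps: the drift-control step and the curvature bound.

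The paper's construction for $m_*<0$ avoids any balancing or drift invariant. It proceeds in two phases. \emph{Phase 1 (reflection):} for each nonzero $t_\a$ with $-t_\a^{-1}>m_*$, replace $t_\a$ by the value $\vt_\a$ satisfying $-\vt_\a^{-1}<m_*$ and $|m_*+\vt_\a^{-1}|=|m_*+t_\a^{-1}|$. Since $z_0'(m_*)$ depends on the poles only through $|m_*+t_\a^{-1}|^2$, each such swap leaves $\vz_0'(m_*)=0$, so $m_*$ does not move at all, and one checks $|\vt_\a|\leq|t_\a|$ and $\vz_0''(m_*)>z_0''(m_*)$ because the cubic contribution flips sign. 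After these swaps all diagonal entries are nonnegative and every pole lies to the left of $m_*$. \emph{Phase 2 (monotone):} set $t=\|T^{(K_1)}\|$ and replace each remaining positive entry $<t$ by $t$. Each such swap gives $\vz_0'(m_*)<0$, so $m_*$ moves monotonically toward $0$; this only increases $\min_\a|m_*+t_\a^{-1}|$, and since every pole stays to the left of $m_*$ one has the direct bound $\vz_0''(\vm_*)>-2/\vm_*^3>2t^3$, with $t>c$ by (\ref{eq:nondegenerate}). No corrective mechanism or bookkeeping of processed entries is needed.

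Your first gap is the corrective-swap availability. The dichotomy ``$0$-swaps push $m_*$ down, $t$-swaps push it up'' is not correct: a swap $t_\a\to t$ gives $\vz_0'(m_*)=\tfrac{1}{N}\bigl[(m_*+t_\a^{-1})^{-2}-(m_*+t^{-1})^{-2}\bigr]$, whose sign depends on whether $|m_*+t_\a^{-1}|$ is smaller or larger than $|m_*+t^{-1}|$. So among the unprocessed original entries there is no guaranteed reserve of ``up'' moves, and the pigeonhole intuition based only on target proportions does not control which direction each available swap points. Your second gap is the claim that $\gamma^{(l)}$ stays bounded ``by continuity'': each swap changes $z_0''(m_*^{(l)})$ by $O(1/N)$, and after $O(N)$ swaps the cumulative change is $O(1)$, so nothing you have written prevents the curvature from vanishing at an intermediate step. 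The paper's reflection-then-monotone construction sidesteps both issues: $m_*$ is either exactly fixed or moving in a single direction, and the curvature lower bound is a one-line consequence of having all poles on one side of $m_*$.
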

\begin{proof}
We construct a Lindeberg sequence that first reflects about
$m_*$ each pole of $z_0$ to the right of $m_*$, and then replaces each pole by
the one closest to $m_*$.

Suppose, first, that there are $K_1$ non-zero diagonal entries $t_\a$
of $T$ (positive or negative) where $-t_\a^{-1}>m_*$. Consider
a sequence of matrices $T^{(0)}$, $T^{(1)}$, $\ldots$, $T^{(K_1)}$ where
$T^{(0)}=T$, and each $T^{(k+1)}$ replaces one such diagonal entry $t_\a$ of
$T^{(k)}$ by the value $\vt_\a$ such that $-\vt_\a^{-1}<m_*$ and
$|m_*+\vt_\a^{-1}|=|m_*+t_\a^{-1}|$. For each such swap $T \to \vT$,
we verify $|\vt_\a| \leq |t_\a| \leq \|T\|$, $\vz_0'(m_*)=z_0'(m_*)=0$,
and $\vz_0''(m_*)>z_0''(m_*)>0$. Thus we may take $\vm_*=m_*$ in Lemma
\ref{lemma:singleswap}, and the new edge $\vE_*=\vz_0(m_*)$ remains
$\tau$-regular for the same constant $\tau$.

All diagonal entries of $T^{(K_1)}$ are now nonnegative. Let
$t=\|T^{(K_1)}\|$ be the maximal such entry. By the above construction,
$-t^{-1}<m_*<0$. Since $E_*^{(K_1)}$ is $\tau$-regular,
(\ref{eq:nondegenerate}) implies $t>c$ for a constant $c>0$.
Let $K_2$ be the number of positive diagonal entries of $T^{(K_1)}$
strictly less than $t$, and consider a sequence
$T^{(K_1+1)},\ldots,T^{(K_1+K_2)}$
where each $T^{(k+1)}$ replaces one such diagonal entry in $T^{(k)}$ by $t$.
Applying Lemma \ref{lemma:singleswap} to each such swap $T \to \vT$,
we verify $\vz_0'(m_*)<z_0(m_*)=0$, so $m_*<\vm_*<0$.
Then $|\vm_*|<|m_*|$ and $\min_\a |\vm_*+\vt_\a^{-1}|>\min_\a |m_*+t_\a^{-1}|$.
Also $\vm_*+\vt_\a^{-1}>0$ for all $\vt_\a \neq 0$, so
$\vz_0''(\vm_*)>-2/\vm_*^3>2t^3$. This verifies $\vE_*=\vz_0(\vm_*)$ is
$\tau'$-regular for a fixed constant $\tau'>0$.
(We may take any $\tau'<\min(\tau,t^{3/2})$.)

The total number of swaps $L=K_1+K_2$ is at most $2M$, and all diagonal entries
of $T^{(L)}$ belong to $\{0,t\}$. This concludes the proof, with property 5
verified by Lemma \ref{lemma:singleswap}.
\end{proof}

\begin{lemma}\label{lemma:lindebergmpos}
Lemma \ref{lemma:lindebergmneg} holds also when $E_*$ has $m$-value $m_*>0$.
\end{lemma}
\begin{proof}
Proposition \ref{prop:edges} implies $m_*$ is a local minimum of $z_0$.
The interval $(0,m_*)$ must contain a pole of $z_0$---otherwise,
by the boundary condition of $z_0$ at 0,
there would exist a local maximum $m$ of $z_0$ in $(0,m_*)$ satisfying
$z_0(m)>z_0(m_*)$, which would contradict the edge ordering
in Proposition \ref{prop:edges}(c). Let
$-t^{-1}$ be the pole in $(0,m_*)$ closest to $m_*$. Note that $t<0$ and
$|t|>|m_*|^{-1}>\tau$. We construct a
Lindeberg sequence that first replaces a small but constant fraction of entries
of $T$ by $t$, then replaces all non-zero $t_\a>t$ by 0, and
finally replaces all $t_\a<t$ by 0.

First, fix a small constant $c_0>0$, let $K_1=\lfloor c_0 M \rfloor$,
and consider a sequence of matrices $T^{(0)},T^{(1)},\ldots,T^{(K_1)}$ where
$T^{(0)}=T$ and each $T^{(k+1)}$ replaces a different (arbitrary) diagonal
entry of $T^{(k)}$ by $t$. For $c_0$ sufficiently small, it is easy to check
that we may apply Lemma \ref{lemma:singleswap} to identify an edge $E_*^{(k)}$
for each $k=1,\ldots,K_1$, such that each $E_*^{(k)}$ remains $\tau/2$-regular.

$T^{(K_1)}$ now has at least $c_0M$ diagonal entries equal to $t$.
By the condition in Lemma \ref{lemma:singleswap} that the swap $m_* \to \vm_*$
does not cross any pole of $z_0$ or $\vz_0$, we have that $-t^{-1}$ is still
the pole in $(0,m_*^{(K_1)})$ closest to $m_*^{(K_1)}$.
Let $K_2$ be the number of non-zero diagonal entries $t_\a$
of $T^{(K_1)}$ (positive or negative) such that $t_\a>t$.
Consider a sequence $T^{(K_1+1)},\ldots,T^{(K_1+K_2)}$ where each $T^{(k+1)}$
replaces one such entry in $T^{(k)}$ by 0. Note that each swap $T \to \vT$ of
this sequence satisfies $\vz_0'(m)>z_0'(m)$ at every value $m$.
Then in particular, $\vz_0'(m_*)>z_0'(m_*)=0$, so
Lemma \ref{lemma:singleswap} yields a new edge $\vE_*$ for which
$-t^{-1}<\vm_*<m_*$. For every
$\alpha$ such that $-\vt_\alpha^{-1}>-t^{-1}$, we have
$-\vt_\alpha^{-1}>m_*$ because $-t^{-1}$ is the closest pole to the left of
$m_*$. Then, since $\vm_*<m_*$, this shows
$\min_{\a:-\vt_\a^{-1}>-t^{-1}} |\vm_*+\vt_\a^{-1}|
>\min_{\a:-t_\a^{-1}>-t^{-1}} |m_*+t_\a^{-1}|>\tau/2$.
The conditions $\vm_*>|t|^{-1}>c$ and
\[0=\vz_0'(\vm_*)\leq\frac{1}{\vm_*^2}
-\frac{c_0M}{N}\frac{1}{(\vm_*+t^{-1})^2}\]
ensure that $\vm_*+t^{-1}>\nu$ for a constant $\nu>0$, and hence
$\min_\a |\vm_*+\vt_\a^{-1}|>\min(\nu,\tau/2)$ for the minimum over all
$\alpha$.
To bound $\vz_0''(\vm_*)$, let us introduce the function
\[f(m)=-\frac{2}{N}\sum_{\a=1}^M \frac{t_\a^2 m^3}{(1+t_\a m)^3}\]
and define analogously $\vf(m)$ for $\vT$.
We have $f'(m)<0$ for all $m$, so $f(\vm_*)>f(m_*)$. 
Furthermore, if $t_\a$ was the value which was replaced by 0, then
$1+t_\a \vm_*>0$. (This is obvious for positive $t_\a$; for negative $t_\a$, it
follows from $-t^{-1}<\vm_*<m_*<-t_\a^{-1}$, as $-t^{-1}$ is the closest pole to
the left of $\vm_*$.) Then $\vf(\vm_*)>f(\vm_*)>f(m_*)$.
Applying the condition $0=z_0'(m_*)$, we
verify $f(m_*)=m_*^4z_0''(m_*)$. Then
\[\vz_0''(\vm_*)>\frac{m_*^4}{\vm_*^4}z_0''(m_*)>z_0''(m_*).\]
This shows that $\vE_*=\vz_0(\vm_*)$ is $\tau'$-regular for a fixed constant
$\tau'>0$. (We may take $\tau'=\min(\nu,\tau/2)$ as above.)

Finally, $T^{(K_1+K_2)}$ now has at least $c_0M$ diagonal entries equal to $t$,
and all non-zero diagonal entries $t_\a$ satisfy $t_\a<t<0$.
Let $K_3$ be the number of such
entries and consider a sequence $T^{(K_1+K_2+1)},\ldots,T^{(K_1+K_2+K_3)}$ where
each $T^{(k+1)}$ replaces one such entry of $T^{(k)}$ by 0. Again,
each such swap satisfies
$\vz_0'(m_*)>z_0'(m_*)=0$, so by Lemma \ref{lemma:singleswap},
$-t^{-1}<\vm_*<m_*$. As in the $K_2$ swaps above,
this implies $\min_\a |\vm_*+\vt_\a^{-1}|>c$ for a
constant $c>0$. The condition $\vt_\a<t$ for all non-zero $\vt_\a$
implies that $1+\vt_\a\vm_*<0$ for all non-zero $\vt_\a$, so we have
\[\vf(\vm_*) \geq -\frac{2c_0M}{N}\frac{t^2\vm_*^3}{(1+t\vm_*)^3}>c\]
for a constant $c>0$, by Proposition \ref{prop:basicregbounds}. Applying again $\vf(\vm_*)=\vm_*^4\vz_0''(\vm_*)$, this
yields $\vz_0''(\vm_*)>c'>0$, so $\vE_*$ is $\tau'$-regular for a constant
$\tau'>0$.

The total number of swaps $L=K_1+K_2+K_3$ is at most $2M$.
All diagonal entries of $T^{(L)}$ belong to $\{0,t\}$,
so this concludes the proof.
\end{proof}

We now establish Lemma \ref{lemma:lindebergconstruction} for all properties
1--6 by rescaling.

\begin{proof}[Proof of Lemma \ref{lemma:lindebergconstruction}]
By Lemmas \ref{lemma:lindebergmneg} and
\ref{lemma:lindebergmpos}, there exist sequences
$T^{(0)},\ldots,T^{(L)}$ and $E_*^{(0)},\ldots,E_*^{(L)}$
satisfying conditions 1--5. By Lemma \ref{lemma:Escaleclose}, the associated
scales $\gamma_0,\ldots,\gamma_L$ satisfy $|\gamma_{l+1}-\gamma_l| \leq C/N$
for a $\phi,\tau'$-dependent constant $C>0$ and each $l=0,\ldots,L-1$.

We verify from the definitions of $E_*,m_*,\gamma$ that under the rescaling
$T \mapsto cT$ for any $c>0$, we have
\[E_* \mapsto cE_*,\qquad m_* \mapsto c^{-1}m_*,\qquad \gamma \mapsto
c^{-3/2}\gamma.\]
Consider then the matrices $\tilde{T}^{(l)}=\gamma_l^{2/3}T^{(l)}$ and edges
$\tilde{E}_*^{(l)}=\gamma_l^{2/3}E_*^{(l)}$.
We check properties 1--6 for $\tilde{T}^{(l)}$ and $\tilde{E}_*^{(l)}$:
Properties 1, 2, and 6 are obvious. Since
$T^{(0)},\ldots,T^{(L)}$ are all $\tau'$-regular, Proposition
\ref{prop:basicregbounds} implies $c<\gamma_l<C$ for constants $C,c>0$ and
every $l$. Then it is easy to check that properties 3, 4, and 5 also hold
with adjusted constants.
\end{proof}

\section{Resolvent comparison and proof of Theorem
\ref{thm:TW}}\label{sec:resolventcompare}

We will conclude the proof of Theorem \ref{thm:TW} by
establishing the following estimate.
\begin{theorem}[Resolvent comparison]\label{thm:resolventcompare}
Fix $\eps>0$ a sufficiently small constant, and
let $s_1,s_2,\eta \in \R$ be such that
$|s_1|,|s_2|<N^{-2/3+\eps}$ and $\eta \in [N^{-2/3-\eps},N^{-2/3}]$.
Let $T,\vT \in \R^{M \times M}$ be two diagonal matrices and
$E_*,\vE_*$ two corresponding regular right edges, such that $(T,E_*)$ and
$(\vT,\vE_*)$ are swappable and their scales satisfy $\gamma=\vg=1$.
Suppose Assumptions \ref{assump:dT} and \ref{assump:X} hold.

Let $m_N,\vm_N$ be the Stieltjes transforms as in (\ref{eq:mN}) corresponding
to $T,\vT$, and define
\[\X=N\int_{E_*+s_1}^{E_*+s_2} \Im m_N(y+i\eta)dy,\qquad
\vX=N\int_{\vE_*+s_1}^{\vE_*+s_2} \Im \vm_N(y+i\eta)dy.\]
Let $K:\R \to \R$
be any function such that $K$ and its first four derivatives are uniformly
bounded by a constant. Then
\begin{equation}\label{eq:resolventcompare}
\E[K(\X)-K(\vX)] \prec N^{-4/3+16\eps}.
\end{equation}
\end{theorem}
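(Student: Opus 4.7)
The plan is to generalize the two-resolvent Green-function comparison argument of \cite{leeschnelli}. First, I would change variables $y=E_*+u$ and $y=\vE_*+u$ in the integrals defining $\X$ and $\vX$, so that both become integrals over the common interval $[s_1,s_2]$ with integrands anchored at the corresponding regular edges:
\[\X - \vX = N \int_{s_1}^{s_2} \Im\bigl[m_N(E_*+u+i\eta) - \vm_N(\vE_*+u+i\eta)\bigr]\,du.\]
By Proposition \ref{prop:m0estimates} together with Lemma \ref{lemma:Escaleclose} and the swappability bound $|m_*-\vm_*|<\phi/N$, the deterministic Marcenko-Pastur approximations of $m_N$ and $\vm_N$ at these shifted arguments agree up to $O(1/N)$. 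The genuine fluctuation structure in $\X-\vX$ therefore lives in the random deviations of $m_N$ and $\vm_N$ from their respective deterministic parts, and this is what must be controlled.

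To bound $\E[K(\X)-K(\vX)]$, Taylor-expand $K$ to first order about $\vX$; the quadratic Taylor remainder is controlled by Cauchy-Schwarz together with the entrywise local law (Theorem \ref{thm:locallaw}) and turns out to be much smaller than $N^{-4/3+16\eps}$. The leading term $\E[K'(\vX)(\X-\vX)]$ is then treated via the resolvent identity
\[G_N - \vG_N = G_N\,X'(\vT-T)X\,\vG_N\]
combined with the Schur formulas of Lemma \ref{lemma:resolventidentities}, which isolates the explicit dependence on the perturbation $T-\vT$ through the diagonal resolvent entries. A cumulant expansion (Gaussian-style integration by parts) in the entries of $X$ then eliminates the randomness, producing a hierarchy of correction terms polynomial in entries of both $G$ and $\vG$ evaluated near the respective edges, together with derivatives of $K'(\vX)$ that likewise involve $\vG$.

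The technical heart of the argument is a two-resolvent generalization of the \emph{decoupling lemma} \cite[Lemma 6.2]{leeschnelli}: one wishes to decouple a diagonal factor $G_{\a\a}$ or $\vG_{\a\a}$ from mixed $(G,\vG)$ expressions up to controlled errors. The single-resolvent ``optical theorems'' of \cite[Lemma B.1]{leeschnelli} do not apply directly to these mixed products, but as flagged in the introduction, the remedy is to insert the identity $\vG = G + G\,X'(T-\vT)X\,\vG$ inside each mixed expression. Every such insertion pays a multiplicative factor of $\sum_\a|t_\a-\vt_\a|\leq \phi=O(1)$, rather than the naive $O(N)$ cost of fully interpolating $T\to\vT$ entrywise, so after iteration the mixed terms collapse to single-resolvent expressions covered by \cite[Lemma B.1]{leeschnelli}. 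I expect the main obstacle to be the combinatorial bookkeeping of the cumulant hierarchy together with these $\vG\to G$ substitutions, and verifying that every correction beyond what \cite{leeschnelli} already handles comes with either an $\ell^1$ weight $\sum_\a|t_\a-\vt_\a|$ or an $|m_*-\vm_*|$ weight, so that the accumulated error stays within $N^{-4/3+16\eps}$.
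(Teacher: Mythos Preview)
Your plan has the right ingredients—the resolvent identity, a two-resolvent decoupling lemma, and the reduction of mixed $(G,\vG)$ expressions to single-resolvent ones via swappability—but there is a genuine gap at the very first step.

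You propose to Taylor-expand $K(\X)-K(\vX)$ to first order about $\vX$ and bound the quadratic remainder by Cauchy-Schwarz and the local law. This does not work: Lemma~\ref{lemma:removecheck} gives only $\X-\vX \prec \Psi=N^{-1/3+3\eps}$, so the remainder $\tfrac12 K''(\xi)(\X-\vX)^2$ is of size $\Psi^2=N^{-2/3+6\eps}$, far larger than the target $N^{-4/3+16\eps}$. Even in expectation, the best you get from Lemma~\ref{lemma:removecheck2} (take $Y=\X-\vX$, so $a=0$ since $Y-Y\pa\prec\Psi$, not $\Psi^2$) is $\E[(\X-\vX)^2]\prec\Psi^2$. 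The quadratic and higher Taylor terms therefore cannot be discarded; they must be carried through the entire computation to $\Psi^5$ precision. The paper handles this by writing $K(\X)-K(\vX)=\int_0^1 K'(\X_\lambda)(\X-\vX)\,d\lambda$ exactly, with $\X_\lambda=\lambda\X+(1-\lambda)\vX$, and then Taylor-expanding $K'(\X_\lambda)$ about $K'(\X_\lambda\pa)$ to third order inside the decoupling lemma. The resulting $K'',K'''$ contributions are not errors: they produce the $\X_{3,2\t{2}}$, $\X_{4,2\t{2}\t{2}}$, etc.\ components of $\X_3,\X_4$ that are required for the optical theorem (Lemma~\ref{lemma:optical}) to yield an identity rather than a mere bound. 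The integration over $\lambda$ is also not cosmetic: it symmetrizes the coefficients into the specific combinations $\cP_\a,\Q_\a$ of Lemma~\ref{lemma:decoupling}, whereas fixing $\lambda=0$ (your $K'(\vX)$) gives asymmetric coefficients to which the subsequent cancellation does not directly apply.

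A second point you do not address: the final cancellation of the leading $\Psi^4$ term does not follow from the $\ell^1$ weight $\sum_\a|t_\a-\vt_\a|=O(1)$ or from $|m_*-\vm_*|=O(1/N)$ alone. After decoupling and the optical theorem, the surviving coefficient is $\sum_\a(t_\a-\vt_\a)\bigl(\tfrac12\cP_\a(A_4-m_*^{-4})-\tfrac13\Q_\a\bigr)$, which is $O(1)$ by swappability. What forces it down to $O(1/N)$ is the scaling hypothesis $\gamma=\vg=1$, via the sum rules of Lemma~\ref{lemma:sumrules}; without this the argument only reaches $N^{-1+C\eps}$. Your outline should make explicit where and how $\gamma=\vg=1$ enters.
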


\begin{proof}[Proof of Theorem \ref{thm:TW}]
By symmetry under $T \mapsto -T$, it suffices to consider a right edge.
By rescaling $T \mapsto \gamma^{2/3}T$, it suffices to
consider $\gamma=1$.

Let $T^{(0)},\ldots,T^{(L)},E_*^{(0)},\ldots,E_*^{(L)}$ satisfy
Lemma \ref{lemma:lindebergconstruction}. Define $\X^{(k)}(s_1,s_2,\eta)$
as in (\ref{eq:X}) for each $(T^{(k)},E_*^{(k)})$.
For a small constant $\eps>0$, let
$\eta,s_+,l$ and $K:[0,\infty) \to [0,1]$ be as in
Lemma \ref{lemma:resolventapprox}, where $K$ has bounded derivatives of all
orders.
Fix $x \in \R$ and let $s=xN^{-2/3}$.
Applying Lemma \ref{lemma:resolventapprox},
\[\P[\lambda_{\max}(\hSigma) \leq E_*+s] \leq
\E[K(\pi^{-1}\X^{(0)}(s+l,s_+,\eta)]+N^{-1}.\]
Setting $\eps'=9\eps$ and applying Theorem \ref{thm:resolventcompare},
\[\E[K(\pi^{-1}\X^{(k)}(s+l,s_+,\eta)]
\leq \E[K(\pi^{-1}\X^{(k+1)}(s+l,s_+,\eta)]+N^{-4/3+17\eps'}\]
for each $k=0,\ldots,L-1$.
Finally, defining $\hSigma^{(L)}=X'T^{(L)}X$ and
$\lambda_{\max}(\hSigma^{(L)})$ as its largest eigenvalue
in $(E_*^{(L)}-\delta',E_*^{(L)}+\delta')$ for some
$\delta'>0$, applying Lemma \ref{lemma:resolventapprox} again yields
\[\E[K(\pi^{-1}\X^{(L)}(s+l,s_+,\eta)]
\leq \P[\lambda_{\max}(\hSigma^{(L)}) \leq E_*^{(L)}+s+2l]+N^{-1}.\]
Recalling $L \leq 2M$ and combining the above bounds,
\[\P[N^{2/3}(\lambda_{\max}(\hSigma)-E_*) \leq x]
\leq \P[N^{2/3}(\lambda_{\max}(\hSigma^{(L)})-E_*^{(L)})
\leq x+2N^{-\eps}]+o(1).\]

The matrix $T^{(L)}$ has all diagonal entries 0 or $t$, so
$\hSigma^{(L)}=t\tilde{X}'\tilde{X}$ for $\tilde{X} \in
\R^{\tilde{M} \times N}$ having $\N(0,1/N)$ entries.
The corresponding law
$\mu_0^{(L)}$ has a single support interval and a unique right edge, so
$E_*^{(L)}$ must be this edge. 
Regularity of $E_*^{(L)}$ and (\ref{eq:nondegenerate}) imply
$|t| \asymp 1$ and $\tilde{M}/N \asymp 1$.
If $E_*^{(L)}>0$, then $t>0$. If $E_*^{(L)}<0$, then $t<0$,
    and edge regularity implies
$\tilde{M}/N$ is bounded away from 1. Then we obtain
\begin{equation}\label{eq:finalTW}
\P[N^{2/3}(\lambda_{\max}(\hSigma^{(L)})-E_*^{(L)}) \leq x+2N^{-\eps}]
=F_1(x)+o(1)
\end{equation}
where $F_1$ is the distribution function of $\mu_{TW}$,
by applying the results of
    \cite{feldheimsodin,knowlesyin} to either the largest eigenvalue of
    $\hSigma^{(L)}$ or the smallest positive eigenvalue of $-\hSigma^{(L)}$.
Combining the above, we obtain
\[\P[N^{2/3}(\lambda_{\max}(\hSigma)-E_*) \leq x] \leq F_1(x)+o(1).\]
The reverse bound is analogous, concluding the proof.
\end{proof}

In the remainder of this section, we prove Theorem
\ref{thm:resolventcompare}.

\subsection{Individual resolvent bounds}\label{subsec:basicbounds}
For diagonal $T$ and for $z=y+i\eta$ as appearing in Theorem
\ref{thm:resolventcompare}, we record here simple resolvent bounds
that follow from the local law. Similar bounds were used in
\cite{erdosyauyin,leeschnelli}. We also introduce the shorthand notation
that will be used in the computation.

Let $E_*$ be a regular right edge. Fix a small constant $\eps>0$, and
fix $s_1,s_2,\eta$ such that $|s_1|,|s_2| \leq N^{-2/3+\eps}$ and
$\eta \in [N^{-2/3-\eps},N^{-2/3}]$. Changing variables, we write
\[\X \equiv \X(s_1,s_2,\eta)=N\int_{s_1}^{s_2} \Im m_N(y+E_*+i\eta)dy.\]
For $y \in [s_1,s_2]$, we write as shorthand
\[z \equiv z(y)=y+E_*+i\eta, \quad
G \equiv G(z(y)), \quad m_N \equiv m_N(z(y)),\quad
G^{(\a)} \equiv G^{(\a)}(z(y)),\]
\[m_N^{(\a)} \equiv \frac{1}{N}\sum_{i \in \I_N} G_{ii}^{(\a)}(z(y)),
\quad \X^{(\a)} \equiv N\int_{s_1}^{s_2}
\Im m_N^{(\a)}(\ty+E_*+i\eta)d\ty.\]

We use the simplified summation notation
\[\sum_{i,j} \equiv \sum_{i,j \in \I_N},\qquad
\sum_{\a,\b} \equiv \sum_{\a,\b \in \I_M}\]
where sums over lower-case Roman indices are over $\I_N$ and
sums over Greek indices are over $\I_M$. We use also the
simplified integral notation
\[\int \tG_{AB} \equiv \int_{s_1}^{s_2} G(z(\ty))_{AB} d\ty,\qquad
\int \tm_N \equiv \int_{s_1}^{s_2} m_N(z(\ty))d\ty,\]
so that integrals are implicitly over $[s_1,s_2]$, and we denote
by $\tilde{F}$ the function $F$ evaluated at $F(z(\ty))$ for $\ty$ the
variable of integration. In this notation, $\X$ and $\X\pa$ are simply
\[\X=\sum_i \Im \int \tG_{ii},\qquad
\X\pa=\sum_i \Im \int \tG_{ii}\pa.\]

We introduce the fundamental small parameter
\begin{equation}
\Psi=N^{-1/3+3\eps}.
\end{equation}
We will eventually bound all quantities in the computation by powers of $\Psi$.
In fact, as shown in Lemmas \ref{lemma:resolventbounds} and
\ref{lemma:concentration}
below, non-integrated resolvent entries are controlled by powers of the smaller 
quantity $N^{-1/3+\eps}$.
However, integrated quantities will require the additional slack of $N^{2\eps}$.
We will pass to using $\Psi$ for
all bounds after this distinction is no longer needed.

We have the following corollaries of Proposition \ref{prop:m0estimates}
and Theorem \ref{thm:locallaw}:

\begin{lemma}\label{lemma:resolventbounds}
Under the assumptions of Theorem \ref{thm:resolventcompare},
for all $y \in [s_1,s_2]$, $i \neq j \in \I_N$, and $\a \neq \b \in \I_M$,
\[G_{ii} \prec 1,\quad \frac{1}{G_{ii}} \prec 1,\quad
\frac{G_{\a\a}}{t_\a} \prec 1,\quad \frac{t_\a}{G_{\a\a}} \prec 1,
\quad G_{ij} \prec N^{-1/3+\eps},\]
\[\frac{G_{i\a}}{t_\a}
\prec N^{-1/3+\eps},\quad \frac{G_{\a\b}}{t_\a t_\b} \prec
N^{-1/3+\eps},\quad m_N-m_* \prec N^{-1/3+\eps}.\]
If $T$ is singular, these are defined by continuity and
the form (\ref{eq:Galt}) for $G$.
\end{lemma}
\begin{proof}
Proposition \ref{prop:m0estimates} implies
$\Im m_0(z(y)) \leq C\sqrt{\kappa+\eta} \leq CN^{-1/3+\eps/2}$, while $\eta \geq
N^{-2/3-\eps}$ by assumption. Then Theorem \ref{thm:locallaw} 
yields $(t_At_B)^{-1}(G-\Pi)_{AB} \prec N^{-1/3+\eps}$ for all $A,B \in \I$.
Proposition \ref{prop:m0estimates} also implies
$|m_0(z)| \asymp 1$ and $|1+t_\a m_0(z)| \asymp 1$, from which
all of the entrywise bounds on $G$ follow.
The bound on $m_N$ follows from
$|m_0-m_*| \leq C\sqrt{\kappa+\eta} \leq CN^{-1/3+\eps/2}$
and $|m_N-m_0| \prec N^{-1/3+\eps}$.
\end{proof}

\begin{lemma}\label{lemma:concentration}
Under the assumptions of Theorem \ref{thm:resolventcompare}, 
for all $i \in \I_N$ and $\a \in \I_M$,
\[\sum_k G_{ik}\pa X_{\a k} \prec N^{-1/3+\eps},
\qquad \sum_{p,q} G_{pq}\pa X_{\a p}X_{\a q}-m_* \prec N^{-1/3+\eps}.\]
\end{lemma}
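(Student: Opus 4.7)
The plan is to exploit the fact that, by construction, $G^{(\a)}$ depends only on the rows of $X$ other than the $\a$-th; hence the random variables $\{X_{\a k}\}_{k \in \I_N}$ are independent of $G^{(\a)}$. Conditional on $G^{(\a)}$, both expressions in the lemma are polynomial forms (linear and quadratic respectively) in independent mean-zero variables with variance $1/N$. The standard large-deviation estimate for such forms (a version of Hanson--Wright, as used for instance in \cite{erdosknowlesyau}) yields
\[\left|\sum_k G_{ik}^{(\a)} X_{\a k}\right|
\prec N^{-1/2}\Bigl(\sum_k |G_{ik}^{(\a)}|^2\Bigr)^{1/2},
\qquad
\left|\sum_{p,q} G_{pq}^{(\a)} X_{\a p}X_{\a q} - \frac{1}{N}\Tr G_N^{(\a)}\right|
\prec N^{-1}\|G_N^{(\a)}\|_{\HS},\]
where $G_N^{(\a)}$ denotes the $\I_N \times \I_N$ block of $G^{(\a)}$.

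The next step is to control the two sums of squares via the Ward identity. Since $G_N^{(\a)}$ is the resolvent of a Hermitian matrix at $z=y+E_*+i\eta$, the spectral decomposition gives
\[\sum_k |G_{ik}^{(\a)}|^2 = \frac{\Im G_{ii}^{(\a)}}{\eta}, \qquad
\|G_N^{(\a)}\|_{\HS}^2 = \sum_i \frac{\Im G_{ii}^{(\a)}}{\eta} = \frac{N \Im m_N^{(\a)}}{\eta}.\]
To estimate the right sides, use the Schur-complement identity of Lemma \ref{lemma:resolventidentities}(c),
$G_{ii}^{(\a)} = G_{ii} - G_{i\a}G_{\a i}/G_{\a\a}$. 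The correction term is at most $|G_{i\a}|^2/|G_{\a\a}|$; by Lemma \ref{lemma:resolventbounds}, $|G_{i\a}| \prec t_\a N^{-1/3+\eps}$ and $|G_{\a\a}| \asymp |t_\a|$, so this correction is $\O(t_\a N^{-2/3+2\eps})$, which is subleading. Combining this with Proposition \ref{prop:m0estimates} (which gives $\Im m_0 \leq C N^{-1/3+\eps/2}$ on $\bD_0$) and the local law bound $|G_{ii}-m_0| \prec N^{-1/3+\eps}$ yields $\Im G_{ii}^{(\a)} \prec N^{-1/3+\eps}$, and averaging over $i$ gives the same bound for $\Im m_N^{(\a)}$. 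Since $\eta \geq N^{-2/3-\eps}$, we conclude
\[\sqrt{\frac{\Im G_{ii}^{(\a)}}{N\eta}} \prec N^{-1/3+\eps},
\qquad \frac{\|G_N^{(\a)}\|_{\HS}}{N} \prec N^{-1/3+\eps},\]
which combined with the Hanson--Wright estimates above immediately yields the first claim.

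For the second claim, it remains to show $\frac{1}{N}\Tr G_N^{(\a)} - m_* \prec N^{-1/3+\eps}$. Write
\[m_N^{(\a)} - m_* = (m_N - m_*) - \frac{1}{N}\sum_i \frac{G_{i\a}G_{\a i}}{G_{\a\a}};\]
the first difference is $\O(N^{-1/3+\eps})$ by Lemma \ref{lemma:resolventbounds}, and the second sum is $\O(N^{-2/3+2\eps})$ by the same entrywise bounds used above. Together with the concentration bound $|\sum_{p,q}G_{pq}^{(\a)}X_{\a p}X_{\a q} - m_N^{(\a)}| \prec N^{-1}\|G_N^{(\a)}\|_{\HS} \prec N^{-1/3+\eps}$, the triangle inequality completes the proof. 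There is no serious obstacle here: the only mild subtlety is that the imaginary-part bound $\Im m_0 \prec N^{-1/3+\eps/2}$ from regularity of the edge is essential to turn the trivial bound $\Im G_{ii}^{(\a)} \leq 1/\eta$ into the sharper $N^{-1/3+\eps}$ needed for the target estimate.
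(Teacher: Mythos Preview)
Your proof is correct, but it takes a longer and conceptually different route from the paper's. The paper observes that the resolvent identities of Lemma~\ref{lemma:resolventidentities} already express both sums as closed-form functions of entries of $G$: from part~(b), $\sum_k G_{ik}^{(\a)}X_{\a k}=-G_{i\a}/G_{\a\a}$, and from part~(a), $\sum_{p,q}G_{pq}^{(\a)}X_{\a p}X_{\a q}=-1/G_{\a\a}-1/t_\a$. The required bounds then follow in one line from Lemma~\ref{lemma:resolventbounds} and the local law (writing $-1/G_{\a\a}-1/t_\a-m_*=1/\Pi_{\a\a}-1/G_{\a\a}+(m_0-m_*)$). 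Your approach instead rederives these bounds from first principles: independence of row $\a$ from $G^{(\a)}$, Hanson--Wright concentration, and the Ward identity to control the variance terms. This is exactly the machinery used inside the proof of the local law itself, and the paper even acknowledges parenthetically that the bounds ``may also be derived directly from concentration inequalities and the independence of $X_{\a k}$ and $G^{(\a)}$.'' So your argument is valid and self-contained, but in the present context---where Theorem~\ref{thm:locallaw} and Lemma~\ref{lemma:resolventbounds} are already available---the paper's two-line deduction via the Schur-complement identities is the more efficient path.
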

\begin{proof}
Applying Lemmas \ref{lemma:resolventidentities}(b) and
\ref{lemma:resolventbounds},
\[\sum_k G_{ik}\pa X_{\a k}=-G_{i\a}/G_{\a\a} \prec N^{-1/3+\eps}.\]
Similarly, applying Lemma \ref{lemma:resolventidentities}(a)
and Theorem \ref{thm:locallaw},
\[\sum_{p,q} G_{pq}\pa X_{\a p}X_{\a q}-m_*
=-\frac{1}{G_{\a\a}}-\frac{1}{t_\a}-m_*
=\frac{1}{\Pi_{\a\a}}-\frac{1}{G_{\a\a}}+(m_0-m_*)
\prec N^{-1/3+\eps}.\]
\end{proof}

\begin{remark}
All probabilistic bounds 
such as the above are derived from Theorem \ref{thm:locallaw}. Thus they in
fact hold in the uniform sense of
Corollary \ref{cor:locallawunionbound}. We continue to use the notation
$\prec$ for convenience, with the understanding that we may take
union bounds and integrals over $y \in [s_1,s_2]$.
\end{remark}

We record one trivial bound for an integral that will be repeatedly used,
and which explains the appearance of $\Psi$.
\begin{lemma}\label{lemma:integralbound}
Suppose the assumptions of Theorem \ref{thm:resolventcompare} hold,
$F(z(y)) \prec N^{a(-1/3+\eps)}$ for some $a \geq 2$,
and we may take a union bound of this statement over $y \in [s_1,s_2]$
(in the sense of Lemma \ref{lemma:infiniteunion}). Then, with
$\Psi=N^{-1/3+3\eps}$,
\[N\int \tilde{F} \prec \Psi^{a-1}.\]
\end{lemma}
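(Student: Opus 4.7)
The plan is a one-line deterministic estimate followed by exponent counting. Under the assumption $|s_1|,|s_2| \leq N^{-2/3+\eps}$, the integration interval has length at most $2N^{-2/3+\eps}$, so after I promote the pointwise bound on $F$ to a supremum bound, I simply pull the supremum out of the integral and multiply by the length.

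First I would invoke the hypothesis that the pointwise estimate $F(z(y)) \prec N^{a(-1/3+\eps)}$ admits a union bound in $y \in [s_1,s_2]$; this is exactly the setting of Lemma \ref{lemma:infiniteunion}, with comparison scale $\Psi(z) \equiv N^{a(-1/3+\eps)}$ constant and the Lipschitz hypotheses supplied (uniformly in $N$) by Lemma \ref{lemma:lipschitz} together with the lower bound $\eta \geq N^{-2/3-\eps}$. This yields
\begin{equation*}
\sup_{y \in [s_1,s_2]} |F(z(y))| \;\prec\; N^{a(-1/3+\eps)}.
\end{equation*}

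The rest is a short calculation:
\begin{equation*}
\left|N \int \tilde{F}\,\right| \;\leq\; N \cdot |s_2-s_1| \cdot \sup_{y \in [s_1,s_2]} |F(z(y))|
\;\prec\; N \cdot N^{-2/3+\eps} \cdot N^{a(-1/3+\eps)} \;=\; N^{-(a-1)/3 \,+\, (a+1)\eps}.
\end{equation*}
Comparing with $\Psi^{a-1} = N^{-(a-1)/3 \,+\, 3(a-1)\eps}$, the leading polynomial exponents $-(a-1)/3$ agree, and the $\eps$-exponent comparison $(a+1)\eps \leq 3(a-1)\eps$ reduces to $a \geq 2$, which is exactly the standing hypothesis. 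Hence $N \int \tilde{F} \prec \Psi^{a-1}$.

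There is no serious obstacle here; the lemma is really a bookkeeping device that encodes the motivation for defining $\Psi$ with the inflated exponent $3\eps$ rather than $\eps$. The integration costs a factor $N \cdot N^{-2/3+\eps} = N^{1/3+\eps}$, and the slack between $\eps$ in the pointwise bound and $3\eps$ in each factor of $\Psi$ is what pays for that cost; the threshold $a \geq 2$ is precisely the point at which there is enough slack to absorb the integration, i.e.\ where $(a+1) \leq 3(a-1)$.
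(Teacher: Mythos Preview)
Your argument is correct and matches the paper's proof essentially line for line: the paper bounds $N(s_2-s_1)N^{a(-1/3+\eps)} \leq 2N^{1/3+\eps}N^{a(-1/3+\eps)} \leq 2\Psi^{a-1}$, which is precisely your exponent count. Your explicit invocation of Lemma~\ref{lemma:infiniteunion} for the union bound is a little more careful than the paper, which leaves it implicit.
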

\begin{proof}
We have
$N(s_2-s_1)N^{a(-1/3+\eps)} \leq 2N^{1/3+\eps}N^{a(-1/3+\eps)} \leq
2\Psi^{a-1}$.
\end{proof}

The next lemma allows us to ``remove the superscript'' in the computation.
\begin{lemma}\label{lemma:removesuper}
Under the assumptions of Theorem \ref{thm:resolventcompare},
for any $y \in [s_1,s_2]$, $i,j \in \I_N$ (possibly equal), and $\a \in \I_M$,
\[G_{ij}-G_{ij}\pa \prec N^{2(-1/3+\eps)},\quad
m_N-m_N\pa \prec N^{2(-1/3+\eps)}, \quad
\X-\X\pa \prec \Psi.\]
\end{lemma}
\begin{proof}
Applying the last resolvent identity from Lemma \ref{lemma:resolventidentities},
\[G_{ij}-G_{ij}\pa=\frac{G_{i\a}G_{j\a}}{G_{\a\a}}
=G_{i\a}\frac{G_{j\a}}{t_\a}\frac{t_\a}{G_{\a\a}},\]
so the first statement follows from Lemma \ref{lemma:resolventbounds}.
Taking $i=j$ and averaging over $\I_N$ yields
the second statement. The third statement follows from
Lemma \ref{lemma:integralbound} and
$\X-\X\pa=\Im N\int (\tm_N-\tm_N\pa)$.
\end{proof}

\subsection{Resolvent bounds for a swappable pair}\label{subsec:swappablebounds}
We now record bounds for a swappable pair $(T,E_*)$ and $(\vT,\vE_*)$, where
$E_*,\vE_*$ are both regular. We denote by $\vm_N,\vG,\vX$ the analogues
of $m_N,G,\X$ for $\vT$. For $\eps,s_1,s_2,\eta$ and $y \in [s_1,s_2]$ as in
Section \ref{subsec:basicbounds}, we write as shorthand
\[\vz \equiv \vz(y)=y+\vE_*+i\eta,\qquad \vG \equiv \vG(\vz(y)),
\qquad \vm_N \equiv \vm_N(\vz(y)).\]
The results of the preceding section hold equally for $\vG$, $\vm_N$, and $\vX$.

The desired bound (\ref{eq:resolventcompare}) arises from the following
identity: Suppose first that $T$ and $\vT$ are invertible.
Applying $A^{-1}-B^{-1}=A^{-1}(B-A)B^{-1}$,
\[G-\vG=G\begin{pmatrix} (-\vz+z)\Id
& 0 \\ 0 & -\vT^{-1}+T^{-1} \end{pmatrix} \vG.\]
Hence, as $z-\vz=E_*-\vE_*$,
\begin{equation}\label{eq:GijvGij}
G_{ij}-\vG_{ij}=\sum_k G_{ik}\vG_{jk}(E_*-\vE_*)-\sum_\a \frac{G_{i\a}}{t_\a}
\frac{\vG_{j\a}}{\vt_\a}(t_\a-\vt_\a).
\end{equation}
This holds by continuity when $T$ is singular, using the form (\ref{eq:Galt}). 

The following lemma allows us to ``remove the check'' in the computation.
\begin{lemma}\label{lemma:removecheck}
Suppose the assumptions of Theorem \ref{thm:resolventcompare} hold.
Let $\Psi=N^{-1/3+3\eps}$. Then for any $y \in [s_1,s_2]$,
$i,j \in \I_N$ (possibly equal), and $\a \in \I_M$,
\[G_{ij}-\vG_{ij} \prec N^{2(-1/3+\eps)}, \quad
m_N-\vm_N \prec N^{2(-1/3+\eps)}, \quad
\X-\vX \prec \Psi.\]
\end{lemma}
\begin{proof}
Applying Lemma \ref{lemma:resolventbounds} for both $G$ and $\vG$, and also the
definition of swappability and Lemma \ref{lemma:Escaleclose}, we have from
(\ref{eq:GijvGij})
\[G_{ij}-\vG_{ij} \prec |E_*-\vE_*| \cdot N \cdot N^{2(-1/3+\eps)}
+\sum_\a |t_\a-\vt_\a| N^{2(-1/3+\eps)} \prec N^{2(-1/3+\eps)}.\]
(The contribution from $k=i$ or $k=j$ in the first sum of (\ref{eq:GijvGij})
is of lower order.)
Taking $i=j$ and averaging over $\I_N$ yields the second statement,
and integrating over $y \in [s_1,s_2]$ and applying Lemma
\ref{lemma:integralbound} yields the third.
\end{proof}

In many cases, we may strengthen the above lemma by an additional factor
of $\Psi$ if we take an expectation. (This may be seen by taking $Y=Y\pa=1$ and
$a=0$ in Lemma \ref{lemma:removecheck2} below.)
To take expectations of remainder terms, we will invoke Lemma
\ref{lemma:expectationdomination} combined with the following basic bound:
\begin{lemma}\label{lemma:crudeGXbound}
Under the assumptions of Theorem \ref{thm:resolventcompare},
let $P \equiv P(z(y))$ be any polynomial in the entries of $X$ and $G$ with
bounded degree, bounded (possibly random)
coefficients, and at most $N^C$ terms for a constant
$C>0$. Then for a constant $C'>0$ and all $y \in [s_1,s_2]$,
we have $\E[|P|] \leq N^{C'}$.
\end{lemma}
\begin{proof}
By the triangle inequality and Holder's inequality, it suffices to consider a
bounded power of a single entry of $G$ or $X$. Then the result follows from
(\ref{eq:mNbound}) and the form (\ref{eq:Galt}) for $G$.
\end{proof}

\begin{lemma}\label{lemma:removecheck2}
Under the assumptions of Theorem \ref{thm:resolventcompare},
let $Y$ be any quantity such that $Y \prec \Psi^a$ for some constant
$a \geq 0$. Suppose that for each $\a \in \I_M$, there exists a quantity $Y\pa$
such that $Y-Y\pa \prec \Psi^{a+1}$, and $Y\pa$ is independent of row $\a$
of $X$. Suppose furthermore that $\E[|Y|^\ell] \leq N^{C_\ell}$ for each integer
$\ell>0$ and some constants $C_1,C_2,\ldots>0$.

Then, for all $i,j \in \I_N$ (possibly equal) and $y \in [s_1,s_2]$,
\[\E[(G_{ij}-\vG_{ij})Y] \prec N^{2(-1/3+\eps)}\Psi^{a+1} \prec \Psi^{a+3},\]
\[\E[(m_N-\vm_N)Y] \prec N^{2(-1/3+\eps)}\Psi^{a+1} \prec \Psi^{a+3}.\]
\[\E[(\X-\vX)Y] \prec \Psi^{a+2}.\]
\end{lemma}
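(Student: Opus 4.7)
The plan is to start from identity (\ref{eq:GijvGij}) combined with the expansion of $E_*-\vE_*$ from Lemma \ref{lemma:Escaleclose}(b). Substituting $E_*-\vE_*=\frac{1}{N}\sum_\a(t_\a-\vt_\a)s_\a\vs_\a+O(1/N^2)$ recombines the two sums in (\ref{eq:GijvGij}) into a single sum indexed by $\a$:
\[G_{ij}-\vG_{ij}=\sum_{\a=1}^M(t_\a-\vt_\a)B_\a+R,\qquad B_\a:=\frac{s_\a\vs_\a}{N}\sum_k G_{ik}\vG_{jk}-\frac{G_{i\a}\vG_{j\a}}{t_\a\vt_\a}.\]
Here $R=O(1/N^2)\cdot\sum_k G_{ik}\vG_{jk}$ is $\prec N^{-5/3+2\eps}$ by Lemma \ref{lemma:resolventbounds} (which yields $\sum_k G_{ik}\vG_{jk}\prec N^{1/3+2\eps}$) and contributes negligibly to the final bound, while the same pointwise bounds give $|B_\a|\prec N^{-2/3+2\eps}$.

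For each $\a$, decompose $Y=Y\pa+(Y-Y\pa)$. The $(Y-Y\pa)$ piece is handled directly: combining $|B_\a|\prec N^{-2/3+2\eps}$, $|Y-Y\pa|\prec\Psi^{a+1}$, and swappability $\sum_\a|t_\a-\vt_\a|<\phi$ gives $\prec N^{-2/3+2\eps}\Psi^{a+1}$, and Lemma \ref{lemma:expectationdomination} (using Lemma \ref{lemma:crudeGXbound} for moments) preserves this bound under expectation.

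The $Y\pa$ piece is the heart of the argument. Since $Y\pa$ is independent of row $\a$ of $X$, we have $\E[(t_\a-\vt_\a)B_\a Y\pa]=(t_\a-\vt_\a)\E[Y\pa\,\E_\a B_\a]$, where $\E_\a$ is the conditional expectation given every row of $X$ except $\a$. The main obstacle is the cancellation $\E_\a B_\a\prec\Psi^3$. For the second half of $B_\a$, I expand via $G_{i\a}=-G_{\a\a}U_\a^i$ with $U_\a^i:=\sum_k G^{(\a)}_{ik}X_{\a k}$, use $G_{\a\a}/t_\a=-s_\a+O_\prec(\Psi)$ from Theorem \ref{thm:locallaw} and Proposition \ref{prop:m0estimates}, and $U_\a^i\prec N^{-1/3+\eps}$ from Lemma \ref{lemma:concentration}, yielding $\frac{G_{i\a}\vG_{j\a}}{t_\a\vt_\a}=s_\a\vs_\a U_\a^i\vU_\a^j+O_\prec(\Psi^3)$. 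Since $s_\a,\vs_\a$ and $G^{(\a)},\vG^{(\a)}$ are $\E_\a$-measurable and $\E[X_{\a k}X_{\a l}]=\delta_{kl}/N$,
\[\E_\a\left[\frac{G_{i\a}\vG_{j\a}}{t_\a\vt_\a}\right]=\frac{s_\a\vs_\a}{N}\sum_k G^{(\a)}_{ik}\vG^{(\a)}_{jk}+O_\prec(\Psi^3).\]
For the first half of $B_\a$, applying the identity $G_{ik}-G^{(\a)}_{ik}=G_{i\a}G_{\a k}/G_{\a\a}$ and its analogue for $\vG$ to $\sum_k G_{ik}\vG_{jk}$, and estimating each resulting error piece via Lemma \ref{lemma:resolventbounds}, gives $\frac{1}{N}\sum_k G_{ik}\vG_{jk}=\frac{1}{N}\sum_k G^{(\a)}_{ik}\vG^{(\a)}_{jk}+O_\prec(\Psi^3)$. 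Taking $\E_\a$ and subtracting, the two leading $\frac{s_\a\vs_\a}{N}\sum_k G^{(\a)}_{ik}\vG^{(\a)}_{jk}$ pieces cancel, so $\E_\a B_\a\prec\Psi^3$. Summing with $|Y\pa|\prec\Psi^a$ and $\sum_\a|t_\a-\vt_\a|<\phi$, the $Y\pa$ contribution is $\prec\Psi^{a+3}$, which is equivalent to $\prec N^{2(-1/3+\eps)}\Psi^{a+1}$ within the $N^\eps$ slack of the $\prec$ relation.

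This proves the first bound. The bound on $\E[(m_N-\vm_N)Y]$ follows by taking $i=j$ and averaging over $i\in\I_N$. For $\E[(\X-\vX)Y]$, Fubini passes the expectation inside the integral defining $\X-\vX$, and applying the second bound pointwise in the integration variable $\ty$ together with $|s_2-s_1|\leq 2N^{-2/3+\eps}$ yields $\prec N\cdot N^{-2/3+\eps}\cdot\Psi^{a+3}=N^{1/3+\eps}\Psi^{a+3}\prec\Psi^{a+2}$, as required.
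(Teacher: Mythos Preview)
Your approach is essentially the paper's: recombine (\ref{eq:GijvGij}) with the expansion (\ref{eq:Eplusdiff}) into $\sum_\a(t_\a-\vt_\a)B_\a$, split off $Y-Y\pa$, and for the $Y\pa$ piece use independence to reduce to a partial-expectation cancellation $\E_\a B_\a\approx 0$. The conceptual steps are all correct.

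There is, however, a bookkeeping gap that breaks the third statement as written. You bound the $Y\pa$ contribution by $\Psi^{a+3}$ and then assert this is ``equivalent to $\prec N^{2(-1/3+\eps)}\Psi^{a+1}$ within the $N^\eps$ slack.'' That is backwards: since $\Psi=N^{-1/3+3\eps}$, one has $\Psi^{a+3}=N^{4\eps}\cdot N^{2(-1/3+\eps)}\Psi^{a+1}$, so $\Psi^{a+3}$ is the \emph{weaker} bound by a fixed factor $N^{4\eps}$ that the stochastic-domination slack cannot absorb ($\eps$ here is fixed by the hypotheses of Theorem~\ref{thm:resolventcompare}, not the arbitrary parameter in the definition of $\prec$). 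Consequently your final arithmetic $N^{1/3+\eps}\Psi^{a+3}\prec\Psi^{a+2}$ is false---it equals $N^{4\eps}\Psi^{a+2}$.

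The fix is to retain the sharper $N^{-1/3+\eps}$ control from Lemmas~\ref{lemma:resolventbounds}, \ref{lemma:concentration}, and \ref{lemma:removesuper} rather than rounding up to $\Psi$ prematurely. Concretely, $G_{\a\a}/t_\a=-s_\a+O_\prec(N^{-1/3+\eps})$ (not merely $O_\prec(\Psi)$), so your expansion of $\tfrac{G_{i\a}\vG_{j\a}}{t_\a\vt_\a}$ and the superscript removal in $\tfrac{1}{N}\sum_k G_{ik}\vG_{jk}$ both carry $O_\prec(N^{3(-1/3+\eps)})$ errors. This gives $\E_\a B_\a\prec N^{3(-1/3+\eps)}$, hence the $Y\pa$ contribution is $\prec \Psi^a N^{3(-1/3+\eps)}\leq N^{2(-1/3+\eps)}\Psi^{a+1}$, and the integration step then correctly yields $N\cdot N^{-2/3+\eps}\cdot N^{2(-1/3+\eps)}\Psi^{a+1}=\Psi^{a+2}$. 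This is exactly the ``$N^{-1/3+\eps}$ versus $\Psi$'' distinction the paper flags in Section~\ref{subsec:basicbounds}.
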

\begin{proof}
Applying (\ref{eq:Eplusdiff}), the bound $N^{-1} \prec \Psi^3$, and
Lemma \ref{lemma:resolventbounds} to (\ref{eq:GijvGij}),
\begin{align*}
(G_{ij}-\vG_{ij})Y&=\sum_k G_{ik}\vG_{jk}(E_*-\vE_*)Y
-\sum_\a \frac{G_{i\a}}{t_\a}\frac{\vG_{j\a}}{\vt_\a}(t_\a-\vt_\a)Y\\
&=\sum_\a (t_\a-\vt_\a) \left(s_\a\vs_\a\frac{1}{N}\sum_k
G_{ik}\vG_{jk}-\frac{G_{i\a}}{t_\a}\frac{\vG_{j\a}}{\vt_\a}\right)Y
+\O(\Psi^{a+5}).
\end{align*}
By swappability and Lemma \ref{lemma:resolventbounds},
the explicit term on the right is of size
$\O(N^{2(-1/3+\eps)}\Psi^a)$. (The contributions from $k=i$ and $k=j$ in the
summation are of lower order.) Applying
the assumption $Y-Y\pa \prec \Psi^{a+1}$
as well as Lemma \ref{lemma:removesuper},
we may replace $Y$ with $Y\pa$, $G_{ik}$ with $G_{ik}\pa$, and $\vG_{jk}$ with
$\vG_{jk}\pa$ above while introducing an $\O(N^{2(-1/3+\eps)}\Psi^{a+1})$
error. Hence,
\begin{align}
(G_{ij}-\vG_{ij})Y&=\sum_\a (t_\a-\vt_\a) \left(s_\a\vs_\a\frac{1}{N}\sum_k
G_{ik}\pa\vG_{jk}\pa-\frac{G_{i\a}}{t_\a}\frac{\vG_{j\a}}{\vt_\a}\right)Y\pa
\nonumber\\
&\hspace{1in}+\O(N^{2(-1/3+\eps)}\Psi^{a+1}).\label{eq:GijvGijY}
\end{align}
Applying the resolvent identities from Lemma \ref{lemma:resolventidentities},
\[\frac{G_{i\a}}{t_\a}
=\frac{G_{\a\a}}{t_\a}\sum_k G_{ik}\pa X_{\a k}
=-\frac{1}{1+t_\a \sum_{p,q} G_{pq}\pa X_{\a p}X_{\a q}}
\sum_k G_{ik}\pa X_{\a k}.\]
Recalling $s_\a=(1+t_\a m_*)^{-1}$, and applying Lemma
\ref{lemma:concentration} and a Taylor expansion of 
$(1+t_\a x)^{-1}$ around $x=m_*$,
\[\frac{G_{i\a}}{t_\a}=-s_\a\sum_k G_{ik}\pa X_{\a k}+\O(N^{2(-1/3+\eps)}),\]
where the explicit term on the right is of size $\O(N^{-1/3+\eps})\prec \Psi$.
A similar expansion holds for $\vG_{j\a}/\vt_\a$. Substituting into
(\ref{eq:GijvGijY}),
\begin{align*}
(G_{ij}-\vG_{ij})Y&=\sum_\a (t_\a-\vt_\a)s_\a\vs_\a \Bigg(\frac{1}{N}\sum_k
G_{ik}\pa\vG_{jk}\pa
-\sum_{k,l} G_{ik}\pa X_{\a k}
\vG_{jl}\pa X_{\a l}\Bigg)Y\pa\\
&\qquad +\O(N^{2(-1/3+\eps)}\Psi^{a+1}).
\end{align*}
Denoting by $\E_\a$ the partial expectation over only row $\a$ of $X$
(i.e.\ conditional on $X_{\beta j}$ for all $\beta \neq \alpha$), we have
\[\E_\a\left[\frac{1}{N}\sum_k G_{ik}\pa\vG_{jk}\pa
-\sum_{k,l} G_{ik}\pa X_{\a k} \vG_{jl}\pa X_{\a l}\right]=0,\]
while the remainder term remains $\O(N^{2(-1/3+\eps)}\Psi^{a+1})$
by Lemma \ref{lemma:expectationdomination}, where the moment condition
of Lemma \ref{lemma:expectationdomination} is
verified by Lemma \ref{lemma:crudeGXbound},
the moment assumption on $Y$, and Cauchy-Schwarz.
Then the first statement follows.
The second statement follows from applying this with $i=j$ and
averaging over $i \in \I_N$. The third statement follows from integrating
over $y \in [s_1,s_2]$ and noting $N^{1/3+\eps}N^{2(-1/3+\eps)}=\Psi$ as in
Lemma \ref{lemma:integralbound}. (If $Y$ also depends on the spectral parameter
$z(y)$, we evaluate $m_N$ and $\vm_N$ at a different parameter $\ty$
and integrate over $\ty$.)
\end{proof}

Finally, we derive a deterministic consequence of swappability and the
scaling condition $\gamma=\vg=1$. In the proof of
\cite{leeschnelli} for a continuous interpolation $T^{(l)}$,
denoting $\dot{t}_\a$ and $\dot{m}_*$ the derivatives with respect to $l$, the
differential analogue of the following lemma is the pair of identities
\[\sum_\a \dot{t}_\a t_\a s_\a^3=N\dot{m}_*,\qquad
\sum_\a \dot{t}_\a t_\a^2 s_\a^4=N\dot{m}_*(A_4-m_*^{-4}).\]

\begin{lemma}\label{lemma:sumrules}
Suppose $T,\vT$ satisfy Assumption \ref{assump:dT}, $E_*,\vE_*$ are associated
regular right edges with scales $\gamma=\vg=1$, and
$(T,E_*)$ and $(\vT,\vE_*)$ are swappable. Define $s_\a=(1+t_\a m_*)^{-1}$,
$\vs_\a=(1+\vt_\a \vm_*)^{-1}$, $A_4=N^{-1}\sum_\a t_\a^4 s_\a^4$,
    \begin{equation}\label{eq:PQ}
        \cP_\a=s_\a \vs_\a (t_\a s_\a+\vt_\a \vs_\a),\qquad
\Q_\a=s_\a \vs_\a (t_\a^2 s_\a^2+t_\a s_\a \vt_\a \vs_\a
+\vt_\a^2 \vs_\a^2).
    \end{equation}
Then for some constant $C>0$, both of the following hold:
\begin{align}
    \left|2N(m_*-\vm_*)-\sum_{\a=1}^M (t_\a-\vt_\a)\cP_\a\right|&\leq C/N
\label{eq:sumrule1}\\
\left|3N(m_*-\vm_*)(A_4-m_*^{-4})
    -\sum_{\a=1}^M (t_\a-\vt_\a)\Q_\a\right|&\leq C/N.\label{eq:sumrule2}
\end{align}
\end{lemma}
\begin{proof}
For (\ref{eq:sumrule1}),
we have from $0=z_0'(m_*)$ applied to $T$ and $\vT$
\begin{equation}\label{eq:sumrule11}
m_*^{-2}-\vm_*^{-2}=\frac{1}{N}\sum_\a t_\a^2 s_\a^2-\vt_\a^2\vs_\a^2.
\end{equation}
The left side may be written as
\begin{equation}\label{eq:sumrule12}
m_*^{-2}-\vm_*^{-2}=(\vm_*-m_*)(\vm_*+m_*)m_*^{-2}\vm_*^{-2}
=2(\vm_*-m_*)m_*^{-3}+O(N^{-2}),
\end{equation}
where the second equality applies $|m_*|,|\vm_*| \asymp 1$ and
$|\vm_*-m_*| \leq C/N$. The right side may be written as
\[\frac{1}{N}\sum_\a t_\a^2 s_\a^2-\vt_\a^2 \vs_\a^2
=\frac{1}{N}\sum_\a (t_\a-\vt_\a)t_\a s_\a^2
+(s_\a^2-\vs_\a^2)t_\a \vt_\a+(t_\a-\vt_\a) \vt_\a \vs_\a^2.\]
Including the identities $(1+t_\a m_*)s_\a=1$ and $(1+\vt_\a \vm_*)\vs_\a=1$,
\begin{align}
&\frac{1}{N}\sum_\a t_\a^2 s_\a^2-\vt_\a^2 \vs_\a^2\nonumber\\
&=\frac{1}{N}\sum_\a (t_\a-\vt_\a)(t_\a s_\a^2(1+\vt_\a \vm_*)\vs_\a
+\vt_\a \vs_\a^2 (1+t_\a m_*)s_\a)+(s_\a^2-\vs_\a^2)t_\a \vt_\a\nonumber\\
&=\frac{1}{N}\sum_\a (t_\a-\vt_\a)s_\a \vs_\a
(t_\a s_\a+\vt_\a \vs_\a+t_\a s_\a \vt_\a \vm_*
+\vt_\a \vs_\a t_\a m_*)+(s_\a^2-\vs_\a^2)t_\a \vt_\a\nonumber\\
&\equiv \frac{1}{N}\sum_\a (t_\a-\vt_\a)s_\a \vs_\a
(t_\a s_\a+\vt_\a \vs_\a)+R_\a,\label{eq:sumrule13}
\end{align}
where we define $R_\a$ as the remainder term. Noting that
\[s_\a^2-\vs_\a^2=(s_\a-\vs_\a)(s_\a+\vs_\a)
=(\vt_\a \vm_*-t_\a m_*)s_\a\vs_\a(s_\a+\vs_\a),\]
we have
\begin{align*}
R_\a&=t_\a\vt_\a s_\a \vs_\a(t_\a s_\a\vm_*+t_\a\vs_\a m_*
-\vt_\a s_\a\vm_*-\vt_\a\vs_\a m_*\\
&\qquad +\vt_\a s_\a \vm_*+\vt_\a\vs_\a\vm_*
-t_\a s_\a m_*-t_\a \vs_\a m_*)\\
&=t_\a s_\a \vt_\a \vs_\a(\vm_*-m_*)(t_\a s_\a+\vt_\a \vs_\a).
\end{align*}
Then, denoting
$A_{i,j}=N^{-1}\sum_\a t_\a^i s_\a^i \vt_\a^j \vs_\a^j$
and applying Lemma \ref{lemma:Escaleclose}(a),
\[\frac{1}{N}\sum_\a R_\a=(\vm_*-m_*)(A_{2,1}+A_{1,2})
=2(\vm_*-m_*)A_{3,0}+O(N^{-2}).\]
By the scaling $\gamma=1$, we have $A_{3,0}=1+m_*^{-3}$.
Combining this with (\ref{eq:sumrule11}), (\ref{eq:sumrule12}), and
(\ref{eq:sumrule13}) and multiplying by $N$ yields (\ref{eq:sumrule1}).

The identity (\ref{eq:sumrule2}) follows similarly:
The condition $\gamma=\vg$ implies
\[m_*^{-3}-\vm_*^{-3}=\frac{1}{N}\sum_\a t_\a^3 s_\a^3-\vt_\a^3\vs_\a^3.\]
The left side is
\[(\vm_*-m_*)(m_*^2+m_*\vm_*+\vm_*^2)m_*^{-3}\vm_*^{-3}
=3(\vm_*-m_*)m_*^{-4}+O(N^{-2}),\]
while the right side is
\begin{align*}
&\frac{1}{N}\sum_\a t_\a^3 s_\a^3-\vt_\a^3\vs_\a^3\\
&=\frac{1}{N}\sum_\a (t_\a-\vt_\a)t_\a^2 s_\a^3
+(s_\a^2-\vs_\a^2)t_\a^2 s_\a \vt_\a
+(t_\a-\vt_\a)t_\a s_\a \vt_\a \vs_\a^2\\
&\hspace{0.5in}+(s_\a-\vs_\a) t_\a \vt_\a^2 \vs_\a^2
+(t_\a-\vt_\a) \vt_\a^2 \vs_\a^3\\
&=\frac{1}{N}\sum_\a (t_\a-\vt_\a)\Big(t_\a^2 s_\a^3(1+\vt_\a \vm_*)\vs_\a
+t_\a s_\a \vt_\a \vs_\a^2(1+t_\a m_*)s_\a
+\vt_\a^2 \vs_\a^3(1+t_\a m_*)s_\a\Big)\\
&\hspace{0.5in}+(s_\a-\vs_\a)((s_\a+\vs_\a)t_\a^2 s_\a \vt_\a
+t_\a \vt_\a^2 \vs_\a^2)\\
&=\frac{1}{N}\sum_\a (t_\a-\vt_\a)s_\a \vs_\a(t_\a^2 s_\a^2
+t_\a s_\a \vt_\a \vs_\a+\vt_\a^2 \vs_\a^2)\\
&\hspace{0.5in}
+t_\a s_\a \vt_\a \vs_\a(\vm_*-m_*)(t_\a^2 s_\a^2+t_\a s_\a \vt_\a \vs_\a
+\vt_\a^2 \vs_\a^2)\\
&=\left(\frac{1}{N}\sum_\a (t_\a-\vt_\a)\Q_\a\right)+3(\vm_*-m_*)A_4+O(N^{-2}).
\end{align*}
Combining the above and multiplying by $N$ yields (\ref{eq:sumrule2}).
\end{proof}

\subsection{Proof of resolvent comparison}
We use the notation of Sections \ref{subsec:basicbounds} and
\ref{subsec:swappablebounds}.

The proof of Theorem \ref{thm:resolventcompare} is a lengthy computation
using the preceding lemmas. To help organize the various terms which appear in
this computation, we denote them as $\X_{k,*}$ for $k=3,4$ and $*$ a label
describing the form of this term. Each $\X_{k,*}$ is of size 
at most $\O(\Psi^k)$, as may be verified from Lemmas \ref{lemma:resolventbounds}
and \ref{lemma:integralbound}. In the label $*$: 1 indicates a term $m_N-m_*$,
2, 3, or 4 indicate a product of 2, 3, or 4 resolvent entries $G_{ij}$,
the mark
$'$ indicates that a resolvent entry is squared, and the superscript $\sim$
denotes that this quantity is contained inside $\Im \int$.
All of these terms depend implicitly on a fixed index
$i \in \I_N$ and $y \in [s_1,s_2]$, which we omit for notational brevity.
{\allowdisplaybreaks
\begin{align*}
\X_{3,12'}&=K'(\X)(m_N-m_*)\frac{1}{N}\sum_k G_{ik}^2\\
\X_{3,3}&=K'(\X)\frac{1}{N^2}\sum_{k,l} G_{ik}G_{kl}G_{il}\\
\X_{3,2\t{2}}&=K''(\X)\frac{1}{N^2}\sum_{j,k,l} G_{ik}G_{il}
\Im \int \tG_{jk}\tG_{jl}\\
\X_{3,2'\t{2}'}&=K''(\X)\frac{1}{N^2}\sum_{j,k,l} G_{ik}^2
\Im \int \tG_{jl}^2\\
\X_{4,22'}&=K'(\X)(m_N-m_*)^2\frac{1}{N}\sum_k G_{ik}^2\\
\X_{4,13}&=K'(\X)(m_N-m_*)\frac{1}{N^2}\sum_{k,l}G_{ik}G_{kl}G_{il}\\
\X_{4,4}&=K'(\X)\frac{1}{N^3}\sum_{j,k,l}G_{ij}G_{jk}G_{kl}G_{il}\\
\X_{4,4'}&=K'(\X)\frac{1}{N^3}\sum_{j,k,l}G_{ik}^2G_{jl}^2\\
\X_{4,12\t{2}}&=K''(\X)(m_N-m_*)\frac{1}{N^2}\sum_{j,k,l}G_{ik}G_{il}
\Im\int\tG_{jk}\tG_{jl}\\
\X_{4,12'\t{2}'}&=K''(\X)(m_N-m_*)\frac{1}{N^2}\sum_{j,k,l}G_{ik}^2
\Im\int\tG_{jl}^2\\
\X_{4,3\t{2}}&=K''(\X)\frac{1}{N^3}\sum_{j,p,q,r}G_{ip}G_{iq}G_{pr}
\Im\int\tG_{jq}\tG_{jr}\\
\X_{4,3'\t{2}}&=K''(\X)\frac{1}{N^3}\sum_{j,p,q,r}G_{ir}^2G_{pq}
\Im\int\tG_{jp}\tG_{jq}\\
\X_{4,3\t{2}'}&=K''(\X)\frac{1}{N^3}\sum_{j,p,q,r}G_{iq}G_{ir}G_{qr}
\Im\int\tG_{jp}^2\\
\X_{4,2\t{12}}&=K''(\X)\frac{1}{N^2}\sum_{j,k,l}G_{ik}G_{il}
\Im\int(\tm_N-m_*)\tG_{jk}\tG_{jl}\\
\X_{4,2'\t{12}'}&=K''(\X)\frac{1}{N^2}\sum_{j,k,l}G_{il}^2
\Im\int(\tm_N-m_*)\tG_{jk}^2\\
\X_{4,2\t{3}}&=K''(\X)\frac{1}{N^3}\sum_{j,p,q,r}G_{ip}G_{iq}
\Im\int\tG_{jp}\tG_{jr}\tG_{qr}\\
\X_{4,2\t{3}'}&=K''(\X)\frac{1}{N^3}\sum_{j,p,q,r}G_{ip}G_{iq}
\Im\int\tG_{jr}^2\tG_{pq}\\
\X_{4,2'\t{3}}&=K''(\X)\frac{1}{N^3}\sum_{j,p,q,r}G_{ip}^2
\Im\int\tG_{jq}\tG_{jr}\tG_{qr}\\
\X_{4,2\t{2}\t{2}}&=K'''(\X)\frac{1}{N^3}\sum_{j,k,p,q,r}G_{ip}G_{iq}
\left(\Im\int\tG_{jp}\tG_{jr}\right)\left(\Im \int \tG_{kq}\tG_{kr}\right)\\
\X_{4,2'\t{2}\t{2}}&=K'''(\X)\frac{1}{N^3}\sum_{j,k,p,q,r}G_{ip}^2
\left(\Im\int\tG_{jq}\tG_{jr}\right)\left(\Im \int \tG_{kq}\tG_{kr}\right)\\
\X_{4,2\t{2}\t{2}'}&=K'''(\X)\frac{1}{N^3}\sum_{j,k,p,q,r}G_{ip}G_{iq}
\left(\Im\int\tG_{jp}\tG_{jq}\right)\left(\Im \int \tG_{kr}^2\right)\\
\X_{4,2'\t{2}'\t{2}'}&=K'''(\X)\frac{1}{N^3}\sum_{j,k,p,q,r}G_{ip}^2
\left(\Im\int\tG_{jq}^2\right)\left(\Im \int \tG_{kr}^2\right)
\end{align*}}
Define the aggregate quantities
\begin{align*}
\X_3&=\X_{3,12}+\X_{3,3}+\X_{3,2\t{2}}\\
\X_4&=3\X_{4,22'}+6\X_{4,13}+12\X_{4,4}+3\X_{4,4'}+4\X_{4,12\t{2}}+8\X_{4,3\t{2}}+4\X_{4,3'\t{2}}\\
&\hspace{0.5in}
+2\X_{4,2\t{12}}+2\X_{4,2\t{3}'}+4\X_{4,2\t{3}}+4\X_{4,2\t{2}\t{2}},\\
\X_4^-&=\X_{4,2\t{12}}+\X_{4,2\t{3}'}+2\X_{4,2\t{3}}-
\X_{4,12\t{2}}-\X_{4,3'\t{2}}-2\X_{4,3\t{2}}.
\end{align*}
Theorem \ref{thm:resolventcompare} is a consequence of the following two
technical results.

\begin{lemma}[Decoupling]\label{lemma:decoupling}
Under the assumptions of Theorem \ref{thm:resolventcompare},
denote $\X_\lambda=\lambda \X+(1-\lambda)\vX$ for $\lambda \in [0,1]$.
For fixed $i \in \I_N$ and $y \in [s_1,s_2]$, define $\X_3$, $\X_4$, and
$\X_4^-$ as above. For fixed $\a \in \I_M$, let
$s_\a=(1+t_\a m_*)^{-1}$ and $\vs_\a=(1+\vt_\a \vm_*)^{-1}$, define
$\cP_\a$ and $\Q_\a$ as in (\ref{eq:PQ}), and
\[\cR_\a=s_\a\vs_\a(t_\a s_\a-\vt_\a \vs_\a)^2.\]
Then
\begin{align*}
&\int_0^1 \E\left[K'(\X_\lambda)\frac{G_{i\a}}{t_\a}
\frac{\vG_{i\a}}{\vt_\a}\right]\,d\lambda\\
&=s_\a\vs_\a \int_0^1 \E\left[K'(\X_\lambda)\frac{1}{N}\sum_k G_{ik}
\vG_{ik}\right]\,d\lambda
-\cP_\a\E[\X_3]
+\tfrac{1}{3}\Q_\a\E[\X_4]+\tfrac{1}{3}\cR_\a\E[\X_4^-]+\O(\Psi^5).
\end{align*}
\end{lemma}
\begin{lemma}[Optical theorems]\label{lemma:optical}
Under the assumptions of Theorem \ref{thm:resolventcompare}, for fixed $i \in
\I_N$ and $y \in [s_1,s_2]$, define $\X_3$ and $\X_4$ as above.
Let $A_4=N^{-1}\sum_\a t_\a^4 s_\a^4$. Then
\[2\Im \E[\X_3]=(A_4-m_*^{-4})\Im \E[\X_4]+\O(\Psi^5).\]
\end{lemma}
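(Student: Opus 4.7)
The plan is to adapt directly Lee--Schnelli's proof of their optical theorems (their Lemma~B.1), following the comment in the introduction that, unlike the decoupling lemma, the optical theorems here involve only a single resolvent $G$ (not the pair $G,\vG$). All of the required inputs --- the local law (Theorem~\ref{thm:locallaw}), the deterministic bounds on $m_0$ (Proposition~\ref{prop:m0estimates}), and the entrywise and concentration bounds (Lemmas~\ref{lemma:resolventbounds}, \ref{lemma:concentration}) --- have already been extended to $T$ with mixed-sign diagonal entries, so the estimates in Lee--Schnelli's argument go through at regular edges in our generalized sense with only notational changes.

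The core algebraic input is the approximate self-consistent equation for $m_N$: starting from the Schur-complement identity of Lemma~\ref{lemma:resolventidentities}(a), averaging over $i\in\I_N$, and applying Lemma~\ref{lemma:concentration}, one obtains
\[
z \;=\; -\frac{1}{m_N} + \frac{1}{N}\sum_{\a=1}^M \frac{t_\a}{1+t_\a m_N} + \O(\Psi^2).
\]
Using $z_0(m_*)=E_*$, $z_0'(m_*)=0$, $z_0''(m_*)=2$ (from the normalization $\gamma=1$), and $z_0'''(m_*)=-6(A_4-m_*^{-4})$, a cubic Taylor expansion yields
\[
z-E_* \;=\; (m_N-m_*)^2 - (A_4-m_*^{-4})(m_N-m_*)^3 + \O(\Psi^4).
\]
Taking imaginary parts and multiplying through by $K'(\X)$ or $K''(\X)$ with an additional resolvent factor, this identifies $(A_4-m_*^{-4})$ as the proportionality constant relating $\Im\E$ of cubic and quartic resolvent polynomials --- exactly the coefficient appearing in the claim.

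The remaining work is to reduce each of the specific terms in $\X_3$ and $\X_4$ to this common form. For the non-integrated terms built from powers of $G$ entries at a single spectral point (such as $\X_{3,3}=K'(\X)\,N^{-2}(G_N^3)_{ii}$ and $\X_{4,4}=K'(\X)\,N^{-3}(G_N^4)_{ii}$), one applies the local law $G_{ij}=\Pi_{ij}+\O(\Psi)$ together with further Schur-complement expansions to rewrite each trace as a polynomial in $(m_N-m_*)$ modulo $\O(\Psi^5)$. For the integrated terms involving $K''(\X)$ or $K'''(\X)$, one uses $\sum_j \tilde G_{jk}\tilde G_{jl} = (\tilde G^2)_{kl} = \partial_{z}\tilde G_{kl}$ to collapse the $y$-integral into a boundary difference, and replaces $K^{(k)}(\X)$ by $K^{(k)}(\E[\X])$ using the local law approximation $\X\approx N\!\int\!\Im \tm_0$. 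The numerical coefficients $3,6,12,3,4,8,4,2,2,4,4$ in the definition of $\X_4$ then arise from matching the combinatorics of Taylor expansion across the different choices of indices and $K$-derivative orders.

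The main obstacle, and the only place where the proof must be checked line by line against Lee--Schnelli, is the careful bookkeeping of the remainder terms to ensure they all remain at the $\O(\Psi^5)$ scale. This requires repeatedly invoking Lemma~\ref{lemma:expectationdomination} to gain an extra factor of $\Psi$ when taking expectations of centered quantities (thereby upgrading pointwise bounds of size $\O(\Psi^4)$ to bounds in expectation of size $\O(\Psi^5)$), tracking the integrals over $y\in[s_1,s_2]$ via Lemma~\ref{lemma:integralbound}, and controlling the crosstalk between the integrated and non-integrated structures. None of these bookkeeping steps rely on positivity of $T$: the only role of $t_\a$ in each estimate is via the bounded factors $|t_\a|<C$ and $|1+t_\a m_*|>c$ guaranteed by Assumption~\ref{assump:dT} and Proposition~\ref{prop:basicregbounds}. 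Consequently the argument transfers verbatim and yields the stated bound.
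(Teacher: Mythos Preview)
Your overall plan to follow Lee--Schnelli is correct in spirit and matches the paper's own approach, which also defers to \cite{leeschnelli} and records the intermediate identities (\ref{eq:22optical})--(\ref{eq:newoptical}) that must be combined. However, two of your specific mechanisms are wrong, and the second one is a genuine gap.

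First, the step ``replaces $K^{(k)}(\X)$ by $K^{(k)}(\E[\X])$ using the local law approximation $\X\approx N\!\int\!\Im \tm_0$'' does not work. The local law gives $m_N-m_0\prec (N\eta)^{-1}$, so $\X-N\!\int\!\Im\tm_0\prec N\cdot |s_2-s_1|\cdot (N\eta)^{-1}\asymp N^{2\eps}$, which is \emph{not} small. Indeed $\X$ fluctuates at order one---this is precisely the Tracy--Widom scale---so $K^{(k)}(\X)$ cannot be replaced by a deterministic constant. Any such replacement destroys the very information the optical theorems are meant to capture. The correct handling, both here and in \cite{leeschnelli,leeschnelliwigner}, is to Taylor-expand $K^{(k)}(\X)$ around the random quantity $K^{(k)}(\X\pa)$, apply the partial expectation $\E_\a$ over row $\a$ of $X$, and keep the random $K^{(k)}(\X)$ factors inside the expectation throughout. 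This is exactly the decoupling-lemma machinery applied to each of $\X_{3,12'}$, $\X_{3,3}$, $\X_{3,2\t{2}}$ separately, yielding relations like (\ref{eq:32optical}), (\ref{eq:33optical}), and (\ref{eq:newoptical}) among the $\E[\X_{k,*}]$; the lemma then follows by linear combination.

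Second, while the identity $\sum_j \tG_{jk}\tG_{jl}=(\tG^2)_{kl}=\partial_z\tG_{kl}$ is correct, collapsing the $\ty$-integral to a boundary difference does not produce the needed cancellations: the outer factors $G_{ik}G_{il}$ live at a different spectral parameter $y$, and the boundary values at $s_1,s_2$ have no reason to combine with them into the specific $\X_4$ aggregate. The paper does not use this reduction; the integrated terms are handled by the same $\E_\a$/Taylor-expansion route as the non-integrated ones.
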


Lemma \ref{lemma:decoupling} generalizes \cite[Lemma 6.2]{leeschnelli}
to a swappable pair. We will present its proof in
Section \ref{subsec:decoupling}. We introduce the interpolation
$\X_\lambda=\lambda \X+(1-\lambda)\vX$ as a device to bound $K(\X)-K(\vX)$.
(This is different from a continuous interpolation between the entries of
$T$ and $\vT$.) Let us make several additional remarks:
\begin{enumerate}[1.]
\item The proof in \cite{leeschnelli} requires this lemma
in ``differential form'', where $T=\vT$. In this case, we have
$G=\vG$, $\X_\lambda=\X$ for every $\lambda \in [0,1]$,
$s_\a=\vs_\a$, and $t_\a=\vt_\a$. Then the integral over $\lambda$ is
irrelevant, and Lemma \ref{lemma:decoupling} reduces to the full version of
\cite[Lemma 6.2]{leeschnelli}.
\item The term $\X_4^-$ does not appear in
\cite{leeschnelli} and is not canceled by the optical theorems of Lemma
\ref{lemma:optical}. (When $T=\vT$, we have $\cR_\a=0$ so this term is not
present.) The cancellation instead occurs by symmetry of its definition,
upon integrating over $y$: Momentarily writing $\X_{k,*}$ as $\X_{k,*}(y)$,
and noting that $K(\X)$ is real-valued, we obtain
\begin{equation}\label{eq:4minuscancellation}
\Im \int \X_{4,2\t{12}}(\ty) d\ty=\Im \int \X_{4,12\t{2}}(\ty)d\ty
\end{equation}
from the symmetric definition of these two terms. A similar cancellation
occurs for the pairs $(\X_{4,2\t{3}'},\X_{4,3'\t{2}})$ and
$(\X_{4,2\t{3}},\X_{4,3\t{2}})$ which comprise $\X_4^-$.
\item An important simplification in the proof
is that we may use Lemmas \ref{lemma:removecheck} and
\ref{lemma:removecheck2} to convert $\O(\Psi^3)$ and $\O(\Psi^4)$ terms to
involve only $G$ and not $\vG$---hence $\X_3,\X_4,\X_4^-$ are defined only
by $T$ and not $\vT$.
\end{enumerate}

The other technical ingredient, Lemma \ref{lemma:optical}, is identical to
the full version of \cite[Lemma B.1]{leeschnelli}, as the terms $\X_3$ and
$\X_4$ depend only on the single matrix $T$. We briefly discuss the
breakdown of its proof in Section \ref{subsec:optical}.

In \cite{leeschnelli}, for expositional
clarity, these lemmas were stated and proven only in the special case
$K' \equiv 1$. Full proofs were presented for an analogous
deformed Wigner model in \cite{leeschnelliwigner}. Although more cumbersome, we 
will demonstrate the full proof of Lemma \ref{lemma:decoupling}
for general $K$ in Section \ref{subsec:decoupling},
as much of the additional complexity in our calculation
due to two resolvents $G$ and $\vG$ arises
from the interpolation $\X_\lambda$ and the Taylor expansion of $K'$.

We establish Theorem \ref{thm:resolventcompare} using the above two results:
\begin{proof}[Proof of Theorem \ref{thm:resolventcompare}]
We write
\begin{equation}\label{eq:KXKcX}
K(\X)-K(\vX)=\int_0^1 \frac{d}{d\lambda} K(\X_\lambda) d\lambda
=\int_0^1 K'(\X_\lambda)(\X-\vX) d\lambda.
\end{equation}
Recalling $\X=\sum_i \Im \int \tG_{ii}$
and applying (\ref{eq:GijvGij}),
\[\X-\vX=\sum_i \Im \int \left(\sum_k \tG_{ik} \tvG_{ik}(E_*-\vE_*)
-\sum_\a \frac{\tG_{i\a}}{t_\a}\frac{\tvG_{i\a}}{\vt_\a}(t_\a-\vt_\a)\right).\]
($\tG$ and $\tvG$ denote $G$ and $\vG$ evaluated at the variable of integration
$\ty$.) Further applying (\ref{eq:Eplusdiff}),
Lemma \ref{lemma:resolventbounds}, and the trivial bound
$N^{-2/3+\eps} \prec \Psi^2$,
\[\X-\vX=\sum_i \Im \int \sum_\a (t_\a-\vt_\a)\left(s_\a \vs_\a \frac{1}{N}
\sum_k \tG_{ik} \tvG_{ik}-\frac{\tG_{i\a}}{t_\a}\frac{\tvG_{i\a}}{\vt_\a}
\right)+\O(\Psi^4).\]
Applying this to (\ref{eq:KXKcX}), taking the expectation,
exchanging orders of summation and integration, and noting that
$K'(\X_\lambda)$ is real,
\begin{align*}
&\E[K(\X)-K(\vX)]\\
&=\sum_i \sum_\a (t_\a-\vt_\a)
\Im \int \int_0^1 \E\Bigg[K'(\X_\lambda)
\Bigg(s_\a \vs_\a \frac{1}{N}
\sum_k \tG_{ik} \tvG_{ik}
-\frac{\tG_{i\a}}{t_\a}\frac{\tvG_{i\a}}{\vt_\a}\Bigg)
\Bigg]\,d\lambda\,d\ty+\O(\Psi^4),
\end{align*}
where the expectation of the remainder term is still $\O(\Psi^4)$ by Lemmas
\ref{lemma:expectationdomination} and \ref{lemma:crudeGXbound}.
Denoting by $\tX_3(i)$, $\tX_4(i)$, and $\tX_4^-(i)$ the quantities
$\X_3$, $\X_4$, and $\X_4^-$ defined by $\ty$ and the outer index of summation
$i$, Lemma \ref{lemma:decoupling} implies
\begin{align*}
&\E[K(\X)-K(\vX)]\\
&=\sum_i \sum_\a (t_\a-\vt_\a)
\Im \int (\cP_\a\E[\tX_3(i)]
-\tfrac{1}{3}\Q_\a\E[\tX_4(i)]-\tfrac{1}{3}
\cR_\a\E[\vX_4^-(i)])d\ty+\O(N^{1/3+\eps}\Psi^5),
\end{align*}
where the error is $N^{1/3+\eps}\Psi^5$ because
$\sum_\a |t_\a-\vt_\a| \leq C$ and the range of integration is contained
in $[-N^{-2/3+\eps},N^{-2/3+\eps}]$. We note, from the identity
(\ref{eq:4minuscancellation}) and the analogous cancellation for the other
two pairs of terms, that $\Im \int \tX_4^-(i) d\ty=0$,
so this term vanishes. Then, applying Lemma \ref{lemma:optical},
\begin{align}
&\E[K(\X)-K(\vX)]\nonumber\\
&=\sum_i \sum_\a (t_\a-\vt_\a)\left(\cP_\a
\frac{A_4-m_*^{-4}}{2}-\frac{\Q_\a}{3}\right)
\Im \int \E[\tX_4(i)] d\ty+\O(N^{1/3+\eps}\Psi^5).
\label{eq:finaleq}
\end{align}
Finally, applying Lemma \ref{lemma:sumrules}, we have
\begin{equation}\label{eq:finalsumrule}
\sum_\a (t_\a-\vt_\a)\left(\cP_\a
\frac{A_4-m_*^{-4}}{2}-\frac{\Q_\a}{3}\right) \leq C/N.
\end{equation}
Thus the first term of (\ref{eq:finaleq}) is of size
$\O(N \cdot 1/N \cdot N^{-2/3+\eps} \cdot \Psi^4)$, which is of smaller
order than the remainder $N^{1/3+\eps}\Psi^5$. (In 
\cite{leeschnelli} for the differential version of Lemma \ref{lemma:decoupling},
this first term is zero due to the exact cancellation
of the analogue of (\ref{eq:finalsumrule}).)
Hence $\E[K(\X)-K(\vX)] \prec N^{1/3+\eps}\Psi^5=N^{-4/3+16\eps}$.
\end{proof}

\subsection{Proof of decoupling lemma}\label{subsec:decoupling}
In this section, we prove Lemma \ref{lemma:decoupling}.
We will implicitly use the resolvent bounds of
Lemma \ref{lemma:resolventbounds} throughout.

{\bf Step 1:} Consider first a fixed value $\lambda \in [0,1]$.
Let $\E_\a$ denote the partial expectation over row $\a$ of $X$
(i.e.\ conditional on all $X_{\beta j}$ for $\beta \neq \alpha$).
In anticipation of computing $\E_\a$ for the quantity on the left,
we expand
\[K'(\X_\lambda)\frac{G_{i\a}}{t_\a}\frac{\vG_{i\a}}{\vt_\a}\]
as a polynomial of entries of row $\a$ of $X$,
with coefficients independent of all entries in this row.

Applying the resolvent identities,
\[\frac{G_{i\a}}{t_\a}=\frac{G_{\a\a}}{t_\a}\sum_k G_{ik}\pa X_{\a k}
=-\frac{1}{1+t_\a \sum_{p,q} G_{pq}\pa X_{\a p}X_{\a q}}\sum_k G_{ik}\pa
X_{\a k}.\]
Applying Lemma \ref{lemma:concentration} and
a Taylor expansion of the function $(1+t_\a x)^{-1}$ around $x=m_*$,
\begin{align}
\frac{G_{i\a}}{t_\a}&=-s_\a\sum_k G_{ik}\pa X_{\a k}
+t_\a s_\a^2\left(\sum_{p,q} G_{pq}\pa X_{\a p}X_{\a
q}-m_*\right)\sum_k G_{ik}\pa X_{\a k}\nonumber\\
&\quad -t_\a^2s_\a^3
\left(\sum_{p,q} G_{pq}\pa X_{\a p}X_{\a q}-m_*\right)^2
\sum_k G_{ik}\pa X_{\a k}+\O(\Psi^4)\nonumber\\
&\equiv U_1+U_2+U_3+\O(\Psi^4),\label{eq:Giata}
\end{align}
where we defined the three explicit terms of sizes $\O(\Psi),\O(\Psi^2),\O(\Psi^3)$
as $U_1,U_2,U_3$. Similarly
\begin{align}
\frac{\vG_{i\a}}{\vt_\a}=\vU_1+\vU_2+\vU_3+\O(\Psi^4),\label{eq:vGiata}
\end{align}
where $\vU_i$ are defined analogously with $\vs_\a,\vt_\a,\vm_*,\vG$ in place of
$s_\a,t_\a,m_*,G$.

For $K'(\X_\lambda)$, define
$\X_\lambda\pa=\lambda\X\pa+(1-\lambda)\vX\pa$ and note from Lemma
\ref{lemma:removesuper} that
$\X_\lambda-\X_\lambda\pa \prec \Psi$. Taylor expanding $K'(x)$ around
$x=\X_\lambda\pa$,
\begin{align}
K'(\X_\lambda)&=K'(\X_\lambda\pa)+K''(\X_\lambda\pa)
(\X_\lambda-\X_\lambda\pa)
+\frac{K'''(\X_\lambda\pa)}{2}
(\X_\lambda-\X_\lambda\pa)^2+\O(\Psi^3).\label{eq:KXlambda}
\end{align}
Applying the definition of $\X,\X\pa$ and the resolvent identities,
\begin{align*}
\X-\X\pa&=\Im \int \sum_j(\tG_{jj}-\tG_{jj}\pa)
=\Im \int \sum_j \frac{\tG_{j\a}^2}{\tG_{\a\a}}
=\Im \int \tG_{\a\a} \sum_{j,p,q} \tG_{jp}\pa X_{\a p}
\tG_{jq}\pa X_{\a q}.
\end{align*}
Further applying the resolvent identity for $\tG_{\a\a}$, a Taylor
expansion as above, and Lemma \ref{lemma:integralbound},
\begin{align}
\X-\X\pa&= -t_\a s_\a \Im \int \sum_{j,p,q}\tG_{jp}\pa X_{\a p}
\tG_{jq}\pa X_{\a q}\nonumber\\
&\quad +t_\a^2 s_\a^2 \Im \int \sum_{r,s} \left(\tG_{rs}\pa X_{\a r}X_{\a
s}-m_*\right)\sum_{j,p,q}\tG_{jp}\pa X_{\a p}
\tG_{jq}\pa X_{\a q}+\O(\Psi^3)\nonumber\\
&\equiv V_1+V_2+\O(\Psi^3),\label{eq:XXa}
\end{align}
where $V_1 \prec \Psi$ and $V_2 \prec \Psi^2$.
Analogously we may write
\begin{equation}
\vX-\vX\pa=\vV_1+\vV_2+\O(\Psi^3),\label{eq:cXXa}
\end{equation}
where $\vV_1,\vV_2$ are defined with $\vs_\a,\vt_\a,\vm_*,\vG$
in place of $s_\a,t_\a,m_*,G$.
Substituting (\ref{eq:XXa}) and (\ref{eq:cXXa})
into (\ref{eq:KXlambda}), and combining with
(\ref{eq:Giata}) and (\ref{eq:vGiata}), we obtain
\begin{equation}\label{eq:Wexpansion}
K'(\X_\lambda)\frac{G_{i\a}}{t_\a}\frac{\vG_{i\a}}{\vt_\a}
=W_2+W_3+W_4+\O(\Psi^5)
\end{equation}
where the $\O(\Psi^2),\O(\Psi^3),\O(\Psi^4)$ terms are respectively
\begin{align*}
    W_2&=K'(\X_\lambda\pa)U_1\vU_1,\\
    W_3&=K'(\X_\lambda\pa)(U_2\vU_1+U_1\vU_2)+K''(\X_\lambda\pa)
(\lambda V_1+(1-\lambda)\vV_1)U_1\vU_1,\\
W_4&=K'(\X_\lambda\pa)(U_3\vU_1+U_2\vU_2+U_1\vU_3)
+K''(\X_\lambda\pa)(\lambda V_1+(1-\lambda) \vV_1)
(U_2\vU_1+U_1\vU_2)\\
&\quad +\left[K''(\X_\lambda\pa)(\lambda V_2+(1-\lambda)\vV_2)
+\frac{K'''(\X_\lambda\pa)}{2}(\lambda V_1
+(1-\lambda) \vV_1)^2\right]U_1\vU_1.
\end{align*}

{\bf Step 2:} We compute $\E_\a$ of $W_2,W_3,W_4$ above.
Note that $\X\pa,\vX\pa,G\pa,\vG\pa$ are independent of
row $\a$ of $X$. Then for $W_2$, we have
\begin{align}
\E_\a[W_2]&=s_\a \vs_\a K'(\X_\lambda\pa)
\sum_{k,l} G_{ik}\pa \vG_{il}\pa \E_\a[X_{\a k}X_{\a l}]\nonumber\\
&=s_\a\vs_\a K'(\X_\lambda\pa)\frac{1}{N}\sum_k G_{ik}\pa
\vG_{ik}\pa,\label{eq:EW2}
\end{align}
where we have used $\E[X_{\a k}X_{\a l}]=1/N$ if $k=l$ and 0 otherwise.

For $W_3$, let us introduce
\begin{align*}
\Y_{3,12'}\pa&=K'(\X_\lambda\pa)(m_N\pa-m_*)
\frac{1}{N}\sum_k G_{ik}\pa \vG_{ik}\pa,\\
\cZ_{3,12'}\pa&=K'(\X_\lambda\pa)(\vm_N\pa-\vm_*)
\frac{1}{N}\sum_k G_{ik}\pa \vG_{ik}\pa,\\
\Y_{3,3}\pa&=K'(\X_\lambda\pa)
\frac{1}{N^2}\sum_{k,l} G_{ik}\pa G_{kl}\pa \vG_{il}\pa\\
\cZ_{3,3}\pa&=K'(\X_\lambda\pa)\frac{1}{N^2}\sum_{k,l} G_{ik}\pa
\vG_{kl}\pa\vG_{il}\pa\\
\Y_{3,2'\t{2}'}\pa&=K''(\X_\lambda\pa)\frac{1}{N^2}\sum_{j,k,l} G_{ik}\pa
\vG_{ik}\pa \Im \int (\tG_{jl}\pa)^2\\
\cZ_{3,2'\t{2}'}\pa&=K''(\X_\lambda\pa)\frac{1}{N^2}\sum_{j,k,l} G_{ik}\pa
\vG_{ik}\pa \Im \int (\tvG_{jl}\pa)^2\\
\Y_{3,2\t{2}}\pa&=K''(\X_\lambda\pa)\frac{1}{N^2}\sum_{j,k,l} G_{ik}\pa
\vG_{il}\pa \Im \int \tG_{jk}\pa\tG_{jl}\pa\\
\cZ_{3,2\t{2}}\pa&=K''(\X_\lambda\pa)\frac{1}{N^2}\sum_{j,k,l}
G_{ik}\pa\vG_{il}\pa \Im \int \tvG_{jk}\pa\tvG_{jl}\pa,
\end{align*}
which are versions of $\X_{3,*}$ that don't depend on row $\a$ of $X$
and with various
instances of $m_N,m_*,G,\X$ replaced by $\vm_N,\vm_*,\vG,\X_\lambda$.
Consider the first term of $W_3$ and write
\begin{align*}
&\E_\a[K'(\X_\lambda\pa)U_2\vU_1]\\
&=\E_\a\left[-t_\a s_\a^2 \vs_\a
K'(\X_\lambda\pa)
\left(\sum_{p,q} G_{pq}\pa X_{\a p}X_{\a q}-m_*\right)
\sum_{k,l}G_{ik}\pa X_{\a k}\vG_{il}\pa X_{\a l}\right]\\
&=-t_\a s_\a^2 \vs_\a K'(\X_\lambda\pa)\sum_{k,l,p,q}
\Bigg(G_{pq}\pa \E_\a[X_{\a p}X_{\a q}
X_{\a k}X_{\a l}]
-\frac{1}{N}m_*\1\{p=q\}\E_\a[X_{\a k}X_{\a l}]
\Bigg)G_{ik}\pa \vG_{il}\pa.
\end{align*}
The summand corresponding to $(k,l,p,q)$ is 0 unless each distinct
index appears at least twice in $(k,l,p,q)$. Furthermore, the case where
all four indices are equal is negligible:
\[\sum_k \left(G_{kk}\pa \E_\a[X_{\a k}^4]
-\frac{1}{N}m_*\E_\a[X_{\a k}^2]\right)G_{ik}\pa \vG_{ik}\pa
\prec N \cdot N^{-2} \cdot \Psi^2 \prec \Psi^5.\]
(The $k=i$ case of the sum may be bounded separately as $\O(N^{-2})$.)
Thus up to $\O(\Psi^5)$, we need only consider summands
where each distinct index appears
exactly twice. Considering the one case where $k=l$ and the two cases where
$k=p$ and $k=q$,
\begin{align*}
&\E_\a[K'(\X_\lambda\pa)U_2\vU_1]\\
&=-t_\a s_\a^2 \vs_\a
K'(\X_\lambda\pa)\Bigg(
\frac{1}{N^2}\sum_k \sum_p^{(k)} \left(G_{pp}\pa-m_*\right)
G_{ik}\pa \vG_{ik}\pa
+\frac{2}{N^2}\sum_k \sum_l^{(k)} G_{ik}\pa \vG_{il}\pa
G_{kl}\pa\Bigg)+\O(\Psi^5).
\end{align*}
Re-including $p=k$ and $l=k$ into the double summations
introduces an additional $\O(\Psi^5)$ error; hence
we obtain for the first term of $W_3$
\begin{align}
\E_\a[K'(\X_\lambda\pa)U_2\vU_1]=-t_\a s_\a^2 \vs_\a
(\Y_{3,12'}\pa+2\Y_{3,3}\pa)+\O(\Psi^5).\label{eq:W31}
\end{align}

Similar arguments apply for the remaining three terms of $W_3$. For the
terms involving an integral, we may apply Lemma \ref{lemma:integralbound} and
also move $X_{\a k}$ outside of the integral and imaginary part because $X$ is
real and does not depend on the variable of integration $\ty$. We obtain
\begin{align}
\E_\a[K'(\X_\lambda\pa)U_1\vU_2]&=-\vt_\a \vs_\a^2 s_\a
(\cZ_{3,12'}\pa+2\cZ_{3,3}\pa)+\O(\Psi^5),\\
\E_\a[\lambda K''(\X_\lambda\pa)V_1U_1\vU_1]&=-\lambda t_\a s_\a^2 \vs_\a
(\Y_{3,2'\t{2}'}\pa+2\Y_{3,2\t{2}}\pa)+\O(\Psi^5),\\
\E_\a[(1-\lambda)K''(\X_\lambda\pa)\vV_1U_1\vU_1]&=-(1-\lambda)
\vt_\a \vs_\a^2 s_\a (\cZ_{3,2'\t{2}'}\pa+2\cZ_{3,2\t{2}}\pa)+\O(\Psi^5),
\label{eq:W34}
\end{align}
and $\E_\a[W_3]$ is the sum of (\ref{eq:W31}--\ref{eq:W34}).

For $W_4$, consider the first term and write
\begin{align*}
&\E_\a[K'(\X_\lambda\pa)U_3\vU_1]\\
&=\E_\a\left[t_\a^2 s_\a^3\vs_\a K'(\X_\lambda\pa)
\left(\sum_{p,q} G_{pq}\pa X_{\a p}X_{\a q}-m_*\right)^2
\sum_{k,l} G_{ik}\pa X_{\a k}\vG_{il}\pa X_{\a l}\right]\\
&=t_\a^2 s_\a^3\vs_\a K'(\X_\lambda\pa)
\sum_{p,q,r,s,k,l}\Bigg(G_{pq}\pa G_{rs}\pa
\E_\a[X_{\a p}X_{\a q}X_{\a r}X_{\a s}X_{\a k}X_{\a l}]\\
&\hspace{0.2in}-\frac{1}{N}m_*\1\{p=q\}G_{rs}\pa
\E_\a[X_{\a r}X_{\a s}X_{\a k}X_{\a l}]
-\frac{1}{N}m_*\1\{r=s\}G_{pq}\pa
\E_\a[X_{\a p}X_{\a q}X_{\a k}X_{\a l}]\\
&\hspace{0.2in}+\frac{1}{N^2}m_*^2\1\{p=q\}\1\{r=s\}\E[X_{\a k}X_{\a l}]
\Bigg)G_{ik}\pa \vG_{il}\pa.
\end{align*}
A summand corresponding to $(k,l,p,q,r,s)$ is 0 unless each distinct index in
$(k,l,p,q,r,s)$ appears at least twice. Furthermore, as in the computations for
$W_3$ above, all summands for which $(k,l,p,q,r,s)$ do not form three distinct
pairs may be omitted and reincluded after taking $\E_\a$,
introducing an $\O(\Psi^5)$ error. Considering all pairings of these indices,
\begin{align*}
&\E_\a[K'(\X_\lambda\pa)U_3\vU_1]\\
&=t_\a^2 s_\a^3 \vs_\a K'(\X_\lambda\pa)\Bigg(
(m_N\pa-m_*)^2\frac{1}{N}\sum_k G_{ik}\pa \vG_{ik}\pa
+4(m_N\pa-m_*)\frac{1}{N^2}\sum_{k,l} G_{ik}\pa G_{kl}\pa \vG_{il}\pa\\
&\hspace{0.2in}+8\frac{1}{N^3}\sum_{j,k,l} G_{ik}\pa G_{jk}\pa G_{jl}\pa
\vG_{il}\pa
+2\frac{1}{N^3}\sum_{j,k,l} G_{ik}\pa \vG_{ik}\pa (G_{jl}\pa)^2\Bigg)
+\O(\Psi^5).
\end{align*}

At this point, let us apply Lemmas \ref{lemma:removesuper} and
\ref{lemma:removecheck} to remove each superscript $(\a)$ above and to convert
each $\vG$ to $G$, introducing an $\O(\Psi^5)$ error.
We may also remove the superscript $(\a)$ and convert
$\X_\lambda$ to $\X$ in $K'(\X_\lambda\pa)$, via
the second-derivative bounds
\[K'(\X_\lambda\pa)-K'(\X_\lambda) \leq \|K''\|_\infty
|\X_\lambda\pa-\X_\lambda| \prec \Psi.\]
\[K'(\X_\lambda)-K'(\X) \leq \|K''\|_\infty
|\X_\lambda-\X| \prec \Psi.\]
We thus obtain
\[\E_\a[K'(\X_\lambda\pa)U_3\vU_1]
=t_\a^2s_\a^3 \vs_\a(\X_{4,22'}+4\X_{4,13}
+8\X_{4,4}+2\X_{4,4'})+\O(\Psi^5).\]

Applying a similar computation to each term of $W_4$, we obtain
\begin{align}
&\E_\a[K'(\X_\lambda\pa)(U_3\vU_1+U_2\vU_2+U_1\vU_3)]\nonumber\\
&=s_\a\vs_\a(t_\a^2s_\a^2+t_\a
s_\a\vt_\a\vs_\a+\vt_\a^2\vs_\a^2)
(\X_{4,22'}+4\X_{4,13}+8\X_{4,4}+2\X_{4,4'})+\O(\Psi^5),\label{eq:W41}\\
&\E_\a[K''(\X_\lambda\pa)(\lambda
V_1+(1-\lambda)\vV_1)(U_2\vU_1+U_1\vU_2)]\nonumber\\
&=s_\a \vs_\a(\lambda t_\a s_\a+(1-\lambda)\vt_\a \vs_\a)(t_\a s_\a+\vt_\a
\vs_\a)\cdot\nonumber\\
&\qquad(\X_{4,12'\t{2}'}+2\X_{4,12\t{2}}+2\X_{4,3\t{2}'}+2\X_{4,3'\t{2}}
+8\X_{4,3\t{2}})+\O(\Psi^5),\\
&\E_\a[K''(\X_\lambda\pa)(\lambda V_2+(1-\lambda)\vV_2)U_1\vU_1]\nonumber\\
&=s_\a \vs_\a(\lambda t_\a^2 s_\a^2+(1-\lambda)
\vt_\a^2\vs_\a^2)\cdot\nonumber\\
&\qquad(\X_{4,2'\t{12}'}+2\X_{4,2\t{12}}+2\X_{4,2'\t{3}}
+2\X_{4,2\t{3}'}+8\X_{4,2\t{3}})+\O(\Psi^5),\\
&\E_\a\left[\frac{K'''(\X_\lambda\pa)}{2}(\lambda V_1
+(1-\lambda) \vV_1)^2U_1\vU_1\right]\nonumber\\
&=\frac{s_\a \vs_\a}{2}(\lambda t_\a s_\a+(1-\lambda) \vt_\a
\vs_\a)^2\cdot\nonumber\\
&\qquad (\X_{4,2'\t{2}'\t{2}'}+2\X_{4,2'\t{2}\t{2}}+4\X_{4,2\t{2}\t{2}'}
+8\X_{4,2\t{2}\t{2}})+\O(\Psi^5),\label{eq:W44}
\end{align}
and $\E_\a[W_4]$ is the sum of (\ref{eq:W41}--\ref{eq:W44}).

The $\O(\Psi^5)$ remainder in (\ref{eq:Wexpansion}) is given by the difference
of the left side with $W_2,W_3,W_4$. As this is an integral over a
polynomial of entries of $G\pa$ and $X$, its partial expectation is still
$\O(\Psi^5)$ by Lemmas \ref{lemma:expectationdomination} and
\ref{lemma:crudeGXbound}.

Summarizing the results of Steps 1 and 2, we
collect (\ref{eq:Wexpansion}), (\ref{eq:EW2}),
(\ref{eq:W31}--\ref{eq:W34}), and (\ref{eq:W41}--\ref{eq:W44}):
{\allowdisplaybreaks
\begin{align}
&\E_\a\left[K'(\X_\lambda)\frac{G_{i\a}}{t_\a}\frac{\vG_{i\a}}{\vt_\a}\right]
\nonumber\\
&=s_\a\vs_\a K'(\X_\lambda\pa)\frac{1}{N}\sum_k G_{ik}\pa\vG_{ik}\pa
-t_\a s_\a^2 \vs_\a (\Y_{3,12'}\pa+2\Y_{3,3}\pa)
-\vt_\a \vs_\a^2 s_\a (\cZ_{3,12'}\pa+2\cZ_{3,3}\pa)\nonumber\\
&\;\;\;\;-\lambda t_\a s_\a^2 \vs_\a(\Y_{3,2'\t{2}'}\pa+2\Y_{3,2\t{2}}\pa)
-(1-\lambda) \vt_\a \vs_\a^2
s_\a(\cZ_{3,2'\t{2}'}\pa+2\cZ_{3,2\t{2}}\pa)\nonumber\\
&\;\;\;\;+s_\a\vs_\a(t_\a^2s_\a^2+t_\a s_\a\vt_\a\vs_\a+\vt_\a^2\vs_\a^2)
(\X_{4,22'}+4\X_{4,13}+8\X_{4,4}+2\X_{4,4'})\nonumber\\
&\;\;\;\;+s_\a \vs_\a(\lambda t_\a s_\a+(1-\lambda)\vt_\a \vs_\a)(t_\a s_\a
+\vt_\a \vs_\a)
(\X_{4,12'\t{2}'}+2\X_{4,12\t{2}}+2\X_{4,3\t{2}'}+2\X_{4,3'\t{2}}
+8\X_{4,3\t{2}})\nonumber\\
&\;\;\;\;+s_\a \vs_\a(\lambda t_\a^2 s_\a^2+(1-\lambda)
\vt_\a^2\vs_\a^2)
(\X_{4,2'\t{12}'}+2\X_{4,2\t{12}}+2\X_{4,2'\t{3}}
+2\X_{4,2\t{3}'}+8\X_{4,2\t{3}})\nonumber\\
&\;\;\;\;+\frac{s_\a \vs_\a}{2}(\lambda t_\a s_\a+(1-\lambda) \vt_\a \vs_\a)^2
(\X_{4,2'\t{2}'\t{2}'}+2\X_{4,2'\t{2}\t{2}}
+4\X_{4,2\t{2}\t{2}'}+8\X_{4,2\t{2}\t{2}})+\O(\Psi^5).\label{eq:Etotal1}
\end{align}}

{\bf Step 3:} In (\ref{eq:Etotal1}), we consider the first term on the right
(of size $\O(\Psi^2)$) and remove the superscripts $(\a)$,
keeping track of the $\O(\Psi^3)$ and $\O(\Psi^4)$ terms that arise.

Applying the resolvent identities and a Taylor expansion for $G_{\a\a}$, we
write
\begin{align}
G_{ik}\pa&=G_{ik}-\frac{G_{i\a}G_{k\a}}{G_{\a\a}}\nonumber\\
&=G_{ik}-G_{\a\a}\sum_{r,s} G_{ir}\pa X_{\a r}G_{ks}\pa X_{\a s}\nonumber\\
&=G_{ik}+t_\a s_\a\sum_{r,s} G_{ir}
\pa X_{\a r}G_{ks}\pa X_{\a s}\nonumber\\
&\quad-t_\a^2s_\a^2\left(\sum_{p,q} G_{pq}\pa X_{\a p}
X_{\a q}-m_*\right)\sum_{r,s} G_{ir}\pa X_{\a r}G_{ks}\pa X_{\a s}
+\O(\Psi^4)\nonumber\\
&\equiv G_{ik}+R_{2k}+R_{3k}+\O(\Psi^4),\label{eq:EW21}
\end{align}
where we defined the two remainder terms of sizes $\O(\Psi^2),\O(\Psi^3)$ as
$R_{2k},R_{3k}$. Similarly we write
\begin{align}
\vG_{ik}\pa=\vG_{ik}+\vR_{2k}+\vR_{3k}+\O(\Psi^4).\label{eq:EW22}
\end{align}
For $K'(\X_\lambda\pa)$, we apply the Taylor expansion
(\ref{eq:KXlambda}) and recall $V_1,\vV_1,V_2,\vV_2$
from (\ref{eq:XXa},\ref{eq:cXXa}) to obtain
\begin{align}
K'(\X_\lambda\pa)
&=K'(\X_\lambda)-K''(\X_\lambda\pa)
(\X_\lambda-\X_\lambda\pa)-\frac{K'''(\X_\lambda\pa)}{2}
(\X_\lambda-\X_\lambda\pa)^2+\O(\Psi^3)\nonumber\\
&=K'(\X_\lambda)-K''(\X_\lambda\pa)(\lambda V_1+(1-\lambda) \vV_1)
-K''(\X_\lambda\pa)(\lambda V_2+(1-\lambda) \vV_2)\nonumber\\
&\qquad-\frac{K'''(\X_\lambda\pa)}{2}(\lambda V_1+(1-\lambda) \vV_1)^2
+\O(\Psi^3).\label{eq:EW23}
\end{align}

Taking the product of (\ref{eq:EW21}), (\ref{eq:EW22}), and (\ref{eq:EW23}),
applying the identity
\[xyz=(x-\delta_x)(y-\delta_y)(z-\delta_z)
+xy\delta_z+x\delta_y z+\delta_x yz-x\delta_y\delta_z-\delta_x y\delta_z
-\delta_x\delta_y z+\delta_x\delta_y\delta_z\]
(with $x=G_{ik}\pa$, $x-\delta_x=G_{ik}$, and $\delta_x=R_{2k}+R_{3k}$, etc.),
and averaging over $k \in \I_N$, we obtain
\begin{align}
K'(\X_\lambda\pa)\frac{1}{N}\sum_k G_{ik}\pa\vG_{ik}\pa
&\equiv S_2+S_{3,1}+S_{3,2}+\sum_{j=1}^5 S_{4,j}+\O(\Psi^5),\label{eq:W2decomp}
\end{align}
where
\begin{align*}
    S_2&=K'(\X_\lambda)\frac{1}{N}\sum_k G_{ik}\vG_{ik},\\
S_{3,1}&=K'(\X_\lambda\pa)\frac{1}{N}\sum_k G_{ik}\pa \vR_{2k}+K'(\X_\lambda\pa)
\frac{1}{N}\sum_k R_{2k}\vG_{ik}\pa,\\
S_{3,2}&=-K''(\X_\lambda\pa)(\lambda V_1+(1-\lambda) \vV_1)
\frac{1}{N}\sum_k G_{ik}\pa \vG_{ik}\pa,\\
S_{4,1}&=K'(\X_\lambda\pa)\frac{1}{N}\sum_k G_{ik}\pa \vR_{3k}
+K'(\X_\lambda\pa)\frac{1}{N}\sum_k R_{3k}\vG_{ik}\pa,\\
S_{4,2}&=-K''(\X_\lambda\pa)(\lambda V_2+(1-\lambda)\vV_2)\frac{1}{N}\sum_k
G_{ik}\pa \vG_{ik}\pa,\\
S_{4,3}&=-\frac{K'''(\X_\lambda\pa)}{2}(\lambda V_1+(1-\lambda) \vV_1)^2
\frac{1}{N}\sum_k G_{ik}\pa \vG_{ik}\pa,\\
S_{4,4}&=-K'(\X_\lambda\pa)\frac{1}{N}\sum_k R_{2k}\vR_{2k},\\
S_{4,5}&=K''(\X_\lambda\pa)(\lambda V_1+(1-\lambda)\vV_1)
\frac{1}{N}\sum_k G_{ik}\pa\vR_{2k}\\
&\qquad+K''(\X_\lambda\pa)(\lambda V_1+(1-\lambda)\vV_1)
\frac{1}{N}\sum_k R_{2k}\vG_{ik}\pa.
\end{align*}

Recalling the definition of $R_{2k}$ and applying $\E_\a$ to the $\O(\Psi^3)$
terms,
\begin{align*}
\E_\a[S_{3,1}]&=t_\a s_\a\Y_{3,3}\pa+\vt_\a \vs_\a\cZ_{3,3}\pa,\\
\E_\a[S_{3,2}]&=\lambda t_\a s_\a\Y_{3,2'\t{2}'}\pa
+(1-\lambda)\vt_\a \vs_\a \cZ_{3,2'\t{2}'}\pa.
\end{align*}
Similarly, we apply $\E_\a$ to each of the $\O(\Psi^4)$ terms,
considering all pairings
of the four summation indices as in Step 2. Then applying Lemmas
\ref{lemma:removesuper} and \ref{lemma:removecheck} to remove superscripts
and convert $\vG$ to $G$, we obtain
\begin{align*}
\E_\a[S_{4,1}]&=-(t_\a^2s_\a^2+\vt_\a^2\vs_\a^2)
(\X_{4,13}+2\X_{4,4})+\O(\Psi^5),\\
\E_\a[S_{4,2}]&=-(\lambda t_\a^2s_\a^2+(1-\lambda)\vt_\a^2 \vs_\a^2)
(\X_{4,2'\t{12}'}+2\X_{4,2'\t{3}})+\O(\Psi^5),\\
\E_\a[S_{4,3}]&=-\frac{1}{2}(\lambda t_\a s_\a
+(1-\lambda)\vt_\a\vs_\a)^2
(\X_{4,2'\t{2}'\t{2}'}+2\X_{4,2'\t{2}\t{2}})+\O(\Psi^5),\\
\E_\a[S_{4,4}]&=-t_\a s_\a\vt_\a\vs_\a (\X_{4,4'}+2\X_{4,4})+\O(\Psi^5),\\
\E_\a[S_{4,5}]&=-(\lambda t_\a s_\a+(1-\lambda)\vt_\a\vs_\a)
(t_\a s_\a+\vt_\a \vs_\a)(\X_{4,3\t{2}'}+2\X_{4,3\t{2}})+\O(\Psi^5).
\end{align*}
Then applying $\E_\a$ to (\ref{eq:W2decomp}), noting that the remainder is again
$\O(\Psi^5)$ by Lemmas \ref{lemma:expectationdomination} and
\ref{lemma:crudeGXbound}, and substituting into (\ref{eq:Etotal1}),
\begin{align}
&\E_\a\left[K'(\X_\lambda)\frac{G_{i\a}}{t_\a}\frac{\vG_{i\a}}{\vt_\a}\right]\nonumber\\
&=s_\a\vs_\a \E_\a\left[K'(\X_\lambda)\frac{1}{N}\sum_k G_{ik}
\vG_{ik}\right]
-t_\a s_\a^2 \vs_\a(\Y_{3,12'}\pa+\Y_{3,3}\pa)\nonumber\\
&\quad-\vt_\a \vs_\a^2 s_\a(\cZ_{3,12'}\pa+\cZ_{3,3}\pa)
-2\lambda t_\a s_\a^2 \vs_\a \Y_{3,2\t{2}}\pa
-2(1-\lambda) \vt_\a \vs_\a^2 s_\a \cZ_{3,2\t{2}}\pa\nonumber\\
&\quad+s_\a\vs_\a(t_\a^2s_\a^2+\vt_\a^2\vs_\a^2)
(\X_{4,22'}+3\X_{4,13}+6\X_{4,4}+2\X_{4,4'})\nonumber\\
&\quad+s_\a\vs_\a(t_\a s_\a\vt_\a \vs_\a)
(\X_{4,22'}+4\X_{4,13}+6\X_{4,4}+\X_{4,4'})\nonumber\\
&\quad+s_\a \vs_\a(\lambda t_\a s_\a+(1-\lambda)\vt_\a \vs_\a)(t_\a s_\a+\vt_\a
\vs_\a)
(\X_{4,12'\t{2}'}+2\X_{4,12\t{2}}+\X_{4,3\t{2}'}+2\X_{4,3'\t{2}}
+6\X_{4,3\t{2}})\nonumber\\
&\quad+s_\a \vs_\a(\lambda t_\a^2 s_\a^2+(1-\lambda) \vt_\a^2\vs_\a^2)
(2\X_{4,2\t{12}}+2\X_{4,2\t{3}'}+8\X_{4,2\t{3}})\nonumber\\
&\quad+\frac{s_\a \vs_\a}{2}(\lambda t_\a s_\a+(1-\lambda) \vt_\a \vs_\a)^2
(4\X_{4,2\t{2}\t{2}'}
+8\X_{4,2\t{2}\t{2}})+\O(\Psi^5).\label{eq:Etotal2}
\end{align}

{\bf Step 4:} In (\ref{eq:Etotal2}), we remove the superscript $(\a)$ from
$\Y_{3,*}$ and $\cZ_{3,*}$, keeping track of the $\O(\Psi^4)$ errors that
arise. For each quantity $\Y_{3,*}\pa$ or $\cZ_{3,*}\pa$, let
$\Y_{3,*}$ or $\cZ_{3,*}$ be the analogous quantity with each instance of
$m_N\pa,G\pa,\tG\pa,\X_\lambda\pa$ replaced by $m_N,G,\tG,\X_\lambda$.

For $\Y_{3,12'}\pa$, recall from (\ref{eq:EW21})
and (\ref{eq:EW23}) that
\begin{align*}
    G_{ik}\pa&=G_{ik}+R_{2k}+\O(\Psi^3),\\
    K'(\X_\lambda\pa)&=K'(\X_\lambda)-K''(\X_\lambda\pa)(\lambda
V_1+(1-\lambda)\vV_1)+\O(\Psi^2).
\end{align*}
For $m_N\pa-m_*$, we apply the resolvent identities and write
\begin{align*}
m_N\pa-m_*&=m_N-m_*-\frac{1}{N}\sum_j \frac{G_{j\a}^2}{G_{\a\a}}\\
&=m_N-m_*-G_{\a\a}\frac{1}{N}\sum_{j,k,l} G_{jk}\pa X_{\a k}G_{jl}\pa X_{\a
l}\\
&=m_N-m_*+t_\a s_\a\frac{1}{N}\sum_{j,k,l} G_{jk}\pa X_{\a k}G_{jl}\pa X_{\a
l}+\O(\Psi^3)\\
&\equiv m_N-m_*+Q+\O(\Psi^3),
\end{align*}
where $Q$ is the $\O(\Psi^2)$ term. Multiplying the above
and averaging over $k$,
\begin{align*}
\Y_{3,12'}\pa&=\Y_{3,12'}+K'(\X_\lambda\pa)(m_N\pa-m_*)\frac{1}{N}\sum_k
G_{ik}\pa \vR_{2k}\\
&\quad+K'(\X_\lambda\pa)(m_N\pa-m_*)\frac{1}{N}\sum_k \vG_{ik}\pa R_{2k}
+K'(\X_\lambda\pa)Q\frac{1}{N}\sum_k G_{ik}\pa \vG_{ik}\pa\\
&\quad-K''(\X_\lambda\pa)(\lambda V_1+(1-\lambda)\vV_1)(m_N\pa-m_*)\frac{1}{N}
\sum_k G_{ik}\pa \vG_{ik}\pa+\O(\Psi^5),
\end{align*}
where each term except $\Y_{3,12'}$ on the right is of size $\O(\Psi^4)$.
Taking $\E_\a$ and applying
Lemmas \ref{lemma:removesuper} and \ref{lemma:removecheck} to remove
superscripts and checks,
\begin{align}
\Y_{3,12'}\pa&=\E_\a[\Y_{3,12'}]+(t_\a s_\a+\vt_\a \vs_\a)\X_{4,13}
+t_\a s_\a\X_{4,4'}\nonumber\\
&\qquad+(\lambda t_\a s_\a+(1-\lambda) \vt_\a \vs_\a)
\X_{4,12'\t{2}'}+\O(\Psi^5).
\end{align}
Similar arguments yield
\begin{align*}
\cZ_{3,12'}\pa&=\E_\a[\cZ_{3,12'}]+(t_\a s_\a+\vt_\a \vs_\a)\X_{4,13}
+\vt_\a \vs_\a\X_{4,4'}\nonumber\\
&\qquad+(\lambda t_\a s_\a+(1-\lambda) \vt_\a \vs_\a)
\X_{4,12'\t{2}'}+\O(\Psi^5),\\
\Y_{3,3}\pa&=\E_\a[\Y_{3,3}]+(2t_\a s_\a+\vt_\a \vs_\a)\X_{4,4}\nonumber\\
&\qquad+(\lambda t_\a s_\a+(1-\lambda) \vt_\a \vs_\a)
\X_{4,3\t{2}'}+\O(\Psi^5),\\
\cZ_{3,3}\pa&=\E_\a[\cZ_{3,3}]+(t_\a s_\a+2\vt_\a \vs_\a)\X_{4,4}\nonumber\\
&\qquad+(\lambda t_\a s_\a+(1-\lambda) \vt_\a \vs_\a)
\X_{4,3\t{2}'}+\O(\Psi^5),\\
\Y_{3,2\t{2}}\pa&=\E_\a[\Y_{3,2\t{2}}]+(t_\a s_\a+\vt_\a \vs_\a)
\X_{4,3\t{2}}+2t_\a s_\a \X_{4,2\t{3}}\nonumber\\
&\qquad+(\lambda t_\a s_\a+(1-\lambda) \vt_\a \vs_\a)
\X_{4,2\t{2}\t{2}'}+\O(\Psi^5),\\
\cZ_{3,2\t{2}}\pa&=\E_\a[\cZ_{3,2\t{2}}]+(t_\a s_\a+\vt_\a \vs_\a)
\X_{4,3\t{2}}+2\vt_\a \vs_\a \X_{4,2\t{3}}\nonumber\\
&\qquad+(\lambda t_\a s_\a+(1-\lambda) \vt_\a \vs_\a)
\X_{4,2\t{2}\t{2}'}+\O(\Psi^5).
\end{align*}

Substituting into (\ref{eq:Etotal2}),
\begin{align}
&\E_\a\left[K'(\X_\lambda)\frac{G_{i\a}}{t_\a}\frac{\vG_{i\a}}{\vt_\a}\right]\nonumber\\
&=s_\a\vs_\a \E_\a\left[K'(\X_\lambda)\frac{1}{N}\sum_k G_{ik}
\vG_{ik}\right]
-t_\a s_\a^2 \vs_\a\E_\a[\Y_{3,12'}+\Y_{3,3}]\nonumber\\
&\quad-\vt_\a \vs_\a^2 s_\a\E_\a[\cZ_{3,12'}+\cZ_{3,3}]
-2\lambda t_\a s_\a^2 \vs_\a \E_\a[\Y_{3,2\t{2}}]
-2(1-\lambda) \vt_\a \vs_\a^2 s_\a \E_\a[\cZ_{3,2\t{2}}]\nonumber\\
&\quad+s_\a\vs_\a(t_\a^2s_\a^2+t_\a s_\a \vt_\a \vs_\a+\vt_\a^2\vs_\a^2)
(\X_{4,22'}+2\X_{4,13}+4\X_{4,4}+\X_{4,4'})\nonumber\\
&\quad+s_\a \vs_\a(\lambda t_\a s_\a+(1-\lambda)\vt_\a \vs_\a)(t_\a s_\a+\vt_\a
\vs_\a)
(2\X_{4,12\t{2}}+2\X_{4,3'\t{2}}+4\X_{4,3\t{2}})\nonumber\\
&\quad+s_\a \vs_\a(\lambda t_\a^2 s_\a^2+(1-\lambda) \vt_\a^2\vs_\a^2)
(2\X_{4,2\t{12}}+2\X_{4,2\t{3}'}+4\X_{4,2\t{3}})\nonumber\\
&\quad+4s_\a \vs_\a(\lambda t_\a s_\a+(1-\lambda) \vt_\a \vs_\a)^2
\X_{4,2\t{2}\t{2}}+\O(\Psi^5).\label{eq:Etotal3}
\end{align}

{\bf Step 5:} We take the full expectation of both sides of (\ref{eq:Etotal3}),
applying Lemma \ref{lemma:removecheck2} to convert $\Y_{3,*}$ and $\cZ_{3,*}$
into $\X_{3,*}$. We illustrate the argument for $\cZ_{3,12'}$:
For $k \neq i$, denote
\[Y=K'(\X_\lambda)(\vm_N-\vm_*)G_{ik},\qquad
Y\pa=K'(\X_\lambda\pa)(\vm_N\pa-\vm_*)G_{ik}\pa.\]
Then $Y \prec \Psi^2$, and $Y-Y\pa \prec \Psi^3$ for all $\a \in \I_M$,
the latter from Lemma \ref{lemma:removesuper} and the second-derivative
bound for $K$. Then applying Lemma \ref{lemma:removecheck2},
$\E[Y \vG_{ik}]=\E[Y G_{ik}]+\O(\Psi^5)$. Hence
\begin{align}
\E\left[K'(\X_\lambda)(\vm_N-\vm_*)\frac{1}{N}\sum_k G_{ik}
(\vG_{ik}-G_{ik})\right]=\O(\Psi^5),\label{eq:W3first11}
\end{align}
where the $k=i$ term is controlled directly by Lemma \ref{lemma:removecheck}.
Applying this argument again with $Y=K'(\X_\lambda)G_{ik}^2$, 
together with the bound $\vm_*-m_* \leq C/N
\prec \Psi^3$, we may convert the term $\vm_N-\vm_*$:
\begin{align}
\E\left[K'(\X_\lambda)(\vm_N-\vm_*-m_N+m_*)\frac{1}{N}\sum_k G_{ik}^2
\right]=\O(\Psi^5).\label{eq:W3first12}
\end{align}
Finally, a Taylor expansion of $K'(x)$ around $\X$ yields
\begin{align}
K'(\X_\lambda)=K'(\X)+(1-\lambda)K''(\X)(\vX-\X)+\O(\Psi^2),\label{eq:W3first13}
\end{align}
where we have used $\vX-\X \prec \Psi$ by Lemma \ref{lemma:removecheck}.
Applying the third implication of Lemma \ref{lemma:removecheck2} with
$Y=K''(\X)(m_N-m_*)G_{ik}^2 \prec \Psi^3$ for $k \neq i$, we obtain
\begin{align}
\E\left[K''(\X)(\vX-\X)(m_N-m_*)
\frac{1}{N}\sum_k G_{ik}^2\right]=\O(\Psi^5).\label{eq:W3first14}
\end{align}
Then combining (\ref{eq:W3first11}--\ref{eq:W3first14}), we obtain
$\E[\cZ_{3,12'}]=\E[\X_{3,12'}]+\O(\Psi^5)$.

The same argument holds for the other terms $\Y_{3,*}$ and $\cZ_{3,*}$.
Then taking the full expectation of (\ref{eq:Etotal3}),
\begin{align}
&\E\left[K'(\X_\lambda)\frac{G_{i\a}}{t_\a}\frac{\vG_{i\a}}{\vt_\a}\right]\nonumber\\
&\quad=s_\a\vs_\a \E\left[K'(\X_\lambda)\frac{1}{N}\sum_k G_{ik}
\vG_{ik}\right]
-(t_\a s_\a^2 \vs_\a+\vt_\a \vs_\a^2 s_\a)
\E[\X_{3,12'}+\X_{3,3}]\nonumber\\
&\qquad-2(\lambda t_\a s_\a^2 \vs_\a
+(1-\lambda) \vt_\a \vs_\a^2 s_\a)\E[\X_{3,2\t{2}}]\nonumber\\
&\qquad+s_\a\vs_\a(t_\a^2s_\a^2+t_\a s_\a \vt_\a \vs_\a+\vt_\a^2\vs_\a^2)
\E[\X_{4,22'}+2\X_{4,13}+4\X_{4,4}+\X_{4,4'}]\nonumber\\
&\qquad+s_\a \vs_\a(\lambda t_\a s_\a+(1-\lambda)\vt_\a \vs_\a)(t_\a s_\a+\vt_\a
\vs_\a)
\E[2\X_{4,12\t{2}}+2\X_{4,3'\t{2}}+4\X_{4,3\t{2}}]\nonumber\\
&\qquad+s_\a \vs_\a(\lambda t_\a^2 s_\a^2+(1-\lambda) \vt_\a^2\vs_\a^2)
\E[2\X_{4,2\t{12}}+2\X_{4,2\t{3}'}+4\X_{4,2\t{3}}]\nonumber\\
&\qquad+4s_\a \vs_\a(\lambda t_\a s_\a+(1-\lambda) \vt_\a \vs_\a)^2
\E[\X_{4,2\t{2}\t{2}}]+\O(\Psi^5).\label{eq:Etotal4}
\end{align}

Finally, we integrate (\ref{eq:Etotal4}) over
$\lambda \in [0,1]$, applying $\int \lambda=\int(1-\lambda)=1/2$ and
$\int \lambda^2=\int 2\lambda(1-\lambda)=\int (1-\lambda)^2=1/3$.
Simplifying and identifying the terms
$\X_3$, $\X_4$, $\X_4^-$, $\cP_\a$, $\Q_\a$, and $\cR_\a$
concludes the proof of the lemma.

\subsection{Proof of optical theorems}\label{subsec:optical}
We discuss briefly the proof of Lemma \ref{lemma:optical}.
In the setting $K' \equiv 1$, Lemma \ref{lemma:optical} corresponds to
\cite[Lemma B.1]{leeschnelli} upon taking the imaginary part.

The proof for general $K$ is the same as that of \cite[Lemma B.1]{leeschnelli},
with additional terms arising from the Taylor expansion of $K'$ as in the proof
of Lemma \ref{lemma:decoupling}. The computation may be broken down into the
identities
\begin{align*}
    N^{-1}\left(\E[K'(\X)]+2m_*^{-1}\E[K'(\X)(m_N-m_*)]\right)\hspace{1in}&\\
    =2\E[\X_3]-2m_*^{-1}(z-E_*)\E[\X_2]-(A_4-2m_*^{-1}-m_*^{-4})
    \E[\X_4]&+\O(\Psi^5),\\
N^{-1}\E[K'(\X)(m_N-m_*)]-2\E[\X_{4,22'}+\X_{4,13}+\X_{4,4}+\X_{4,12\t{2}}]
    &=\O(\Psi^5),\\
    \E[2\X_{4,13}+3\X_{4,4}+\X_{4,4'}+2\X_{4,3\t{2}}]
    &=\O(\Psi^5),\\
    (z-E_*)\E[\X_2]-\E[\X_{4,22'}+4\X_{4,4}+\X_{4,4'}+2\X_{4,3'\t{2}}]
    &=\O(\Psi^5),\\
    \E[\X_{4,12\t{2}}+2\X_{4,3\t{2}}+\X_{4,3'\t{2}}+
\X_{4,2\t{12}}+2\X_{4,2\t{3}}+\X_{4,2\t{3}'}+2\X_{4,2\t{2}\t{2}}]
    &=\O(\Psi^5),
\end{align*}
where $\X_2=K'(\X)N^{-1}\sum_k G_{ik}^2$.
For $K' \equiv 1$, the first four identities above
reduce to \cite[eqs.\ (B.29), (B.33), (B.38),
(B.51)]{leeschnelli}. The fifth identity is
trivial for $K' \equiv 1$, as the left side is 0. It is analogous to
\cite[Eq.\ (C.42)]{leeschnelliwigner} in the full computation for the
deformed Wigner model, and may be derived as an ``optical theorem'' from
$\X_{3,2\t{2}}$.

Lemma \ref{lemma:optical} follows from substituting the second and fourth
identities into the first, adding $4m_*^{-1}$
times the third and fifth,
and taking the imaginary part (noting $K'$ is real-valued). This
concludes the proof of Theorem \ref{thm:resolventcompare}, and hence of
Theorem \ref{thm:TW}.

\appendix

\section{Properties of $\mu_0$}\label{appendix:deterministic}

\begin{figure}
\includegraphics[width=0.5\textwidth]{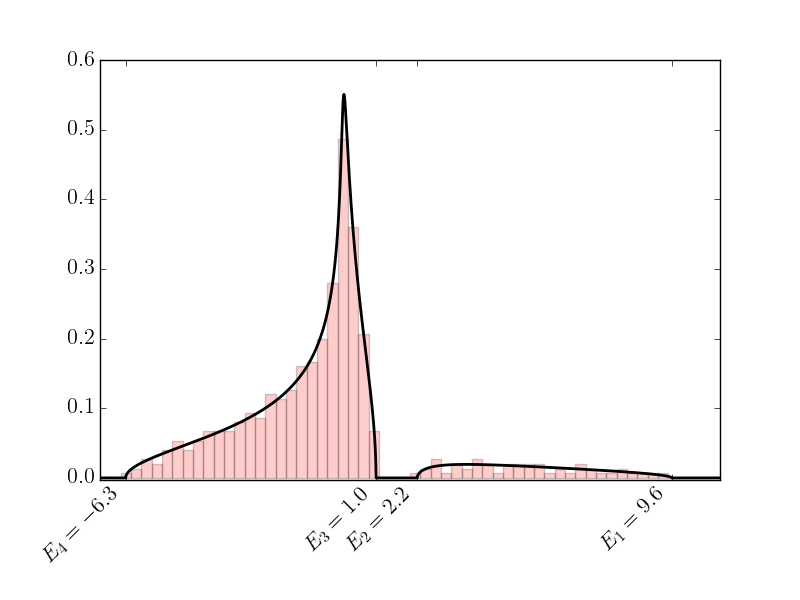}%
\includegraphics[width=0.5\textwidth]{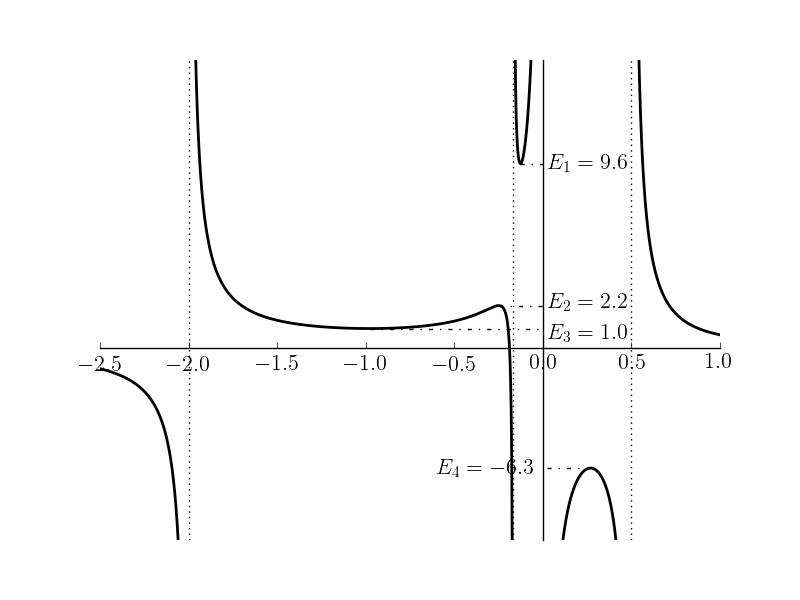}
\caption{Left: Density $f_0(x)$ of $\mu_0$ and simulated
eigenvalues of $\hSigma$, for $N=500$, $M=700$, and $T$ having 350
eigenvalues at -2, 300 at 0.5, and 50 at 6. The four soft
edges of $\mu_0$ are indicated by $E_1,\ldots,E_4$.
Right: The function $z_0(m)$, with two local
minima and two local maxima corresponding to the four edges of $\mu_0$.}
\label{fig:example1}
\end{figure}
\begin{figure}
\includegraphics[width=0.5\textwidth]{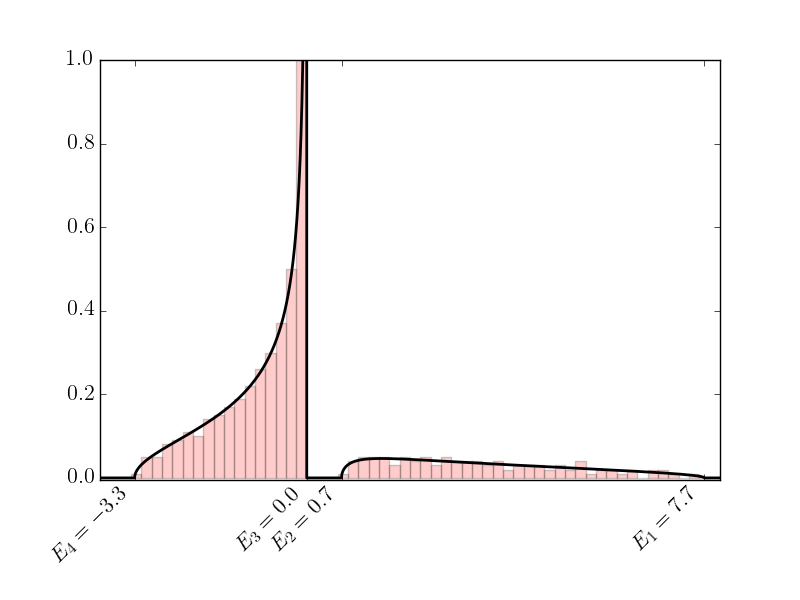}%
\includegraphics[width=0.5\textwidth]{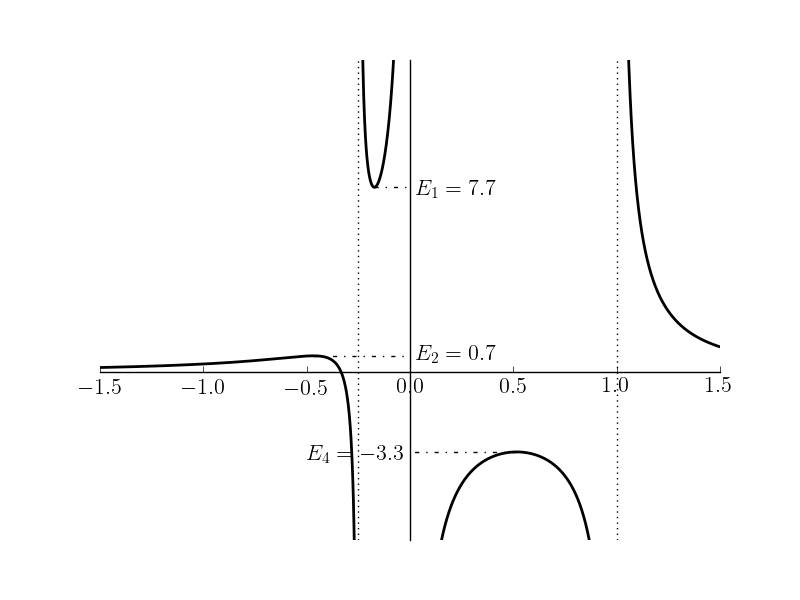}
\caption{Left: Density $f_0(x)$ of $\mu_0$ and simulated
eigenvalues of $\hSigma$, for $N=M=500$, and $T$ having 400
eigenvalues at -1 and 100 at 4. Here, $\mu_0$ has three soft edges
$E_1,E_2,E_4$ and one hard edge $E_3=0$.
Right: The function $z_0(m)$, with three indicated
local extrema, and also a local minimum at $m=\infty$
corresponding to the hard right edge $E_3=0$.}
\label{fig:example2}
\end{figure}

We verify the statements of Section \ref{subsec:support} and prove Proposition
\ref{prop:edges}.
The following characterization of the density and support of $\mu_0$
are from \cite{silversteinchoi}:

\begin{proposition}\label{prop:m0extension}
The limit
\begin{equation}\label{eq:m0x}
m_0(x)=\lim_{\eta \downarrow 0} m_0(x+i\eta)
\end{equation}
exists for each $x \in \R \setminus \{0\}$. At each such $x$, the law
$\mu_0$ admits a continuous density given by
\[f_0(x)=\frac{1}{\pi} \Im m_0(x).\]
\end{proposition}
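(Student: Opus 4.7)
The plan is to follow the Silverstein-Choi approach, which combines the Stieltjes-Perron inversion formula with a structural analysis of the Marcenko-Pastur equation $z = z_0(m_0(z))$. The Stieltjes-Perron formula tells us that any probability measure $\mu$ on $\R$ with Stieltjes transform $m$ satisfies $d\mu(x) = \pi^{-1}\Im m(x+i0)\,dx$ in the weak sense, and that the boundary limit $m(x+i0)$ exists for almost every $x$; the task here is to promote this to a pointwise statement with continuity at each $x \in \R \setminus \{0\}$.

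First, I would show that $m_0(x+i\eta)$ remains bounded (and $1+t_\a m_0$ bounded away from $0$) as $\eta \downarrow 0$ for $x$ in compact subsets of $\R \setminus \{0\}$. Taking the imaginary part of (\ref{eq:MPdiag}) yields the identity
\[
\Im z \;=\; \Im m_0 \cdot \left(\frac{1}{|m_0|^2} \;-\; \frac{1}{N}\sum_{\a} \frac{t_\a^2}{|1+t_\a m_0|^2}\right),
\]
and combining this with the real part controls $|m_0|$ and each $|1+t_\a m_0|$ uniformly on compact subsets of $(\R \setminus \{0\}) \times (0,1]$. Second, I would prove uniqueness of subsequential limits: if $m_0(x_n+i\eta_n) \to m_*$ with $\eta_n \downarrow 0$ and $x_n \to x$, then continuity of $z_0$ at $m_*$ forces $z_0(m_*) = x$, and there is at most one such $m_* \in \overline{\C^+}$ compatible with the branch of $m_0$ coming from $\C^+$. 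This uniqueness gives existence of the boundary value $m_0(x)$, and the joint-in-$(x,\eta)$ version yields continuity in $x$. The density formula then follows from the Stieltjes-Perron inversion formula combined with this continuity.

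The main subtlety is the case where the boundary value $m_0(x)$ is real, equivalently, when $x$ lies outside $\supp(\mu_0)$: then $x$ may have several real preimages under $z_0$, and one must identify which of them is the limit of $m_0(x+i\eta)$. Proposition \ref{prop:edges} pins this down --- $m_0(x)$ must lie in the unique connected component of $\bar\R \setminus P$ on which $z_0$ is monotone and takes the value $x$, with orientation fixed by $\Im m_0 > 0$ on $\C^+$. The restriction $x \neq 0$ is essential because $m_0$ can blow up at $0$ (reflecting a point mass of $\mu_0$ at $0$ when $\rank(T) \leq N$); the separate extension to $x = 0$ in the case $\rank(T) > N$, mentioned in Section \ref{sec:model}, requires a different argument analyzing the behavior of $m_0(z)$ as $z \to 0$ under the Marcenko-Pastur equation.
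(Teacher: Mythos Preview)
The paper's own proof is a one-line citation to \cite[Theorem 1.1]{silversteinchoi}, so your proposal---which explicitly outlines the Silverstein--Choi argument (boundedness of $m_0$ and of each $1+t_\a m_0$ near any $x\neq 0$, compactness, uniqueness of subsequential limits via the equation $z_0(m_*)=x$, then Stieltjes--Perron)---is exactly the intended approach, just unpacked. Your imaginary-part identity and the general structure are correct.

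One caution on logical ordering: you invoke Proposition~\ref{prop:edges} to identify the real boundary value when $x\notin\supp(\mu_0)$, but in the paper Proposition~\ref{prop:edges} is proved \emph{after} this proposition and relies on it (through Proposition~\ref{prop:m0softedge}). The non-circular reference for this identification is Proposition~\ref{prop:mu0support} (also from Silverstein--Choi), which characterizes $\R\setminus\supp(\mu_0)$ as the image of the set where $z_0'>0$ and gives the bijection with $m_0$. In any case, the identification is not needed for the existence and continuity statement you are proving---uniqueness of the subsequential limit suffices---so this is a citation issue, not a gap in the argument.
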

\begin{proof}
See \cite[Theorem 1.1]{silversteinchoi}.
\end{proof}

\begin{proposition}\label{prop:mu0support}
Let
$S=\{m \in \R \setminus P:z_0'(m)>0\}$ and $z_0(S)=\{z_0(m):m\in S\}$. Then
\[\R \setminus \supp(\mu_0)=z_0(S).\]
Furthermore, $z_0:S \to \R \setminus \supp(\mu_0)$ is a bijection
with inverse $m_0:\R \setminus \supp(\mu_0) \to S$.
\end{proposition}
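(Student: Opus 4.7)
The plan is to follow the Silverstein--Choi argument \cite{silversteinchoi} and build $m_0$ into a bijection $\R \setminus \supp(\mu_0) \to S$ whose inverse is $z_0|_S$. First, I would show that for each $x \in \R \setminus \supp(\mu_0)$ the boundary value $m_0(x) = \lim_{\eta \downarrow 0} m_0(x+i\eta)$ from Proposition \ref{prop:m0extension} lies in $\R \setminus P$ and satisfies $z_0(m_0(x)) = x$. Reality is immediate since $\pi f_0(x) = \Im m_0(x) = 0$ on $\R \setminus \supp(\mu_0)$. The fact that $m_0(x) \neq -t_\a^{-1}$ for any $\a$ follows by contradiction: if $m_0(x_n) \to -t_\a^{-1}$ along any sequence $x_n \to x$, the right-hand side of (\ref{eq:MPdiag}) blows up while the left-hand side remains finite. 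Taking the limit $\eta \downarrow 0$ in (\ref{eq:MPdiag}) then yields $x = z_0(m_0(x))$.

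Next, to place $m_0(x)$ in $S$, I would exploit that $m_0$ is the Stieltjes transform of a probability measure, so on each connected component $I \subset \R \setminus \supp(\mu_0)$ the formula $m_0(x) = \int (y-x)^{-1}\,d\mu_0(y)$ gives a real-analytic $m_0$ with strictly positive derivative $m_0'(x) = \int (y-x)^{-2}\,d\mu_0(y) > 0$. Differentiating the identity $z_0(m_0(x)) = x$ then gives $z_0'(m_0(x))\,m_0'(x) = 1$, so $z_0'(m_0(x)) > 0$, and $m_0(x) \in S$ as required.

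For the reverse inclusion, suppose $m \in S$ and set $x := z_0(m)$. Since $m \notin P$ and $z_0'(m) > 0$, the real inverse function theorem provides a neighborhood $V \ni m$ on which $z_0 : V \to U := z_0(V)$ is a real-analytic diffeomorphism. Extending to a complex neighborhood, $z_0$ is injective and holomorphic on some $\tilde V \supset V$, hence admits a holomorphic inverse $\phi : \tilde U \to \tilde V$ defined on a complex neighborhood $\tilde U$ of $x$. Since $z_0'(m) > 0$, the image $\phi(\tilde U \cap \C^+)$ near $m$ lies in $\C^+$; then by the uniqueness of the $\C^+$-valued solution of (\ref{eq:MPdiag}) we obtain $\phi = m_0$ on $\tilde U \cap \C^+$. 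Consequently $\Im m_0(\xi) \to 0$ as $\Im \xi \downarrow 0$ for every $\xi_0 \in U$, so by Proposition \ref{prop:m0extension} $f_0 \equiv 0$ on $U$, and $x \notin \supp(\mu_0)$. This gives $x \in z_0(S)$ and, combined with the first two steps, $m_0(x) = m$, yielding both the bijectivity and the inverse relationship.

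The main obstacle I anticipate is the last step, namely verifying that the holomorphic local inverse $\phi$ produced by the inverse function theorem genuinely agrees with $m_0$ on the upper half-plane rather than some other analytic branch of $z_0^{-1}$. Resolving this cleanly requires checking that $z_0$ sends a small half-disk $\{m + re^{i\theta} : 0 < r < \delta,\ 0 < \theta < \pi\}$ bijectively to a half-disk near $x$ (using $z_0'(m) > 0$ to rule out orientation reversal), and then invoking the defining uniqueness of $m_0(z) \in \C^+$ for $z \in \C^+$ to identify $\phi$ with $m_0$.
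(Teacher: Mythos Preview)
Your proposal is correct and is essentially the Silverstein--Choi argument that the paper cites (the paper does not give its own proof, it just refers to \cite[Theorems 4.1 and 4.2]{silversteinchoi}). One minor omission: when you argue $m_0(x)\in\R\setminus P$, you only check $m_0(x)\neq -t_\a^{-1}$, but $P$ also contains $0$; the same blow-up argument (the $-1/m$ term in $z_0$) handles $m_0(x)\neq 0$, so this is easily patched.
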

\begin{proof}
See \cite[Theorems 4.1 and 4.2]{silversteinchoi}.
\end{proof}

Proposition \ref{prop:mu0support} implies that $\mu_0$ has bounded support:

\begin{proposition}\label{prop:boundedsupport}
Under Assumption \ref{assump:dT}, $\supp(\mu_0) \subset [-C,C]$
for a constant $C>0$.
\end{proposition}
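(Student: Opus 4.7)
The plan is to invoke Proposition \ref{prop:mu0support}, which identifies $\R \setminus \supp(\mu_0)$ with $z_0(S)$ for $S = \{m \in \R \setminus P : z_0'(m) > 0\}$. It therefore suffices to find a constant $C' > 0$, depending only on the constant in Assumption \ref{assump:dT}, such that both tails $(-\infty, -C')$ and $(C', \infty)$ lie in $z_0(S)$.

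The key observation is that near $m = 0$, the singular term $-1/m$ in
\[ z_0(m) = -\frac{1}{m} + \frac{1}{N}\sum_\a \frac{t_\a}{1+t_\a m} \]
dominates every other contribution. Concretely, from $|t_\a| \leq C$ and $M/N \leq C$, for $|m| \leq 1/(2C)$ each denominator satisfies $|1+t_\a m| \geq 1/2$, so an elementary estimate gives
\[ \Bigl| z_0(m) + \tfrac{1}{m} \Bigr| \leq 2C^2, \qquad z_0'(m) \geq \frac{1}{m^2} - 4C^3. \]
Moreover the nonzero poles $-t_\a^{-1}$ of $z_0$ satisfy $|t_\a^{-1}| \geq 1/C$, so they do not intersect any punctured neighborhood of $0$ of radius less than $1/C$. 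Choosing a constant $\delta \in (0, 1/(2C))$ small enough that $\delta^{-2} > 4C^3$, we conclude $(-\delta, 0) \cup (0, \delta) \subset S$.

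Since $z_0$ is strictly increasing on each of $(-\delta, 0)$ and $(0, \delta)$, and the bound above forces $z_0(m) \to -\infty$ as $m \downarrow 0$ and $z_0(m) \to +\infty$ as $m \uparrow 0$, the intermediate value theorem yields
\[ z_0\bigl((0,\delta)\bigr) = (-\infty,\, z_0(\delta)), \qquad z_0\bigl((-\delta, 0)\bigr) = (z_0(-\delta),\, +\infty). \]
Setting $C' = \max\bigl(|z_0(\delta)|, |z_0(-\delta)|\bigr) + 1$, which is controlled purely by the constants in Assumption \ref{assump:dT}, we have $\R \setminus [-C', C'] \subset z_0(S)$, and Proposition \ref{prop:mu0support} delivers $\supp(\mu_0) \subset [-C', C']$. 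No step presents a real obstacle: the argument is just a quantitative form of the fact that $m_0(z) \to 0$ as $|z| \to \infty$, packaged through the parameterization $z = z_0(m)$ off the support.
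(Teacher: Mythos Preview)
Your proof is correct and takes a somewhat different route from the paper's. The paper first observes qualitatively that the behavior of $z_0$ near $m=0$ forces compact support, and then separately bounds each edge: any nonzero boundary point equals $z_0(m_*)$ with $z_0'(m_*)=0$, the stationarity equation forces $|m_*|>c$, and Cauchy--Schwarz applied to $\frac{1}{M}\sum_\a \frac{t_\a}{1+m_*t_\a}$ then bounds $|z_0(m_*)|$. You instead make the near-$m=0$ analysis quantitative from the start, showing directly that for a fixed $\delta$ the intervals $(0,\delta)$ and $(-\delta,0)$ lie in $S$ and their images under $z_0$ cover both tails of $\R$. Your argument is more elementary (no Cauchy--Schwarz, no analysis of critical points) and gets to the conclusion in one step; the paper's route has the side benefit of producing a uniform lower bound on $|m_*|$ for every critical point, which is independently useful but not needed for this proposition.
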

\begin{proof}
Proposition \ref{prop:mu0support} and the behavior of $z_0(m)$ as $m \to 0$
implies that $\mu_0$ has compact support for each $N$.
Furthermore, each non-zero boundary point of $\supp(\mu_0)$ is given by
$z_0(m_*)$ for
some $m_* \in \R$ satisfying $z_0'(m_*)=0$. Rearranging this condition yields
\[1=\frac{1}{N}\sum_{\a=1}^M \frac{m_*^2t_\a^2}{(1+m_*t_\a)^2}.\]
Since $\|T\|<C$, this condition implies $|m_*|>c$ for a constant $c>0$.
Furthermore, Cauchy-Schwarz yields
\[\left(\frac{1}{M}\sum_{\a=1}^M \frac{t_\a}{1+m_*t_\a}\right)^2
\leq \frac{1}{M}\sum_{\a=1}^M \frac{t_\a^2}{(1+m_*t_\a)^2}=\frac{N}{Mm_*^2}.\]
Combining these yields $|z_0(m_*)|<C$ for a constant $C>0$,
so each non-zero boundary point of $\supp(\mu_0)$ belongs to $[-C,C]$.
\end{proof}

We next extend Proposition \ref{prop:m0extension} to handle the case
$x=0$ (cf.\ Proposition \ref{prop:m0extension0} below). We provide this
extension so as to distinguish the behavior of a hard edge at $x=0$ from a soft
edge at $x=0$ (which may occur if $T$ is indefinite).

\begin{lemma}\label{lemma:z0decreasing}
Denote $m_0(\C^+)=\{m_0(z):z \in \C^+\}$. For any $m \in \R \setminus P$ such 
that $z_0'(m)<0$, $m$ cannot belong to the closure of $m_0(\C^+)$.
\end{lemma}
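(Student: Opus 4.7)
The plan is to argue by contradiction. Suppose there exists $m \in \R \setminus P$ with $z_0'(m)<0$ that lies in the closure of $m_0(\C^+)$. Then pick a sequence $z_n \in \C^+$ with $m_n := m_0(z_n) \to m$. The Marcenko-Pastur equation (\ref{eq:MPdiag}) can be rewritten as the identity $z = z_0(m_0(z))$ for all $z \in \C^+$, so in particular $z_n = z_0(m_n)$ along this sequence.

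Since $m_0(\C^+) \subset \C^+$, I can write $m_n = a_n + ib_n$ with $b_n > 0$, and by assumption $a_n \to m$ and $b_n \to 0^+$. Because $m \notin P$, the rational function $z_0$ is analytic in a complex neighborhood of $m$ and real-valued on $\R$ there, so its Taylor coefficients at $a_n$ are all real for all large $n$. The key step is then the Taylor expansion
\[
z_n = z_0(a_n+ib_n) = z_0(a_n) + ib_n\, z_0'(a_n) - \tfrac{1}{2}b_n^2\, z_0''(a_n) + O(b_n^3),
\]
whose imaginary part is
\[
\Im z_n = b_n\, z_0'(a_n) + O(b_n^3) = b_n\bigl(z_0'(a_n) + O(b_n^2)\bigr).
\]

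By continuity of $z_0'$ at $m$ (using $m \notin P$), we have $z_0'(a_n) \to z_0'(m)<0$, so for all sufficiently large $n$ the parenthesized quantity is strictly negative. Combined with $b_n>0$, this gives $\Im z_n < 0$, contradicting $z_n \in \C^+$. Hence no such $m$ can lie in the closure of $m_0(\C^+)$.

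I do not anticipate any serious obstacle: the only subtlety is making sure the Taylor expansion is valid uniformly on a neighborhood of $m$ (so the $O(b_n^3)$ constant does not blow up), which follows from $m \notin P$ and the fact that $z_0$ is a fixed rational function with poles only at $P$. The hypothesis $z_0'(m)<0$ is used exactly once, to produce the strict sign on the leading term of $\Im z_n$.
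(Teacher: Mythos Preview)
Your proof is correct and takes a genuinely different route from the paper's. The paper argues via the inverse function theorem: since $z_0'(m)\neq 0$, there is a local analytic inverse of $z_0$ near $m$, which must agree with $m_0$ on the overlap and hence furnish an analytic continuation of $m_0$ across a real interval; but a Stieltjes transform can only be continued across $\R\setminus\supp(\mu_0)$, where it is real and \emph{increasing}, contradicting $z_0'(m)<0$. Your argument sidesteps all of this: you directly compute $\Im z_n = b_n\,z_0'(a_n)+O(b_n^3)$ from the Taylor expansion of the real-analytic function $z_0$ at the real point $a_n$, and read off the sign. This is more elementary---it uses only that $m_0$ maps $\C^+$ into $\C^+$ and that $z=z_0(m_0(z))$---and avoids invoking the open mapping theorem or the analytic-continuation characterization of $\supp(\mu_0)$. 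The paper's approach, on the other hand, makes the connection to $\supp(\mu_0)$ explicit, which is conceptually useful for the surrounding arguments in the appendix. Both are short; yours is the more self-contained one.
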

\begin{proof}
$z_0$ defines an analytic function on $\C \setminus P$.
For any such $m$, the inverse function theorem implies $z_0$ has an
analytic inverse in a neighborhood $B$ of $m$ in $\C \setminus P$.
If $m$ belongs to the closure of $m_0(\C^+)$, then $B \cap m_0(\C^+)$ is
non-empty. As $z_0(m_0(z))=z$ for $z \in \C^+$ by definition of
$m_0$, the inverse of $z_0$ on $B$ is an analytic extension of $m_0$ to
$z_0(B)$. By the open mapping theorem, $z_0(B)$ is an open set in $\C$
containing $m$. On the other hand, as $m_0$ is the Stieltjes transform of
$\mu_0$, it permits an analytic extension only to
$\C \setminus \supp(\mu_0)$, and this extension is real-valued and increasing
on $\R \setminus \supp(\mu_0)$. Then $z_0(B) \cap \R$ must belong to
$\R \setminus \supp(\mu_0)$ and $z_0$ must be increasing on $B \cap \R$, but
this contradicts that $z_0'(m)<0$.
\end{proof}

\begin{lemma}\label{lemma:disjointincreasing}
Define
\begin{equation}\label{eq:gq}
g(q)=z_0(1/q)
=-q+\frac{1}{N}\sum_{\a=1}^M \left(t_\a-\frac{t_\a^2}{q+t_\a}\right).
\end{equation}
Then for any $c \in \R$, there is at most one value $q \in \R$
for which $g(q)=c$ and $g'(q) \leq 0$.
\end{lemma}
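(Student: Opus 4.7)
The plan is to reduce the statement to a property of the Marcenko-Pastur equation $z_0$ via the change of variable $m = 1/q$, and then invoke the bijectivity of Proposition \ref{prop:mu0support} together with the edge classification in Proposition \ref{prop:edges}.

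First I would verify the elementary identities $g(q) = z_0(1/q)$ (which is already noted in (\ref{eq:gq})) and $g'(q) = -q^{-2}\, z_0'(1/q)$, valid for $q \neq 0$, so that $g'(q) \leq 0$ is equivalent to $z_0'(m) \geq 0$ with $m = 1/q$. The exceptional value $q = 0$ corresponds to $m = \infty$, and the convention $z_0(\infty) = 0 = g(0)$ together with the treatment of extrema at $\infty$ in Proposition \ref{prop:edges}'s footnote allows it to be absorbed uniformly.

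Suppose now $q_1 \neq q_2$ both satisfy $g(q_i) = c$ and $g'(q_i) \leq 0$, and set $m_i = 1/q_i$, so that $z_0(m_1) = z_0(m_2) = c$ and $z_0'(m_i) \geq 0$. In the generic case where both $z_0'(m_i) > 0$, both $m_i$ lie in $S$, and the bijectivity part of Proposition \ref{prop:mu0support} forces $m_1 = m_2$, a contradiction. If instead $z_0'(m_i) = 0$ for some $i$, then $m_i$ is a critical point of $z_0$ and hence (by Proposition \ref{prop:edges}(a,b)) the $m$-value of an edge of $\mu_0$, so that $c = z_0(m_i) \in \supp(\mu_0)$. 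The other index $j$ cannot then satisfy $z_0'(m_j) > 0$, because Proposition \ref{prop:mu0support} would place $c = z_0(m_j) \in z_0(S) = \R \setminus \supp(\mu_0)$; hence both $m_i$ are edge $m$-values, and Proposition \ref{prop:edges}(c) (strict ordering of edges) yields $z_0(m_1) \neq z_0(m_2)$, the final contradiction.

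The main technical subtlety is handling zeros of $z_0'$ that are not simple local extrema, i.e., higher-order critical points where $z_0''$ also vanishes, since these are not literally covered by the enumeration in Proposition \ref{prop:edges}. Because $z_0'$ is a nontrivial rational function, such points are isolated; a short local analysis shows $z_0$ is strictly monotone through such a point, placing it in the closure of a single component of $S$, so that the bijection argument applied to nearby pre-images extends by continuity to rule out any second solution. This, together with the straightforward boundary treatment of $q = 0$, completes the proof.
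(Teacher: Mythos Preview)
Your argument is circular within the paper's logical structure. You invoke Proposition~\ref{prop:edges}(a,b,c) to handle the cases where some $z_0'(m_i)=0$, but in this paper Proposition~\ref{prop:edges} is proved \emph{after} Lemma~\ref{lemma:disjointincreasing} and relies on it in an essential way. In particular, part~(c) (the strict ordering $E_1>\cdots>E_n$, which you use to conclude $z_0(m_1)\neq z_0(m_2)$ when both are critical) is established via a continuity-in-$\lambda$ argument that explicitly appeals to Lemma~\ref{lemma:disjointincreasing} for each $\lambda$; and the assertion that every local extremum of $z_0$ yields a genuine edge (your use of (a,b)) is verified via the disjointness statement~(\ref{eq:disjointgJ}), which is again Lemma~\ref{lemma:disjointincreasing}. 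So the two boundary cases of your trichotomy rest on exactly the statement you are trying to prove.

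The paper's proof avoids this by working directly with $g$ and counting roots: the equation $g(q)=c$, after clearing denominators, is a polynomial of degree $|P'|+1$ in $q$; the pole structure forces at least $|P'|-1$ real roots (one in each bounded interval between consecutive poles); and any $q$ with $g(q)=c$ and $g'(q)\le 0$ contributes two \emph{additional} roots counted with multiplicity (either a tangency, or a down-crossing that forces a second crossing in the same interval). Since at most $|P'|+1$ roots are available in total, only one such $q$ can exist. This argument is entirely elementary and uses nothing beyond the explicit rational form of $g$, which is why it can sit upstream of Proposition~\ref{prop:edges}. If you want to salvage your approach, you would need to supply independent proofs of the edge ordering and of the fact that images of distinct increasing components of $z_0$ are disjoint---but that is essentially the content of the lemma itself.
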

\begin{proof}
Denote by $P'=\{-t_\a:t_\a \neq 0\}$ the distinct poles of $g$, and let
$I_1,\ldots,I_{|P'|+1}$ be the intervals of $\R \setminus P'$ in increasing
order. For any $c \in \R$, boundary conditions of $g$ at $P'$ imply that
$g(q)=c$ has at least one root $q$ in each interval $I_2,\ldots,I_{|P'|}$,
and hence at least $|P'|-1$ total roots. In addition, every
$q \in \R$ where $g(q)=c$ and $g'(q) \leq 0$ contributes two additional
roots to $g(q)=c$, counting multiplicity.
As $g(q)=c$ may be written as a polynomial equation in $q$ of
degree $|P'|+1$ by clearing denominators, it can have at most $|P'|+1$ total
roots counting multiplicity, and hence there is at most one such $q$.
\end{proof}

\begin{proposition}\label{prop:m0extension0}
If $\rank(T)>N$, then the limit (\ref{eq:m0x}) exists also at $x=0$,
and $\mu_0$ has continuous density $f_0(x)=(1/\pi)\Im m_0(x)$ at $x=0$.

If $\rank(T) \leq N$, then for any
sequence $z_n \to 0$ with $z_n \in \overline{\C^+} \setminus \{0\}$, we have
$|m_0(z_n)| \to \infty$.
\end{proposition}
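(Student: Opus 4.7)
The plan is to analyze the MP equation \eqref{eq:MPdiag} near $z=0$ via the substitution $q = 1/m$ and the rational function $g(q) = z_0(1/q)$ from Lemma \ref{lemma:disjointincreasing}. A key elementary computation is the expansion $z_0(m) = (r-N)/(Nm) + O(m^{-2})$ as $m \to \infty$, equivalently $g(0) = 0$ with $g'(0) = (r-N)/N$, where $r := \rank(T)$.

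For $r \leq N$, I argue by contradiction that every subsequential limit $m_\infty$ of $m_n := m_0(z_n)$ must equal $\infty$. The degenerate cases $m_\infty \in P := \{0\} \cup \{-t_\a^{-1}: t_\a \neq 0\}$ are ruled out because the corresponding term(s) in the MP equation blow up with no cancellation (each value $-t_\a^{-1}$ uniquely determines $t_\a$), forcing $|z_n| \to \infty$. For finite $m_\infty \notin P$, passing to the limit gives $z_0(m_\infty) = 0$, and multiplying through by $m_\infty$ yields
\begin{equation*}
\sum_{\a:\, t_\a \neq 0} \frac{1}{1+t_\a m_\infty} = r - N.
\end{equation*}
If $m_\infty \in \C^+$, taking imaginary then real parts yields successively $\sum t_\a/|1+t_\a m_\infty|^2 = 0$ and $\sum 1/|1+t_\a m_\infty|^2 = r - N$, which is impossible since the left side is strictly positive while $r - N \leq 0$. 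If $m_\infty \in \R \setminus P$, Lemma \ref{lemma:z0decreasing} forces $z_0'(m_\infty) \geq 0$, i.e., $g(q_\infty) = 0$ and $g'(q_\infty) \leq 0$ for $q_\infty = 1/m_\infty \neq 0$. But $q = 0$ also satisfies $g(0) = 0$ with $g'(0) = (r-N)/N \leq 0$, contradicting the at-most-one conclusion of Lemma \ref{lemma:disjointincreasing}.

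For $r > N$, the leading-order behavior rules out $m_\infty = \infty$: $z_n \sim (r-N)/(Nm_n)$ with $m_n \in \C^+$ and $r - N > 0$ would give $\Im z_n < 0$, contradicting $z_n \in \overline{\C^+} \setminus \{0\}$. Hence subsequential limits are finite and satisfy $z_0(m_\infty) = 0$, with either $m_\infty \in \C^+$ or $m_\infty \in \R \setminus P$ with $z_0'(m_\infty) \geq 0$. To conclude that all such limits coincide, I note that real admissible limits correspond via $q_\infty = 1/m_\infty$ to solutions of $g(q) = 0$ with $g'(q) \leq 0$ and $q \neq 0$ (as $g'(0) > 0$ under $r > N$), and Lemma \ref{lemma:disjointincreasing} gives at most one such $q$. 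For $\C^+$ admissible limits, the system $\sum t_\a/|1+t_\a m|^2 = 0$, $\sum 1/|1+t_\a m|^2 = r - N$, and $|m|^{-2} = N^{-1}\sum t_\a^2/|1+t_\a m|^2$ (the last from the imaginary part of the original MP equation) jointly constrain $m_\infty$. Continuity of the density at $0$ then follows from Proposition \ref{prop:m0extension}.

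The main obstacle will be the uniqueness argument in Part 1 for a $\C^+$ subsequential limit, which is non-vacuous when $T$ has mixed-sign entries so that $\sum t_\a/|1+t_\a m|^2 = 0$ has solutions. My preferred route is to first establish uniform boundedness of $m_0$ on a small half-disk around $z = 0$ in $\overline{\C^+}$ (via Proposition \ref{prop:boundedsupport} and standard Stieltjes-transform bounds), and then combine this with the above identities to rule out multiple $\C^+$ accumulation points. A second subtlety is the degenerate scenario in which the unique real admissible $q^\star$ satisfies $g'(q^\star) = 0$ rather than $g'(q^\star) < 0$: here $0$ is a soft edge of $\mu_0$ rather than lying outside the support, but by Proposition \ref{prop:edges}(d) the density $f_0$ still decays like $\sqrt{|x|}$ near $0$, so $f_0(0) = 0$ and continuity is preserved.
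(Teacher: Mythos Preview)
Your argument for $\rank(T) \leq N$ is essentially the paper's, reorganized: you take imaginary then real parts of $\sum_{\a:t_\a\neq 0}(1+t_\a m_\infty)^{-1}=r-N$ to force $m_\infty\in\R$, then invoke Lemmas~\ref{lemma:z0decreasing} and~\ref{lemma:disjointincreasing} exactly as the paper does.

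For $\rank(T)>N$ there is a genuine gap, which you yourself flag. Your asymptotic $z_n=(r-N)/(Nm_n)+O(m_n^{-2})$ does correctly rule out $|m_n|\to\infty$ (the $O(m_n^{-2})$ imaginary part is dominated by the leading term), so you obtain boundedness of $m_0$ near $0$. But your proposed uniqueness argument for subsequential limits in $\C^+$ does not work: the three scalar identities you list underdetermine $m_\infty\in\C^+$, and nothing prevents multiple solutions when $T$ has mixed signs. Your ``preferred route'' does not help either, since Proposition~\ref{prop:boundedsupport} and the bound $|m_0(z)|\leq 1/\dist(z,\supp(\mu_0))$ give nothing unless you already know $0\notin\supp(\mu_0)$; and even granting boundedness, the identities still do not yield uniqueness.

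The paper sidesteps this entirely. It obtains boundedness on all of $\C^+$ from the exact identity
\[
\Im z=\frac{\Im m_0(z)}{|m_0(z)|^2}\Bigl(1-\frac{1}{N}\sum_{\a=1}^M\frac{|t_\a m_0(z)|^2}{|1+t_\a m_0(z)|^2}\Bigr),
\]
noting the bracket tends to $1-r/N<0$ if $|m_0(z_n)|\to\infty$. Then, rather than classifying limits algebraically, it simply observes that the proof of \cite[Theorem~1.1]{silversteinchoi} goes through verbatim at $x=0$ once $m_0$ is bounded nearby: that argument uses analyticity of $m_0$ and the polynomial structure of the MP equation, not any sign condition on $T$ or the location of $x$. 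This is what you are missing; your citation of Proposition~\ref{prop:m0extension} for continuity at $0$ is circular, since that proposition is stated only for $x\neq 0$.
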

\begin{proof}
Suppose $\rank(T)>N$. Taking imaginary parts of (\ref{eq:MPdiag}) yields
\begin{equation}\label{eq:MPimag}
\Im z=\frac{\Im m_0(z)}{|m_0(z)|^2}
\left(1-\frac{1}{N}\sum_{\a=1}^M \frac{|t_\a m_0(z)|^2}{|1+t_\a m_0(z)|^2}
\right).
\end{equation}
Both $\Im z>0$ and $\Im m_0(z)>0$ for $z \in \C^+$,
whereas if $|m_0(z_n)| \to \infty$ along any sequence $z_n \in \C^+$, then
\[\left(1-\frac{1}{N}\sum_{\a=1}^M \frac{|t_\a m_0(z_n)|^2}{|1+t_\a m_0(z_n)|^2}
\right) \to 1-\frac{\rank(T)}{N}.\]
When $\rank(T)>N$, this implies $m_0(z)$ is bounded on all of $\C^+$.
In particular, it is bounded in a neighborhood of $x=0$, and the result follows
from the same proof as \cite[Theorem 1.1]{silversteinchoi}.

Suppose now $\rank(T) \leq N$. Note (\ref{eq:MPdiag}) holds for $z \in
\overline{\C^+} \setminus \{0\}$ by continuity of $m_0$. If
$m_0(z_n) \to m$ for some finite $m$ along any sequence $z_n \in \overline{\C^+}
\setminus \{0\}$ with $z_n \to 0$, then
$z_0(m)=\lim_n z_0(m_0(z_n))=0$, and $m \notin P$. 
Rearranging (\ref{eq:MPdiag}) yields
\[zm_0(z)=-1+\frac{\rank(T)}{N}-\frac{1}{N}\sum_{\a:t_\a \neq 0} \frac{1}{1+t_\a
m_0(z)},\]
and taking real and imaginary parts followed by $z_n \to 0$ yields
\[1-\frac{\rank(T)}{N}=-\frac{1}{N}\sum_{\a:t_\a \neq 0}
\frac{1+t_\a \Re m}{|1+t_\a m|^2},\qquad
0=\frac{1}{N} \sum_{\a:t_\a \neq 0} \frac{t_\a \Im m}{|1+t_\a m|^2}.\]
When $\rank(T) \leq N$, the first equation implies
$\Re m \neq 0$ and $\sum_{\a:t_\a \neq 0}
t_\a/|1+t_\a m|^2 \neq 0$, and the second
equation then implies $\Im m=0$. Thus $m \in \R \setminus P$. But 
recalling $g(q)$ from (\ref{eq:gq}), we have $g(0)=0$ and $g'(0) \leq 0$ when
$\rank(T) \leq N$, so Lemma \ref{lemma:disjointincreasing} implies $g'(q)>0$ for
every other $q$ where $g(q)=0$. Thus $z_0'(m)<0$, but this contradicts 
Lemma \ref{lemma:z0decreasing}. Hence $|m_0(z_n)| \to \infty$.
\end{proof}

Recall $\R_*$ from (\ref{eq:Rstar}) and the notion of a soft edge from
Definition \ref{def:mvaluescale}. We record the following consequence of the
above.
\begin{proposition}\label{prop:m0softedge}
If $E_*$ is a soft edge of $\mu_0$ with $m$-value $m_*$, then $E_* \in \R_*$,
$m_0$ extends continuously to $E_*$, and $m_0(E_*)=m_*$.
\end{proposition}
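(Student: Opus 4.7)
The plan is to verify the three assertions in turn, using results already in this appendix. The claim $E_* \in \R_*$ is immediate from Proposition \ref{prop:edges}(d), since a soft edge has $m_* \neq \infty$. The continuous extension of $m_0$ to $E_*$ splits into two cases: if $E_* \neq 0$, Proposition \ref{prop:m0extension} directly provides the boundary value $m_0(E_*) := \lim_{\eta \downarrow 0} m_0(E_* + i\eta)$; if $E_* = 0$, then $E_* \in \R_*$ forces $\rank(T) > N$ and Proposition \ref{prop:m0extension0} gives the analogous bounded extension. The real work is therefore to identify this boundary value as $m_*$.

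For the identification, I would first observe that $\Im m_0(E_*) = \pi f_0(E_*) = 0$: the density $f_0$ is continuous near $E_*$ with square-root decay by Proposition \ref{prop:edges}(d), so $f_0(E_*) = 0$ and $m_0(E_*) \in \R$. Next, $m_0(E_*)$ cannot be a pole of $z_0$: if $m_0(E_* + i\eta) \to -t_\a^{-1}$ for some $t_\a \neq 0$ (or to $0$), then $z_0(m_0(E_* + i\eta)) = E_* + i\eta$ would blow up, contradicting $E_* + i\eta \to E_*$ being finite. Hence $z_0$ is continuous at $m_0(E_*)$, and passing to the limit in the Marcenko-Pastur equation (\ref{eq:MPdiag}) yields $z_0(m_0(E_*)) = E_*$. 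Since $m_0(E_*)$ lies in the closure of $m_0(\C^+)$ and is real with $m_0(E_*) \notin P$, Lemma \ref{lemma:z0decreasing} gives $z_0'(m_0(E_*)) \geq 0$. The strict inequality would place $E_* = z_0(m_0(E_*)) \in z_0(S) = \R \setminus \supp(\mu_0)$ by Proposition \ref{prop:mu0support}, which contradicts $E_* \in \supp(\mu_0)$ (as $E_*$ is an endpoint of a support interval). Thus $z_0'(m_0(E_*)) = 0$, so $m_0(E_*)$ is a local extremum of $z_0$ taking value $E_*$. The bijective correspondence between the $n$ edges $E_1, \ldots, E_n$ and the local extrema $m_1, \ldots, m_n$ of $z_0$ in Proposition \ref{prop:edges}, together with the strict ordering and distinctness of the edges in part (c), identifies this extremum uniquely as $m_*$.

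The main obstacle will be ensuring that the real limit $m_0(E_*)$ lands exactly at the distinguished extremum $m_*$ rather than at some other real preimage of $E_*$ under $z_0$. This is resolved by a three-way squeeze: Lemma \ref{lemma:z0decreasing} excludes points on decreasing branches of $z_0$, Proposition \ref{prop:mu0support} excludes points on strictly increasing branches, and the distinctness of the $E_j$ in Proposition \ref{prop:edges}(c) ensures there is a unique local extremum with $z_0$-value equal to $E_*$.
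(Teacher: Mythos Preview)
Your argument has a circularity problem: you invoke Proposition~\ref{prop:edges}(d) twice --- once for $E_* \in \R_*$ and once for the square-root decay giving $f_0(E_*)=0$ --- but in the paper's logical order the proof of Proposition~\ref{prop:edges}(d) itself relies on Proposition~\ref{prop:m0softedge}. So those citations beg the question. The paper establishes $E_* \in \R_*$ independently: if $E_*=0$ is soft, then with $g$ as in (\ref{eq:gq}) one has $g(1/m_*)=0$ and $g'(1/m_*)=0$, so Lemma~\ref{lemma:disjointincreasing} forces $g'(0)>0$, i.e.\ $\rank(T)>N$. Your $f_0(E_*)=0$ step can be repaired without part~(d) simply from continuity of $f_0$ at $E_*$ (Propositions~\ref{prop:m0extension} and~\ref{prop:m0extension0}) together with $f_0$ vanishing on one side of $E_*$ by the definition of an edge.

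Once the circularity is removed, your identification of $m_0(E_*)$ with $m_*$ is valid but more elaborate than needed. The paper takes a one-line route: approach $m_*$ along real $m$ with $z_0'(m)>0$; on that set Proposition~\ref{prop:mu0support} gives $m_0(z_0(m))=m$ exactly, and continuity of $z_0$ and $m_0$ then yields $m_0(E_*)=\lim m_0(z_0(m))=\lim m=m_*$. Your route --- show $m_0(E_*)$ is real, not a pole, satisfies $z_0'=0$ via Lemma~\ref{lemma:z0decreasing} and Proposition~\ref{prop:mu0support}, and is the unique such extremum by Proposition~\ref{prop:edges}(c) --- does work (parts (a)--(c) of Proposition~\ref{prop:edges} are proved without invoking Proposition~\ref{prop:m0softedge}), but it replaces a direct limit by an indirect characterization argument.
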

\begin{proof}
Recalling $g(q)$ from (\ref{eq:gq}),
if $E_*=0$ is a soft edge, then $g(1/m_*)=0$ and $g'(1/m_*)=0$.
Hence Lemma \ref{lemma:disjointincreasing} implies $g'(0)>0$, so
$\rank(T)>N$. Thus any soft edge $E_*$ belongs to $\R_*$. Propositions
\ref{prop:m0extension} and \ref{prop:m0extension0} then imply continuous
extension of $m_0$ to $E_*$.
Considering $m \in \R$ with $z_0'(m)>0$ and $m \to m_*$, Proposition
\ref{prop:mu0support} implies $m_0(z_0(m))=m$, while continuity of $z_0$ and
$m_0$ yield $z_0(m) \to z_0(m_*)=E_*$ and $m_0(z_0(m)) \to m_0(E_*)$. Hence
$m_0(E_*)=m_*$.
\end{proof}

We now establish the characterization of edges of $\mu_0$ given in
Proposition \ref{prop:edges}, following arguments similar to
\cite{silversteinchoi,knowlesyin}.

\begin{proof}[Proof of Proposition \ref{prop:edges}]
Let $g(q)$ be as in Lemma \ref{lemma:disjointincreasing}.
If $m_j$ is a local minimum (or maximum) of $z_0$, then $q_j=1/m_j$ is a local
minimum (resp.\ maximum) of $g$, where $q_j=0$ if $m_j=\infty$. Furthermore
these are the only local extrema of $g$, and they are ordered as
$q_1<\ldots<q_n$. We have $E_j=g(q_j)$ for each $j=1,\ldots,n$.

Let $P'=\{-t_\a:t_\a \neq 0\}$ be the poles of $g$, and let
$I_1,\ldots,I_{|P'|+1}$ be the intervals of $\R \setminus P'$ in increasing
order. Denoting
\[S'=\{q \in \R \setminus P':g'(q)<0\},\]
Proposition \ref{prop:mu0support} is rephrased in terms of $g$ as
\begin{equation}\label{eq:supportcondition}
\R \setminus \supp(\mu_0)=g(S' \setminus \{0\}).
\end{equation}
(We must remove 0 from $S'$, as $m=\infty$ is not included in $S$.)
As $g'''(q)>0$ for all $q
\in \R \setminus P'$, we have that $g'(q)$ is convex on each $I_j$. Together
with the boundary conditions $g'(q) \to \infty$ as $q \to P'$ and
$g'(q) \to -1$ as $q \to \pm \infty$, this implies $I_1$ contains the single
local extremum $q_1$ (a minimum), $I_{|P'|+1}$ contains the single local
extremum $q_n$ (a maximum),
and each $I_j$ for $j=2,\ldots,|P'|$ contains either 0 or 2 local
extrema (a maximum followed by a minimum). Hence $S'$
is a union of open intervals, say $J_1,\ldots,J_r$,
with at most one such interval contained in each $I_j$.
Lemma \ref{lemma:disjointincreasing} verifies
\begin{equation}\label{eq:disjointgJ}
\overline{g(J_j)} \cap \overline{g(J_k)}=\emptyset
\end{equation}
for all $j \neq k$. Together with (\ref{eq:supportcondition}), this verifies
that the edges of $\mu_0$ are precisely the values $g(q_j)$, with a local
maximum $q_j$ corresponding to a left edge and a local minimum $q_j$
corresponding to a right edge. If $0 \in S'$, then it belongs to the interior
of some open interval $J_j$, and $\supp(\mu_0)$ contains an isolated point
at 0 which is not considered an edge. This establishes (a) and (b).

The ordering in part (c) follows from a continuity argument as in
\cite[Lemma 2.5]{knowlesyin}: Define for $\lambda \in (0,1]$
\[g_\lambda(q)=-q+\frac{\lambda}{N}\sum_{\a=1}^M
\left(t_\a-\frac{t_\a^2}{q+t_\a}\right).\]
Note that $g_\lambda'(q)$ is increasing in $\lambda$ for each fixed
$q \in \R \setminus P'$. Hence for each local minimum (or maximum) $q_j$ of $g$,
we may define a path $q_j(\lambda)$, continuous and increasing (resp.\
decreasing) in $\lambda$, such that
$q_j(1)=q_j$ and $q_j(\lambda)$ remains a local minimum (resp.\ maximum) of 
$g_\lambda$ for each $\lambda \in (0,1]$. As $\lambda \searrow 0$,
each $q_j(\lambda)$ converges to a pole $-t_\a$ in $P'$, with
$g_\lambda(q_j(\lambda)) \searrow
t_\a$ if $q_j(\lambda) \nearrow -t_\a$ and 
$g_\lambda(q_j(\lambda)) \nearrow t_\a$ if $q_j(\lambda) \searrow
-t_\a$. Hence for sufficiently small $\lambda>0$,
\[g_\lambda(q_1(\lambda))>\ldots>g_\lambda(q_n(\lambda)).\]
Lemma \ref{lemma:disjointincreasing} applies to $g_\lambda$ for each fixed 
$\lambda$, implying in particular that $g_\lambda(q_j(\lambda)) \neq
g_\lambda(q_k(\lambda))$ for any $j \neq k$. Hence by continuity in $\lambda$,
the above ordering is preserved for all
$\lambda \in (0,1]$. In particular it holds at $\lambda=1$, which
establishes (c).

Finally, for part (d), suppose $E_j$ is a soft right edge.
Proposition \ref{prop:m0softedge} yields $m_j \in \R_*$
and $m_0(E_j)=m_j$. The previous
convexity argument implies $g''(q_j) \neq 0$ for any local extremum $q_j$,
and hence $z_0''(m_j) \neq 0$.
Taking $x \nearrow E_j$, continuity of $m_0$ implies
$m_0(x) \to m_j$. As $z_0$ is analytic at $m_j$ and $z_0'(m_j)=0$,
a Taylor expansion yields, as $x \nearrow E_j$,
\[x-E_j=z_0(m_0(x))-z_0(m_j)=\frac{z_0''(m_j)}{2}(1+o(1))(m_0(x)-m_j)^2.\]
Since $\Im m_0(x)>0$ and $\Im m_j=0$, this yields
\[m_0(x)-m_j=\sqrt{\frac{2}{z_0''(m_j)}(x-E_j)(1+o(1))},\]
where we take the square root with branch cut on the positive real axis and
having positive imaginary part. Taking imaginary parts
and recalling $f_0(x)=(1/\pi)\Im m_0(x)$ yields (d). The case of a left edge is
similar.
\end{proof}

\section{Behavior of Stieltjes transform}\label{appendix:m0}

We establish some estimates involving the Stieltjes transform $m_0(z)$ in
spectral domains with constant separation from $\supp(\mu_0)$. We then prove the
consequences of edge regularity stated in Section \ref{subsec:regularity}.
Many arguments are similar to those of \cite[Appendix A]{knowlesyin}, although there are differences in the technical details to handle indefinite $T$.

First consider $z \in U_\delta=\{z \in \C:\dist(z,\supp(\mu_0)) \geq \delta\}$
for a constant $\delta>0$. We establish some basic bounds on $m_0$
and $\Im m_0$ in this domain.

\begin{proposition}\label{prop:m0regularoutside}
Suppose Assumption \ref{assump:dT} holds. Fix any constant $\delta>0$. Then
for some constant $c>0$, all $z \in U_\delta$, and each eigenvalue
$t_\a$ of $T$,
\[|1+t_\a m_0(z)|>c.\]
\end{proposition}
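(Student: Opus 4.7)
My plan is to argue by contradiction using weak compactness of the space of laws $\mu_0$, after first handling the case of small $|t_\alpha|$ by an elementary bound. Since $\supp(\mu_0) \subset [-C,C]$ by Proposition \ref{prop:boundedsupport}, the Stieltjes transform bound $|m_0(z)| \leq 1/\dist(z,\supp(\mu_0))$ gives $|m_0(z)| \leq 1/\delta$ on $U_\delta$. Hence if $|t_\alpha| \leq \delta/2$ then $|1+t_\alpha m_0(z)| \geq 1 - |t_\alpha||m_0(z)| \geq 1/2$, reducing the problem to eigenvalues with $|t_\alpha| \geq \delta/2$.

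Suppose for contradiction that for this $\delta$ there exist sequences $(N_k, M_k, T_k, z_k, \alpha_k)$ satisfying Assumption \ref{assump:dT} with $z_k \in U_\delta^{(N_k)}$, $|t_{\alpha_k}^{(N_k)}| \geq \delta/2$, and $|1+t_{\alpha_k}^{(N_k)} m_0^{(N_k)}(z_k)| \to 0$. Using boundedness of $t_{\alpha_k}^{(N_k)}$ and of $m_0^{(N_k)}(z_k)$, and ruling out $|z_k| \to \infty$ (which would force $m_0^{(N_k)}(z_k) \to 0 \neq -1/t$), I will extract subsequences so that $t_{\alpha_k}^{(N_k)} \to t$ with $|t| \geq \delta/2$, $m_0^{(N_k)}(z_k) \to -1/t$, $z_k \to z^* \in \C$, $M_k/N_k \to y^*$, the empirical spectral distribution of $T_k$ converges weakly to some $\nu^*$ on $[-C,C]$, and $\mu_0^{(N_k)}$ converges weakly to the Marcenko-Pastur law $\mu^*$ associated with $(\nu^*, y^*)$. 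A Portmanteau argument based on $\dist(z_k,\supp(\mu_0^{(N_k)})) \geq \delta$ will yield $z^* \notin \supp(\mu^*)$, and continuity of the Stieltjes transform off the support then gives $m^*(z^*) = -1/t$ for $m^*$ the Stieltjes transform of $\mu^*$.

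The contradiction will come from showing that the pole of $z_0^{(N_k)}$ at $-1/t_{\alpha_k}^{(N_k)}$ must create a support interval of $\mu_0^{(N_k)}$ whose endpoints both converge to $z^*$. Applying Proposition \ref{prop:mu0support} to $\mu^*$, the relation $m^*(z^*) = -1/t$ forces $-1/t$ to lie in the corresponding set $S^*$, so $t \notin \supp(\nu^*)$ and the limiting curvature $(z_0^*)'(-1/t)$ is strictly positive. Near $m = -1/t_{\alpha_k}^{(N_k)}$, the term $-(1/N_k)/(m+1/t_{\alpha_k}^{(N_k)})^2$ in $(z_0^{(N_k)})'(m)$ forces $(z_0^{(N_k)})' \to -\infty$, while on a fixed neighborhood of $-1/t$ that excludes a shrinking neighborhood of $-1/t_{\alpha_k}^{(N_k)}$, the isolation of $t$ from $\supp(\nu^*)$ together with weak convergence $\nu_{T_k} \to \nu^*$ gives uniform convergence $(z_0^{(N_k)})'(m) \to (z_0^*)'(m) > 0$. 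The intermediate value theorem will then produce two zeros of $(z_0^{(N_k)})'$ straddling $-1/t_{\alpha_k}^{(N_k)}$ and both converging to $-1/t$; by Proposition \ref{prop:edges} these are edges of a support interval of $\mu_0^{(N_k)}$ whose endpoints converge to $z_0^*(-1/t) = z^*$. This forces $\dist(z^*,\supp(\mu_0^{(N_k)})) \to 0$, contradicting the $\delta$-separation.

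The hard part will be justifying the uniform convergence $(z_0^{(N_k)})' \to (z_0^*)'$ on the required annular region around $-1/t$: I will exploit $t \notin \supp(\nu^*)$ to keep $s/(1+sm)$ and $s^2/(1+sm)^2$ bounded and continuous on $\supp(\nu^*)$ for $m$ near $-1/t$, separately bound the $O(1/N_k)$ contribution from the vanishing atom at $t_{\alpha_k}^{(N_k)}$, and combine these ingredients with the weak convergence $\nu_{T_k} \to \nu^*$.
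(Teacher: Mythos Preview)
Your compactness strategy is genuinely different from the paper's direct argument, and the reduction to $|t_\alpha|\geq\delta/2$ is the same first step. However, there is a real gap in the final stage of your plan.

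The problem is the passage from ``$m_\pm^{(N_k)}\to -1/t$'' to ``the edge values $z_0^{(N_k)}(m_\pm^{(N_k)})$ converge to $z_0^*(-1/t)=z^*$.'' Your uniform convergence $(z_0^{(N_k)})'\to(z_0^*)'$ holds only on a \emph{fixed} annulus $\{r\leq|m+1/t|\leq R\}$; both $m_\pm^{(N_k)}$ and $m_0^{(N_k)}(z_k)$ eventually fall inside radius~$r$, and inside that disk $z_0^{(N_k)}$ has poles and need not be close to $z_0^*$ in any useful sense. You therefore cannot conclude that $z_0^{(N_k)}(m_\pm^{(N_k)})\to z_0^*(-1/t)$ from the annular convergence alone. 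A related issue is your plan to ``separately bound the $O(1/N_k)$ contribution from the vanishing atom at $t_{\alpha_k}^{(N_k)}$'': weak convergence together with $t\notin\supp(\nu^*)$ guarantees only that the total \emph{mass} of eigenvalues of $T_k$ near $t$ vanishes, not that $t_{\alpha_k}^{(N_k)}$ is the only such eigenvalue. There may be many poles of $z_0^{(N_k)}$ inside any disk around $-1/t$.

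The paper closes exactly this gap with an elementary pointwise estimate that makes the limiting objects unnecessary. For fixed $N$ and real $z\in U_\delta$, one notes that between $m_0(z)$ and the pole $-1/t_\alpha$ there is a critical point $m_*$ of $z_0$ (since $z_0'(m_0(z))>0$ and $z_0'\to-\infty$ at the pole), and $z_0(m_*)$ is an edge of $\supp(\mu_0)$, so $|z-z_0(m_*)|\geq\delta$. On the interval between $m_*$ and $m_0(z)$ one has $0<z_0'(m)\leq 1/m^2\leq 4C_0^2$, giving
\[
|m_0(z)+1/t_\alpha|>|m_0(z)-m_*|\geq |z-z_0(m_*)|/(4C_0^2)\geq \delta/(4C_0^2).
\]
Complex $z$ is then handled by Lipschitz continuity of $m_0$ near the real axis and by $|\Im m_0(z)|>c$ away from it. If you insert the bound $z_0'\leq 1/m^2$ into your framework, it immediately yields $|z_0^{(N_k)}(m_+^{(N_k)})-z_k|\leq 4C_0^2\,|m_+^{(N_k)}-m_0^{(N_k)}(z_k)|\to 0$ and rescues the contradiction; but at that point the machinery of $\nu^*$, $\mu^*$, and $z_0^*$ is no longer doing any work.
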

\begin{proof}
For each $z \in U_\delta$, we have
\begin{equation}\label{eq:m0stieltjes}
\Im m_0(z)=\int \frac{\Im z}{|x-z|^2}\mu_0(dx),
\qquad |m_0(z)| \leq \int \frac{1}{|x-z|}\mu_0(dx) \leq \frac{1}{\delta}.
\end{equation}
The second statement implies the
result holds for $|t_\a|<\delta/2$.
Since $\|T\|<C_0$ for a constant $C_0>0$, the result also holds when
$|m_0(z)|<1/(2C_0)$. Proposition \ref{prop:boundedsupport} shows that
$\supp(\mu_0)$ is uniformly bounded, so there is a constant $R>0$ such that
$|m_0(z)|<1/(2C_0)$ when $|z|>R$. Thus it remains to consider
the case
\begin{equation}\label{eq:zconditions}
|t_\a| \geq \delta/2, \qquad |m_0(z)| \geq 1/(2C_0), \qquad |z| \leq R.
\end{equation}
For this case, consider first $z \in U_\delta \cap \R$, so that $m_0(z) \in \R$.
The result is immediate if $t_\a m_0(z)>0$. Otherwise, note that
$\operatorname{sign}(m_0(z))=\operatorname{sign}(-1/t_\a)$.
Since $z \notin \supp(\mu_0)$, Proposition \ref{prop:mu0support}
implies $z_0'(m_0(z))>0$. By the behavior of $z_0$ at its poles,
there exists $m_* \in \R$ between $m_0(z)$ and $-1/t_\a$
such that $z_0'(m_*)=0$ and $z_0'(m)>0$ for each $m$ between $m_*$ and
$m_0(z)$. Note that $|1/t_\a|>1/C_0$, so
$|m|>1/(2C_0)$ for each such $m$. Also, differentiating (\ref{eq:z0}) yields
$z_0'(m) \leq 1/m^2$. So $0<z_0'(m)<4C_0^2$ for each such $m$. Then,
since $z=z_0(m_0(z))$, we have
\[|m_0(z)+1/t_\a|>|m_0(z)-m_*|>|z-z_0(m_*)|/(4C_0^2).\]
Since $z_0(m_*)$ is a boundary of $\supp(\mu_0)$ and $z \in U_\delta$, we have
$|z-z_0(m_*)|>\delta$. Multiplying by $|t_\a|$ and applying $|t_\a|
\geq \delta/2$ yields the result when $z \in U_\delta \cap \R$.

To extend to all $z \in U_\delta$ satisfying (\ref{eq:zconditions}), let
us apply the validity of this result for $z \in U_{\delta/2} \cap \R$. Note
that for any $z,z' \in U_{\delta/2}$, we have
\[|m_0(z)-m_0(z')| \leq \int
\left|\frac{1}{x-z}-\frac{1}{x-z'}\right|\mu_0(dx) \leq C|z-z'|.\]
Thus $|1+t_\a m_0(z)|>c$ for all $z \in U_\delta \subset U_{\delta/2}$
belonging to an $\eps$-neighborhood of $U_{\delta/2} \cap \R$,
for a sufficiently small constant $\eps>0$. On the other hand,
if $\dist(z,U_{\delta/2} \cap \R)>\eps$ and $z \in U_\delta$, then it is easy
to check that $|\Im z|>\eps$ when $\eps$ is sufficiently small. So the
bound $|z|<R$ in (\ref{eq:zconditions}) and the first statement of
(\ref{eq:m0stieltjes}) yields $|\Im m_0(z)|>c$.
Then $|1+t_\a m_0(z)| \geq |t_\a| \cdot |\Im m_0(z)|>c$.
\end{proof}

\begin{proposition}\label{prop:m0basicoutside}
Suppose Assumption \ref{assump:dT} holds.
Fix $\delta,R>0$. Then there exist constants $C,c>0$ such that for all $z \in
U_\delta$,
\[|m_0(z)|<C, \qquad |\Im m_0(z)| \leq C|\Im z|,\]
and for all $z \in U_\delta$ with $|z|<R$,
\[|m_0(z)|>c, \qquad |\Im m_0(z)| \geq c|\Im z|.\]
\end{proposition}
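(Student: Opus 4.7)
The plan is to derive all four inequalities from two ingredients: the integral representation
\[m_0(z)=\int\frac{1}{x-z}\,\mu_0(dx),\]
valid on $\C\setminus\supp(\mu_0)$ by analytic continuation (using Propositions \ref{prop:m0extension} and \ref{prop:m0extension0}), together with the Marchenko-Pastur equation (\ref{eq:MPdiag}). By standard asymptotics at infinity (taking $z\to\infty$ in (\ref{eq:MPdiag}) forces $zm_0(z)\to-1$), $\mu_0$ is a probability measure; I would note this at the outset so that $\int\mu_0(dx)=1$ in subsequent estimates.

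For the two upper bounds, I would simply observe that any $z\in U_\delta$ satisfies $|x-z|\geq\delta$ for every $x\in\supp(\mu_0)$. Bounding under the integral sign gives $|m_0(z)|\leq\delta^{-1}$. Taking the imaginary part of the integral representation yields $\Im m_0(z)=\Im z\int|x-z|^{-2}\mu_0(dx)$, so the same pointwise bound produces $|\Im m_0(z)|\leq|\Im z|/\delta^2$.

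For the lower bound on $|\Im m_0(z)|$ when $|z|<R$, I would invoke Proposition \ref{prop:boundedsupport} to fix $C_0>0$ with $\supp(\mu_0)\subset[-C_0,C_0]$. Then $|x-z|\leq C_0+R$ for every $x\in\supp(\mu_0)$, so $\int|x-z|^{-2}\mu_0(dx)\geq (C_0+R)^{-2}$, giving $|\Im m_0(z)|\geq|\Im z|/(C_0+R)^2$. (When $z$ is real this is trivially $\geq 0$.)

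The main obstacle is the lower bound $|m_0(z)|>c$: unlike $\Im m_0$, the integral $\int(x-z)^{-1}\mu_0(dx)$ can be small, either because of decay at infinity or because of cancellation when $z$ sits in a spectral gap, so the integral representation alone is not enough. I would bypass this by using (\ref{eq:MPdiag}) as a fixed-point equation. Suppose $|m_0(z)|<\eps$ with $\eps$ chosen below $1/(2C)$, where $C$ is the constant of Assumption \ref{assump:dT}. Then $|t_\alpha m_0(z)|<1/2$, so $|1+t_\alpha m_0(z)|>1/2$ for every $\alpha$, and therefore
\[\left|\frac{1}{N}\sum_{\alpha=1}^M\frac{t_\alpha}{1+t_\alpha m_0(z)}\right|\leq \frac{M}{N}\cdot 2C\leq 2C^2.\]
Substituting into (\ref{eq:MPdiag}) yields $|z|\geq 1/|m_0(z)|-2C^2>1/\eps-2C^2$, which contradicts $|z|<R$ once $\eps<1/(R+2C^2)$. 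This gives the desired lower bound with $c=\min(1/(2C),1/(R+2C^2))$, completing the proof.
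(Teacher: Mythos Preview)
Your proof is correct and follows essentially the same route as the paper: the integral representation handles $|m_0|<C$ and both bounds on $\Im m_0$ (the paper leaves implicit the appeal to Proposition \ref{prop:boundedsupport} for the lower bound), and the Marcenko--Pastur equation gives $|m_0|>c$. The one small difference is that the paper obtains $|1+t_\alpha m_0(z)|>c$ by invoking Proposition \ref{prop:m0regularoutside}, whereas you observe directly that if $|m_0|$ were small then $|t_\alpha m_0|<1/2$ automatically; your version is slightly more self-contained, while the paper's reuses a bound already established for other purposes.
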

\begin{proof}
From (\ref{eq:m0stieltjes}),
we obtain both bounds on $\Im m_0(z)$ and the upper bound on $|m_0(z)|$.
The lower bound on $|m_0(z)|$
follows from (\ref{eq:MPdiag}) together with $|z|<R$, $|t_\a|<C$,
and $|1+t_\a m_0(z)|>c$.
\end{proof}

We now turn to the implications of edge regularity, and prove
Propositions \ref{prop:regular}, \ref{prop:basicregbounds},
\ref{prop:z0secondderivative}, and
\ref{prop:m0estimates}. 

\begin{remark}
One may check, via Proposition \ref{prop:basicregbounds}, that
Definition \ref{def:regular} is equivalent to the definition of a regular edge
in \cite{knowlesyin} when $T$ is positive definite.
The condition $|m_*+t_\a^{-1}|>\tau$ is similar to that
introduced for the rightmost edge in \cite{elkaroui}. 
In a simple spiked model \cite{johnstone} where 
$(t_1,\ldots,t_M)=(\theta,1,1,\ldots,1)$ for
fixed $\theta>1$, this condition for the rightmost edge is equivalent to
$\theta$ falling below the phase transition
threshold $1+\sqrt{M/N}$ studied in \cite{baiketal}.
\end{remark}

\begin{proof}[Proof of Proposition \ref{prop:basicregbounds}]
The bounds $|m_*|<\tau^{-1}$ and $\gamma<\tau^{-1}$
are assumed in Definition \ref{def:regular}. From (\ref{eq:dz0mstar})
and the condition $|m_*+t_\a^{-1}|>\tau$ for each $\a$, the bound
$|m_*|>c$ follows. The bounds
$|E_*|<C$ and $\gamma>c$ then follow from the definitions $E_*=z_0(m_*)$ and
$\gamma^{-2}=|z_0''(m_*)|/2$.
For $|1+t_\a m_*|$, take $C>0$ such that $|m_*|<C$.
If $|t_\a|>1/(2C)$, then $|1+t_\a m_*|>\tau/(2C)$ by the condition
$|m_*+t_\a^{-1}|>\tau$, whereas if $|t_\a| \leq 1/(2C)$,
then $|1+t_\a m_*|>1/2$.

From (\ref{eq:dz0mstar}) and the conditions $|m_*|<C$ and $|1+t_\a m_*|>c$,
we have $M^{-1}\sum_\a t_\a^2>c$. Together with the assumption $|t_\a|<C$ for
all $\a$, this implies (\ref{eq:nondegenerate}).
Finally, note that $0=z_0'(m_*)$ implies
$m_*^{-1}=N^{-1}\sum_\a t_\a^2m_*/(1+t_\a m_*)^2$, and hence
\[E_*=z_0(m_*)=\frac{1}{N}\sum_{\a=1}^M \frac{t_\a}{(1+t_\a m_*)^2}.\]
If $T$ is positive semi-definite, then $E_*>c$ follows from
$|1+t_\a m_*|<C$ and (\ref{eq:nondegenerate}).
\end{proof}

The remaining results heuristically follow from
the Taylor expansion
\[z_0(m)-E_*=z_0(m)-z_0(m_*)=\frac{z_0''(m_*)}{2}(m-m_*)^2+O((m-m_*)^3),\]
where there is no first-order term because $0=z_0'(m_*)$. Consequently,
\[m_0(z) \approx m_*+\sqrt{\frac{2}{z_0''(m_*)}(z-E_*)}\]
for $z \in \C^+$ near $E_*$ and an appropriate choice of square-root.
Edge regularity implies uniform control of the above Taylor expansion.

We first quantify continuity of $m_0$, uniformly in $N$,
near a regular edge $E_*$. In particular this implies that when $|z-E_*|$ is
small, $|m_0(z)-m_*|$ is also small. (We believe that uniform control of this
continuity may have been
omitted from the analysis in \cite[Appendix A]{knowlesyin}.)

\begin{lemma}\label{lemma:m0uniformcontinuity}
Suppose Assumption \ref{assump:dT} holds and $E_*$ is a
regular edge with $m$-value $m_*$.
Then there exist constants $C,\delta>0$ such that
\[(E_*-\delta,E_*+\delta) \subset \R_*,\]
and for every $z \in \overline{\C^+}$ with $|z-E_*|<\delta$,
\[|m_0(z)-m_*|^2<C|z-E_*|.\]
\end{lemma}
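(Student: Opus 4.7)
The strategy is to exploit the identity $z = z_0(m_0(z))$ (which is valid on all of $\overline{\C^+}\setminus\{0\}$ by continuity) and to Taylor expand $z_0$ at $m_*$, using $z_0'(m_*)=0$ and $|z_0''(m_*)|=2/\gamma^2$, where the latter is bounded below by regularity. The main steps would then be: (i) locate a neighborhood of $E_*$ inside $\R_*$; (ii) establish a uniform, $\tau$-dependent Taylor lower bound for $z_0$ near $m_*$; and (iii) propagate the pointwise continuity $m_0(E_*)=m_*$ into a uniform statement by a connectedness argument in $\overline{\C^+}$.

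For (i), regularity gives $|m_*|<\tau^{-1}$, so $E_*$ is soft and Proposition \ref{prop:m0softedge} yields $E_*\in\R_*$. The only obstruction is $\R_* = \R\setminus\{0\}$, i.e.\ $\rank(T)\le N$; in that case Proposition \ref{prop:m0extension0} forces $|m_0(z_n)|\to\infty$ for any $z_n\to 0$ in $\overline{\C^+}\setminus\{0\}$, which together with $|m_*|<\tau^{-1}$ implies $|E_*|>c$, so a $\tau$-dependent real neighborhood of $E_*$ avoids the origin. For (ii), Proposition \ref{prop:basicregbounds} and $|m_*+t_\a^{-1}|>\tau$ make the successive derivatives of $z_0$ uniformly controlled: differentiating (\ref{eq:z0}) termwise gives $|z_0''(m)|,|z_0'''(m)|\le C$ on the complex ball $B_\eta:=\{m\in\C:|m-m_*|\le\eta\}$ for a sufficiently small $\tau$-dependent $\eta>0$. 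Combined with $|z_0''(m_*)|\ge 2\tau^2$ and $z_0'(m_*)=0$, Taylor's theorem gives, after shrinking $\eta$ once more,
\[
|z_0(m)-E_*|\;\ge\; c\,|m-m_*|^2 \qquad\text{for all } m\in B_\eta,
\]
with $c$ depending only on $\tau$.

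For (iii), consider the half-disk $D_\delta:=\{z\in\overline{\C^+}:|z-E_*|<\delta\}$, which is connected and, by Step (i), stays inside $\overline{\C^+}\setminus\{0\}$ for small $\delta$. On $D_\delta$, $m_0$ is continuous by Propositions \ref{prop:m0extension}--\ref{prop:m0softedge}, with $m_0(E_*)=m_*\in\operatorname{int}(B_\eta)$. Choose $\delta<c\eta^2$. If some $z\in D_\delta$ had $m_0(z)\in\partial B_\eta$, then $z=z_0(m_0(z))$ would satisfy $|z-E_*|\ge c\eta^2$ by Step (ii), contradicting $z\in D_\delta$. Connectedness of $m_0(D_\delta)$ then gives $m_0(D_\delta)\subset\operatorname{int}(B_\eta)$, and Step (ii) yields
\[
|z-E_*|\;=\;|z_0(m_0(z))-z_0(m_*)|\;\ge\; c\,|m_0(z)-m_*|^2,
\]
which rearranges to $|m_0(z)-m_*|^2 \le C|z-E_*|$.

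The main obstacle is the uniformity-in-$N$ issue in step (iii): pointwise continuity of $m_0$ at $E_*$ is not quantitative, and the natural modulus of continuity (via the Stieltjes representation) degrades as $z$ approaches the support. The connectedness trick sidesteps this by combining qualitative continuity with the deterministic, $N$-uniform Taylor bound of Step (ii), so that the size of the neighborhood of $E_*$ on which $m_0(z)$ remains $\eta$-close to $m_*$ depends only on $\tau$. This is the precise point at which edge regularity enters quantitatively, and it is the same mechanism that will propagate in the subsequent proofs of Propositions \ref{prop:regular} and \ref{prop:m0estimates}.
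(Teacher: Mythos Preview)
Your approach is essentially the same as the paper's: both rely on $z=z_0(m_0(z))$, $z_0'(m_*)=0$, a uniform lower bound on the second-order term from $|z_0''(m_*)|\geq 2\tau^2$, and a continuity/connectedness bootstrap to upgrade the pointwise identity $m_0(E_*)=m_*$ to a uniform neighborhood. The paper uses an exact difference-quotient identity in place of your Taylor expansion in Step~(ii), and phrases your connectedness argument as ``define the maximal $N$-dependent radius $\delta_N$ on which $|m_0-m_*|<c$, then show $\delta_N\geq c^2/C$''; these are cosmetic differences.

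There is one small gap in Step~(i). From the qualitative divergence $|m_0(z_n)|\to\infty$ as $z_n\to 0$ (when $\rank(T)\leq N$) together with $|m_*|<\tau^{-1}$, you can conclude $E_*\neq 0$, but not $|E_*|>c$ for a $\tau$-dependent constant: the rate of divergence at $0$ is not uniform in $N$, so this does not give a quantitative separation. Consequently you cannot, a priori, choose a $\tau$-dependent $\delta$ with $0\notin D_\delta$ before running Step~(iii). The fix is to fold the exclusion of $0$ into the connectedness argument itself, exactly as the paper does: work on $D_\delta\setminus\{0\}$ (still path-connected since $E_*\neq 0$), and observe that along any path in $D_\delta$ from $E_*$ toward $0$, the divergence of $m_0$ forces $m_0$ to cross $\partial B_\eta$ first, yielding the same contradiction as in your Step~(iii). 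Thus $0\notin D_\delta$ and $(E_*-\delta,E_*+\delta)\subset\R_*$ come out as \emph{consequences} of the bootstrap, not prerequisites for it. With this reordering, your argument is complete.
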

\begin{proof}
Applying Proposition \ref{prop:basicregbounds}, take a constant
$\nu>0$ such that $|m_*|>\nu$.
Fix a constant $c<\min(\nu,\tau)$ to be determined later,
and define
\begin{align*}
\delta_N&=\min\Big(c,\;\inf(\delta>0:|m_0(z)-m_*|<c \text{ for all }
z \in \C^+ \cup \R_*
 \text{ such that } |z-E_*| \leq \delta)\Big).
\end{align*}
As $m_0(E_*)=m_*$, continuity of $m_0$ at $E_*$ implies $\delta_N>0$.
Furthermore, if $\rank(T) \leq N$ so that $0 \notin \R_*$, then the divergence
of $m_0$ at 0 from Proposition \ref{prop:m0extension0} implies
$(E_*-\delta_N,E_*+\delta_N) \subset \R_*$.
A priori, $\delta_N$ may depend on $N$. We will first establish 
that $|m_0(z)-m_*|^2<C|z-E_*|$ when $|z-E_*| \leq \delta_N$. This will then
imply that $\delta_N$ is bounded below by a constant $\delta$.

Consider $z \in \overline{\C^+}$ with $|z-E_*| \leq \delta_N$. Let us
write as shorthand $m=m_0(z)$. Then
\begin{align}
|z-E_*|&=|z_0(m)-z_0(m_*)|\nonumber\\
&=|m-m_*| \left|-\frac{1}{mm_*}+\frac{1}{N}
\sum_{\a=1}^M \frac{t_\a^2}{(1+t_\a m)(1+t_\a m_*)}\right|\nonumber\\
&=|m-m_*|^2 \left|-\frac{1}{mm_*^2}+\frac{1}{N}
\sum_{\a=1}^M \frac{t_\a^3}{(1+t_\a m)(1+t_\a m_*)^2}\right|,
\label{eq:deltastar}
\end{align}
where the last line adds to the quantity inside the modulus
\[0=z_0'(m_*)=\frac{1}{m_*^2}-\frac{1}{N}\sum_{\a=1}^M
\frac{t_\a^2}{(1+t_\a m_*)^2}.\]
As $|m-m_*|<c$ by definition of $\delta_N$, we have for each non-zero
$t_\a$
\[\left|\frac{1}{m}-\frac{1}{m_*}\right|<\frac{c}{\nu(\nu-c)},
\qquad \left|\frac{1}{m+t_\a^{-1}}
-\frac{1}{m_*+t_\a^{-1}}\right|<\frac{c}{\tau(\tau-c)}.\]
Applying this to (\ref{eq:deltastar}) and recalling
$\gamma^{-2}=|z_0''(m_*)|/2$ yields
\[|z-E_*|>|m-m_*|^2\left(\gamma^{-2}-
\frac{c}{\nu^3(\nu-c)}-\frac{M}{N}\frac{c}{\tau^3(\tau-c)}\right).\]
As $\gamma^{-2}>\tau^2$, this implies
$|m_0(z)-m_*|^2<C|z-E_*|$ when $c$ is chosen sufficiently small, as desired.

By continuity of $m_0$ and definition of $\delta_N$, either $\delta_N=c$ or
there must exist $z \in \overline{\C^+}$ such that $|z-E_*|=\delta_N$ and
$|m_0(z)-m_*|=c$. In the latter case, for this $z$
we have $c^2=|m_0(z)-m_*|^2<C|z-E_*|=C\delta_N$, implying $\delta_N>c^2/C$.
Thus in both cases $\delta_N$ is bounded below by a constant,
yielding the lemma.
\end{proof}

Next we bound the third derivative of $z_0$ near the $m$-value of a
regular edge.
\begin{lemma}\label{lemma:z0thirdderivative}
Suppose Assumption \ref{assump:dT} holds and $E_*$ is a
regular edge with $m$-value $m_*$. Then there exist
constants $C,\delta>0$ such that $z_0$ is analytic on the
disk $\{m \in \C:|m-m_*|<\delta\}$, and for every $m$ in this disk,
\[|z_0'''(m)|<C.\]
\end{lemma}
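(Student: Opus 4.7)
The plan is to show directly that the disk $\{|m-m_*|<\delta\}$ avoids all poles of $z_0$ by a constant margin, and then to bound $|z_0'''(m)|$ termwise using this separation.

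First, I invoke Proposition \ref{prop:basicregbounds} to obtain constants $c,C>0$ such that $c<|m_*|<C$, $|t_\a|<C$, and $|1+t_\a m_*|>c$ for every $\a=1,\ldots,M$. Next, by elementary continuity, I choose $\delta>0$ depending only on these constants and on $\tau$ so that for every $m\in\C$ with $|m-m_*|<\delta$:
\[
|m|>c/2,\qquad |1+t_\a m|>c/2 \text{ for all }\a \text{ with }t_\a\neq 0.
\]
(For $|1+t_\a m|$, note $|1+t_\a m|\geq |1+t_\a m_*|-|t_\a|\,|m-m_*|>c-C\delta$.) Since the poles of $z_0$ are $P=\{0\}\cup\{-t_\a^{-1}:t_\a\neq 0\}$, this shows the disk $\{|m-m_*|<\delta\}$ lies in $\C\setminus P$, so $z_0$ is analytic there.

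It remains to bound $|z_0'''(m)|$ on the disk. Differentiating the definition of $z_0$ three times gives
\[
z_0'''(m)=\frac{6}{m^4}-\frac{6}{N}\sum_{\a:\,t_\a\neq 0}\frac{t_\a^4}{(1+t_\a m)^4}.
\]
Applying the lower bound $|m|>c/2$ controls the first term by a constant, and the estimates $|t_\a|<C$ together with $|1+t_\a m|>c/2$ bound each summand in the second term by $(2C/c)^4$, a constant independent of $\a$ and of $N$. Using $M/N<C$ from Assumption \ref{assump:dT}, the average is also bounded by a constant, giving $|z_0'''(m)|<C'$ on the disk, which completes the proof.

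There is no real obstacle here: the entire lemma reduces to a pole-separation argument that is an immediate consequence of the regularity conditions packaged in Proposition \ref{prop:basicregbounds}, followed by an elementary termwise estimate of the explicit formula for $z_0'''$.
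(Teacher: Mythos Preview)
Your proof is correct and follows essentially the same approach as the paper: both use Proposition \ref{prop:basicregbounds} to get uniform separation of $m_*$ from the poles of $z_0$, choose $\delta$ small enough to preserve this separation on the disk, and then bound the explicit formula for $z_0'''$ termwise. The only cosmetic difference is that the paper writes the summand as $6/(t_\a^{-1}+m)^4$ and bounds it via $|m_*+t_\a^{-1}|>\tau$ directly from Definition \ref{def:regular}, whereas you equivalently use $|1+t_\a m|>c$.
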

\begin{proof}
Proposition \ref{prop:basicregbounds} ensures $|m_*|>\nu$ for a constant
$\nu>0$. Taking $\delta<\min(\nu,\tau)$, the disk
$D=\{m \in \C:|m-m_*|<\delta\}$ does not contain any pole of $z_0$,
and hence $z_0$ is analytic on $D$. We compute
\[z_0'''(m)=\frac{6}{m^4}-\frac{1}{N}\sum_{\a:t_\a \neq 0}
\frac{6}{(t_\a^{-1}+m)^4},\]
so $|z_0'''(m)|<C$ for $m \in D$ and sufficiently small $\delta$
by the bounds $|m_*|>\nu$ and $|m_*+t_\a^{-1}|>\tau$.
\end{proof}

Propositions \ref{prop:regular}, \ref{prop:z0secondderivative}, and
\ref{prop:m0estimates} now follow:

\begin{proof}[Proof of Proposition \ref{prop:z0secondderivative}]
This follows from Taylor expansion of $z_0''$ at $m_*$,
the condition $|z_0''(m_*)|=2\gamma^{-2}>2\tau^2$ implied by regularity,
and Lemma \ref{lemma:z0thirdderivative}.
\end{proof}

\begin{proof}[Proof of Proposition \ref{prop:regular}(a)]
Let $C,\delta>0$ be as in Lemma \ref{lemma:m0uniformcontinuity}.
Reducing $\delta$ as necessary and
applying Lemma \ref{lemma:z0thirdderivative}, we may assume $z_0$ is analytic
with $|z_0'''(m)|<C'$ over the disk
\[D=\{m \in \C:|m-m_*|<\sqrt{C\delta}\},\]
for a constant $C'>0$.

Let $E^*$ be the closest other edge to $E_*$, and suppose
$E^* \in (E_*-\delta,E_*+\delta)$. Let $m^*$ be the $m$-value for $E^*$.
Then Lemma \ref{lemma:m0uniformcontinuity} implies $m^* \in D$.
Applying a Taylor expansion of $z_0'$,
\[z_0'(m^*)=z_0'(m_*)+z_0''(m_*)(m^*-m_*)+\frac{z_0'''(m)}{2}(m^*-m_*)^2\]
for some $m$ between $m_*$ and $m^*$. Applying $0=z_0'(m^*)=z_0'(m_*)$,
$|z_0''(m_*)|=2\gamma^{-2}>2\tau^2$, and $|z_0'''(m)|<C'$,
we obtain $|m^*-m_*|>4\tau^2/C'$. Then Lemma
\ref{lemma:m0uniformcontinuity} yields $|E^*-E_*|>c$ for a constant
$c>0$. Reducing $\delta$ to $c$ if necessary, we ensure
$(E_*-\delta,E_*+\delta)$ contains no other edge $E^*$. The
condition $(E_*-\delta,E_*+\delta) \subset \R_*$ was established in Lemma
\ref{lemma:m0uniformcontinuity}.
\end{proof}

\begin{proof}[Proof of Propositions \ref{prop:m0estimates}
and \ref{prop:regular}(b)]
For any constant $\delta>0$, if $\eta=\Im z \geq \delta$,
then all claims follow from
Propositions \ref{prop:m0regularoutside} and \ref{prop:m0basicoutside}. Hence
let us consider $\eta=\Im z<\delta$.

Taking $\delta$ sufficiently small, Lemma \ref{lemma:m0uniformcontinuity}
implies $|m_0(z)-m_*|<\sqrt{C\delta}$ for all $z \in \bD_0$.
Then $|m_0(z)| \asymp 1$ and $|1+t_\a m_0(z)| \asymp 1$ by
Proposition \ref{prop:basicregbounds}. Reducing $\delta$ if necessary,
by Lemma \ref{lemma:z0thirdderivative} we may also ensure
$z_0$ is analytic with $|z_0'''(m)|<C'$ on
\[D=\{m \in \C:|m-m^*|<\sqrt{C\delta}\}.\]
Note $z=z_0(m_0(z))$ by (\ref{eq:MPdiag}) while
$E_*=z_0(m_*)$. Then taking a Taylor expansion of $z_0$ and applying the
conditions $z_0'(m_*)=0$, $z_0''(m_*)=2\gamma^{-2}$,
and $|z_0'''(\tilde{m})|<C'$ for all $\tilde{m} \in D$, we have
\begin{equation}\label{eq:Taylorzm}
z-E_*=z_0(m_0(z))-z_0(m_*)=(\gamma^{-2}+r(z))(m_0(z)-m_*)^2
\end{equation}
where $|r(z)|<C'\sqrt{C\delta}/6$. Taking $\delta$ sufficiently small, we
ensure
\begin{equation}\label{eq:gammarz}
|\gamma^{-2}+r(z)| \asymp 1,\qquad\arg(\gamma^{-2}+r(z)) \in (-\eps,\eps)
\end{equation}
for an arbitrarily small constant $\eps>0$, where $\arg(z)$ denotes the complex
argument. Taking the modulus of 
(\ref{eq:Taylorzm}) on both sides yields
$|m_0(z)-m_*| \asymp \sqrt{|z-E_*|} \asymp \sqrt{\kappa+\eta}$.

For $\Im m_0(z)$, suppose $E_*$ is a right edge. (The case of a
left edge is similar.) By Proposition \ref{prop:regular}(a), we may assume
$(E_*-\delta,E_*) \subset \supp(\mu_0)$ and
$(E_*,E_*+\delta) \subset \R \setminus \supp(\mu_0)$.
First suppose $\Im z>0$ and $E \equiv \Re z \leq E_*$. As $\Im m_0(z)>0$ by
definition, (\ref{eq:Taylorzm}) yields
\[m_0(z)-m_*=\sqrt{(z-E_*)/(\gamma^{-2}+r(z))}\]
where the square-root has branch cut on the positive real axis
and positive imaginary part. Applying $\arg(z-E_*) \in [\pi/2,\pi)$ and
(\ref{eq:gammarz}), we have
$\Im m_0(z) \asymp \Im \sqrt{z-E_*} \asymp |\sqrt{z-E_*}|
\asymp \sqrt{\kappa+\eta}$. By continuity of $m_0$, this extends to $z \in
(E_*-\delta,E_*)$ on the real axis. Hence Proposition \ref{prop:regular}(b)
also follows, as $f_0(x)=\pi^{-1} \Im m_0(x)$.

Now, suppose $E \equiv \Re z>E_*$. Let us write
\begin{align*}
\Im m_0(z)&=\int_{|\lambda-E_*|<\delta} \frac{\eta}{(\lambda-E)^2+\eta^2} 
\mu_0(d\lambda)+\int_{|\lambda-E_*| \geq \delta}
\frac{\eta}{(\lambda-E)^2+\eta^2}\mu_0(d\lambda)\\
&\equiv \mathrm{I}+\mathrm{II}.
\end{align*}
Reducing $\delta$ to $\delta/2$, we may assume the closest edge to $E$ is $E_*$.
Then we have $\mathrm{II} \in [0,\eta/\delta^2]$.
For $\mathrm{I}$, as $\mu_0$ has density $f_0(x) \asymp \sqrt{E_*-x}$ for
$x \in (E_*-\delta,E_*)$ while
$(E_*,E_*+\delta) \subset \R \setminus \supp(\mu_0)$,
\[\mathrm{I} \asymp \int_{E_*-\delta}^{E_*} \frac{\eta}{(\lambda-E)^2+\eta^2}
\sqrt{E_*-\lambda}\,d\lambda
=\int_0^\delta \frac{\eta}{\eta^2+(\kappa+x)^2}\sqrt{x}\,dx.\]
Considering separately the integral over $x \in [0,\kappa+\eta]$
and $x \in [\kappa+\eta,\delta]$, we obtain $\mathrm{I} \asymp
\eta/\sqrt{\eta+\kappa}$. Then $\mathrm{II} \leq C\cdot \mathrm{I}$, and this
yields $\Im m_0(z) \asymp \eta/\sqrt{\eta+\kappa}$.
\end{proof}

\section{Proof of local law}\label{appendix:locallaw}
We verify that the proof of the entrywise local law in \cite{knowlesyin} does
not require positivity of $T$.
Indeed, Theorem \ref{thm:generallocallaw} below, which is a slightly modified
version of \cite[Theorem 3.22]{knowlesyin}, holds in our setting.
We deduce from this Theorems
\ref{thm:sticktobulk}, \ref{thm:regedgeconcentration}, and \ref{thm:locallaw}.

We use the following notion of stability, analogous to \cite[Definition
5.4]{knowlesyin} and \cite[Lemma 4.5]{bloemendaletal}.
\begin{definition}\label{def:stability}
Fix a bounded set $S \subset \R$ and a constant $a>0$, and let
\begin{equation}\label{eq:Dgeneral}
\bD=\{z \in \C^+:\Re z \in S,\,\Im z \in [N^{-1+a},1]\}.
\end{equation}
For $z=E+i\eta \in \bD$, denote
\[L(z)=\{z\} \cup \{w \in \bD:\Re w=E,\,\Im w \in [\eta,1] \cap (N^{-5}
\mathbb{N})\}.\]
For a function $g:\bD \to (0,\infty)$, the Marcenko-Pastur equation
(\ref{eq:MPdiag}) is {\bf $\pmb{g}$-stable} on $\bD$ if the following holds
for some constant $C>0$:
Let $u:\C^+ \to \C^+$ be the Stieltjes transform of any probability measure, and
let $\Delta:\bD \to (0,\infty)$ be any function satisfying
\begin{itemize}
\item (Boundedness) $\Delta(z) \in [N^{-2},(\log N)^{-1}]$ for all $z \in \bD$,
\item (Lipschitz) $|\Delta(z)-\Delta(w)| \leq N^2|z-w|$ for all $z,w \in \bD$,
\item (Monotonicity) $\eta \mapsto \Delta(E+i\eta)$ is non-increasing for
each $E \in S$ and $\eta>0$.
\end{itemize}
If $z \in \bD$ is such that $|z_0(u(w))-w| \leq \Delta(w)$
for all $w \in L(z)$, then
\begin{equation}\label{eq:stability}
|u(z)-m_0(z)| \leq \frac{C\Delta(z)}{g(z)+\sqrt{\Delta(z)}}.
\end{equation}
\end{definition}

\begin{theorem}[Abstract local law]\label{thm:generallocallaw}
Suppose Assumptions \ref{assump:dT} and \ref{assump:X} hold.
Fix a bounded set $S \subset \R$ and a constant $a>0$, and define $\bD$ by
(\ref{eq:Dgeneral}).
Suppose, for some constants $C,c>0$ and a bounded function $g:\bD \to (0,C)$,
that (\ref{eq:MPdiag}) is $g$-stable on $\bD$, and furthermore
\[c<|m_0(z)|<C, \qquad c\eta<\Im m_0(z)<Cg(z), \qquad
|1+t_\a m_0(z)|>c\]
for all $z=E+i\eta \in \bD$ and all $\a \in \I_M$. Then, letting
$m_N(z),G(z),\Pi(z)$ be as in (\ref{eq:mN}), (\ref{eq:Galt}), and
(\ref{eq:Pi0}), and denoting
\[\Psi(z)=\sqrt{\frac{\Im m_0(z)}{N\eta}}+\frac{1}{N\eta},\]
\begin{enumerate}[(a)]
\item (Entrywise law)
For all $z \in \bD$ and $A,B \in \I$,
\[\frac{G_{AB}(z)-\Pi_{AB}(z)}{t_A t_B} \prec \Psi(z).\]
\item (Averaged law) For all $z \in \bD$,
\[m_N(z)-m_0(z) \prec
\min\left(\frac{1}{N\eta},\frac{\Psi(z)^2}{g(z)}\right).\]
\end{enumerate}
\end{theorem}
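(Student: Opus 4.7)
The plan is to follow the standard Schur-complement / self-consistent equation strategy in the form developed in \cite{knowlesyin}, checking that each step survives the absence of a sign assumption on $T$. The key point is that all ingredients -- resolvent identities, large deviation estimates for quadratic forms, and the stability hypothesis -- are blind to the sign of the entries $t_\a$ provided the hypothesis $|1+t_\a m_0(z)|>c$ holds.

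First I would collect the standard deterministic and probabilistic inputs. The resolvent identities of Lemma \ref{lemma:resolventidentities} already allow for arbitrary diagonal $T$, with $G^{(A)}$ defined by continuity when $T$ is not invertible. The large deviation estimates (e.g.\ Hanson-Wright) applied to the $i$-th and $\a$-th Schur expansions
\[G_{ii}^{-1}=-z-\sum_{\a,\b} G^{(i)}_{\a\b}X_{\a i}X_{\b i},\qquad
G_{\a\a}^{-1}=-t_\a^{-1}-\sum_{i,j}G^{(\a)}_{ij}X_{\a i}X_{\a j}\]
give, under Assumption \ref{assump:X},
\[\sum_{\a,\b}G^{(i)}_{\a\b}X_{\a i}X_{\b i}=\frac{1}{N}\Tr\,T^2G^{(i)}_{MM}+O_\prec(\Psi),\qquad
\sum_{i,j}G^{(\a)}_{ij}X_{\a i}X_{\a j}=m_N^{(\a)}+O_\prec(\Psi),\]
where $G^{(i)}_{MM}$ is the $\I_M\times\I_M$ block of $G^{(i)}$ and $\Psi$ is the target error. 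These inputs are independent of signs of $t_\a$.

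Second I would derive the approximate self-consistent equation. Using (\ref{eq:Galt}) to rewrite $T^2G^{(i)}_{MM}$ in terms of $X$ and $G^{(i)}_N$, and applying Lemma \ref{lemma:resolventidentities}(c) to replace $G^{(i)}$ by $G$ with negligible error, one obtains
\[-\frac{1}{G_{\a\a}}=\frac{1}{t_\a}+m_N+\Lambda_\a,\qquad
-\frac{1}{G_{ii}}=z+\frac{1}{N}\sum_\a \frac{t_\a}{1+t_\a m_N}+\Lambda_i,\]
with fluctuations $\Lambda_\a,\Lambda_i\prec\Psi$. Averaging the second identity over $i\in\I_N$ yields
\[z_0(m_N)-z=\Lambda\qquad\text{with}\qquad \Lambda\prec \Psi,\]
i.e.\ $m_N$ approximately satisfies the Marcenko--Pastur equation with a controllable error. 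At this stage the stability hypothesis, together with the assumed bounds $|m_0|\asymp 1$, $|1+t_\a m_0|\asymp 1$, and $\Im m_0\le Cg$, converts this perturbed equation into the bound $|m_N-m_0|\le C\Delta/(g+\sqrt{\Delta})$ for a majorant $\Delta$ of the fluctuation $\Lambda$.

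Third, I would run the bootstrap (continuity) argument in $\eta$, analogous to \cite[Section 5]{knowlesyin}. Starting at $\eta=1$, where $\|G\|\le 1$ gives trivial bounds, one discretises the vertical line through $E=\Re z$ on a $N^{-5}$-lattice, applies a union bound and the Lipschitz continuity of $G$ (Lemma \ref{lemma:lipschitz}) to control $\Lambda$ simultaneously on $L(z)$, and lowers $\eta$ in $N^{-5}$ steps. At each step, the previous level's bound on $\Lambda$ feeds into the stability inequality (\ref{eq:stability}) to upgrade $|m_N-m_0|$, which in turn improves the entrywise bounds on $G_{AB}-\Pi_{AB}$ via the identities above. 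This loop closes until $\eta=N^{-1+a}$, giving the entrywise law in part (a); the off-diagonal entries and the $G_{i\a}, G_{\a\b}$ blocks follow from Lemma \ref{lemma:resolventidentities}(b) and the concentration estimates, using the anisotropic quantities $(G_{AB}-\Pi_{AB})/(t_At_B)$ from (\ref{eq:locallawalt}).

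Finally, for the averaged law (b), I would invoke a fluctuation-averaging argument as in \cite[Section 4]{erdosknowlesyau} or \cite[Lemma 3.5]{knowlesyin}: averaging $\Lambda_i$ over $i$ yields a gain of order $\Psi/g$ over the entrywise bound, which then plugs back into the stability relation to produce the stronger bound $|m_N-m_0|\prec \Psi^2/g$; the minimum with $(N\eta)^{-1}$ follows from the trivial bound $|\Im(m_N-m_0)|\le 2/(N\eta)$. The main obstacle, and the only place where the absence of positivity matters, is verifying in this fluctuation-averaging step that the high-moment estimates of \cite{erdosknowlesyau} go through: they depend on controlling products of resolvent entries with the prefactors $t_\a$, which is handled uniformly thanks to the hypothesis $|1+t_\a m_0|>c$ and $|m_0|\asymp 1$, exactly as in the positive definite case.
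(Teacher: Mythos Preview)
Your overall architecture matches the paper's: the same Schur-complement expansions, the same large-deviation inputs, the stability hypothesis fed a majorant $\Delta$ of the fluctuation, and a continuity (bootstrap) argument in $\eta$ followed by fluctuation averaging. You are also right that nothing in these steps sees the sign of $t_\a$; the paper makes exactly this point, replacing any appearance of $\sigma_i$ in \cite{knowlesyin} by $|t_\a|$ and using $|1+t_\a m_0|>c$, $|m_0|\asymp 1$ throughout.

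Two structural points deserve correction. First, the bootstrap in your Step~3 does \emph{not} by itself close at the sharp entrywise bound $\Psi$. Without fluctuation averaging, the loop only produces a weak a priori estimate (in the paper, $\Lambda\prec(N\eta)^{-1/4}$ and the high-probability event $\Xi$). The sharp entrywise law comes \emph{after} the averaged law: one iterates the self-improvement
\[\Theta\prec (N\eta)^{-c}\ \Longrightarrow\ [Z]\prec \Psi_\Theta^2\ \Longrightarrow\ \Theta\prec (N\eta)^{-(1+c)/2},\]
where the middle implication is fluctuation averaging, to reach $\Theta\prec(N\eta)^{-1}$, and only then deduces $\Lambda\prec|m_N-m_0|+\Psi_\Theta\prec\Psi$. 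In your write-up the fluctuation averaging must be embedded in the iteration, not appended as a final step. Second, the bound $|m_N-m_0|\prec(N\eta)^{-1}$ in part~(b) is not ``trivial'' and is not a statement about $\Im(m_N-m_0)$ alone; it is the output of the same iteration (one more application of stability with $\Delta\asymp\Psi^2$ at $c=1$).
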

\begin{proof}
The proof is the same as for \cite[Theorem 3.22]{knowlesyin},
with only cosmetic differences which we indicate here. The notational
identification with \cite{knowlesyin} is $T \leftrightarrow \Sigma$ and 
$t_\a \leftrightarrow \sigma_i$. (We continue to use
Greek indices for $\I_M$ and Roman indices for $\I_N$, although this is reversed
from the convention in \cite{knowlesyin}.)
As in \cite{knowlesyin}, we may assume $T$ is invertible. The non-invertible
case follows by continuity.

We follow \cite[Section 5]{knowlesyin}, which in turn is based on
\cite{bloemendaletal}. Define
\[Z_i=\sum_{\a,\b \in \I_M} G_{\a\b}^{(i)}X_{\a i}X_{\b i}-N^{-1}\Tr
G_M^{(i)},\]
\[Z_\a=\sum_{i,j \in \I_N}
G_{ij}^{(\a)}X_{\a i}X_{\a j}-N^{-1}\Tr G_N^{(\a)},\]
\[[Z]=\frac{1}{N}\left(\sum_{i \in I_N} Z_i
+\sum_{\a \in \I_M} \frac{t_\a^2}{(1+t_\a m_0)^2}Z_\a\right),\]
\[\Theta=N^{-1}\left|\sum_{i \in \I_N} (G-\Pi)_{ii}\right|+M^{-1}
\left|\sum_{\a \in \I_M} (G-\Pi)_{\a\a}\right|,\qquad
\Psi_\Theta=\sqrt{\frac{\Im m_0+\Theta}{N\eta}},\]
\[\Lambda_o=\max_{A \neq B \in \I} \frac{|G_{AB}|}{|t_A t_B|},\qquad
\Lambda=\max_{A,B \in \I} \frac{|(G-\Pi)_{AB}|}{|t_A t_B|},\qquad
\Xi=\{\Lambda \leq (\log N)^{-1}\}.\]
These all implicitly depend on an argument $z \in \bD$. Then
the same steps as in \cite[Section 5]{knowlesyin} yield, either for
$\eta=1$ or on the event $\Xi$, for all $z \in \bD$ and $A \in \I$,
\begin{align}
|Z_A|,\Lambda_o &\prec \Psi_\Theta,\label{eq:ZLambdao}\\
z_0(m_N(z))-z-[Z] &\prec \Psi_\Theta^2
\prec (N\eta)^{-1}.\label{eq:z0mNz}
\end{align}
(In the argument for $\eta=1$, the use of \cite[Eq.\ (4.16)]{knowlesyin}
may be replaced by \cite[Lemmas 4.8 and 4.9]{knowlesyin}. Various bounds using
$\sigma_i$, for example
\cite[Eqs.\ (5.4), (5.11)]{knowlesyin}, may be replaced by ones using
the positive quantity $|t_\a|$.) Applying (\ref{eq:ZLambdao})
and the resolvent identities
for $G_{ii}$ and $G_{\a\a}$, we may also obtain on the event $\Xi$
\begin{equation}\label{eq:LambdaThetaN}
\Theta \prec |m_N-m_0|+|[Z]|+(N\eta)^{-1},\qquad
\Lambda \prec |m_N-m_0|+\Psi_\Theta.
\end{equation}

The bound (\ref{eq:ZLambdao}) yields the initial estimate
$[Z] \prec \Psi_\Theta \prec (N\eta)^{-1/2}$ on $\Xi$.
The conditions of Definition \ref{def:stability} hold for
$\Delta=(N\eta)^{-1/2}$, so (\ref{eq:z0mNz}), the assumed stability
of (\ref{eq:MPdiag}),
and the stochastic continuity argument of \cite[Section 4.1]{bloemendaletal}
yield that $\Xi$ holds with high probability (i.e.\ $1 \prec \1\{\Xi\}$) and
$\Lambda \prec (N\eta)^{-1/4}$ on all of $\bD$. Next, applying the fluctuation 
averaging result of \cite[Lemma 5.6]{knowlesyin},
we obtain for any $c \in (0,1]$ the implications
\begin{align*}
\Theta \prec (N\eta)^{-c} &\Rightarrow 
\Psi_\Theta \prec \sqrt{\frac{\Im m_0+(N\eta)^{-c}}{N\eta}}\\
&\Rightarrow [Z] \prec \frac{\Im m_0+(N\eta)^{-c}}{N\eta}
\equiv \Delta(z).
\end{align*}
The conditions of Definition \ref{def:stability} hold for this
$\Delta(z)$, so applying (\ref{eq:z0mNz}), stability of (\ref{eq:MPdiag}),
and $1 \prec \1\{\Xi\}$, we have the implications
\begin{align}
\Theta \prec (N\eta)^{-c} &\Rightarrow 
|m_N-m_0| \prec \frac{\Delta(z)}{g(z)+\sqrt{\Delta(z)}}\nonumber\\
&\Rightarrow \Theta \prec 
\frac{\Delta(z)}{g(z)+\sqrt{\Delta(z)}}+\Delta(z)+(N\eta)^{-1}.
\label{eq:selfimproving}
\end{align}
We bound $\Delta(z) \leq C(N\eta)^{-1}$ and
\[\frac{\Delta(z)}{g(z)+\sqrt{\Delta(z)}}
\leq \frac{\Im m_0(z)}{N\eta\,g(z)}
+(N\eta)^{-(1+c)/2}<C(N\eta)^{-1}+(N\eta)^{-(1+c)/2},\]
where this applies $\Im m_0(z)<Cg(z)$. Hence
\[\Theta \prec (N\eta)^{-c} \Rightarrow \Theta \prec (N\eta)^{-(1+c)/2}.\]
Initializing to $c=1/4$ and iterating,
we obtain $\Theta \prec (N\eta)^{-1+\eps}$
for any $\eps>0$, so $|m_N-m_0| \leq \Theta \prec (N\eta)^{-1}$. Applying
(\ref{eq:selfimproving}) once more with $c=1$, we have for $c=1$ that $\Delta(z)
\leq \Psi(z)^2$ and hence also $|m_N-m_0| \prec \Psi^2/g$. This yields
both bounds in the averaged law. The entrywise law $\Lambda \prec \Psi$
follows from (\ref{eq:LambdaThetaN}).
\end{proof}

We now verify the stability condition in Definition \ref{def:stability} near a
regular edge and outside the spectrum. Define
\[\supp(\mu_0)_\delta=\{x \in \R: \text{there exists } y \in \supp(\mu_0)
\text{ such that } |x-y|<\delta\}.\]
The proofs are the same as
\cite[Lemmas A.5 and A.8]{knowlesyin}, which are based on
\cite[Lemma 4.5]{bloemendaletal}. For convenience, we reproduce the argument
here.

\begin{lemma}\label{lemma:stability}
Suppose Assumption \ref{assump:dT} holds.
\begin{enumerate}[(a)]
\item Fix any constants $\delta,a,C_0>0$, and let
\[\bD=\{z \in \C^+:\Re z \in [-C_0,C_0] \setminus \supp(\mu_0)_\delta,\,
\Im z \in [N^{-1+a},1]\}.\]
Then (\ref{eq:MPdiag}) is $g$-stable on $\bD$ for $g(z) \equiv 1$.
\item Let $E_*$ be a regular edge, and let $\bD$ be the domain
(\ref{eq:bD}), depending on constants $\delta,a>0$.
For $z=E+i\eta \in \bD$, denote $\kappa=|E-E_*|$ and let
$g(z)=\sqrt{\kappa+\eta}$. Then, for any constant $a>0$ and any constant
$\delta>0$ sufficiently small, (\ref{eq:MPdiag}) is $g$-stable on $\bD$.
\end{enumerate}
\end{lemma}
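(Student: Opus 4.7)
The plan is to convert the hypothesis $|z_0(u(w)) - w| \leq \Delta(w)$ into a perturbative equation for $d(w) := u(w) - m_0(w)$. Using the defining identity $z_0(m_0(w)) = w$, a Taylor expansion of $z_0$ around $m_0(w)$ yields
\[
z_0'(m_0(w))\,d(w) + \tfrac{1}{2}z_0''(m_0(w))\,d(w)^2 = (z_0(u(w)) - w) + O(|d(w)|^3),
\]
whose leading right-hand term is bounded in modulus by $\Delta(w)$. Uniform control of $m_0(z)$ and of $|1+t_\alpha m_0(z)|^{-1}$ on $\bD$ (Propositions~\ref{prop:m0regularoutside} and \ref{prop:m0basicoutside} for part (a); Proposition~\ref{prop:m0estimates} for part (b)) guarantees that $z_0$ is analytic with bounded second and third derivatives in a fixed neighborhood of $m_0(z)$, so the cubic remainder is meaningful and absorbable once $|d|$ is small.

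The two parts differ only in the size of the linear coefficient $z_0'(m_0(z))$. For (a), Proposition~\ref{prop:mu0support} asserts that $z_0'(m_0(z)) > 0$ for $z \in \R \setminus \supp(\mu_0)$; combined with uniform boundedness of $m_0(z)$ and separation of $m_0(z)$ from the poles of $z_0$ (Propositions~\ref{prop:m0regularoutside} and \ref{prop:m0basicoutside}), one deduces $|z_0'(m_0(z))| \geq c > 0$ uniformly on $\bD$. The linear term dominates the quadratic equation, yielding $|d(z)| \leq C\Delta(z)$, which matches $g \equiv 1$. For (b), Proposition~\ref{prop:m0estimates} gives $|m_0(z) - m_*| \asymp \sqrt{\kappa + \eta} = g(z)$; since $z_0'(m_*) = 0$ with $|z_0''(m_*)| \asymp 1$ (Proposition~\ref{prop:z0secondderivative}), Taylor expansion of $z_0'$ at $m_*$ produces $|z_0'(m_0(z))| \asymp g(z)$. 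The quadratic equation then exhibits a dichotomy: either $|d(z)| \leq C\Delta(z)/g(z)$ in the linear-dominated regime, or $|d(z)| \leq C\sqrt{\Delta(z)}$ in the quadratic-dominated regime, both summarized by $|d(z)| \leq C\Delta(z)/(g(z) + \sqrt{\Delta(z)})$.

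The main obstacle is the a priori smallness of $|d(z)|$ required to absorb the cubic remainder and to select the correct root of the quadratic. This is handled by a discrete continuity argument along the vertical grid $L(z)$, as in \cite[Lemma 4.5]{bloemendaletal}. At the topmost grid point $w_{\mathrm{top}}$ with $\Im w_{\mathrm{top}} = 1$, the trivial bounds $|u(w_{\mathrm{top}})|, |m_0(w_{\mathrm{top}})| \leq 1$ together with the smallness $\Delta(w_{\mathrm{top}}) \leq (\log N)^{-1}$ give $|d(w_{\mathrm{top}})| \leq C(\log N)^{-1/2}$, well inside the perturbative regime. Stepping down the grid in increments of $N^{-5}$, the $N^2$-Lipschitz continuity of $u$ and $m_0$ (from $\|G(w)\| \leq (\Im w)^{-1}$ and $|m_0'(w)| \leq (\Im w)^{-2}$, both bounded by $N^2$ on $\bD$) implies $|d|$ varies by at most $N^{-3}$ between successive grid points, while the Lipschitz and monotonicity conditions on $\Delta$ in Definition~\ref{def:stability} propagate the quantitative bound from one grid point to the next, at which the quadratic analysis reapplies. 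After at most $O(N^5)$ such steps the target $z$ is reached and (\ref{eq:stability}) is obtained.
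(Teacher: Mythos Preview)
Your overall strategy---reduce to a quadratic in $d = u - m_0$ and propagate smallness down the grid $L(z)$---matches the paper's approach, which in turn follows \cite{bloemendaletal}. However, two steps in your execution are not justified as written.

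First, the initialization at $\Im w_{\mathrm{top}} = 1$: the trivial bounds $|u|,|m_0| \leq 1$ give only $|d| \leq 2$, and nothing yet selects the small root of your quadratic, so the claimed $|d(w_{\mathrm{top}})| \leq C(\log N)^{-1/2}$ does not follow. The paper's argument here is different and direct: for $\Im z \geq \nu$ fixed, the hypothesis $|z_0(u(z)) - z| \leq (\log N)^{-1}$ forces $z_0(u(z)) \in \C^+$; uniqueness of the $\C^+$-root of (\ref{eq:MPdiag}) then yields $u(z) = m_0(z_0(u(z)))$, and the Lipschitz bound $|m_0'| \leq (\nu/2)^{-2}$ gives $|d(z)| \leq C\Delta(z)$ immediately. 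Only below height $\nu$ does the quadratic analysis enter.

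Second, for part (a) the lower bound $|z_0'(m_0(z))| \geq c$ on all of $\bD$ does not follow from the ingredients you cite. Propositions~\ref{prop:m0regularoutside} and \ref{prop:m0basicoutside} bound $|m_0|$ and $|1+t_\a m_0|^{-1}$, which controls $|z_0'|$ from \emph{above}, not below; Proposition~\ref{prop:mu0support} concerns only real $z$, and a compactness argument would not be uniform in $N$. A clean fix is the inverse-function identity $z_0'(m_0(z)) = 1/m_0'(z)$ together with $|m_0'(z)| = \bigl|\int (x-z)^{-2}\,d\mu_0(x)\bigr| \leq \delta^{-2}$; the paper instead extracts the equivalent bound from the imaginary-part identity (\ref{eq:MPimag}).

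A smaller point of comparison: the paper avoids your cubic remainder altogether by writing $z_0(u) - z_0(m_0)$ as an \emph{exact} quadratic $\alpha(z)(m_0-u)^2 + \beta(z)(m_0-u)$ with $u$-dependent coefficients. Your Taylor-expansion route works once $|d|$ is known to be small, but the exact identity is cleaner and does not require separately bounding $z_0'''$ in part (a).
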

\begin{proof}
Writing $u=u(z)$, $m=m_0(z)$, and $\Delta_0=\Delta_0(z)=z_0(u(z))-z$, we have
\begin{align*}
\Delta_0=z_0(u)-z_0(m)&=\frac{m-u}{um}\left(-1+
\frac{1}{N}\sum_{\a=1}^M \frac{t_\a^2um}{(1+t_\a u)(1+t_\a m)}\right)\\
&=\alpha(z)(m-u)^2+\beta(z)(m-u)
\end{align*}
for
\[\alpha(z)=-\frac{1}{u}\cdot
\frac{1}{N}\sum_{\a=1}^M \frac{t_\a^2}{(1+t_\a u)(1+t_\a m)^2},\]
\[\beta(z)=\frac{1}{um}\left(-1+\frac{1}{N}\sum_{\a=1}^M
\frac{t_\a^2 m^2}{(1+t_\a m)^2}\right)=-\frac{m}{u}z_0'(m).\]
Viewing this a quadratic equation in $m-u$ and
denoting the two roots
\begin{equation}\label{eq:R1R2}
R_1(z),R_2(z)=
\frac{-\beta(z) \pm \sqrt{\beta(z)^2+4\alpha(z)\Delta_0(z)}}{2\alpha(z)},
\end{equation}
we obtain $m_0(z)-u(z) \in \{R_1(z),R_2(z)\}$ for each $z \in \bD$. Note that
(\ref{eq:R1R2}) implies
\begin{equation}\label{eq:R1R2diff}
|R_1(z)-R_2(z)|=\frac{\sqrt{|\beta(z)^2+4\alpha(z)\Delta_0(z)|}}{|\alpha(z)|}.
\end{equation}
Also, we have $|R_1R_2|=|\Delta_0/\alpha|$ and $|R_1+R_2|=|\beta/\alpha|$. The
first statement yields $\min(|R_1|,|R_2|) \leq
\sqrt{|\Delta_0/\alpha|}=2|\Delta_0|/\sqrt{4|\alpha \Delta_0|}$. The second
yields $\max(|R_1|,|R_2|) \geq |\beta/(2\alpha)|$, so the first
then yields $\min(|R_1|,|R_2|) \leq 2|\Delta_0|/|\beta|$. Combining these,
\begin{equation}\label{eq:minR1R2}
\min(|R_1(z)|,|R_2(z)|) \leq
\frac{4|\Delta_0(z)|}{|\beta(z)|+\sqrt{4|\alpha(z)\Delta_0(z)|}}.
\end{equation}

We first show part (a). Let $\Delta(z)$ satisfy the conditions of Definition
\ref{def:stability}. We claim that for any constant $\nu>0$,
there exist constants $C_0,c>0$ such that
\begin{enumerate}
\item If $\Im z \geq \nu$ and $|\Delta_0(z)| \leq \Delta(z)$, then
\begin{equation}\label{eq:stabilityoutside}
|m_0(z)-u(z)| \leq C_0\Delta(z).
\end{equation}
\item If $|\Delta_0(z)| \leq \Delta(z)$ and $|m_0(z)-u(z)|<(\log N)^{-1/2}$,
then
\begin{equation}\label{eq:gapoutside}
\min(|R_1(z)|,|R_2(z)|) \leq C_0\Delta(z), \qquad
|R_1(z)-R_2(z)| \geq c.
\end{equation}
\end{enumerate}
Indeed, if $\Im z \geq \nu$ and $|\Delta_0(z)| \leq \Delta(z) \leq (\log
N)^{-1}$, then $\Im z_0(u(z)) \geq \nu/2$. In particular $z_0(u(z)) \in \C^+$,
so $m_0(z_0(u(z)))=u(z)$ as this is the unique
root $m \in \C^+$ to the equation $z_0(m)=z_0(u(z))$. Applying $|m_0'(z)| \leq
1/(\Im z)^2$, we obtain
\[|m_0(z)-u(z)|=|m_0(z)-m_0(z_0(u(z)))| \leq (4/\nu^2)|\Delta_0(z)|
\leq (4/\nu^2)\Delta(z),\]
and hence (\ref{eq:stabilityoutside}) holds for $C_0=4/\nu^2$.
On the other hand, if $|m_0(z)-u(z)|<(\log N)^{-1/2}$, then Propositions
\ref{prop:m0basicoutside} and
\ref{prop:m0regularoutside} imply $|\alpha(z)|<C$ and $|\beta(z)|<C$.
Taking imaginary parts of (\ref{eq:MPdiag}) as in
(\ref{eq:MPimag}), we also have $|u(z)m(z)\beta(z)|
\geq (\Im z)|m_0(z)|^2/\Im m_0(z)>c$, so $|\beta(z)|>c$. Applying this
to (\ref{eq:R1R2diff}) and (\ref{eq:minR1R2}), and increasing $C_0$ if
necessary, we obtain (\ref{eq:gapoutside}).

A continuity argument now concludes the proof of part (a): Consider any $z \in
\bD$ with $|\Delta_0(w)| \leq \Delta(w)$ for all $w \in L(z)$. If $\Im z \geq
\nu$, the result follows from (\ref{eq:stabilityoutside}). If $\Im z<\nu$,
let $w \in L(z)$ be such that
$\Im z<\Im w\leq \Im z+N^{-5}$. Suppose inductively that we have shown
(\ref{eq:stabilityoutside}) holds at $w$.
Applying $|u'(z)| \leq 1/(\Im z)^2 \leq N^2$ for any Stieltjes transform
$u(z)$ and $z \in \bD$, we obtain
\[|m_0(z)-u(z)| \leq C_0\Delta(w)+2N^{-3}<(\log N)^{-1/2}.\]
So (\ref{eq:gapoutside}) implies
$\max(|R_1(z)|,|R_2(z)|)>c/2$. Then
\[|m_0(z)-u(z)|=\min(|R_1(z)|,|R_2(z)|),\]
so (\ref{eq:gapoutside}) also shows
that (\ref{eq:stabilityoutside}) holds at $z$. Starting the induction at
$\Im z \geq \nu$, we obtain (\ref{eq:stabilityoutside}) for all
$w \in L(z)$, and in particular at $w=z$. This establishes part (a).

For part (b), let $g(z)=\sqrt{\kappa+\eta}$.
We claim that when $\delta>0$ is sufficiently small, there exist
constants $\nu,C_0,C_1>0$ such that
\begin{enumerate}
\item If $\Im z \geq \nu$ and $|\Delta_0(z)| \leq \Delta(z)$, then
\begin{equation}\label{eq:stabilityedge}
|m_0(z)-u(z)| \leq \frac{C_0\Delta(z)}{g(z)+\sqrt{\Delta(z)}}.
\end{equation}
\item If $\Im z<\nu$, $|\Delta_0(z)| \leq \Delta(z)$, and
$|m_0(z)-u(z)|<(\log N)^{-1/3}$, then
\begin{equation}\label{eq:minR1R2edge}
\min(|R_1(z)|,|R_2(z)|) \leq 
\frac{C_0\Delta(z)}{g(z)+\sqrt{\Delta(z)}},
\end{equation}
\begin{equation}\label{eq:gapedge}
C_1^{-1}(g(z)-\sqrt{\Delta(z)}) \leq
|R_1(z)-R_2(z)| \leq C_1(g(z)+\sqrt{\Delta(z)}).
\end{equation}
\end{enumerate}
We verify the second claim first: If $\Im z<\nu$ and $|m_0(z)-u(z)|<(\log
N)^{-1/3}$, then for $\nu$ and $\delta$ sufficiently small,
Lemma \ref{lemma:m0uniformcontinuity} implies
\begin{equation}\label{eq:muclose}
|m_0(z)-m_*|<C\sqrt{\nu+\delta},\qquad |u(z)-m_*|<C\sqrt{\nu+\delta}
\end{equation}
for a constant $C>0$ independent of $\nu,\delta$. We have
\[\frac{m_*z_0''(m_*)}{2}=-\frac{1}{m_*^2}+\frac{1}{N}\sum_{\a=1}^M
\frac{t_\a^3 m_*}{(1+t_\a m_*)^3}
=-\frac{1}{N}\sum_{\a=1}^M \frac{t_\a^2}{(1+t_\a m_*)^3},\]
where the second equality applies the identity $0=z_0'(m_*)$.
Comparing the right side with $u(z)\alpha(z)$, and applying (\ref{eq:muclose})
together with the bounds $|m_*| \asymp 1$, $|z_0''(m_*)| \asymp 1$,
and $|1+t_\a m_*| \asymp 1$ from Proposition \ref{prop:basicregbounds}, we
obtain $c<|\alpha(z)|<C$ for constants $C,c>0$ and sufficiently small
$\nu,\delta$. Next, applying again $0=z_0'(m_*)$, we have
\[z_0'(m)=\int_{m_*}^m z_0''(x)dx
=(m-m_*)z_0''(m_*)+\int_{m_*}^m \int_{m_*}^x z_0'''(y) dy\,dx.\]
Applying (\ref{eq:muclose}), $|z_0''(m_*)| \asymp 1$ from
Proposition \ref{prop:basicregbounds}, $|m_0(z)-m_*| \asymp \sqrt{\kappa+\eta}$
from Proposition \ref{prop:m0estimates}, and $|z_0'''(y)|<C$ from
Lemma \ref{lemma:z0thirdderivative}, we obtain
$cg(z)<|\beta(z)|<Cg(z)$ for $\nu,\delta$ sufficiently small.
Applying these bounds and $|\Delta_0(z)| \leq \Delta(z)$
to (\ref{eq:minR1R2}) and (\ref{eq:R1R2diff}) yields
(\ref{eq:minR1R2edge}) and (\ref{eq:gapedge}). Letting $\nu$ be small enough
such that this holds, for $\Im z \geq \nu$,
the same argument as in part (a) implies $|m_0(z)-u(z)| \leq (4/\nu^2)
\Delta(z)$. Noting $g(z) \geq \sqrt{\nu}$ and increasing $C_0$ if necessary,
we obtain (\ref{eq:stabilityedge}).

We again apply a continuity argument to conclude the proof: 
Consider any $z \in \bD$ with $|\Delta_0(w)| \leq \Delta(w)$ for all $w \in
L(z)$. If $\Im z \geq \nu$, the result follows from (\ref{eq:stabilityedge}).
If $\Im z<\nu$, suppose first that
\begin{equation}\label{eq:inductivez}
\frac{C_0\Delta(z)}{g(z)+\sqrt{\Delta(z)}}+2N^{-3}
<(2C_1)^{-1}(g(z)-\sqrt{\Delta(z)}).
\end{equation}
Note that by monotonicity of $\Delta$, the left side is decreasing in $\Im z$
while the right side is increasing in $\Im z$. Thus if
(\ref{eq:inductivez}) holds at $z$, then it holds at all $w \in L(z)$.
Let $w \in L(z)$ be such that $\Im z<\Im w
\leq \Im z+N^{-5}$, and suppose inductively that we have established
(\ref{eq:stabilityedge}) at $w$. Then
\[|m_0(z)-u(z)| \leq \frac{C_0\Delta(w)}{g(w)+\sqrt{\Delta(w)}}
+2N^{-3}<(\log N)^{-1/3}.\]
Then (\ref{eq:gapedge}) and (\ref{eq:inductivez}) imply
$|m_0(z)-u(z)|=\min(|R_1(z)|,|R_2(z)|)$, so (\ref{eq:minR1R2edge}) implies
(\ref{eq:stabilityedge}) holds at $z$.
Starting the induction at $\Im z \geq \nu$, this establishes
(\ref{eq:stabilityedge}) if $z$ satisfies (\ref{eq:inductivez}).

If $z$ does not satisfy (\ref{eq:inductivez}), then rearranging
(\ref{eq:inductivez}) and applying $\Delta(z)>N^{-3}$ yields
$g(z)^2 \leq C\Delta(z)$ for a constant $C>0$. Then
\[\frac{C_0\Delta(z)}{g(z)+\sqrt{\Delta(z)}}
+C_1(g(z)+\sqrt{\Delta(z)}) \leq 
\frac{C_2\Delta(z)}{g(z)+\sqrt{\Delta(z)}}\]
for a constant $C_2>0$. We claim
\begin{equation}\label{eq:stabilityedge2}
|m_0(z)-u(z)| \leq \frac{C_2\Delta(z)}{g(z)+\sqrt{\Delta(z)}}.
\end{equation}
Indeed, let $w \in L(z)$ be such that
$\Im z<\Im w \leq \Im z+N^{-5}$, and suppose inductively that
we have established (\ref{eq:stabilityedge2}) at $w$. This implies
in particular $|m_0(z)-u(z)|<(\log N)^{-1/3}$ as before, so
(\ref{eq:stabilityedge2}) holds at $z$ by (\ref{eq:minR1R2edge}) and
(\ref{eq:gapedge}). Starting the induction at the value
$w \in L(z)$ satisfying (\ref{eq:inductivez}) which has the smallest
imaginary part, this concludes the proof in all cases.
\end{proof}

We now verify Theorems \ref{thm:sticktobulk}, \ref{thm:regedgeconcentration}, 
and \ref{thm:locallaw}.

\begin{proof}[Proof of Theorem \ref{thm:sticktobulk}]
By the bound $\|\hSigma\| \leq \|T\|\|X\|^2$, we may take $C_0>0$
sufficiently large such that $\|\hSigma\| \leq C_0$
with probability at least $1-N^{-D}$.
Define
\[\bD=\{z \in \C^+:\,\Re \in [-C_0,C_0] \setminus \supp(\mu_0)_\delta,\,
\Im z \in [N^{-2/3},1]\}.\]
Then Propositions \ref{prop:m0regularoutside}, \ref{prop:m0basicoutside},
and Lemma \ref{lemma:stability}(a) check the conditions of Theorem
\ref{thm:generallocallaw} for $g(z) \equiv 1$ over $\bD$.

Applying the second bound of
Theorem \ref{thm:generallocallaw}(b), $|m_N(z)-m_0(z)|
\prec \Psi(z)^2 \asymp N^{-1}+(N\eta)^{-2}$ for any $z \in \bD$. Taking
$\eta=N^{-2/3}$ and applying also $\Im m_0(z) \asymp
\eta$, we obtain $\Im m_N(z) \prec N^{-2/3}<1/(2N\eta)$. 
As the number of eigenvalues of $\hSigma$ in $[E-\eta,E+\eta]$ is at most
$2N\eta \cdot \Im m_N(z)$, this implies $\hSigma$ has no eigenvalues in this
interval with probability $1-N^{-D}$ for all $N \geq N_0(D)$. 
The result follows from a union
bound over a grid of values $E \in [-C_0,C_0] \setminus \supp(\mu_0)_\delta$
of cardinality at most $CN^{2/3}$,
together with the bound $\|\hSigma\| \leq C_0$.
\end{proof}

\begin{proof}[Proof of Theorem \ref{thm:regedgeconcentration}]
The argument follows \cite[Eq.\ (3.4)]{pillaiyin}.
Consider the case of a right edge $E_*$. (A left edge is analogous.)
For each $E \in [E_*+N^{-2/3+\eps},E_*+\delta]$, denoting $\kappa=E-E_*$,
consider $z=E+i\eta$ for
\[\eta=N^{-1/2-\eps/4}\kappa^{1/4} \in [N^{-2/3},1],\]
where the inclusion holds for all large $N$ because $\kappa \in
[N^{-2/3+\eps},\delta]$. Proposition \ref{prop:m0estimates} implies
\[\Im m_0(z) \leq \frac{C\eta}{\sqrt{\kappa+\eta}}
\leq \frac{C\eta}{\sqrt{\kappa}}=C(N\eta)^{-1}N^{-\eps/2}.\]
Also by Proposition \ref{prop:m0estimates} and
Lemma \ref{lemma:stability}(b), we may apply
Theorem \ref{thm:generallocallaw} with $g(z)=\sqrt{\kappa+\eta}$.
The above bound on $\Im m_0(z)$ yields $\Psi(z)^2 \leq C/(N\eta)^2$, and hence
Theorem \ref{thm:generallocallaw}(b) implies
\[|m_N(z)-m_0(z)| \prec \frac{1}{(N\eta)^2\sqrt{\kappa+\eta}}
\leq \frac{1}{(N\eta)^2\sqrt{\kappa}}
=\frac{1}{N^{3+\eps/2}\eta^4} \leq (N\eta)^{-1}N^{-\eps/2},\]
where the last bound uses $\eta \geq N^{-2/3}$. Thus we obtain
\[\Im m_N(z) \prec C(N\eta)^{-1}N^{-\eps/2}.\]
Then $\hSigma$ has no eigenvalues in $[E-\eta,E+\eta]$ with probability
$1-N^{-D}$ for all $N \geq N_0(D)$, and the result follows from a union bound
over a grid of such values $E$.
\end{proof}

\begin{proof}[Proof of Theorem \ref{thm:locallaw}]
This follows from
Theorem \ref{thm:generallocallaw} applied with $g(z)=\sqrt{\kappa+\eta}$,
and Proposition \ref{prop:m0estimates} and Lemma \ref{lemma:stability}(b).
\end{proof}

\begin{proof}[Proof of Corollary \ref{cor:locallawunionbound}]
This follows from Lemmas \ref{lemma:unionbound}(a) and
\ref{lemma:infiniteunion}. For a large enough constant $C>0$ and any $D>0$,
on an event of probability $1-N^{-D}$, we have $\|X\|<C$ for all $N \geq
N_0(D)$. The required boundedness and Lipschitz continuity
properties for Lemma \ref{lemma:infiniteunion} then follow from
(\ref{eq:locallawalt}), (\ref{eq:mNbound}), (\ref{eq:mNlipschitz}), and
Proposition \ref{prop:m0estimates}.
\end{proof}

\section{Stochastic domination and resolvent approximation}\label{appendix:tools}

We state several known elementary properties about stochastic
domination, and also prove Lemma \ref{lemma:resolventapprox} on the resolvent
approximation. (This follows the argument
of \cite[Lemma 6.1 and Corollary 6.2]{erdosyauyin}; we provide a
self-contained exposition, as we do not first establish eigenvalue rigidity.)

\begin{lemma}\label{lemma:unionbound}
Let $U$ be any index set, and suppose $\xi(u) \prec \Psi(u)$ for all $u \in U$.
\begin{enumerate}[(a)]
\item For any constant $C>0$, if $|U| \leq N^C$, then
$\sup_{u \in U} |\xi(u)|/\Psi(u) \prec 1$.
\item For any constant $C>0$, if $|U| \leq N^C$, then
$\sum_{u \in U} \xi(u) \prec \sum_{u \in U} \Psi(u)$.
\item If $u_1,u_2 \in U$, then
$\xi(u_1)\xi(u_2) \prec \Psi(u_1)\Psi(u_2)$.
\end{enumerate}
\end{lemma}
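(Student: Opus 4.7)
The plan is to prove all three parts by combining the hypothesis $\xi(u) \prec \Psi(u)$ with elementary union bounds, using the fact that the threshold $N_0(\eps, D)$ in the definition (\ref{eq:domination}) depends only on $\eps, D$, and the context --- in particular, not on the index $u$. Hence a single $N_0$ works simultaneously for every $u \in U$, which is precisely what enables a union bound.

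For part (a), I would fix $\eps, D > 0$ and a constant $C$ with $|U| \leq N^C$, then apply the hypothesis at level $\eps$ with the exponent $D + C$ in place of $D$. This yields $\P[|\xi(u)| > N^\eps \Psi(u)] < N^{-(D+C)}$ uniformly in $u \in U$ for $N \geq N_0(\eps, D+C)$, and a union bound over the at most $N^C$ elements of $U$ gives
\[\P\Big[\sup_{u \in U} |\xi(u)|/\Psi(u) > N^\eps\Big] \leq N^C \cdot N^{-(D+C)} = N^{-D}.\]
Part (b) then follows immediately: on the event $E$ that $|\xi(u)| \leq N^\eps \Psi(u)$ simultaneously for all $u \in U$, the triangle inequality gives the deterministic bound $\big|\sum_u \xi(u)\big| \leq N^\eps \sum_u \Psi(u)$, while part (a) bounds $\P[E^c] \leq N^{-D}$. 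For part (c), I would use the event inclusion
\[\{|\xi(u_1)\xi(u_2)| > N^\eps \Psi(u_1)\Psi(u_2)\} \subseteq \{|\xi(u_1)| > N^{\eps/2}\Psi(u_1)\} \cup \{|\xi(u_2)| > N^{\eps/2}\Psi(u_2)\}\]
together with a union bound, applying the hypothesis at level $\eps/2$ and exponent $D + 1$ to each event on the right to absorb the factor of $2$.

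There is no serious obstacle here. The only point that deserves care is the uniformity of $N_0$ across $u \in U$, which is built into the convention stated immediately after (\ref{eq:domination}) and which is precisely why the cardinality constraint $|U| \leq N^C$ suffices in parts (a) and (b). Since this lemma is entirely standard in the stochastic-domination literature (going back to \cite{erdosknowlesyau}), I would simply cite the reference rather than reproduce the short argument in full.
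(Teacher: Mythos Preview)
Your proposal is correct and matches the paper's approach exactly: the paper's proof is a single sentence stating that all three parts follow from a union bound, since $\eps,D>0$ in (\ref{eq:domination}) are arbitrary. Your more detailed write-up just unpacks this, and your closing remark that one could simply cite \cite{erdosknowlesyau} is precisely in the spirit of how the paper handles it.
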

\begin{proof}
All three parts follow from a union bound, as $\eps,D>0$ in
(\ref{eq:domination}) are arbitrary.
\end{proof}

\begin{lemma}\label{lemma:expectationdomination}
Suppose $\xi \prec \Psi$ and $\Psi$ is deterministic. Suppose furthermore that
there are constants $C,C_1,C_2,\ldots>0$ such that
$\Psi>N^{-C}$ and $\E[|\xi|^\ell]<N^{C_\ell}$ for each integer $\ell>0$.
Then $\E[\xi|\G] \prec \Psi$ for any sub-$\sigma$-field $\G$.
\end{lemma}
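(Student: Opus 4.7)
The plan is the standard truncation argument for passing from high-probability bounds to bounds in conditional expectation. Write $\xi = \xi\,\mathbf{1}\{|\xi| \leq N^{\eps/2}\Psi\} + \xi\,\mathbf{1}\{|\xi| > N^{\eps/2}\Psi\} =: Z + W$, where the splitting threshold depends on the arbitrary $\eps > 0$ appearing in the definition of $\prec$. Taking conditional expectations, the first summand contributes $|\E[Z\mid \G]| \leq N^{\eps/2}\Psi$ almost surely, which is a factor of $N^{\eps/2}$ smaller than the target bound $N^\eps \Psi$. The entire task is therefore to show that $\E[|W|]$ is negligible compared to $\Psi$, after which a Markov bound controls $\E[W\mid\G]$.

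For $W$, I would apply Cauchy--Schwarz to get
\[
\E[|W|] \leq \E[|\xi|^2]^{1/2}\,\P[|\xi|>N^{\eps/2}\Psi]^{1/2}.
\]
The first factor is at most $N^{C_2/2}$ by the assumed moment bound, and the second factor is bounded by $N^{-D'/2}$ for any $D'>0$ by the hypothesis $\xi \prec \Psi$ (choosing the ``$\eps$'' of that definition to be $\eps/2$, and the ``$D$'' to be $D'$). Thus $\E[|W|] \leq N^{C_2/2 - D'/2}$. Using $\Psi > N^{-C}$, this gives $\E[|W|] \leq N^{-D-\eps+C+C_2/2-D'/2}\,\Psi$, which is less than $N^{-D}\,N^\eps\,\Psi/2$ once $D'$ is chosen large enough relative to the fixed constants $C,C_2,D,\eps$.

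The conclusion then follows by Markov applied to the $\G$-measurable random variable $|\E[W\mid\G]|$: for any target $\eps,D>0$,
\[
\P\!\left[|\E[W\mid \G]| > \tfrac{1}{2}N^\eps \Psi\right]
\leq \frac{2\E[|W|]}{N^\eps \Psi} \leq N^{-D}
\]
for $N$ sufficiently large, after which combining with the deterministic bound on $\E[Z\mid\G]$ yields $\P[|\E[\xi\mid\G]| > N^\eps\Psi] < N^{-D}$.

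The only step requiring care is bookkeeping: the constants $\eps,D$ in the desired conclusion are arbitrary but fixed, while the constants $\eps',D'$ we invoke when applying the hypothesis $\xi \prec \Psi$ must be chosen \emph{after} knowing $C$, $C_2$, $\eps$, and $D$. Since all constants are allowed to depend only on fixed problem data (and the $N_0$ in the definition of $\prec$ can depend on them), this dependency structure is consistent with the definition of stochastic domination, and there is no obstruction. The moment hypothesis $\E[|\xi|^\ell] < N^{C_\ell}$ is used only at $\ell=2$; higher moments are not needed for this argument but are harmless.
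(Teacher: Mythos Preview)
Your proof is correct. Both your argument and the paper's share the same truncation-plus-Cauchy--Schwarz core for bounding $\E[|W|]$, but they diverge in how the conditional case is handled. The paper first proves the unconditional bound $|\E\xi| < N^\eps\Psi$, then bootstraps to general $\G$ by applying that unconditional bound to $|\xi|^k$ (which also satisfies the hypotheses) to get $\E[|\xi|^k] < N^\eps\Psi^k$, and finally combines Markov with conditional Jensen, $|\E[\xi\mid\G]|^k \leq \E[|\xi|^k\mid\G]$, choosing $k > (D+\eps)/\eps$. Your route is more direct: you never separate out the unconditional case, and you apply Markov to $|\E[W\mid\G]|$ using only $\E[|W|]$, which as you note requires just the second moment rather than arbitrarily high moments. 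The paper's version is structurally a bit cleaner as a two-step reduction, while yours is more economical in its use of the moment hypothesis; either is perfectly acceptable here.
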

\begin{proof}
If $\G$ is trivial so $\E[\xi|\G]=\E[\xi]$, then this follows
from Cauchy-Schwarz: For any $\eps>0$ and all $N \geq N_0(\eps)$,
\begin{align*}
|\E \xi| &\leq \E\left[|\xi|\1\{|\xi| \leq N^{\eps/2}\Psi\}\right]
+\E\left[|\xi|\1\{|\xi|>N^{\eps/2}\Psi\}\right]\\
&\leq N^{\eps/2}\Psi+\E[|\xi|^2]^{1/2}\P[|\xi|>N^{\eps/2}\Psi]^{1/2}\\
&<N^{\eps}\Psi,
\end{align*}
where the last inequality applies $\xi \prec \Psi$.
For general $\G$, consider any $\eps,D>0$ and fix an integer
$k>(D+\eps)/\eps$. Then the above argument
yields $\E[|\xi|^k]<N^\eps \Psi^k$ for all $N \geq N_0(\eps,D)$, so
\[\P\Big[|\E[\xi|\G]|>N^\eps\Psi\Big]
\leq \frac{\E[|\E[\xi|\G]|^k]}{N^{k\eps}\Psi^k}
\leq \frac{\E[|\xi|^k]}{N^{k\eps}\Psi^k}<N^{\eps-k\eps}<N^{-D}.\]
\end{proof}

\begin{lemma}\label{lemma:infiniteunion}
Suppose $\xi(z) \prec \Psi(z)$ for all $z \in U$, where $U \subset \C$ is
uniformly bounded in $N$.
Suppose that for any $D>0$, there exists $C \equiv C(D)>0$
and an event of probability $1-N^{-D}$ on which
$\Psi(z)>N^{-C}$ for all $z \in U$, and also
$|\xi(z_1)-\xi(z_2)| \leq N^C|z_1-z_2|$ and
$|\Psi(z_1)-\Psi(z_2)| \leq N^C|z_1-z_2|$ for all $z_1,z_2 \in U$.
Then $\sup_{z \in U} |\xi(z)|/\Psi(z) \prec 1$.
\end{lemma}
\begin{proof}
For any $\eps,D>0$, set $C=C(D)$ and $\Delta=N^{-3C}$.
Take a net $\N \subset U$ with $|\N| \leq N^{6C+1}$ such that
for every $z \in U$, there exists $z' \in \N$ with $|z-z'|<\Delta$.
By Lemma \ref{lemma:unionbound}(a), $|\xi(z')|<N^\eps \Psi(z')$ for all $z' \in
\N$ with probability $1-N^{-D}$. Then with probability $1-2N^{-D}$, for all
$z \in U$,
\[|\xi(z)| \leq |\xi(z')|+\Delta N^C
<N^\eps \Psi(z')+\Delta N^C
\leq N^\eps \Psi(z)+2\Delta N^{\eps+C}
<3N^\eps \Psi(z).\]
\end{proof}

\begin{proof}[Proof of Lemma \ref{lemma:resolventapprox}]
Denote
\[\#(a,b)=\text{number of eigenvalues of } \hSigma \text{ in }
[a,b].\]
For any $E_1<E_2$, any $m>0$, and any $\lambda \in \R$, we have the
casewise bound
\begin{align*}
&\left|\1_{[E_1,E_2]}(\lambda)-\int_{E_1}^{E_2}
\frac{1}{\pi}\frac{\eta}{\eta^2+(x-\lambda)^2}dx\right|
\leq \begin{cases}
\frac{E_2-E_1}{\pi}\frac{\eta}{\eta^2+(E_1-\lambda)^2} & \text{if }
\lambda<E_1-m\\
1 & \text{if } E_1-m \leq \lambda \leq E_1+m \\
\frac{2}{\pi}\frac{\eta}{m} & \text{if } E_1+m<\lambda<E_2-m\\
1 & \text{if } E_2-m \leq \lambda \leq E_2+m \\
\frac{E_2-E_1}{\pi} \frac{\eta}{\eta^2+(\lambda-E_2)^2}
& \text{if } \lambda>E_2+m,
\end{cases}
\end{align*}
where the middle case $E_1+m<\lambda<E_2-m$ follows from
\begin{align*}
1-\int_{E_1}^{E_2} \frac{1}{\pi}\frac{\eta}{\eta^2+(x-\lambda)^2}dx
&\leq 1-\int_{\lambda-m}^{\lambda+m} \frac{1}{\pi}\frac{\eta}
{\eta^2+(x-\lambda)^2}dx\\
&=1-\frac{2}{\pi}\tan^{-1}\left(\frac{m}{\eta}\right) \leq \frac{2}{\pi}
\frac{\eta}{m}.
\end{align*}
For the first case, we apply also the bound
\[\frac{\eta}{\eta^2+(E_1-\lambda)^2} \leq \frac{\eta}{(E_1-\lambda)^2}
\leq \frac{2\eta}{m} \cdot \frac{m}{m^2+(E_1-\lambda)^2},\]
and similarly for the last case.
Hence, summing over $\lambda$ as the eigenvalues of $\hSigma$,
\begin{equation}\label{eq:countapprox}
\left|\#(E_1,E_2)-\frac{N}{\pi}\int_{E_1}^{E_2} \Im m_N(x+i\eta)dx\right|
\leq R(E_1,E_2,m)+S(E_1,E_2,m)
\end{equation}
where we set
\begin{align*}
R(E_1,E_2,m)&=\#(E_1-m,E_1+m)+\#(E_2-m,E_2+m),\\
S(E_1,E_2,m)&=\frac{2}{\pi}\frac{\eta}{m}\Big((E_2-E_1)N\Im m_N(E_1+im)\\
&\hspace{0.5in}+(E_2-E_1)N\Im m_N(E_2+im)+\#(E_1+m,E_2-m)\Big).
\end{align*}

We apply the above with $E_1,E_2 \in [E_*-2s_+,E_*+2s_+]$, and with
$m=N^{-2/3-3\eps}$. To bound $S(E_1,E_2,m)$, note that
Proposition \ref{prop:m0estimates} and Theorem \ref{thm:locallaw} yield,
for $j=1,2$,
\[\Im m_N(E_j+im) \prec N^{-1/3+3\eps}.\]
For $z=E_*+i(2s_+)$, Proposition \ref{prop:m0estimates} and
Theorem \ref{thm:locallaw} also yield $Ns_+\Im m_N(z) \prec N^{3\eps/2}$.
Applying $\#(E_*-v,E_*+v) \leq 2Nv\,\Im m_N(E_*+iv)$ for any $v>0$, this yields
\begin{equation}\label{eq:roughrigidity}
\#(E_*-2s_+,E_*+2s_+) \prec N^{3\eps/2}.
\end{equation}
Then applying $\#(E_1+m,E_2-m) \leq \#(E_*-2s_+,E_*+2s_+)$ and
$\eta/m=N^{-6\eps}$, we obtain $S(E_1,E_2,m) \prec N^{-2\eps}$.
By (\ref{eq:mNlipschitz}) and Lemma \ref{lemma:infiniteunion}, we may take a
union bound over all such $E_1,E_2$: For any $\eps',D>0$,
\begin{align}
&\P\bigg[\text{there exist } E_1,E_2 \in [E_*-2s_+,E_*+2s_+]
\text{ such that }
S(E_1,E_2,m)>N^{-2\eps+\eps'}\bigg] \leq N^{-D} \label{eq:countR2}
\end{align}
for all $N \geq N_0(\eps',D)$.

Now let $E=E_*+s$ and $E_+=E_*+s_+-l$. Then
\begin{align*}
\#(E,E_+) &\leq \frac{1}{l^2}\int_{E-l}^{E}\left(\int_{E_+}^{E_++l}
\#(E_1,E_2)dE_2\right)dE_1\\
&\leq \frac{N}{\pi}\int_{E-l}^{E_++l} \Im m_N(x+i\eta)dx
+\frac{1}{l^2}\int_{E-l}^{E} \int_{E_+}^{E_++l}
R(E_1,E_2,m)dE_2\,dE_1+\O(N^{-2\eps}),
\end{align*}
where we have applied (\ref{eq:countapprox}) and (\ref{eq:countR2}).
The first term is $\pi^{-1}\X(s-l,s_+,\eta)$.
For the second term, we obtain from the definition of $R(E_1,E_2,m)$
\begin{align*}
&\frac{1}{l^2}\int_{E-l}^{E} \int_{E_+}^{E_++l}
R(E_1,E_2,m)dE_2\,dE_1\\
&\quad \leq \frac{2m}{l}\#(E-l-m,E+m)
+\frac{2m}{l}\#(E_+-m,E_++l+m).
\end{align*}
Applying (\ref{eq:roughrigidity}) to crudely bound
$\#(E-l-m,E+m)$ and $\#(E_+-m,E_++l+m)$ by $\#(E_*-2s_+,E_*+2s_+)$,
and noting $m/l=N^{-2\eps}$, we obtain
\[\#(E,E_+) \leq \pi^{-1} \X(s-l,s_+,\eta)+\O(N^{-\eps/2}).\]
Theorem \ref{thm:regedgeconcentration} yields $\#(E_+,E_*+\delta)=0$ with
probability $1-N^{-D}$ for $N \geq N_0(\eps,D)$, so
\begin{equation}\label{eq:Edeltaupper}
\#(E,E_*+\delta) \leq \pi^{-1} \X(s-l,s_+,\eta)+\O(N^{-\eps/2}).
\end{equation}
Similarly, setting $E_+=E_*+s_++l$, we have
\begin{align}
\#(E,E_*+\delta) &\geq \frac{1}{l^2} \int_E^{E+l}
\left(\int_{E_+-l}^{E_+} \#(E_1,E_2)dE_2\right)dE_1\nonumber\\
&\geq \pi^{-1}\X(s+l,s_+,\eta)-\O(N^{-\eps/2}).
\label{eq:Edeltalower}
\end{align}

For any $D>0$ and all $N \geq N_0(\eps,D)$, (\ref{eq:Edeltaupper}) implies that
$\pi^{-1} \X(s-l,s_+,\eta) \geq 2/3$ whenever $\#(E_*+s,E_*+\delta) \geq 1$, 
except possibly on an event of probability $N^{-D}$. Similarly
(\ref{eq:Edeltalower}) implies $\pi^{-1} \X(s+l,s_+,\eta) \leq 1/3$ whenever
$\#(E_*+s,E_*+\delta)=0$, except possibly on
an event of probability $N^{-D}$. The result then follows from the definition
and boundedness of $K$.
\end{proof}

\section{Testing in random effects models}\label{appendix:testing}

We discuss further the application of Theorem \ref{thm:TW} for testing the
global sphericity null hypothesis in linear mixed models. In Example
\ref{ex:oneway} below, we describe explicitly the form of this test for a
balanced one-way classification design, including an additional fixed-effect
mean vector as is common in applications of this model.

In balanced classification designs, regularity of the rightmost edge
may be verified from the following simple sufficient condition,
noted also in \cite{elkaroui}.

\begin{proposition}\label{prop:balancedregular}
Suppose there exists a constant
$c>0$ such that the largest diagonal value of $T$ is at least $c$ and has
multiplicity at least $cM$. Then the rightmost edge $E_*$ of $\mu_0$ is
$\tau$-regular for a constant $\tau>0$.
\end{proposition}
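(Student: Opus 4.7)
The plan is to verify the three conditions in Definition \ref{def:regular} for the rightmost edge $E_*$ and its $m$-value $m_*$. Write $t_{\max}$ for the largest diagonal value of $T$, $k \geq cM$ for its multiplicity, and $\lambda := k/N \in [c/C, 1]$; all constants below will depend only on $c$ and $C$.

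First I would locate $m_*$ in the interval $(-1/t_{\max}, 0)$. The $k$ terms with $t_\a = t_{\max}$ contribute $\lambda/(m+1/t_{\max})$ to $z_0(m)$, which drives $z_0(m) \to +\infty$ as $m \to (-1/t_{\max})^+$; at the other end, $-1/m \to +\infty$ as $m \to 0^-$. All other terms remain finite on this open interval, so $z_0$ has a local minimum on $(-1/t_{\max}, 0)$. By the ordering of $m$-values in Proposition \ref{prop:edges}, $m_* = m_1$ is the largest local extremum of $z_0$ on $(-\infty, 0)$, which forces $m_* \in (-1/t_{\max}, 0)$.

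Next I would bound $|m_*|$ and $s_* := m_* + 1/t_{\max}$ away from $0$. Evaluating $z_0$ at the specific point $m = -1/(2t_{\max})$, every factor $1 + t_\a m$ is at least $1/2$, so $z_0(-1/(2t_{\max})) \leq K$ for a constant $K$ depending only on $c, C$. For any $m \in (-1/t_{\max}, 0)$, the $t_\a \in (0, t_{\max}]$ contributions to $z_0(m)$ are positive, the $t_\a < 0$ contributions sum in absolute value to $\leq C^2$, and $-1/m > 0$, which gives the two lower bounds
\[z_0(m) \geq \frac{\lambda}{m+1/t_{\max}} - C^2, \qquad z_0(m) \geq -\frac{1}{m} - C^2.\]
Combined with $z_0(m_*) \leq K$, these force both $s_*$ and $|m_*|$ to be bounded below by a positive constant. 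The condition $|m_*| < \tau^{-1}$ of Definition \ref{def:regular} is then immediate (for $\tau < c$). The pole-separation condition also reduces to these two estimates: for positive $t_\a$, $1/t_\a \geq 1/t_{\max}$ gives $m_* + 1/t_\a \geq s_*$; for negative $t_\a$, $|m_* + 1/t_\a| = |m_*| + 1/|t_\a| \geq |m_*|$.

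The main obstacle is the curvature lower bound $|z_0''(m_*)| > 2\tau^2$. My approach is to substitute $v_\a := t_\a m_*/(1+t_\a m_*) = m_*/(m_* + 1/t_\a)$, which rewrites the stationarity condition $z_0'(m_*) = 0$ as the identity $\frac{1}{N}\sum_\a v_\a^2 = 1$ and the curvature as
\[z_0''(m_*) = \frac{2}{|m_*|^3}\left(1 - \frac{1}{N}\sum_\a v_\a^3\right).\]
The sign structure of the $v_\a$ is decisive. For every $0 < t_\a \leq t_{\max}$ one has $v_\a < 0$ (since $m_* + 1/t_\a > 0$), so these terms contribute nonnegatively to $1 - \frac{1}{N}\sum_\a v_\a^3$; in particular the $k$ terms with $t_\a = t_{\max}$ together contribute
\[-\frac{1}{N}\sum_{t_\a = t_{\max}} v_\a^3 = \lambda\,\frac{|m_*|^3}{s_*^3}.\]
For $t_\a < 0$, $v_\a \in (0, 1)$, so $v_\a^3 \leq v_\a^2$, and the stationarity identity gives $\frac{1}{N}\sum_{t_\a < 0} v_\a^3 \leq 1 - \lambda m_*^2/s_*^2$. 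Dropping the nonnegative $0 < t_\a < t_{\max}$ contribution then yields $1 - \frac{1}{N}\sum_\a v_\a^3 \geq \lambda\,m_*^2/s_*^2$, which is bounded below by a positive constant via the earlier bounds on $\lambda$, $|m_*|$, and $s_*$. Consequently $|z_0''(m_*)|$ is bounded below and $\gamma = \sqrt{2/|z_0''(m_*)|}$ is bounded above, concluding $\tau$-regularity of $E_*$.
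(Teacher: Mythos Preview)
Your proof is correct. It reaches the same conclusion as the paper but by a somewhat different route in two places.

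For the lower bounds on $|m_*|$ and $s_* = m_* + t_{\max}^{-1}$, you compare the minimum value $z_0(m_*)$ to $z_0(-1/(2t_{\max}))$ and then read off the bounds from crude lower estimates on $z_0$. The paper instead extracts both bounds directly from the stationarity equation
\[
0=z_0'(m_*)=\frac{1}{m_*^2}-\frac{1}{N}\sum_{\a:t_\a\neq 0}\frac{1}{(m_*+t_\a^{-1})^2},
\]
which is a bit shorter (the $K$ copies of the $t_{\max}$ term immediately force $s_*^2 \geq (K/N)m_*^2$, and $|t_\a^{-1}|>1/C$ forces $|m_*|$ away from $0$). Note that your step ``$z_0(m_*)\le K$'' implicitly uses that $m_*$ is the \emph{global} minimum of $z_0$ on the pole-free interval $(-1/t_{\max},0)$; this is true because $m_*=m_1$ is in fact the unique local extremum there (any second extremum $m_j<m_*$ in that interval would satisfy $z_0(m_j)>z_0(m_*)$ by monotonicity between consecutive extrema, contradicting the edge ordering $E_1>E_j$ of Proposition~\ref{prop:edges}(c)), but you do not spell this out.

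For the curvature, both arguments substitute $z_0'(m_*)=0$ into $z_0''(m_*)$. The paper arrives at the form
\[
z_0''(m_*)=-\frac{2}{m_* N}\sum_{\a:t_\a\ne 0}\frac{t_\a^{-1}}{(m_*+t_\a^{-1})^3}
\]
and observes in one line that every summand is positive (checking the sign of $m_*+t_\a^{-1}$ for $t_\a\gtrless 0$), then retains only the $t_{\max}$ terms. Your $v_\a$-substitution is algebraically equivalent, but the detour through $v_\a^3\le v_\a^2$ for $t_\a<0$ combined with the constraint $\tfrac{1}{N}\sum v_\a^2=1$ is slightly more circuitous than the paper's termwise positivity. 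Both yield a lower bound of order $\lambda/(|m_*|\,s_*^{2})$ or better, which suffices.
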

\begin{proof}
Let $t_1$ be the maximum diagonal value of $T$, and let $K$ be its multiplicity.
The $m$-value $m_*$ for the rightmost edge satisfies $m_* \in (-t_1^{-1},0)$.
As $t_1>c$ for a constant $c>0$, this implies $|m_*|<1/c$.
Furthermore, we have
\begin{equation}\label{eq:dz0mstar}
0=z_0'(m_*)=\frac{1}{m_*^2}-\frac{1}{N}\sum_{\a:t_\a \neq 0}
\frac{1}{(m_*+t_\a^{-1})^2}.
\end{equation}
As $|t_\a^{-1}|>c$ for a constant $c>0$ and each $\a$, this implies $|m_*|>c$
for a constant $c>0$. The condition (\ref{eq:dz0mstar}) also implies
\[0 \leq \frac{1}{m_*^2}-\frac{K}{N}\frac{1}{(m_*+t_1^{-1})^2}.\]
As $K$ is proportional to $N$, this yields $|m_*+t_1^{-1}|>c$ for a constant
$c>0$. Then by the condition $m_* \in (-t_1^{-1},0)$, we obtain
$|m_*+t_\a^{-1}|>\tau$ for all non-zero $\a$ and some constant
$\tau>0$. Finally, we have
\[z_0''(m_*)=-\frac{2}{m_*^3}+\frac{2}{N}\sum_{\a:t_\a \neq 0}
\frac{1}{(m_*+t_\a^{-1})^3}
=\sum_{\a:t_\a \neq 0} -\frac{2}{m_*N} \cdot
\frac{t_\a^{-1}}{(m_*+t_\a^{-1})^3},\]
where the second equality applies (\ref{eq:dz0mstar}). Note that $m_*<0$, and
$m_*+t_\a^{-1}>0$ if $t_\a>0$ and $m_*+t_\a^{-1}<0$ if $t_\a<0$. Thus each
summand on the right side above is positive, and in particular
\[z_0''(m_*) \geq -\frac{2K}{m_*N} \cdot \frac{t_1^{-1}}{(m_*+t_1^{-1})^3}.\]
Thus $\gamma<\tau^{-1}$ for a constant $\tau>0$.
\end{proof}

In testing applications where the variances
$\sigma_r^2$ are unknown, they may be estimated as follows.

\begin{proposition}\label{prop:bulkestimates}
Fix $r \in \{1,\ldots,k\}$ and let $\hSigma=Y'BY$ be an unbiased estimator for
$\Sigma_r$ in the mixed effects linear model (\ref{eq:MANOVAmodel}).
Suppose the null hypothesis (\ref{eq:nullhypothesis}) holds, and there is a
constant $C>0$ such that $\sigma_s \leq C$, $\|U_s\| \leq C$, and
$\|B\| \leq C/n$ for all $s \in \{1,\ldots,k\}$.

Let $\hsigma^2=p^{-1}\Tr \hSigma$. 
Then for any $\eps,D>0$ and all $n \geq n_0(\eps,D)$,
\[\P[|\hsigma^2-\sigma_r^2|>n^{-1+\eps}]<n^{-D}.\]
\end{proposition}
\begin{proof}
Note that $\E[\hsigma^2]=\sigma_r^2$. Writing $\hSigma=X'FX$ where $X$ has
$\N(0,1/N)$ entries and $F$ is defined by (\ref{eq:F}), we have
\[\hat{\sigma}^2=N^{-1}\Tr X'FX=\vec(X)'A\vec(X)\]
where $A=N^{-1}\Id_N \otimes F$ and
$\vec(X)$ is the column-wise vectorization of $X$. The condition
$\E[\hsigma^2]=\sigma_r^2$ implies $N^{-1}\Tr A=\sigma_r^2$.
We have $\|A\|_\HS^2=N^{-1}\|F\|_\HS^2<C$ for a constant $C>0$ under the above
conditions, so the result
follows from the Hanson-Wright inequality.
\end{proof}
Replacing any $\sigma_1^2,\ldots,\sigma_k^2$ that are unknown
by $\hsigma_1^2,\ldots,\hsigma_k^2$ and computing
$\widehat{E}_*$ and $\hat{\gamma}$ using these estimated variances,
one may check that when $E_*$ is regular,
\[\P[|\widehat{E}_*-E_*|>n^{-1+\eps}]<n^{-D}, \qquad
\P[|\hat{\gamma}-\gamma|>n^{-1+\eps}]<n^{-D}.\]
This follows from an argument similar to Lemma \ref{lemma:singleswap}, which we
omit for brevity. Then the conclusion of Theorem \ref{thm:TW} remains
asymptotically valid using the estimated center and scale
$\widehat{E}_*,\hat{\gamma}$.

\begin{table}
\centering
\caption{Empirical cumulative probabilities for
$(\gamma p)^{2/3}(\lambda_{\max}(\hSigma_1)-E_*)$ at the theoretical
90th, 95th, and 99th percentiles of the Tracy-Widom $F_1$ law, estimated
across 10000 simulations.
Here, $\hSigma_1$ is the MANOVA estimator of $\Sigma_1$ in the
balanced one-way classification model, for various $n,p,J$ when $\Sigma_1=0$ and
$\Sigma_2=\Id$. The final column gives approximate standard errors based on
binomial sampling.}
\label{table:simulation}
\begin{tabular}{l|l|SSS|SSS|c}
\toprule
&\multirow{2}{*}{$F_1$}
& \multicolumn{3}{c}{$n=p$} & \multicolumn{3}{c}{$n=4 \times p$} & \\
& & {$J=2$} & {$J=5$} & {$J=10$} & {$J=2$} & {$J=5$} & {$J=10$} &
$2 \times \text{SE}$ \\
\midrule
\multirow{3}{*}{$p=20$}
& 0.90 & 0.941 & 0.949 & 0.959 & 0.931 & 0.934 & 0.940 & (0.005) \\ 
& 0.95 & 0.973 & 0.977 & 0.983 & 0.968 & 0.969 & 0.971 & (0.003) \\
& 0.99 & 0.995 & 0.997 & 0.997 & 0.994 & 0.994 & 0.993 & (0.002) \\
\midrule
\multirow{3}{*}{$p=100$}
& 0.90 & 0.926 & 0.928 & 0.934 & 0.920 & 0.916 & 0.919 & (0.005) \\
& 0.95 & 0.964 & 0.967 & 0.968 & 0.960 & 0.958 & 0.961 & (0.004) \\
& 0.99 & 0.993 & 0.995 & 0.995 & 0.992 & 0.991 & 0.992 & (0.002) \\
\midrule
\multirow{3}{*}{$p=500$}
& 0.90 & 0.914 & 0.920 & 0.919 & 0.916 & 0.915 & 0.921 & (0.006) \\
& 0.95 & 0.958 & 0.961 & 0.960 & 0.957 & 0.957 & 0.962 & (0.004) \\
& 0.99 & 0.992 & 0.993 & 0.993 & 0.992 & 0.992 & 0.993 & (0.002) \\
\bottomrule
\end{tabular}
\end{table}

\begin{example}\label{ex:oneway}
As a concrete example, consider the balanced one-way classification model
\[\y_{i,j}=\bmu+\balpha_i+\beps_{i,j} \in \R^p\]
with $I$ groups of $J$ samples per group, as discussed in the introduction and
with an additional deterministic mean vector $\bmu \in \R^p$.
This model is expressed in matrix form as
\[Y=\one_n\bmu'+U\alpha+\eps,\]
where the rows of $Y \in \R^{n \times p}$, $\alpha \in \R^{I \times p}$, and
$\eps \in \R^{n \times p}$ are the above vectors, and where
$\one_n$ denotes the all-1's column vector of length $n$ and
\begin{equation}\label{eq:U}
U=\Id_I \otimes \one_J=\begin{pmatrix} \one_J & & \\
& \ddots & \\
& & \one_J \end{pmatrix} \in \{0,1\}^{n \times I}
\end{equation}
is an incidence matrix encoding the group 
memberships. Denoting by $\pi_1,\pi_2 \in \R^{n \times n}$ the
orthogonal projections onto $\col(U) \ominus \col(\one_n)$ (the orthogonal
complement of $\one_n$ in the column span of $U$) and onto
$\R^n \ominus \col(U)$ (the orthogonal complement of the column span of $U$
in $\R^n$), the classical MANOVA estimators \cite{searleetal,searlerounsaville}
are $\hSigma_1=Y'B_1Y$ and $\hSigma_2=Y'B_2Y$ for
\[B_1=\frac{1}{J}\frac{\pi_1}{I-1}-\frac{1}{J}\frac{\pi_2}{n-I},
\qquad B_2=\frac{\pi_2}{n-I}.\]

Let us consider a test of
\[H_0:\Sigma_1=\sigma_1^2\Id,\;\Sigma_2=\sigma_2^2\Id\]
using the largest observed eigenvalue of $\hSigma_1$. To obtain a more explicit
form for $F$, set $B \equiv B_1$ and
write the singular value decomposition of $U$ as
\[U=\sqrt{J}V_0W_0'+\sqrt{J}V_1W_1'\]
where $V_0=\one_n/\sqrt{n}$ and the columns of $V_1 \in \R^{n \times (I-1)}$
collect the left singular vectors of $U$, and $W_0 \in \R^{I \times 1}$
and $W_1 \in \R^{I \times (I-1)}$ are the corresponding right singular vectors.
Letting $V_2 \in \R^{n \times (n-I)}$ have orthonormal
columns spanning $\R^n \ominus \col(U)$, we have $\pi_1=V_1V_1'$ and
$\pi_2=V_2V_2'$. Then, after some simplification,
\[F=Q \begin{pmatrix} \frac{p\sigma_1^2}{I-1}\Id_{I-1} & 
\frac{p\sigma_1\sigma_2}{\sqrt{J}(I-1)}\Id_{I-1} & 0 \\
\frac{p\sigma_1\sigma_2}{\sqrt{J}(I-1)}\Id_{I-1} &
\frac{p\sigma_2^2}{J(I-1)}\Id_{I-1} & 0 \\
0 & 0 & -\frac{p\sigma_2^2}{J(n-I)}\Id_{n-I} \end{pmatrix} Q',\]
\[Q=\begin{pmatrix} W_1 & 0 & 0 \\ 0 & V_1 & V_2 \end{pmatrix}.\]
As $Q$ has orthonormal columns,
the nonzero eigenvalues of $F$ are the same as those of $Q'FQ$.
Diagonalization yields that $F$ has $I-1$ eigenvalues equal to $t_1$,
$n-I$ eigenvalues equal to $t_2$, and remaining eigenvalues 0, where
\[t_1=\frac{p}{I-1}(\sigma_1^2+\sigma_2^2/J),\qquad
t_2=-\frac{p}{J(n-I)}\sigma_2^2.\]
Then the Marcenko-Pastur equation (\ref{eq:MPdiag}) is cubic in $m_0(z)$, and we
have the explicit form
\[z_0(m)=-\frac{1}{m}+\frac{I-1}{p}\cdot \frac{1}{m+t_1^{-1}}+\frac{n-I}{p}
\cdot \frac{1}{m+t_2^{-1}}.\]

Table \ref{table:simulation} displays the accuracy of the Tracy-Widom
approximation for the standardized largest eigenvalue
$(\gamma p)^{2/3}(\lambda_{\max}(\hSigma_1)-E_*)$, under $\sigma_1^2=0$,
$\sigma_2^2=1$, and various settings
of $n$, $p$, and group size $J$. The center and scale $E_*$ and $\gamma$ are
computed from $z_0(m)$ above, where we have assumed that $\sigma_1^2$ and
$\sigma_2^2$ are known. We observe that the approximation is reasonably
accurate but has a conservative bias, particularly for small sample sizes.
\end{example}

\newcommand{\etalchar}[1]{$^{#1}$}


\begin{thebibliography}{MCM{\etalchar{+}}14}

\bibitem[BBAP05]{baiketal}
Jinho Baik, G{\'e}rard Ben~Arous, and Sandrine P{\'e}ch{\'e}.
 Phase transition of the largest eigenvalue for nonnull complex sample
  covariance matrices.
 {\em The Annals of Probability}, 33(5):1643--1697, 2005.

\bibitem[BEK{\etalchar{+}}14]{bloemendaletal}
Alex Bloemendal, L{\'a}szl{\'o} Erdos, Antti Knowles, Horng-Tzer Yau, and Jun
  Yin.
 Isotropic local laws for sample covariance and generalized {W}igner
  matrices.
 {\em Electronic Journal of Probability}, 19(33):1--53, 2014.

\bibitem[BM15]{blowsmcguigan}
Mark~W Blows and Katrina McGuigan.
 The distribution of genetic variance across phenotypic space and the
  response to selection.
 {\em Molecular Ecology}, 24(9):2056--2072, 2015.

\bibitem[BPZ13]{baopanzhou}
Zhigang Bao, Guangming Pan, and Wang Zhou.
 Local density of the spectrum on the edge for sample covariance
  matrices with general population.
 Technical report, 2013.

\bibitem[BPZ15]{baopanzhouTW}
Zhigang Bao, Guangming Pan, and Wang Zhou.
 Universality for the largest eigenvalue of sample covariance matrices
  with general population.
 {\em The Annals of Statistics}, 43(1):382--421, 2015.

\bibitem[BS98]{baisilverstein}
Zhi-Dong Bai and Jack~W Silverstein.
 No eigenvalues outside the support of the limiting spectral
  distribution of large-dimensional sample covariance matrices.
 {\em The Annals of Probability}, 26(1):316--345, 1998.

\bibitem[BS99]{baisilversteinexact}
Z~D Bai and Jack~W Silverstein.
 Exact separation of eigenvalues of large dimensional sample
  covariance matrices.
 {\em The Annals of Probability}, 27(3):1536--1555, 1999.

\bibitem[BY93]{baiyin}
Z~D Bai and Y~Q Yin.
 Limit of the smallest eigenvalue of a large dimensional sample
  covariance matrix.
 {\em The Annals of Probability}, 21(3):1275--1294, 1993.

\bibitem[EYY12]{erdosyauyin}
L{\'a}szl{\'o} Erd{\H{o}}s, Horng-Tzer Yau, and Jun Yin.
 Rigidity of eigenvalues of generalized {W}igner matrices.
 {\em Advances in Mathematics}, 229(3):1435--1515, 2012.

\bibitem[FJ16]{fanjohnstone}
Zhou Fan and Iain~M Johnstone.
 Eigenvalue distributions of variance components estimators in
  high-dimensional random effects models.
 {\em arXiv preprint 1607.02201}, 2016.

\bibitem[FJS18]{fanjohnstonespikes}
Zhou Fan, Iain~M Johnstone, and Yi~Sun.
 Spiked covariances and principal components analysis in
  high-dimensional random effects models.
 {\em arXiv preprint 1806.09529}, 2018.

\bibitem[FS10]{feldheimsodin}
Ohad~N Feldheim and Sasha Sodin.
 A universality result for the smallest eigenvalues of certain sample
  covariance matrices.
 {\em Geometric And Functional Analysis}, 20(1):88--123, 2010.

\bibitem[Gem80]{geman}
Stuart Geman.
 A limit theorem for the norm of random matrices.
 {\em The Annals of Probability}, 8(2):252--261, 1980.

\bibitem[HHN16]{hachemetal}
Walid Hachem, Adrien Hardy, and Jamal Najim.
 Large complex correlated {W}ishart matrices: {F}luctuations and
  asymptotic independence at the edges.
 {\em The Annals of Probability}, 44(3):2264--2348, 2016.

\bibitem[Joh00]{johansson}
Kurt Johansson.
 Shape fluctuations and random matrices.
 {\em Communications in Mathematical Physics}, 209(2):437--476, 2000.

\bibitem[Joh01]{johnstone}
Iain~M Johnstone.
 On the distribution of the largest eigenvalue in principal components
  analysis.
 {\em The Annals of Statistics}, 29(2):295--327, 2001.

\bibitem[Kar07]{elkaroui}
Noureddine~El Karoui.
 Tracy-{W}idom limit for the largest eigenvalue of a large class of
  complex sample covariance matrices.
 {\em The Annals of Probability}, 35(2):663--714, 2007.

\bibitem[KY17]{knowlesyin}
Antti Knowles and Jun Yin.
 Anisotropic local laws for random matrices.
 {\em Probability Theory and Related Fields}, 169(1--2):257--352,
  2017.

\bibitem[LS15]{leeschnelliwigner}
Ji~Oon Lee and Kevin Schnelli.
 Edge universality for deformed {W}igner matrices.
 {\em Reviews in Mathematical Physics}, 27(08):1550018, 2015.

\bibitem[LS16]{leeschnelli}
Ji~Oon Lee and Kevin Schnelli.
 Tracy-{W}idom distribution for the largest eigenvalue of real sample
  covariance matrices with general population.
 {\em The Annals of Applied Probability}, 26(6):3786--3839, 2016.

\bibitem[Ma12]{ma}
Zongming Ma.
 Accuracy of the {T}racy-{W}idom limits for the extreme eigenvalues in
  white {W}ishart matrices.
 {\em Bernoulli}, 18(1):322--359, 2012.

\bibitem[MP67]{marcenkopastur}
Vladimir~A Marcenko and Leonid~Andreevich Pastur.
 Distribution of eigenvalues for some sets of random matrices.
 {\em Sbornik: Mathematics}, 1(4):457--483, 1967.

\bibitem[Ona08]{onatski}
Alexei Onatski.
 The {T}racy-{W}idom limit for the largest eigenvalues of singular
  complex {W}ishart matrices.
 {\em The Annals of Applied Probability}, 18(2):470--490, 2008.

\bibitem[P{\'e}c09]{peche}
Sandrine P{\'e}ch{\'e}.
 Universality results for the largest eigenvalues of some sample
  covariance matrix ensembles.
 {\em Probability Theory and Related Fields}, 143(3-4):481--516, 2009.

\bibitem[PPR06]{pattersonetal}
Nick Patterson, Alkes~L Price, and David Reich.
 Population structure and eigenanalysis.
 {\em PLoS Genetics}, 2(12):e190, 2006.

\bibitem[PY14]{pillaiyin}
Natesh~S Pillai and Jun Yin.
 Universality of covariance matrices.
 {\em The Annals of Applied Probability}, 24(3):935--1001, 2014.

\bibitem[SB95]{silversteinbai}
Jack~W Silverstein and Zhidong Bai.
 On the empirical distribution of eigenvalues of a class of large
  dimensional random matrices.
 {\em Journal of Multivariate Analysis}, 54(2):175--192, 1995.

\bibitem[SC95]{silversteinchoi}
Jack~W Silverstein and Sang-Il Choi.
 Analysis of the limiting spectral distribution of large dimensional
  random matrices.
 {\em Journal of Multivariate Analysis}, 54(2):295--309, 1995.

\bibitem[SCM09]{searleetal}
Shayle~R Searle, George Casella, and Charles~E McCulloch.
 {\em Variance Components}.
 John Wiley \& Sons, 2009.

\bibitem[Sil95]{silverstein}
Jack~W Silverstein.
 Strong convergence of the empirical distribution of eigenvalues of
  large dimensional random matrices.
 {\em Journal of Multivariate Analysis}, 55(2):331--339, 1995.

\bibitem[Sos02]{soshnikov}
Alexander Soshnikov.
 A note on universality of the distribution of the largest eigenvalues
  in certain sample covariance matrices.
 {\em Journal of Statistical Physics}, 108(5):1033--1056, 2002.

\bibitem[SR74]{searlerounsaville}
S~R Searle and T~R Rounsaville.
 A note on estimating covariance components.
 {\em The American Statistician}, 28(2):67--68, 1974.

\bibitem[TW96]{tracywidomorthogonal}
Craig~A Tracy and Harold Widom.
 On orthogonal and symplectic matrix ensembles.
 {\em Communications in Mathematical Physics}, 177(3):727--754, 1996.

\bibitem[YBK88]{yinetal}
Yong-Quan Yin, Zhi-Dong Bai, and Pathak~R Krishnaiah.
 On the limit of the largest eigenvalue of the large dimensional
  sample covariance matrix.
 {\em Probability Theory and Related Fields}, 78(4):509--521, 1988.

\bibitem[Yin86]{yinyq}
Y~Q Yin.
 Limiting spectral distribution for a class of random matrices.
 {\em Journal of Multivariate Analysis}, 20(1):50--68, 1986.

\end{thebibliography}
\end{document}